\theoremstyle{plain}
\newtheorem{thm}{Theorem}[section] % reset theorem numbering for each chapter
\newtheorem{exmp}[thm]{Example} % same for example numbers
\newtheorem{lem}[thm]{Lemma} % same for lemma
\newtheorem{pro}[thm]{Proposition} 
\newtheorem{rem}[thm]{Remark} 
\newtheorem{co}[thm]{Corollary}
\newtheorem{ass}[thm]{Assumption}
\newcommand{\vast}{\bBigg@{3.2}}
\newcommand{\Vast}{\bBigg@{5}}
\begin{document}
	\begin{center}
		\section*{A universal robustification procedure}
		\subsection*{Riccardo Passeggeri\footnote[2]{\noindent Email: riccardo.passeggeri@imperial.ac.uk}, Nancy Reid\footnote[3]{\noindent Email: nancym.reid@utoronto.ca}}
		\subsection*{{\normalfont \large\textit{$^\dag$Imperial College London\\ $^\ddag$University of Toronto}}}
		\today
	\end{center}
	\begin{abstract}
	We develop a procedure that transforms any asymptotically normal estimator into an asymptotically normal estimator whose distribution is robust to arbitrary data contamination. More generally, our procedure transforms any estimator whose asymptotic distribution has positive and continuous density at the origin into an asymptotically normal estimator whose distribution is robust to arbitrary contamination. In developing such a procedure we prove new general properties of componentwise and geometric quantiles in both finite and infinite dimensions. \\\\
		\textbf{Keywords:} quantile, robust estimator, continuity, consistency, asymptotic normality, orthogonality\\\\
	\end{abstract}
	\tableofcontents

	\section{Introduction}

	%In this paper we consider a Quantile-of-Estimators (QoE) procedure that provides universal robustness, in the sense that it is asymptotically normally distributed in general settings, and under unspecified contamination of the observations. The procedure relies on an initial estimator, which need only have an asymptotic distribution with a positive and continuous density at the origin. The behaviour of the contamination can be arbitrary, and can increase with $n$, at a rate which depends on the rate of convergence of the initial estimator to its asymptotic distribution in the absence of contamination. For example, if the initial estimator is the sample mean and the observations have finite third moment then the QoE estimator has an explicit asymptotic distribution in the presence of $o(n^{1/4})$ arbitrarily contaminated observations. This gives further motivation for studying rate-of-convergence results for estimators. In finite dimensions the contamination is more general version of Huber (1973) contamination, which considers $X_1,...,X_n$ to be $n$ independent and identically distributed (i.i.d.) random variables drawn from a distribution $F=(1-\varepsilon)G+\varepsilon H$ where $G$ and $H$ are symmetric distributions. In contrast we make no assumptions on the behaviour of the contaminated data, and in do not assume any independence within the contaminated data, or between them and the non-contaminated data, thus allowing for malicious, also known as adversarial, contamination (see \cite{Loh}).%

	In this paper we consider a Quantile-of-Estimators (QoE) procedure that provides universal robustness, in the sense that given an initial estimator the procedure makes it asymptotically normally distributed even in presence of unspecified contamination of the observations. The procedure applies to any initial estimator which has an asymptotic distribution with a positive and continuous density at the origin, \textit{e.g.}~any asymptotically normal estimator. The behaviour of the contamination can be arbitrary, and can increase with $n$, at a rate which depends on the rate of convergence of the initial estimator to its asymptotic distribution in the absence of contamination. For example, if the initial estimator is the sample mean and the observations have finite third moment then the QoE estimator has an explicit asymptotic normal distribution in the presence of $o(n^{1/4})$ arbitrarily contaminated observations. This gives further motivation for studying rate-of-convergence results for estimators. The asymptotic distribution of the QoE estimator is always explicit, Gaussian, and unaffected by the contamination up to a certain growth rate.  
	
	In finite dimensions the contamination is more general version of \cite{Huber} contamination, which considers $X_1,...,X_n$ to be $n$ independent and identically distributed (i.i.d.) random variables drawn from a distribution $F=(1-\varepsilon)G+\varepsilon H$ where $G$ and $H$ are symmetric distributions and $\varepsilon>0$. In contrast we make no assumptions on the behaviour of the contaminated data, and do not assume any independence within the contaminated data, or between them and the non-contaminated data, thus allowing for malicious, also known as adversarial, contamination (see \cite{Loh}).

	%We call the robustification `universal', as it applies to any estimator whose asymptotic distribution has positive and continuous density at the origin, and it is developed in both finite- and infinite-dimensional settings. The latter extends some limited work on specific problems,  \textit{e.g.}~\cite{CCZ13,Giulini}. Our infinite-dimensional settings include  topological vector spaces like $L_p[0,T]$, Banach spaces, which are crucial  for functional data analysis,  the space of continuous paths, $C[0,T]$,  and the space of c\`{a}dl\`{a}g paths, $D[0,T]$. These latter spaces are the natural spaces for stochastic processes, where interest focusses on asymptotic Gaussianity. For example, estimation of  stochastic volatility usually relies on a Brownian motion limit for the so-called realized power variations, and its asymptotic variance (or a function of it).  The method applies in any setting where an initial estimator is available, and thus to both nonparametric and parametric modelling. %
	
	We call the robustification `universal', as it applies to any estimator whose asymptotic distribution has positive and continuous density at the origin, and it is developed in both finite- and infinite-dimensional settings. The possibility of our method to apply to the infinite-dimensional spaces is remarkable given that robustness results in these spaces are few and limited to specific problems,  \textit{e.g.}~\cite{CCZ13,Giulini}. Our infinite-dimensional spaces include Banach spaces, which are crucial  for functional data analysis, the space of continuous paths, $C[0,T]$,  and the space of c\`{a}dl\`{a}g paths, $D[0,T]$. These latter spaces are the natural spaces for stochastic processes, where interest focusses on asymptotic Gaussianity. For example, estimation of  stochastic volatility usually relies on a Brownian motion limit for the so-called realized power variations, and its asymptotic variance (or a function of it).  The method applies in any setting where an initial estimator is available, and thus to both nonparametric and parametric modelling. 
	
	In developing our procedure we prove new properties of quantiles in both finite and infinite dimensions, which constitutes the second main contribution of the paper.
	
	%There is a large literature on the quantiles in  finite- and infinite-dimensional settings. \cite{Kemperman} provided a fundamental step forward in this direction. Building on the work of \cite{Valadier} he showed the existence, uniqueness and weak continuity of the median in abstract spaces. \cite{Chaudhuri} provided a notion of the geometric quantile for multivariate data. \cite{Koltchinskii} developed and studied  M-quantile functions, including the geometric quantile. More recent developments are treated in \cite{Cadre2001}, \cite{Gervini2008},  \cite{ChCh2014}, \cite{HPS}, and \cite{KoPa}, among many others. %
	
	There is a large literature on quantiles. Building on the work of \cite{Valadier}, \cite{Kemperman} showed the existence, uniqueness and weak continuity of the median in certain infinite dimensional spaces. \cite{Chaudhuri} provided a notion of the geometric quantile for multivariate data. \cite{Koltchinskii} developed and studied  M-quantile functions, including the geometric quantile. More recent developments are treated in \cite{Cadre2001}, \cite{Gervini2008},  \cite{ChCh2014}, \cite{HPS}, and \cite{KoPa}, among many others.

	More recently, motivated by statistical learning theory, several papers investigate the role of the median in providing efficient and robust non-asymptotic probability bounds for the estimation of the mean of a random variable. A first work in this direction is \cite{LeOl} which builds on the Median-of-Means (MoM) estimator, originally introduced in \cite{Jerrum,Nemirovski}. \cite{Minsker} develops a general method, based on the geometric median in Banach spaces, to obtain estimators with tight concentration bounds around the true parameter. \cite{Lugosi} introduces the Median-of-Means (MoM) tournaments, and presents an estimator that outperforms classical least-squares estimators when data are heavy-tailed. An analysis of MoM robustness properties is provided in \cite{Laforgue}.

	In Section \ref{Sec-Quantiles} we recall the definition of the geometric quantile in Banach spaces and introduce three definitions of component-wise quantile in infinite dimensions, depending which notion of \textit{component} we consider. While we focus on these two common definitions, our procedure can be carried out for any definition of quantiles, including the $\rho$-quantiles defined in \cite{KoPa} and those defined via optimal transport (\cite{Chernozhukov,GhSe,Hallin}). 
	
	%Using these definitions we build our robustification procedure in Section \ref{Sec-QoE}, based on the quantile-of-estimates (QoE) estimator, which is constructed by dividing the $n$ observations into $k$ blocks, computing the initial estimate on each block, and computing the sample quantile of these estimates. We establish  consistency and asymptotic normality of the QoE estimator, for both fixed $k$ and $k$ increasing with $n$, and in finite- and infinite-dimensional settings. Several examples are presented, and in Section \ref{SubSec-QoE-quantile-regression} we establish the robustness properties of ordinary quantile estimators, and distinguish our approach from quantile regression. %
	
	Using these definitions we build our robustification procedure, the QoE estimator, in Section \ref{Sec-QoE}. This estimator is constructed by dividing the $n$ observations into $k$ blocks, computing the initial estimate on each block, and then computing the sample quantile of these estimates. We establish consistency and asymptotic normality of the QoE estimator, for both fixed $k$ and $k$ increasing with $n$, and in finite- and infinite-dimensional settings. Several examples are presented. In Section \ref{SubSec-QoE-quantile-regression} we establish the robustness properties of ordinary quantile estimators, and distinguish our approach from quantile regression.
	
	It is important to make a brief comparison between our work and the MoM and quantile regression literature. 
	
	Given the procedure, it is natural to call our estimator the QoE estimator. However, the approach of our paper is different from the one of the MoM literature. First, our procedure applies to a large class of estimators, including the sample mean. Second, our goal is to obtain asymptotically normal robust version of an initial estimator, while the goal of the MoM is to obtain efficient probability bounds for the estimation of the mean of a random variable. Third, we focus on both finite- and infinite-dimensions while MoM literature focuses on finite dimensions. Fourth, by considering quantiles, we can obtain results for any estimator whose asymptotic distribution has positive and continuous density at the origin, as we explain in Section \ref{Sec-QoE}.
	
	In quantile regressions the goal is to provide robust estimation in linear regression models. Our goal is to provide a robustification procedure for general estimators, including those for linear regression models. The quantile regression estimator is not robust to contamination, in particular one contaminated observation is enough to destroy it (see \cite{koenker_2005}), while our estimator is robust to contamination of order $n^\gamma$ for some $\gamma\in(0,1)$ depending on the setting -- in linear regression $\gamma=1/4$. Moreover, as in Huber's contamination model the asymptotic normality of the estimator is obtained only under certain restrictive conditions of the distribution of the contaminated data and it depends on that distribution. In our setting we allow for any contamination and the asymptotic normality of the QoE estimator does not depend on it. Recently in \cite{Loh},  the authors allow for adversarial contamination and focus on non-asymptotic $\ell_2$-error bounds, but even in this linear regression setting no asymptotic normality of the estimator is retained. Our procedure can also be applied to regression quantile estimators making them robust to contamination.
	
	%The results in Section \ref{Sec-QoE} are established using new results on the existence, uniqueness, continuity, and representations of  geometric and component-wise quantiles, developed in Section \ref{Sec-Continuity}. In particular, we provide necessary and sufficient conditions for the uniqueness of geometric quantiles in Banach spaces, improving the results of \cite{Kemperman}. This is based on a novel convex analysis result on James orthogonality, Lemma \ref{lem-ineq}. Further, we show that the geometric quantile function is a continuous function of the data. In particular, we show that if a sequence of empirical distributions converge weakly to an empirical distribution then their quantiles converge \textit{strongly}, \textit{i.e.}~in norm; we note that only weak$^*$-convergence has been established to date \cite{Kemperman,Cadre2001,ChCh2014}. By showing continuity of the quantile function we can then use the continuous mapping theorem to obtain asymptotic properties of the QoE estimator when the number of blocks $k$ is fixed. %
	
	The QoE estimator and its properties are presented in Section \ref{Sec-QoE}. These results are established using new results on the existence, uniqueness, continuity, and representations of  geometric and component-wise quantiles, developed in Section \ref{Sec-Continuity}. In particular, we provide necessary and sufficient conditions for the uniqueness of geometric quantiles in Banach spaces, improving the results of \cite{Kemperman}. This is based on a novel convex analysis result on James orthogonality, Lemma \ref{lem-ineq}. Further, we show that the geometric quantile function is a continuous function of the data. In particular, we show that if a sequence of empirical distributions converge weakly to an empirical distribution then their quantiles converge \textit{strongly}, \textit{i.e.}~in norm; we note that only weak$^*$-convergence has been established to date \cite{Kemperman,Cadre2001,ChCh2014}. By showing continuity of the quantile function we can then use the continuous mapping theorem to obtain asymptotic properties of the QoE estimator when the number of blocks $k$ is fixed.

	In Hilbert spaces we are able to give an explicit formulation for the geometric quantile and show that it is a weighted sum of the data and of the quantile parameter, where the weights are solutions of a minimization problem over a finite and compact set (see Theorem \ref{thm-extension-Gervini}). This extends the main intuition of \cite{Gervini2008} to quantiles and opens the possibility of extending his results for quantiles, including his algorithm for computing the sample spatial median in Hilbert spaces; something that was thought not to be possible (see \cite{ChCh2014} page 1208). In finite-dimensional space we show that if a sequence of distributions converge weakly to a distribution then their quantiles converge uniformly, Lemma \ref{lem-uniform-continuity}.

	\section{Quantiles}\label{Sec-Quantiles}
	Let $k\in\mathbb{N}$ and consider $\mathbf{x}\in\mathbb{R}^k$. Order its components in increasing values and denote by $\tilde{\mathbf{x}}$ the corresponding vector. For $\alpha\in(0,1)$, the univariate $\alpha$-quantile of $\mathbf{x}=(x_1,...,x_k)$ is the function $q_\alpha:\mathbb{R}^k\to\mathbb{R}$ defined as
	\begin{equation*}
		q_\alpha(\mathbf{x})=\frac{1}{2}(\tilde{x}_{\lceil k\alpha\rceil}+\tilde{x}_{\lfloor k\alpha+1 \rfloor});
	\end{equation*}
	where $\lfloor\cdot\rfloor$ and $\lceil\cdot\rceil$ are the floor and ceiling functions respectively. $q_{1/2}$ is the median of $(x_1,...,x_k)$. An alternative definition is the so-called geometric quantile, defined as
	\begin{equation}\label{univariate-geometric-quantile}
		{\displaystyle {\underset {y\in \mathbb {R}}{\operatorname {arg\,min} }}\sum _{i=1}^{k}|x_{i}-y|+u(x_i-y)},\quad\textnormal{where $u=2\alpha-1$.}
	\end{equation}
	The geometric quantile is not unique when $\alpha\in\{\frac{1}{k},\frac{2}{k},...,\frac{k-1}{k}\}$; in this case the set of solutions is the interval $[\tilde{x}_{ \alpha k},\tilde{x}_{ \alpha k+1 }]$.
	
	We can naturally extend the definition of the geometric quantile to a Banach space $\mathbb{X}$ with dual $\mathbb{X}^*$ (\cite{ChCh2014,Chaudhuri}). The pairing of a functional $\mathbf{f}\in\mathbb{X}^*$ and an element $\mathbf{x}\in\mathbb{X}$ is denoted by $\langle\mathbf{f},\mathbf{x}\rangle$. Given $\mathbf{u}\in \mathbb{X}^*$ with $\|\mathbf{u}\|_{\mathbb{X}^*}<1$, the geometric quantile of $\mathbf{x}_{1},...,\mathbf{x}_{k}\in\mathbb{X}$ is defined as
	\begin{equation}\label{geometric-quantile-Banach}
		{\displaystyle {\underset {\mathbf{y}\in \mathbb{X}}{\operatorname {arg\,min} }}\sum _{i=1}^{k}\left\|\mathbf{x}_{i}-\mathbf{y}\right\|_{\mathbb{X}}}+\langle\mathbf{u},\mathbf{x}_{i}-\mathbf{y}\rangle,
	\end{equation}
	equivalently
	\begin{equation}\label{geometric-quantile-Banach-without-x}
		{\displaystyle {\underset {\mathbf{y}\in \mathbb{X}}{\operatorname {arg\,min} }}\sum _{i=1}^{k}\left\|\mathbf{x}_{i}-\mathbf{y}\right\|_{\mathbb{X}}}-k\langle\mathbf{u} ,\mathbf{y}\rangle,
	\end{equation}
	
	As in the univariate case, the geometric quantile suffers from the lack of uniqueness, discussed further in Section \ref{Sec-Continuity}. To overcome this problem we proceed as follows. Let $\mathbb{X}^{k}:=\bigoplus_{i=1}^{k}\mathbb{X}$ be the $k$ times direct sum, \textit{i.e.}~the Cartesian product, of $\mathbb{X}$ with norm $\|(\mathbf{x}_1,...,\mathbf{x}_k)\|_{\mathbb{X}^{k}}:=\sum_{i=1}^{k}\|\mathbf{x}_i\|_{\mathbb{X}}$, which is itself a Banach space. Let $\mathbf{x}_1,...,\mathbf{x}_k\in\mathbb{X}$ lie on a straight line, which we call $W$. If the data are translated by $\mathbf{z}\in\mathbb{X}$ then (\ref{geometric-quantile-Banach}) is translated by $\mathbf{z}$, so wlog we assume that $W$ passes through the origin. If $\mathbb{X}$ is a Hilbert space, identify $\mathbb{X}^*$ with $\mathbb{X}$, as usual. Let $\mathbf{e}\in W$ be such that $\|\mathbf{e}\|_{\mathbb{X}}=1$ and denote by $W^{\perp}$ the orthogonal complement of $W$. Then \begin{equation*}
		\mathbf{u}=u\mathbf{e}+\mathbf{v},\quad\textnormal{where $u\in\mathbb{R}$ and $\mathbf{v}\in W^{\perp}$. }
	\end{equation*}
	Since $\|\mathbf{u}\|_{\mathbb{X}}<1$, we have that $u\in(-1,1)$. If $\mathbb{X}$ is a Banach space let $W^{\perp}$ be the annihilator of $W$, \textit{i.e.}
	\begin{equation*}
		W^{\perp}=\{\mathbf{v}\in\mathbb{X}^*:\langle\mathbf{v},\mathbf{s}\rangle=0,\,\,\forall\mathbf{s}\in W\}=\{\mathbf{v}\in\mathbb{X}^*:W\subseteq\textnormal{Ker}(\mathbf{v})\}.
	\end{equation*}
	By Theorem 5.110 in \cite{Hitchhiker2006} we have that $W^{\perp}$ has co-dimension 1, so 
	\begin{equation}\label{direct-sum-decomposition}
		\mathbb{X}^*=Z\oplus W^{\perp},
	\end{equation}
	where $Z$ is a one-dimensional subspace. Thus, we have that $\mathbf{u}=u\mathbf{e}+\mathbf{v}$ where $\mathbf{e}\in Z$ with $\|\mathbf{e}\|_{\mathbb{X}^{*}}=1$, $u\in\mathbb{R}$, and $\mathbf{v}\in W^{\perp}$. Since $\|\mathbf{u}\|_{\mathbb{X}^{*}}=\sup\limits_{\mathbf{z}\in\mathbb{X}}\frac{|\langle\mathbf{u},\mathbf{z}\rangle|}{\|\mathbf{z}\|_{\mathbb{X}}}$ and since $\|\mathbf{u}\|_{\mathbb{X}^{*}}<1$, we have that for any $\mathbf{s}\in W$
	\begin{equation*}
		\|\mathbf{s}\|_{\mathbb{X}}>\|\mathbf{s}\|_{\mathbb{X}}\|\mathbf{u}\|_{\mathbb{X}^{*}}\geq|\langle\mathbf{u},\mathbf{s}\rangle|=|\langle u\mathbf{e},\mathbf{s}\rangle|=|u||\langle \mathbf{e},\mathbf{s}\rangle|
	\end{equation*}
	and since $\|\mathbf{s}\|_{\mathbb{X}}=\|\mathbf{s}\|_{\mathbb{X}}\|\mathbf{e}\|_{\mathbb{X}^{*}}\geq|\langle \mathbf{e},\mathbf{s}\rangle|$ we conclude that $u\in(-1,1)$ and is a well-defined parameter of any geometric quantile in a Banach space. We define $\alpha:=(u+1)/2$.

	Consider the function $\mathbf{q}_{\mathbf{u}}:\mathbb{X}^{k}\to \mathbb{X}$ defined as follows. If $\|\mathbf{u}\|_{\mathbb{X}^*}\notin\{1-\frac{2j}{k},j=1,...,\lfloor\frac{k}{2}\rfloor\}$ let $\mathbf{q}_{\mathbf{u}}(\mathbf{x}_1,...,\mathbf{x}_k)$ be the geometric quantile (\ref{geometric-quantile-Banach}), which by Corollary \ref{co-unique} is uniquely defined. If $\|\mathbf{u}\|_{\mathbb{X}^*}\in\{1-\frac{2j}{k},j=1,...,\lfloor\frac{k}{2}\rfloor\}$, let
	\begin{equation}\label{QoE-geo}
		\mathbf{q}_{\mathbf{u}}(\mathbf{x}_1,...,\mathbf{x}_k):=\begin{cases}
			{\displaystyle {\underset {\mathbf{y}\in \mathbb{X}}{\operatorname {arg\,min} }}\sum _{i=1}^{k}\left\|\mathbf{x}_{i}-\mathbf{y}\right\|_{\mathbb{X}}+\langle \mathbf{u},\mathbf{x}_i-\mathbf{y}\rangle},&\textnormal{if $\mathbf{x}_1,...,\mathbf{x}_k$ do not lie on a straight line},\\q_{\alpha}(x_1,...,x_k)& \textnormal{otherwise,}
		\end{cases}
	\end{equation}
	where $x_i$ is such that $\mathbf{x}_i=x_i\mathbf{h}$ and $\mathbf{h}\in W$ with $\|\mathbf{h}\|_{\mathbb{X}}=1$, $i=1,...,k$. For the median we have $\|\mathbf{u}\|_{\mathbb{X}^*}=0$, so $\|\mathbf{u}\|_{\mathbb{X}^*}\notin\{1-\frac{2j}{k},j=1,...,\lfloor\frac{k}{2}\rfloor\}$ when $k$ is odd. 
	
	We define now component-wise quantiles in spaces with possibly infinite dimensions. Informally, each component of the component-wise quantile is the univariate quantile of the corresponding component of $\mathbf{x}_{1},...,\mathbf{x}_{k}$. This definition is not formal because the words \textit{component} might have different meanings, especially in infinite dimensional spaces. The existence of the component-wise quantiles we present in this section is proved in Section \ref{Sec-Continuity}.
	
	The most general definition of the component-wise quantile uses the Hamel basis. Let $\mathbb{X}$ be a vector space. Any $\mathbf{x}_{1},...,\mathbf{x}_{k}\in\mathbb{X}$ can be expressed as $\sum_{l\in I}x^{(l)}_1\mathbf{b}_{l},...,\sum_{l\in I}x^{(l)}_k\mathbf{b}_{l}$, respectively, where the $x^{(l)}_i$'s are real valued constants, only finitely many of which are non-zero, and $(\mathbf{b}_l)_{l\in I}$, for some index set $I$, is the Hamel basis of $\mathbb{X}$. The component-wise quantile with respect to the Hamel basis and with parameter $\bm{\alpha}=(\alpha_l)_{l\in I}$, where $\alpha_l\in(0,1)$ for every $l\in I$, is defined as 
	\begin{equation}\label{QoE-Hamel}
		\mathbf{q}_{Hamel,\bm{\alpha}}(\mathbf{x}_{1},...,\mathbf{x}_{k}):= \sum_{l\in I}\check{x}_l\mathbf{b}_{l},
	\end{equation}
	where $\check{x}_l=q_{\alpha_l}(x^{(l)}_1,...,x^{(l)}_k)$. The map that sends any $\mathbf{x}\in\mathbb{X}$ (rewritten using the Hamel basis as $\sum_{l\in I}x^{(l)}\mathbf{b}_{j}$) to $\sum_{l\in I}|x^{(l)}|$ is a norm on $\mathbb{X}$; we denote this by $\|\cdot\|_{\mathbb{X},Hamel}$. The map $(\mathbf{x}_1,...,\mathbf{x}_k)\mapsto\sum_{i=1}^{k}\|\mathbf{x}_i\|_{\mathbb{X},Hamel}$ defines a norm on the product space $\mathbb{X}^k$, which we denote $\|(\mathbf{x}_1,...,\mathbf{x}_k)\|_{\mathbb{X},Hamel,k}$.
	
	In practice, since the Hamel basis might not be explicitly available, it is easier to work with the Schauder basis. We define the component-wise quantile with respect to Schauder basis similarly. For any $\mathbf{x}_{1},...,\mathbf{x}_{k}$ belonging to some Banach space $\mathbb{X}$ possessing a Schauder basis $(\mathbf{d}_{l})_{l\in \mathbb{N}}$ we have $\mathbf{x}_{i}=\sum_{l\in \mathbb{N}}x^{(l)}_i\mathbf{d}_{l}$, where the $x^{(l)}_i$'s are real valued constants and $i=1,...,k$ (these $x^{(l)}_i$'s are generally not related to those in the Hamel basis decomposition). The component-wise quantile with respect to the Schauder basis, with parameter $\bm{\alpha}=(\alpha_l)_{l\in\mathbb{N}}$, where $\alpha_l\in(0,1)$ for every $l\in\mathbb{N}$, is defined as
	\begin{equation}\label{QoE-Schauder}
		\mathbf{q}_{S,\bm{\alpha}}(\mathbf{x}_{1},...,\mathbf{x}_{k}):= \sum_{l\in \mathbb{N}}\check{x}_l\mathbf{d}_{l},
	\end{equation}
	where $\check{x}_l=q_{\alpha_l}(x^{(l)}_1,...,x^{(l)}_k)$. A Schauder basis $(\mathbf{d}_{l})_{l\in \mathbb{N}}$ is said to be unconditional if whenever the series $\sum_{l\in \mathbb{N}}x_l\mathbf{d}_l$ converges, it converges unconditionally, \textit{i.e.}~all reorderings of the series converge to the same value. In this paper we concentrate on spaces that possess an unconditional Schauder basis. The standard bases of the sequence spaces $c_0$ and $\ell_p$ for $1 \leq p < \infty$, as well as every orthonormal basis in a Hilbert space, are unconditional. The Haar system and the Franklin system are unconditional bases in $L_p$ for any $1 < p < \infty$. Further, the Franklin system is an unconditional basis in all reflexive Orlicz spaces.
	
	There are some important spaces which have no unconditional Schauder basis, like $C[0,1]$, $L_1[0,1]$, and $L_\infty[0,1]$, among others. For these spaces and for function spaces in general we can employ the following definition of component-wise quantile. For the sake of clarity, we focus on function spaces $\mathbb{X}$ being either $C[0,1]$, or $D[0,1]$ or $L_p[0,1]$, with $1 \leq p \leq \infty$. Consider any $\mathbf{f}_1,...,\mathbf{f}_k\in \mathbb{X}$. Then, we can define the point-wise component-wise quantile with parameter $\bm{\alpha}=(\alpha_l)_{l\in[0,1]}$, where $\alpha_l\in(0,1)$ for every $l\in[0,1]$, as follows: for every $x\in[0,1]$
	\begin{equation}\label{QoE-point}
		\mathbf{q}_{P,\bm{\alpha}}(\mathbf{f}_{1},...,\mathbf{f}_{k})(x):=q_{\alpha_x}(\mathbf{f}_{1}(x),...,\mathbf{f}_{k}(x)).
	\end{equation}
	In this paper we consider $C[0,1]$ and $D[0,1]$ to be endowed with the uniform topology, \textit{i.e.}~the topology generated by the supremum norm.
	
	\begin{rem}\label{rem-definitions}
		The three definitions of component-wise quantile are equivalent when $\mathbb{X}$ is finite-dimensional.
	\end{rem} 
	On the other hand, in infinite-dimensional Banach spaces, not all function spaces have an unconditional Schauder basis, and by the Baire category theorem any Hamel basis is necessarily uncountable (see Corollary 5.23 in \cite{Hitchhiker2006}). 
	\section{Robustness of the QoE estimator}\label{Sec-QoE}
	In this section we present the main results of the paper. We present a procedure to make the asymptotic properties of a wide range of estimators robust to data contamination.
	
	Consider a sample of $n$ i.i.d.~observations. Divide these into $k$ blocks of equal size $\lfloor n/k\rfloor$, for some $k\in\mathbb{N}$. Let $Z_n$ be any estimator of a parameter of interest $\bm \theta_0\in\mathbb{X}$, for some space $\mathbb{X}$, and let $Z^{(i)}_{\lfloor n/k\rfloor}$ be the estimator from the observations in block $i$, $i=1,...,k$. We define the Quantile-of-Estimates (QoE) estimators $T_{\mathbf{u},n,k}:=\mathbf{q}_{\mathbf{u}}(Z^{(1)}_{\lfloor n/k\rfloor},...,Z^{(k)}_{\lfloor n/k\rfloor})$,  $T_{S,\bm{\alpha},n,k}:=\mathbf{q}_{S,\bm{\alpha}}(Z^{(1)}_{\lfloor n/k\rfloor},...,Z^{(k)}_{\lfloor n/k\rfloor})$, and similarly $T_{Hamel,\bm{\alpha},n,k}$ and $T_{P,\bm{\alpha},n,k}$ using (\ref{QoE-geo}), (\ref{QoE-Schauder}), (\ref{QoE-Hamel}), and (\ref{QoE-point}) respectively.
	
	By \textit{contaminated data} we mean that a portion of the sample is substituted by any arbitrary random variables. This framework, which is in line with Assumption 2 in \cite{Laforgue}, is much more general than Huber's contamination model. To distinguish this case from the uncontaminated case, we use the notation $\tilde{Z}^{(i)}_{\lfloor n/k\rfloor}$ for the sample estimator related to the block $i$, $i=1,...,k$, when $l_n$ observations are contaminated. By a slight abuse of notation we let $T_{\mathbf{u},n,k}:=\mathbf{q}_{\mathbf{u}}(\tilde{Z}^{(1)}_{\lfloor n/k\rfloor},...,\tilde{Z}^{(k)}_{\lfloor n/k\rfloor})$ and similarly $T_{S,\bm{\alpha},n,k}$, $T_{Hamel,\bm{\alpha},n,k}$ and $T_{P,\bm{\alpha},n,k}$.

	\begin{rem}\label{rem-uniquely-defined}
		The initial estimator $\tilde{Z}^{(i)}_{\lfloor n/k\rfloor}$ might not be uniquely defined due to contamination, for some $i=1,..,k$. This is for example the case of the least squares estimator when contamination creates multicollinearity in some blocks. In this case, we allow $\tilde{Z}^{(i)}_{\lfloor n/k\rfloor}$ to take any of the possible values.
	\end{rem}
	\begin{rem}
		All the proofs of the robustness results we consider the worst case scenario, namely that each contaminated data contaminate a different block and so for example if we have $100$ observations and $10$ blocks and $5$ contaminated data then $5$ blocks are contaminated. Therefore, all the robustness results hold under any permutation of the data.
	\end{rem}
	We consider two assumptions. In the first one the initial estimator $Z_n$ is consistent, while in the second one it has an asymptotic distribution.
	\begin{ass}[Consistency]\label{A1}
		We have that $Z_n\stackrel{p}{\to}\bm\theta_0$ as $n\to\infty$
	\end{ass}
	\begin{ass}[Asymptotic distribution]\label{A2}
		We have that $a_{n}(Z_n-\bm\theta_0)\stackrel{d}{\to}Y$ as $n\to\infty$, for some sequence $(a_{n})_{n\in\mathbb{N}}$ of non-negative numbers going to infinity and some random variable $Y$.
	\end{ass}
	We stress that these assumptions are on the behavior of  the initial estimator in the non-contaminated sample $Z_n$, even when we are considering statements about the asymptotic behavior of $T_{\mathbf{u},n,k}=\mathbf{q}_{\mathbf{u}}(\tilde{Z}^{(1)}_{\lfloor n/k\rfloor},...,\tilde{Z}^{(k)}_{\lfloor n/k\rfloor})$. For example, if the initial $n$ i.i.d.~observations have finite second moment and $Z_n$ is the sample mean, then Assumptions \ref{A1} and \ref{A2} are satisfied regardless of the contamination of the sample.

	\subsection{Component-wise quantile}\label{SubSec-QoE-component}
	First, we investigate the asymptotic behavior of the QoE's $T_{S,\bm{\alpha},n,k}$, $T_{Hamel,\bm{\alpha},n,k}$, and $T_{P,\bm{\alpha},n,k}$ as $n\to\infty$ for fixed $k\in\mathbb{N}$. The proofs of all our results are in the online Supplementary Material.
	\begin{thm}\label{t-1-q-component_1}
		Let $\mathbb{X}$ be a vector space endowed with the topology induced by $\|\cdot\|_{Hamel}$. Under Assumption \ref{A1}, $T_{Hamel,\bm{\alpha},n,k}\stackrel{p}{\to}\bm\theta_{0}$ as $n\to\infty$. Under Assumption \ref{A2}, $a_{\lfloor n/k\rfloor}(T_{Hamel,\bm{\alpha},n,k}-\bm\theta_{0})\stackrel{d}{\to} \mathbf{q}_{Hamel,\bm{\alpha}}(Y_1,...,Y_k)$ as $n\to\infty$, where $Y_i\stackrel{iid}{\sim}Y$, $i=1,...,k$.
	\end{thm}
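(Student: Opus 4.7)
The plan is to exploit the coordinate structure that the Hamel basis imposes on $\mathbf{q}_{Hamel,\bm{\alpha}}$. Since the $n$ observations are i.i.d., partitioning them into $k$ blocks yields i.i.d.~block estimators $Z^{(1)}_{\lfloor n/k\rfloor},\ldots,Z^{(k)}_{\lfloor n/k\rfloor}$. The crux is the following Lipschitz bound in the Hamel product norm: for any $\mathbf{x}_1,\ldots,\mathbf{x}_k,\mathbf{y}_1,\ldots,\mathbf{y}_k\in\mathbb{X}$,
\begin{equation*}
\|\mathbf{q}_{Hamel,\bm{\alpha}}(\mathbf{x}_1,\ldots,\mathbf{x}_k)-\mathbf{q}_{Hamel,\bm{\alpha}}(\mathbf{y}_1,\ldots,\mathbf{y}_k)\|_{\mathbb{X},Hamel}\le \sum_{i=1}^{k}\|\mathbf{x}_i-\mathbf{y}_i\|_{\mathbb{X},Hamel}.
\end{equation*}
This follows by applying coordinate-wise the elementary estimate $|q_{\alpha_l}(x_1,\ldots,x_k)-q_{\alpha_l}(y_1,\ldots,y_k)|\le \max_i|x_i-y_i|$ (a consequence of each order statistic being $1$-Lipschitz in the sup norm), bounding the max by the sum of the $|x_i^{(l)}-y_i^{(l)}|$, and interchanging the order of summation over the basis index and the block index. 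In particular $\mathbf{q}_{Hamel,\bm{\alpha}}$ is continuous as a map from $(\mathbb{X}^k,\|\cdot\|_{\mathbb{X},Hamel,k})$ to $(\mathbb{X},\|\cdot\|_{\mathbb{X},Hamel})$.

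For the consistency statement I would use that $q_{\alpha_l}(c,\ldots,c)=c$ for any $c\in\mathbb{R}$, so $\mathbf{q}_{Hamel,\bm{\alpha}}(\bm{\theta}_0,\ldots,\bm{\theta}_0)=\bm{\theta}_0$. Combined with the Lipschitz bound this gives
\begin{equation*}
\|T_{Hamel,\bm{\alpha},n,k}-\bm{\theta}_0\|_{\mathbb{X},Hamel}\le\sum_{i=1}^{k}\|Z^{(i)}_{\lfloor n/k\rfloor}-\bm{\theta}_0\|_{\mathbb{X},Hamel},
\end{equation*}
and since each summand on the right converges to $0$ in probability by Assumption \ref{A1} and $k$ is fixed, so does the sum, proving $T_{Hamel,\bm{\alpha},n,k}\stackrel{p}{\to}\bm{\theta}_0$.

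For the distributional part I would note that each $q_{\alpha_l}$ is translation equivariant and positively homogeneous, so coordinate-wise on the Hamel decomposition one obtains the joint identity $a\bigl(\mathbf{q}_{Hamel,\bm{\alpha}}(\mathbf{x}_1,\ldots,\mathbf{x}_k)-\bm{\theta}_0\bigr)=\mathbf{q}_{Hamel,\bm{\alpha}}\bigl(a(\mathbf{x}_1-\bm{\theta}_0),\ldots,a(\mathbf{x}_k-\bm{\theta}_0)\bigr)$ for $a>0$. Assumption \ref{A2} yields $a_{\lfloor n/k\rfloor}(Z^{(i)}_{\lfloor n/k\rfloor}-\bm{\theta}_0)\stackrel{d}{\to}Y_i$ for each block, and independence of the blocks upgrades this to joint convergence on $\mathbb{X}^k$ with limit $(Y_1,\ldots,Y_k)$, $Y_i\stackrel{iid}{\sim}Y$. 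Inserting $a=a_{\lfloor n/k\rfloor}$ and $\mathbf{x}_i=Z^{(i)}_{\lfloor n/k\rfloor}$ into the equivariance identity and applying the continuous mapping theorem to the continuous map $\mathbf{q}_{Hamel,\bm{\alpha}}$ then delivers $a_{\lfloor n/k\rfloor}(T_{Hamel,\bm{\alpha},n,k}-\bm{\theta}_0)\stackrel{d}{\to}\mathbf{q}_{Hamel,\bm{\alpha}}(Y_1,\ldots,Y_k)$. The main subtlety to handle carefully is that the Hamel basis may be uncountable and the supports of the realisations of the $Z^{(i)}_{\lfloor n/k\rfloor}$ and of the $Y_i$ are random and finite but not uniformly bounded in cardinality; the global, coordinate-free Lipschitz bound renders this issue moot, since continuity holds for all input configurations and no separability or measurability hypothesis beyond Assumption \ref{A2} is needed.
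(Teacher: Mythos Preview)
Your proposal is correct and follows essentially the same route as the paper: the paper's proof invokes the equivariance of the univariate quantile, the Lipschitz continuity result for $\mathbf{q}_{Hamel,\bm{\alpha}}$ (Theorem~\ref{thm-continuity-component-Hamel}, which is precisely the bound you derive inline), and the continuous mapping theorem. Your write-up simply unpacks these ingredients explicitly, including the direct Lipschitz bound for the consistency part rather than phrasing it as an application of the continuous mapping theorem to the constant limit $(\bm\theta_0,\ldots,\bm\theta_0)$.
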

	\begin{thm}\label{t-1-q-component_2}
		Let $\mathbb{X}$ be a Banach space possessing an unconditional Schauder basis. Under Assumption \ref{A1}, $T_{S,\bm{\alpha},n,k}\stackrel{p}{\to}\bm\theta_{0}$ as $n\to\infty$. Under Assumption \ref{A2}, $a_{\lfloor n/k\rfloor}(T_{S,\bm{\alpha},n,k}-\bm\theta_{0})\stackrel{d}{\to} \mathbf{q}_{S,\bm{\alpha}}(Y_1,...,Y_k)$ as $n\to\infty$, where $Y_i\stackrel{iid}{\sim}Y$, $i=1,...,k$.
	\end{thm}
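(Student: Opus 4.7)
The plan is to reduce both claims to an application of the continuous mapping theorem to the componentwise-Schauder quantile map $\mathbf{q}_{S,\bm\alpha}:\mathbb{X}^k\to\mathbb{X}$. Since the $n$ observations are i.i.d.\ and the blocks are disjoint, the block estimators $Z^{(1)}_{\lfloor n/k\rfloor},\dots,Z^{(k)}_{\lfloor n/k\rfloor}$ are independent copies of $Z_{\lfloor n/k\rfloor}$. Thus Assumption \ref{A1} yields the joint convergence in probability $(Z^{(1)}_{\lfloor n/k\rfloor},\dots,Z^{(k)}_{\lfloor n/k\rfloor})\stackrel{p}{\to}(\bm\theta_0,\dots,\bm\theta_0)$ in the product Banach space $\mathbb{X}^k$, and independence under Assumption \ref{A2} gives the joint weak convergence $(a_{\lfloor n/k\rfloor}(Z^{(i)}_{\lfloor n/k\rfloor}-\bm\theta_0))_{i=1}^{k}\stackrel{d}{\to}(Y_1,\dots,Y_k)$ with $Y_i\stackrel{iid}{\sim}Y$.

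Next, two algebraic identities are inherited coordinate by coordinate from the univariate $q_\alpha$: translation equivariance $\mathbf{q}_{S,\bm\alpha}(\mathbf{x}_1+\mathbf{c},\dots,\mathbf{x}_k+\mathbf{c})=\mathbf{c}+\mathbf{q}_{S,\bm\alpha}(\mathbf{x}_1,\dots,\mathbf{x}_k)$ and positive homogeneity $\mathbf{q}_{S,\bm\alpha}(c\mathbf{x}_1,\dots,c\mathbf{x}_k)=c\,\mathbf{q}_{S,\bm\alpha}(\mathbf{x}_1,\dots,\mathbf{x}_k)$ for $c>0$. Together they rewrite
\begin{equation*}
a_{\lfloor n/k\rfloor}\bigl(T_{S,\bm\alpha,n,k}-\bm\theta_0\bigr)=\mathbf{q}_{S,\bm\alpha}\bigl(a_{\lfloor n/k\rfloor}(Z^{(1)}_{\lfloor n/k\rfloor}-\bm\theta_0),\dots,a_{\lfloor n/k\rfloor}(Z^{(k)}_{\lfloor n/k\rfloor}-\bm\theta_0)\bigr),
\end{equation*}
and $\mathbf{q}_{S,\bm\alpha}(\mathbf{0},\dots,\mathbf{0})=\mathbf{0}$, so both conclusions follow from the continuous mapping theorem once $\mathbf{q}_{S,\bm\alpha}$ is known to be continuous on $\mathbb{X}^k$.

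The only genuinely nontrivial step is thus the continuity of $\mathbf{q}_{S,\bm\alpha}$, and it is here that the unconditional basis hypothesis is indispensable. I would combine three ingredients: (i) the univariate quantile is $1$-Lipschitz in the supremum norm, $|q_{\alpha_l}(\mathbf{x})-q_{\alpha_l}(\mathbf{x}')|\le\max_i|x_i-x'_i|$, as an average of two order statistics; (ii) the coordinate functionals of a Schauder basis are continuous linear functionals, uniformly bounded in norm by the basis constant via Banach--Steinhaus; and (iii) the unconditional basis property, which furnishes a constant $C_u$ such that whenever $|a_l|\le|b_l|$ for every $l$ and $\sum_l b_l\mathbf{d}_l$ converges in $\mathbb{X}$, the series $\sum_l a_l\mathbf{d}_l$ also converges and $\|\sum_l a_l\mathbf{d}_l\|_{\mathbb{X}}\le C_u\|\sum_l b_l\mathbf{d}_l\|_{\mathbb{X}}$. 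Applying (i) componentwise with $b^{(i)}_l=x^{(l)}_i-x'^{(l)}_i$, bounding $|q_{\alpha_l}(x^{(l)})-q_{\alpha_l}(x'^{(l)})|\le\sum_i|b^{(i)}_l|$, and invoking (iii) twice (once to absorb the absolute values, once to split the sum over $i$ via the triangle inequality) delivers the Lipschitz estimate $\|\mathbf{q}_{S,\bm\alpha}(\mathbf{x})-\mathbf{q}_{S,\bm\alpha}(\mathbf{x}')\|_{\mathbb{X}}\le C\sum_{i=1}^{k}\|\mathbf{x}_i-\mathbf{x}'_i\|_{\mathbb{X}}$, which simultaneously certifies that the defining series $\sum_l\check{x}_l\mathbf{d}_l$ converges in $\mathbb{X}$ and proves continuity. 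The passage from per-coordinate control to norm control is the main obstacle: it is exactly the failure of this lattice-type inequality in general Banach spaces (notably in $C[0,1]$ and $L_1[0,1]$) that forces the unconditional hypothesis and motivates the separate point-wise definition used later in the paper.
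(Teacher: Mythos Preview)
Your proposal is correct and follows essentially the same route as the paper: equivariance of the univariate quantile plus the continuous mapping theorem, with the substantive work deferred to the Lipschitz continuity of $\mathbf{q}_{S,\bm\alpha}$ on $\mathbb{X}^k$, which the paper isolates as Theorem~\ref{thm-continuity-component-Scahuder} and proves using exactly your ingredients (i) and (iii) (the lattice inequality for the unconditional basis constant $\mathcal{K}$, applied twice, yielding the bound $\mathcal{K}^2\sum_i\|\mathbf{x}_i-\mathbf{z}_i\|_{\mathbb{X}}$).
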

	\begin{thm}\label{t-1-q-component_3}
		Let $\mathbb{X}$ be either $L_p[0,1]$ with $1 \leq p \leq \infty$ or $\mathbb{X}=C[0,1]$ or $\mathbb{X}=D[0,1]$. Under Assumption \ref{A1}, $T_{P,\bm{\alpha},n,k}\stackrel{p}{\to}\bm\theta_{0}$ as $n\to\infty$. Under Assumption \ref{A2}, $a_{\lfloor n/k\rfloor}(T_{P,\bm{\alpha},n,k}-\bm\theta_{0})\stackrel{d}{\to} \mathbf{q}_{P,\bm{\alpha}}(Y_1,...,Y_k)$ as $n\to\infty$, where $Y_i\stackrel{iid}{\sim}Y$, $i=1,...,k$.
	\end{thm}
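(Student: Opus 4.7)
My plan is to use Lipschitz continuity of the pointwise quantile map $\mathbf{q}_{P,\bm\alpha}:\mathbb{X}^k\to\mathbb{X}$, combined with independence of the block estimators and the continuous mapping theorem, following the same template as Theorems \ref{t-1-q-component_1} and \ref{t-1-q-component_2}. The only genuinely new ingredient is that $\mathbf{q}_{P,\bm\alpha}$ is defined pointwise rather than through a basis expansion, so the continuity estimate has to be worked out separately for each of the four norms in play.

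For the continuity step, each order statistic $\tilde x_{(j)}:\mathbb{R}^k\to\mathbb{R}$ satisfies $|\tilde x_{(j)}-\tilde y_{(j)}|\le\max_i|x_i-y_i|$, so the univariate quantile $q_\alpha$ is $1$-Lipschitz in the $\ell_\infty$ metric. Applying this pointwise yields
\begin{equation*}
\bigl|\mathbf{q}_{P,\bm\alpha}(\mathbf{f}_1,\dots,\mathbf{f}_k)(x)-\mathbf{q}_{P,\bm\alpha}(\mathbf{g}_1,\dots,\mathbf{g}_k)(x)\bigr|\le\max_{i}\bigl|\mathbf{f}_i(x)-\mathbf{g}_i(x)\bigr|.
\end{equation*}
Taking the (essential) supremum in $x$ dispatches $\mathbb{X}\in\{C[0,1],D[0,1],L_\infty[0,1]\}$ immediately, while for $\mathbb{X}=L_p[0,1]$ with $1\le p<\infty$ integrating to the $p$-th power and using $\max_i|a_i|^p\le\sum_i|a_i|^p$ gives $\|\mathbf{q}_{P,\bm\alpha}(\mathbf{f})-\mathbf{q}_{P,\bm\alpha}(\mathbf{g})\|_\mathbb{X}\le\sum_{i=1}^k\|\mathbf{f}_i-\mathbf{g}_i\|_\mathbb{X}$. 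Membership of the output in $\mathbb{X}$ is supplied by the existence results of Section \ref{Sec-Continuity}, so $\mathbf{q}_{P,\bm\alpha}$ is a well-defined Lipschitz map on $\mathbb{X}^k$.

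Since the $k$ blocks are disjoint portions of the i.i.d.\ sample, $Z^{(1)}_{\lfloor n/k\rfloor},\dots,Z^{(k)}_{\lfloor n/k\rfloor}$ are independent copies of $Z_{\lfloor n/k\rfloor}$. Under Assumption \ref{A1} this gives joint convergence in probability to $(\bm\theta_0,\dots,\bm\theta_0)$ in $\mathbb{X}^k$, and since $\mathbf{q}_{P,\bm\alpha}(\bm\theta_0,\dots,\bm\theta_0)=\bm\theta_0$ (the identity $q_\alpha(c,\dots,c)=c$ passes through pointwise), the Lipschitz bound yields $T_{P,\bm\alpha,n,k}\stackrel{p}{\to}\bm\theta_0$. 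Under Assumption \ref{A2}, independence upgrades the marginal weak convergence to the joint statement $(a_{\lfloor n/k\rfloor}(Z^{(i)}_{\lfloor n/k\rfloor}-\bm\theta_0))_{i=1}^k\stackrel{d}{\to}(Y_1,\dots,Y_k)$ in $\mathbb{X}^k$ with i.i.d.\ $Y_i\sim Y$. The translation and positive-scale equivariance of $q_\alpha$ carry through pointwise to $\mathbf{q}_{P,\bm\alpha}$, so
\begin{equation*}
a_{\lfloor n/k\rfloor}(T_{P,\bm\alpha,n,k}-\bm\theta_0)=\mathbf{q}_{P,\bm\alpha}\bigl(a_{\lfloor n/k\rfloor}(Z^{(1)}_{\lfloor n/k\rfloor}-\bm\theta_0),\dots,a_{\lfloor n/k\rfloor}(Z^{(k)}_{\lfloor n/k\rfloor}-\bm\theta_0)\bigr),
\end{equation*}
and the continuous mapping theorem, applied via the Lipschitz (hence continuous) map $\mathbf{q}_{P,\bm\alpha}$, delivers the desired weak limit $\mathbf{q}_{P,\bm\alpha}(Y_1,\dots,Y_k)$.

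The principal obstacle sits inside the Lipschitz step: one must verify that the pointwise quantile of elements of $C[0,1]$ or $D[0,1]$ really remains continuous (respectively càdlàg), which is subtle when $\alpha_x$ varies over $x$ and may cross the discontinuity levels $j/k$ of the univariate $q_\alpha$. I would rely on the existence lemmas of Section \ref{Sec-Continuity} for this; without them the statement is not even well-posed. Once those are granted, the Lipschitz inequalities and the continuous mapping argument above are essentially routine.
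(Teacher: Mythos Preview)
Your proposal is correct and follows essentially the same route as the paper: the paper's proof is a one-line appeal to the equivariance of $q_\alpha$, the Lipschitz continuity of $\mathbf{q}_{P,\bm\alpha}$ established in Theorem~\ref{thm-continuity-component-point}, and the continuous mapping theorem. You have simply unpacked the Lipschitz step (which is the content of Lemma~\ref{lem-con-quant} and Theorem~\ref{thm-continuity-component-point}) and the equivariance identity explicitly, and your flag about well-posedness in $C[0,1]$ and $D[0,1]$ when $\alpha_x$ varies is exactly what the existence part of Theorem~\ref{thm-continuity-component-point} is meant to cover.
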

	We now consider $k$ increasing with $n$. We focus first on the case $\mathbb{X}$ being finite dimensional. In this case the three definitions of component-wise quantiles coincide (see Remark \ref{rem-definitions}) and without loss of generality we use the notation $T_{P,\bm{\alpha},n,k_n}$. In the next result we investigate the asymptotic behavior in probability of $T_{P,\bm{\alpha},n,k_n}$ and its robustness to contaminated data, of size $l_n$.
	\begin{pro}\label{t-k-to-infinity-component}
		Let $\mathbb{X}=\mathbb{R}^d$, $d\in\mathbb{N}$. Under Assumption \ref{A1}, $T_{P,\bm{\alpha},n,k_n}\stackrel{p}{\to}\bm\theta_{0}$ as $n\to\infty$ for any $l_n$ and $k_n=o(n)$ such that $\lim\limits_{n\to\infty}l_n/k_n=0$.	
	\end{pro}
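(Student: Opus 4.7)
Since $\mathbb{X}=\mathbb{R}^d$, Remark~\ref{rem-definitions} shows that $T_{P,\bm{\alpha},n,k_n}$ has $l$-th coordinate equal to the univariate $\alpha_l$-quantile of the $l$-th coordinates of the $k_n$ block estimates. Because convergence in probability on $\mathbb{R}^d$ is coordinatewise, the plan is to prove the result one coordinate at a time: fix $l\in\{1,\ldots,d\}$, set $\alpha:=\alpha_l\in(0,1)$, let $\theta$ be the $l$-th coordinate of $\bm\theta_0$, and fix $\epsilon>0$.

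Using the definition $q_\alpha(\mathbf{x})=\tfrac{1}{2}(\tilde x_{\lceil k_n\alpha\rceil}+\tilde x_{\lfloor k_n\alpha+1\rfloor})$, a sufficient event for $q_\alpha\in[\theta-\epsilon,\theta+\epsilon]$ is that fewer than $\lceil k_n\alpha\rceil$ block coordinates lie below $\theta-\epsilon$ and fewer than $k_n-\lfloor k_n\alpha+1\rfloor+1$ block coordinates lie above $\theta+\epsilon$. For $\alpha\in(0,1)$, both thresholds exceed $c_\alpha k_n$ for some $c_\alpha>0$ and all $k_n$ large, so it suffices to show $B_n/k_n\stackrel{p}{\to}0$, where $B_n$ counts the blocks $i$ with $\|\tilde Z^{(i)}_{\lfloor n/k_n\rfloor}-\bm\theta_0\|>\epsilon$ (this random variable dominates both bad counts, coordinatewise). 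If $k_n$ is bounded along a subsequence, then $l_n/k_n\to 0$ forces $l_n=0$ eventually along that subsequence, so Theorem~\ref{t-1-q-component_3} applies directly there.

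To control $B_n$, I would split into contributions from contaminated and uncontaminated blocks. In the worst case (Remark immediately following Remark~\ref{rem-uniquely-defined}) there are exactly $l_n$ contaminated blocks, each contributing at most $1$ to $B_n$. The remaining $k_n-l_n$ uncontaminated blocks are constructed from disjoint subsamples of the i.i.d. data, so their block estimators are independent copies of $Z_{\lfloor n/k_n\rfloor}$; because $k_n=o(n)$ gives $\lfloor n/k_n\rfloor\to\infty$, Assumption~\ref{A1} yields $p_n:=P(\|Z_{\lfloor n/k_n\rfloor}-\bm\theta_0\|>\epsilon)\to 0$. Therefore $E[B_n]\le l_n+(k_n-l_n)p_n=o(k_n)$ since $l_n/k_n\to 0$ and $p_n\to 0$, and Markov's inequality gives $B_n/k_n\stackrel{p}{\to}0$.

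The only mildly delicate ingredient is the floor/ceiling reduction from the event $\{q_{\alpha_l}\in[\theta-\epsilon,\theta+\epsilon]\}$ to the clean count $B_n$; once that is in hand the argument collapses to a one-line mean bound plus Markov, and no independence between coordinates of $Z_n$ is used. Assembling the coordinatewise convergences yields $T_{P,\bm{\alpha},n,k_n}\stackrel{p}{\to}\bm\theta_{0}$.
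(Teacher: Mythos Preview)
Your proposal is correct and follows essentially the same route as the paper: reduce to one coordinate, count ``bad'' blocks (those whose block estimate misses $\theta$ by more than $\epsilon$), bound the contaminated contribution by $l_n$ and the uncontaminated contribution by $(k_n-l_n)p_n$ with $p_n\to 0$, then apply a concentration argument. The paper's proof (deferred to the first-statement argument of Theorem~\ref{t-k-to-infinity-multivariate-robust}) uses Hoeffding's inequality on the indicator sum, whereas you use a first-moment bound plus Markov; your choice is more elementary and fully sufficient here since only $B_n/k_n\stackrel{p}{\to}0$ is needed, not an exponential tail. Your side remark about bounded $k_n$ subsequences is harmless but unnecessary: the Markov argument already covers that case because $E[B_n]/k_n\le l_n/k_n+p_n\to 0$ uses only $k_n=o(n)$ and $l_n/k_n\to 0$, not $k_n\to\infty$.
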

	
	Since we are in $\mathbb{R}^d$ we have $\bm\theta_{0}=(\theta_{0,1},...,\theta_{0,d})$ and similarly $Z^{(1)}_{\lfloor n/k\rfloor}=(Z^{(1)}_{\lfloor n/k\rfloor,1},...,Z^{(1)}_{\lfloor n/k\rfloor,d})$ and $Y=(Y_1,...,Y_d)$. Let $F_i$ and $f_i$ be the distribution and the density (if it exists) of $Y_i$, for $i=1,..,d$. Further, let $\Sigma_{\bm \alpha}$ be the $d\times d$ matrix with $(i,j)$ element
	\begin{equation*}
		\frac{\mathbb{P}\{Y_{i}> F_{i}^{-1}(\alpha_i) ,Y_{j}> F_{j}^{-1}(\alpha_j) \}-(1-\alpha_i)(1-\alpha_j)}{f_i(F_i^{-1}(\alpha_i))f_j(F_j^{-1}(\alpha_j))}.
	\end{equation*}
	For $i=1,..,d$ let $F_{n,i}$ be the cdf of $a_{\lfloor n/cn^\beta\rfloor}(Z^{(1)}_{\lfloor n/cn^\beta\rfloor,i}-\theta_{0,i})$ and let 
	\begin{equation}\label{beta^*}
		\beta_i^{*}=\sup\{\beta\in(0,1):\lim\limits_{n\to\infty} \sqrt{ n^\beta}(F_i(x)-F_{n,i}(x))=0,\textnormal{for $x=F_i^{-1}(\alpha_i)+\frac{y}{\sqrt{\lfloor n^\beta\rfloor}}$ and $y\in\mathbb{R}$}\}.
	\end{equation} 
	\begin{thm}\label{t-k-to-infinity-multivariate-robust}
		Let $\mathbb{X}=\mathbb{R}^d$, $d\in\mathbb{N}$. Under Assumption \ref{A2} if $f_i$ is continuous and strictly positive at $F_i^{-1}(\alpha_i)$, then
		\begin{equation*}
			a_{\lfloor n/cn^\beta\rfloor}(T_{P,\bm{\alpha},n,cn^\beta}-\bm\theta_{0})\stackrel{p}{\to} (F_1^{-1}(\alpha_1),...,F_d^{-1}(\alpha_d)),
		\end{equation*}
		as $n\to\infty$ for any $l_n=O(n^{\gamma})$, $c>0$, and $\beta\in(0,1)$ such that $\beta>\gamma$. Moreover, 
		\begin{equation*}
			\sqrt{cn^{\beta}}(a_{\lfloor n/cn^\beta\rfloor}(T_{P,\bm{\alpha},n,cn^\beta}-\bm\theta_{0})-(F_1^{-1}(\alpha_1),...,F_d^{-1}(\alpha_d)))\stackrel{d}{\to} N(\mathbf{0},\Sigma_{\bm \alpha})
		\end{equation*}
		as $n\to\infty$, for any $l_n=O(n^{\gamma})$, $c>0$, and $\beta\in(2\gamma,\max_{i=1,..,d}\beta_i^{*})$.
	\end{thm}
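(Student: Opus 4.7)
The plan is to reduce the problem to a componentwise sample-quantile limit theorem for a triangular array, and to bound separately the effect of the contamination. Writing $k_n = c n^\beta$ and $m_n = \lfloor n/k_n\rfloor$, I use the translation and positive-scale equivariance of the univariate sample quantile to set
\begin{equation*}
V^{(j)}_{n,i} \;:=\; a_{m_n}\bigl(\tilde Z^{(j)}_{m_n,i}-\theta_{0,i}\bigr),\qquad
U^{(j)}_{n,i} \;:=\; a_{m_n}\bigl(Z^{(j)}_{m_n,i}-\theta_{0,i}\bigr),
\end{equation*}
so that $a_{m_n}\bigl(T_{P,\bm\alpha,n,k_n,i}-\theta_{0,i}\bigr)=q_{\alpha_i}(V^{(1)}_{n,i},\ldots,V^{(k_n)}_{n,i})$, and the $U^{(j)}_n=(U^{(j)}_{n,1},\ldots,U^{(j)}_{n,d})$, $j=1,\ldots,k_n$, are i.i.d.~with joint cdf $F_n$ whose $i$-th marginal is $F_{n,i}$, and at most $l_n$ of the blocks differ between $U$ and $V$ (by the worst-case convention of the paper's remark).

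The first step is a deterministic contamination bound. If $U^{(1)}_{n,i},\ldots,U^{(k_n)}_{n,i}$ have ordered values $U_{(1)}\le\ldots\le U_{(k_n)}$ and at most $l_n$ entries are altered, then
\begin{equation*}
q_{\alpha_i}(V^{(\cdot)}_{n,i}) \in \bigl[U_{(\lceil k_n\alpha_i\rceil-l_n)},\;U_{(\lfloor k_n\alpha_i\rfloor+1+l_n)}\bigr].
\end{equation*}
Since $l_n/k_n=O(n^{\gamma-\beta})\to 0$, the classical Glivenko--Cantelli argument for triangular arrays (which applies because $F_{n,i}\Rightarrow F_i$ by Assumption \ref{A2}, and $F_i$ is continuous and strictly increasing at $F_i^{-1}(\alpha_i)$) gives that both endpoints tend in probability to $F_i^{-1}(\alpha_i)$. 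This yields the consistency statement for $\beta>\gamma$.

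The second step is a Bahadur-type linear representation for the uncontaminated componentwise sample quantile of the triangular array, component by component:
\begin{equation*}
q_{\alpha_i}(U^{(\cdot)}_{n,i}) - F_{n,i}^{-1}(\alpha_i)
\;=\; \frac{1}{f_i\bigl(F_i^{-1}(\alpha_i)\bigr)}\cdot\frac{1}{k_n}\sum_{j=1}^{k_n}\Bigl(\alpha_i-\mathbb{I}\{U^{(j)}_{n,i}\le F_{n,i}^{-1}(\alpha_i)\}\Bigr)+o_p\bigl(k_n^{-1/2}\bigr).
\end{equation*}
The remainder is handled by a standard empirical-process/monotonicity argument (Kiefer-type bound applied to the monotone map $t\mapsto\hat F_{k_n,i}(F_{n,i}^{-1}(\alpha_i)+t/\sqrt{k_n})$), using only weak convergence of $F_{n,i}$ to $F_i$ on a neighbourhood of $F_i^{-1}(\alpha_i)$ together with continuity and strict positivity of $f_i$ there; this convergence is guaranteed by Assumption \ref{A2}. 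The multivariate CLT applied to the mean-zero, bounded, i.i.d.~vectors $\bigl(\alpha_i-\mathbb{I}\{U^{(j)}_{n,i}\le F_{n,i}^{-1}(\alpha_i)\}\bigr)_{i=1}^d$ then yields joint asymptotic normality; computing covariances using $\mathbb{P}\{U^{(j)}_{n,i}>F_{n,i}^{-1}(\alpha_i),\ U^{(j)}_{n,l}>F_{n,l}^{-1}(\alpha_l)\}\to \mathbb{P}\{Y_i>F_i^{-1}(\alpha_i),\,Y_l>F_l^{-1}(\alpha_l)\}$ produces exactly $\Sigma_{\bm\alpha}$.

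Two error terms must be absorbed at the $\sqrt{k_n}$ scale. First, the shift from $F_{n,i}^{-1}(\alpha_i)$ to $F_i^{-1}(\alpha_i)$: by the definition of $\beta_i^*$ in (\ref{beta^*}) and the implicit function $F_i^{-1}\circ F_{n,i}$ near the quantile (continuity and positivity of $f_i$), $\beta<\beta_i^*$ yields $\sqrt{k_n}\bigl(F_{n,i}^{-1}(\alpha_i)-F_i^{-1}(\alpha_i)\bigr)\to 0$. Second, the contamination: the deterministic bound above shows it perturbs $q_{\alpha_i}(V^{(\cdot)}_{n,i})$ by at most $O_p(l_n/k_n)=O_p(n^{\gamma-\beta})$, so $\sqrt{k_n}$ times this error is $O_p(n^{\gamma-\beta/2})=o_p(1)$ precisely when $\beta>2\gamma$. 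Combining these gives the stated joint Gaussian limit.

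The main obstacle is the Bahadur remainder in step two, since the underlying law $F_{n,i}$ is changing with $n$; one cannot quote the standard Bahadur theorem off-the-shelf. The way around it is to observe that only an $o_p(k_n^{-1/2})$ rate is needed, which follows from a local uniform Donsker-type bound on $\sqrt{k_n}(\hat F_{k_n,i}-F_{n,i})$ in a $k_n^{-1/2}$-neighbourhood of $F_i^{-1}(\alpha_i)$; this holds as soon as $F_{n,i}\to F_i$ uniformly on such a neighbourhood, which is precisely what $\beta<\beta_i^*$ guarantees.
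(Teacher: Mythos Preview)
Your argument is essentially correct, but it takes a different route from the paper. The paper never invokes a Bahadur representation. Instead, it works directly with the event inversion $\{q_{\alpha_i}(\tilde W^{(\cdot)}_{n,i})\le x_i\}$, sandwiching this event between
\[
\Bigl\{\textstyle\sum_{j}\mathbf 1_{\{W^{(j)}_{n,i}>x_i\}}\le k_n(1-\alpha_i)-1-l_n\Bigr\}
\quad\text{and}\quad
\Bigl\{\textstyle\sum_{j}\mathbf 1_{\{W^{(j)}_{n,i}>x_i\}}\le k_n(1-\alpha_i)+l_n\Bigr\},
\]
jointly in $i=1,\dots,d$, and then applies the multivariate CLT to the indicator vectors $\bigl(\mathbf 1_{\{W^{(j)}_{n,i}>x_i\}}\bigr)_i$ at the moving point $x_i=F_i^{-1}(\alpha_i)+y_i/\sqrt{k_n}$. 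The contamination enters only as the additive $\pm l_n$ in the count thresholds, and the condition $\beta>2\gamma$ is exactly what makes $l_n/\sqrt{k_n}\to 0$ so that both sandwich bounds have the same Gaussian limit; the condition $\beta<\beta_i^*$ is used in the form $\sqrt{k_n}\bigl(F_i(x_i)-F_{n,i}(x_i)\bigr)\to 0$, which is literally the defining property in (\ref{beta^*}), without ever passing to $F_{n,i}^{-1}$.

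Your Bahadur route is more modular and perhaps more familiar, but it costs you the triangular-array remainder control that you flag yourself, as well as the implicit-function step $\sqrt{k_n}\bigl(F_{n,i}^{-1}(\alpha_i)-F_i^{-1}(\alpha_i)\bigr)\to 0$; note that the definition of $\beta_i^*$ gives you pointwise convergence of $\sqrt{k_n}(F_i-F_{n,i})$ at the moving points, not uniform convergence on a neighbourhood, so you would need to invoke monotonicity to upgrade. The paper's inversion-and-sandwich argument avoids both of these issues entirely and is shorter; your approach, on the other hand, separates the three error sources (Bahadur remainder, centering shift, contamination) cleanly and would generalise more readily if one wanted a Bahadur-type expansion for $T_{P,\bm\alpha,n,k_n}$ itself rather than just its limit law.
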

	
	\begin{rem}
		$\beta^*$ is a measure of how fast the asymptotic distribution of the initial estimator converges. The level of contamination $\gamma$ must be less than half of $\beta^*$. If this happens, then we can always find a growth rate for the number of blocks ($\beta$) which allows for this level of contamination. It is possible to see that we can always allow for an increasing number of contaminated data, even for the second statement of Theorem \ref{t-k-to-infinity-multivariate-robust}. This is because $\beta^*$ will always be strictly greater than zero simply because our initial estimator has an asymptotic distribution to which it is converging. The faster the initial estimator is converging, the more contaminated data we can allow for in our dataset.
	\end{rem}
	
	From the above result we have the following robustness property of the asymptotic distribution of the QoE estimator. Consider $Y$ to be such that $(F_1^{-1}(\alpha_1),...,F_d^{-1}(\alpha_d))=\mathbf{0}$, for some $\alpha_1,...,\alpha_d\in(0,1)$, and $a_{n}$ to be equal to $\sqrt{n}$. This happens in the typical case of asymptotically normal estimators, where the distribution of $Y$ is symmetric around zero, $(F_1^{-1}(1/2),...,F_d^{-1}(1/2))=\mathbf{0}$, and $a_{n}=\sqrt{n}$.
	\begin{co}\label{co-robust-component}
		Let $\mathbb{X}=\mathbb{R}^d$, $d\in\mathbb{N}$. Under Assumption \ref{A2} hold, if $(F_1^{-1}(\alpha_1),...,$ $F_d^{-1}(\alpha_d))=\mathbf{0}$, for some $\alpha_1,...,\alpha_d\in(0,1)$ and $f_i$ is continuous and strictly positive at $F_i^{-1}(\alpha_i)$, then
		\begin{equation*}
			\sqrt{cn^{\beta}}a_{\lfloor n/cn^\beta\rfloor}(T_{P,\bm{\alpha},n,cn^\beta}-\bm \theta_{0})\stackrel{d}{\to} N(\mathbf{0},\Sigma_{\bm \alpha})
		\end{equation*}
		as $n\to\infty$, and if $a_{n}=\sqrt{n}$ then
		\begin{equation*}
			\sqrt{n}(T_{P,\bm{\alpha},n,cn^\beta}-\bm \theta_{0})\stackrel{d}{\to} N(\mathbf{0},\Sigma_{\bm \alpha}),
		\end{equation*}
		for any $l_n=O(n^{\gamma})$, $c>0$, and $\beta\in(2\gamma,\max_{i=1,..,d}\beta_i^{*})$.
	\end{co}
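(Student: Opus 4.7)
The plan is to derive this corollary as an essentially immediate consequence of Theorem \ref{t-k-to-infinity-multivariate-robust}, since all the substantive work (handling of the contamination, the number-of-blocks rate $\beta$, the Bahadur-type representation implicit in $\Sigma_{\bm\alpha}$) is already carried out there. What the corollary adds is (a) specialising the centering $(F_1^{-1}(\alpha_1),\dots,F_d^{-1}(\alpha_d))$ to zero and (b) rewriting the normalisation in the particularly important case $a_n=\sqrt{n}$.

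First, I would invoke the second conclusion of Theorem \ref{t-k-to-infinity-multivariate-robust} directly. Under Assumption \ref{A2} with $f_i$ continuous and strictly positive at $F_i^{-1}(\alpha_i)$, that theorem gives
\begin{equation*}
\sqrt{cn^{\beta}}\bigl(a_{\lfloor n/cn^\beta\rfloor}(T_{P,\bm{\alpha},n,cn^\beta}-\bm\theta_{0})-(F_1^{-1}(\alpha_1),\dots,F_d^{-1}(\alpha_d))\bigr)\stackrel{d}{\to} N(\mathbf{0},\Sigma_{\bm \alpha}),
\end{equation*}
for the prescribed range of $\beta$ and $l_n$. Substituting the hypothesis $(F_1^{-1}(\alpha_1),\dots,F_d^{-1}(\alpha_d))=\mathbf{0}$ removes the centering term and immediately yields the first displayed conclusion of the corollary.

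Second, to obtain the second displayed conclusion, I specialise to $a_n=\sqrt{n}$ and rewrite the normalising factor. Note that
\begin{equation*}
\sqrt{cn^{\beta}}\,a_{\lfloor n/cn^\beta\rfloor}=\sqrt{cn^{\beta}\lfloor n/cn^\beta\rfloor},
\end{equation*}
and, since $cn^\beta\to\infty$ with $\beta\in(0,1)$, we have $cn^\beta\lfloor n/cn^\beta\rfloor/n\to 1$ as $n\to\infty$. Hence $\sqrt{cn^{\beta}}\,a_{\lfloor n/cn^\beta\rfloor}/\sqrt{n}\to 1$. Applying Slutsky's theorem to the first displayed conclusion (already established in the previous step) then gives
\begin{equation*}
\sqrt{n}(T_{P,\bm{\alpha},n,cn^\beta}-\bm\theta_{0})\stackrel{d}{\to} N(\mathbf{0},\Sigma_{\bm\alpha}),
\end{equation*}
under the same conditions on $l_n$, $c$, and $\beta$, completing the proof.

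There is no genuine obstacle here; the only thing to be careful about is handling the floor function $\lfloor n/cn^\beta\rfloor$ cleanly when verifying the ratio $\sqrt{cn^{\beta}}\,a_{\lfloor n/cn^\beta\rfloor}/\sqrt{n}\to 1$, and confirming that the range $\beta\in(2\gamma,\max_i\beta_i^{*})$ imposed by Theorem \ref{t-k-to-infinity-multivariate-robust} indeed transfers verbatim to both conclusions of the corollary (which it does, since the only modification is multiplication by a deterministic scalar converging to $1$).
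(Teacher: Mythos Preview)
Your proposal is correct and matches the paper's own proof, which simply states that the result follows from Theorem \ref{t-k-to-infinity-multivariate-robust}. Your additional care with the Slutsky argument for the $a_n=\sqrt{n}$ case is a reasonable elaboration of what the paper leaves implicit.
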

	
	\begin{exmp}[Sample mean]
		Consider the case of estimating the mean of a population and the initial estimator is the sample mean, \textit{i.e.}~$Z_n$ is the sample mean. If observations have finite variance, then by the central limit theorem the distribution of $Y$ is normal, $a_{n}=\sqrt{n}$, and, if focusing on the median, $(F_1^{-1}(\alpha_1),...,$ $F_d^{-1}(\alpha_d))=\mathbf{0}$. Thus, Corollary \ref{co-robust-component} applies. In addition, if the observations have finite third moment then by the Berry-Esseen theorem $\max_{i=1,..,d}\beta_i^{*}=1/2$ and so the allowed number of contaminated data is $l_n=o(n^{1/4})$.
	\end{exmp}
	\begin{exmp}[$U$-statistics]
		Let $X_1,...,X_n$ be i.i.d.~random variables and consider the case of estimating $\mathbb{E}[h(X_1,...,X_r)]$ for some $r\in\mathbb{N}$ and some function $h$ using a $U$-statistic. Assume that its asymptotic distribution has a continuous and strictly positive density at $F_i^{-1}(\alpha_i)$ and that $(F_1^{-1}(\alpha_1),...,$ $F_d^{-1}(\alpha_d))=\mathbf{0}$. For example, this is the case when $h$ is symmetric. In this case we have asymptotic normality of the distribution of the $U$-statistic with $a_n=\sqrt{n}$. Then, by \cite{U-Statistics} the allowed number of contaminated data for the asymptotic distribution is $l_n=o(n^{1/4})$.
	\end{exmp}
	\begin{exmp}[Settings where Berry-Esseen type results apply]
		If Berry-Esseen type results are known then our results apply. Since the literature on Berry-Esseen type results is huge, we just mention three very recent examples: random forests and other supervised learning ensembles (\cite{Random-forest}), nonlinear statistics related to $M$-estimators and stochastic gradient descent algorithms (\cite{Shao}), and Efron’s bootstrap (\cite{Mayya-Zhilova}).
	\end{exmp}
	\begin{exmp}
		[MLE]  Our procedure can also be used in the case the initial estimator is the maximum likelihood estimator (MLE). This is an interesting example because contamination can easily destroy the asymptotic properties of the MLE. The growth rate of the contamination, $\gamma$, will then depend on the speed of convergence of the asymptotic distribution of the MLE in the non-contaminated setting.
	\end{exmp}
	\begin{exmp}
		[Linear regression models]  Consider the linear regression model $y_i=\mathbf{x}_i\bm{\beta}+u_i$, $i=1,...,n$, where $\mathbf{x}$ has dimension $p\in\mathbb{N}$, and assume as usual $\mathbb{E}[\mathbf{x}^{\top}u]=0$ and $\textnormal{rank}\,\, \mathbb{E}[\mathbf{x}^{\top}\mathbf{x}]=p$. Here we use small letters for $y$, $\mathbf{x}$, and $u$ even if they are random variables for consistency with the notation of the literature. For simplicity we focus on the homeoskedastic case, \textit{i.e.}~$\mathbb{E}[u^2\mathbf{x}^{\top}\mathbf{x}]=\sigma^{2}\mathbb{E}[\mathbf{x}^{\top}\mathbf{x}]$, where $\sigma^2:=\mathbb{E}[u^2]$, but the results hold without this assumption. Then, using the OLS estimator as initial estimator and focusing on the median (\textit{i.e.}~$\bm\alpha=\mathbf{0.5}=(0.5,...,0.5)$) we have that, as $n\to\infty$,
		\begin{equation*}
			\sqrt{n}(T_{P,\mathbf{0.5},n,cn^\beta}-\bm \beta)\stackrel{d}{\to} N(\mathbf{0},\Gamma),
		\end{equation*}
		where $\Gamma$ is a $p\times p$ matrix with $(i,j)$ element
		\begin{equation*} \label{eq-Gamma}
			2\pi\sqrt{\mathbb{E}[x_i^2]\mathbb{E}[x_j^2]}(\mathbb{P}\{Y_{i}> 0 ,Y_{j}> 0 \}-1/4)
		\end{equation*}
		and the $(i,i)$ element reduces to 
		\begin{equation*}
			\pi\sigma^2\mathbb{E}[x_i^2],
		\end{equation*}
		where $x_i$ is the $i$-th element of $\mathbf{x}$ and in \eqref{eq-Gamma}, the vector $(Y_1,...,Y_p)\sim N(0,\sigma^2\mathbb{E}[\mathbf{x}^{\top}\mathbf{x}])$. This convergence holds for any $\beta$ and $\gamma$ such that $\beta\in(2\gamma, \max_{i=1,..,d}\beta_i^{*})$.  If the data have finite third moment then by the Berry-Esseen theorem $\max_{i=1,..,d}\beta_i^{*}=1/2$, and the contaminated data $l_n$ccan grow at order $o(n^{1/4})$. This is a remarkable result given how easy it is to implement our procedure, and given that the quantile regression estimator is not robust to contamination. 
	\end{exmp}
	\begin{exmp}[Quantile regression] As we will discuss in Section \ref{SubSec-QoE-quantile-regression} the quantile regression estimator is not robust to contamination. However, we can robustify it, %using our procedure. In particular, consider $n$ i.i.d.~observations $(y_1,\mathbf{x}_1),...,(y_n,\mathbf{x}_n)$.
		as the Bahadur representation of the quantile regression estimator shows that it is a sample mean of i.i.d.~observations, and so by the Berry-Esseen theorem Corollary \ref{co-robust-component} applies with $l_n=o(n^{1/4})$. 
	\end{exmp}
	
	\begin{rem}
		In Huber's contamination model the contamination level is of order $l_n=\varepsilon n$ for some small $\varepsilon>0$. However, the distribution of the contaminated observations has to be symmetric and asymptotic normality of the $M$-estimator depends on that distribution. In contrast, in our setting we allow for any kind of contamination and almost any estimator, not just a subclass of $M$-estimators, and the asymptotic normality of the QoE estimator does not depend on the contamination.
	\end{rem}
	
	We conclude this section with a discussion on the asymptotic behavior of $T_{P,\bm{\alpha},n,cn^{\beta}}$ in the infinite-dimensional setting. From Theorem \ref{t-k-to-infinity-multivariate-robust} we have convergence in finite-dimensional distributions (fdd), see the first part of the next result. To obtain convergence in distribution in these spaces we need convergence in fdd and tightness. However, there are cases where just convergence in fdd implies convergence in distributions. We show this in the second part of the next result.
	
	Recall that $T_{P,\bm{\alpha},n,cn^{\beta}}$ is the point-wise component-wise quantile in functions spaces. Let $Y=(Y_t)_{t\in[0,1]}$ ,and let $F_t$ and $f_t$ be the distribution and the density (if exists) of $Y_t$. Further, let $\mathbf{F}^{-1}(\bm{\alpha}):=(F^{-1}_{t}(\alpha_{t}))_{t\in[0,1]}$ and let $B_{\bm\alpha}$ be a mean zero Gaussian process such that $(B_{t_1,\alpha_{t_1}},...,B_{t_r,\alpha_{t_r}})$ has covariance matrix
	\begin{equation*}
		\Bigg(\frac{\mathbb{P}(Y_{t_i}> F_{t_i}^{-1}(\alpha_{t_i}) ,Y_{t_j}> F_{t_j}^{-1}(\alpha_{t_j}) )-(1-\alpha_{t_i})(1-\alpha_{t_j})}{f_{t_i}(F_{t_i}^{-1}(\alpha_{t_i}))f_{t_j}(F_{t_j}^{-1}(\alpha_{t_j}))}\Bigg)_{i,j=1,...,r}.
	\end{equation*}
	and for any $t\in[0,T]$ we let $\beta_{t}^{*}$ be defined as $\beta^{*}_i$ in (\ref{beta^*}) with $F_t$ and $\alpha_{t}$ replacing $F_i$ and $\alpha_i$.
	\begin{pro}\label{pro-fdd}
		Let $\mathbb{X}$ be either $L_p[0,T]$ with $1 \leq p \leq \infty$ or $\mathbb{X}=C[0,T]$ or $\mathbb{X}=D[0,T]$. Let Assumption \ref{A2} hold. Assume that $f_{t}$ is continuous and strictly positive at $F_{t}^{-1}(\alpha_{t})$, for every $t\in[0,1]$. Then, 
		\begin{equation*}
			\sqrt{cn^{\beta}}(a_{\lfloor n/cn^\beta\rfloor}(T_{P,\bm{\alpha},n,cn^\beta}-\bm\theta_{0})-\mathbf{F}^{-1}(\bm{\alpha}))\stackrel{fdd}{\to} B_{\bm\alpha},
		\end{equation*}
		as $n\to\infty$, and if $a_{n}=\sqrt{n}$ and $F_{t}^{-1}(\alpha_{t})=0$ for every $t\in[0,1]$, then
		\begin{equation}\label{limit-fdd}
			\sqrt{n}(T_{P,\bm{\alpha},n,cn^\beta}-\bm\theta_{0})\stackrel{fdd}{\to} B_{\bm\alpha},
		\end{equation}
		for any $l_n=O(n^{\gamma})$, $c>0$, and $\beta\in(2\gamma,\sup_{t}\beta^{*}_{t})$. Moreover, if 
		\begin{equation*}
			\sup_{n\in\mathbb{N}}\mathbb{E}\left[\left\|\sqrt{cn^{\beta}}(a_{\lfloor n/cn^\beta\rfloor}(T_{P,\bm{\alpha},n,cn^\beta}-\bm\theta_{0})-\mathbf{F}^{-1}(\bm{\alpha}))\right\|^{p}_{L^{p}[0,T]}\right]<\infty
		\end{equation*}
		for some $p\in(1,\infty)$, then
		\begin{equation*}
			\sqrt{cn^{\beta}}(a_{\lfloor n/cn^\beta\rfloor}(T_{P,\bm{\alpha},n,cn^\beta}-\bm\theta_{0})-\mathbf{F}^{-1}(\bm{\alpha}))\stackrel{d}{\to} B_{\bm\alpha},
		\end{equation*}
		as $n\to\infty$ in $L^r[0,T]$, for any $r\in[1,p)$, $l_n=O(n^{\gamma})$, $c>0$, and $\beta\in(2\gamma,\sup_{t}\beta^{*}_{t})$.
	\end{pro}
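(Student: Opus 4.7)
The plan is in two parts: (i) reduce the fdd statement to an application of Theorem~\ref{t-k-to-infinity-multivariate-robust} via the coordinate-wise structure of the point-wise quantile, and (ii) upgrade fdd convergence to distributional convergence in $L^r[0,T]$ using the uniform $L^p$ moment bound.

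For (i), I fix an arbitrary finite set $\{t_1,\ldots,t_r\}\subset[0,T]$. Because $\mathbf{q}_{P,\bm\alpha}$ is defined coordinate-wise in $t$, the evaluation vector $(T_{P,\bm\alpha,n,cn^\beta}(t_j))_{j=1}^r$ equals the $\mathbb{R}^r$-valued point-wise component-wise quantile applied to the $r$-dimensional block vectors $(\tilde Z^{(i)}_{\lfloor n/cn^\beta\rfloor}(t_j))_{j=1}^r$, $i=1,\ldots,\lfloor cn^\beta\rfloor$. The $\mathbb{R}^r$-valued estimator $(Z_n(t_j))_{j=1}^r$ inherits Assumption~\ref{A2} with asymptotic distribution $(Y_{t_j})_{j=1}^r$, and the positivity/continuity of $f_{t_j}$ at $F_{t_j}^{-1}(\alpha_{t_j})$ holds by hypothesis at each $t_j$. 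Theorem~\ref{t-k-to-infinity-multivariate-robust} with $d=r$ then delivers joint convergence to a centered Gaussian whose covariance entries are, by inspection, precisely those that define the fdd of $B_{\bm\alpha}$ at $(t_1,\ldots,t_r)$; the first fdd statement follows since $r$ and $t_1,\ldots,t_r$ were arbitrary, and the second displayed limit is the specialization $a_n=\sqrt n$, $F_t^{-1}(\alpha_t)=0$.

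For (ii), let $W_n$ denote the rescaled error in the third display. The uniform $L^p$ moment bound gives boundedness in probability of $(W_n)$ in $L^p[0,T]$, and hence in $L^r[0,T]$ by H\"older. Convergence in distribution on the separable Banach space $L^r[0,T]$ amounts to tightness plus convergence of the law of every continuous linear functional $\langle W_n,g\rangle$, $g\in L^{r'}[0,T]$. The latter reduces to step~(i) by approximating $g$ in $L^{r'}$-norm by simple functions, with the uniform $L^p$-envelope supplying the uniform integrability needed to pass the approximation through the expectation. For tightness I would run a Prokhorov-style subsequential extraction argument: from any subsequence, extract a further subsequence along which the $L^r$-laws converge (using the $L^p$-envelope to control relative compactness), and appeal to step~(i) to identify the limit as the law of $B_{\bm\alpha}$, forcing convergence of the full sequence.

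The main obstacle is the $L^r$-tightness step. A uniform bound in $L^p$ does not by itself yield $L^r$-relative compactness for $r<p$ on $[0,T]$, since the embedding $L^p\hookrightarrow L^r$ is continuous but not compact; some oscillation or equi-integrability control in $t$ must be manufactured. I would synthesize this from the $L^p$-envelope and the fdd limit via a Kolmogorov--Riesz-type criterion (equi-tightness of translations and tails), transferring the uniform estimates available for the Gaussian limit $B_{\bm\alpha}$ (whose $L^r$-integrable sample paths follow from continuity of its covariance kernel) to the approximating sequence $(W_n)$ through the $L^p$-envelope and finite-dimensional convergence. The fdd part itself is essentially a direct reduction to the multivariate theorem; the heart of the proof is this tightness synthesis.
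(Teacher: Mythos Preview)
Your Part~(i) is correct and is exactly what the paper does: the fdd statement is an immediate consequence of Theorem~\ref{t-k-to-infinity-multivariate-robust} applied at any finite collection $t_1,\ldots,t_r$, using that $\mathbf{q}_{P,\bm\alpha}$ acts coordinate-wise.

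Your Part~(ii), however, has a genuine gap. You correctly diagnose the obstacle---a uniform $L^p$ moment bound does \emph{not} imply $L^r$-tightness because the inclusion $L^p\hookrightarrow L^r$ is not compact---but your proposed remedy via a Kolmogorov--Riesz criterion would fail: that criterion asks for uniform control of $\|W_n(\cdot+h)-W_n(\cdot)\|_{L^r}$ as $h\to 0$, and nothing in the hypotheses (fdd convergence plus an $L^p$ envelope) yields any oscillation-in-$t$ estimate. Transferring regularity from the Gaussian limit $B_{\bm\alpha}$ back to $W_n$ is not legitimate before weak convergence is established, so the argument is circular.

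The paper sidesteps the tightness question entirely by invoking a ready-made result (cited there as Corollary~2.4 in Bogachev): in $L^p[0,T]$, finite-dimensional convergence together with $\sup_n \mathbb{E}\|W_n\|_{L^p}^p<\infty$ already implies weak convergence of the laws in $L^r[0,T]$ for every $r\in[1,p)$. The mechanism behind such results is not Prokhorov tightness but rather a direct verification that $\mathbb{E}[\Phi(W_n)]\to\mathbb{E}[\Phi(B_{\bm\alpha})]$ for bounded Lipschitz $\Phi$ on $L^r$: one uses Fubini to interchange $\mathbb{E}$ and $\int_0^T$, and the $L^p$ bound supplies uniform integrability of $|W_n(t)|^r$ so that pointwise-in-$t$ convergence in law upgrades to convergence of the needed integrals (equivalently, one checks a flat-concentration condition rather than full tightness). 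You should replace your tightness sketch with a citation of this result; the rest of your argument is fine.
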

	\begin{exmp}[Convergence to Brownian motion]
		In many situations in statistical analysis of stochastic processes, the estimator is converging toward a Brownian motion with rate $\sqrt{n}$. This is for example the case for the asymptotic convergence of the realized power variations in the estimation of stochastic volatility. So, the process $Y_t$ is a Brownian motion and when focusing on the median it is possible to see that $F_{t}^{-1}(\alpha_{t})=0$ for every $t\in[0,1]$. Moreover, the $i,j$ element of the covariance matrix of  $(B_{t_1,0},...,B_{t_r,0})$ is then given by
		\begin{equation*}
			\frac{\mathbb{P}(Y_{t_i}> 0 ,Y_{t_j}> 0 )-1/4}{f_{t_i}(0)f_{t_j}(0)}=2\pi \sqrt{t_i t_j}(\mathbb{P}(Y_{t_i}-Y_{t_j}+Y_{t_j}> 0 ,Y_{t_j}> 0 )-1/4)
		\end{equation*}
		and by the independence of the increments of the Brownian motion, the tower property of conditional expectations and assuming wlog that $t_i>t_j$ we get 
		\begin{equation*}
			\mathbb{P}(Y_{t_i}-Y_{t_j}+Y_{t_j}> 0 ,Y_{t_j}> 0 )=\mathbb{E}[\mathbb{E}[\mathbf{1}_{\{Y_{t_i}-Y_{t_j}+Y_{t_j}> 0\}} \mathbf{1}_{\{Y_{t_j}> 0\}}| Y_{t_j}]]
		\end{equation*}
		\begin{equation*}
			=\mathbb{E}\bigg[\bigg(1-\Phi\bigg(-\frac{Y_{1}\sqrt{t_j}}{\sqrt{t_i-t_j}}\bigg)\bigg)\mathbf{1}_{\{Y_{1}> 0\}}\bigg]=\frac{1}{2}-\int_{0}^{\infty}\Phi\bigg(-\frac{x\sqrt{t_j}}{\sqrt{t_i-t_j}}\bigg)\phi(x)dx
		\end{equation*}
		\begin{equation*}
			=\frac{1}{4}+\frac{1}{2\pi}\arctan\bigg(\frac{\sqrt{t_j}}{\sqrt{t_i-t_j}}\bigg)
		\end{equation*}
		where $\Phi$ and $\phi$ are the cdf and pdf of a standard normal distribution (hence of $Y_1$). Thus, we have 
		\begin{equation*}
			\frac{\mathbb{P}(Y_{t_i}> 0 ,Y_{t_j}> 0 )-1/4}{f_{t_i}(0)f_{t_j}(0)}= \sqrt{t_i t_j}\arctan\bigg(\frac{\sqrt{t_j}}{\sqrt{t_i-t_j}}\bigg).
		\end{equation*}
		Observe that when $t_i=t_j$ we obtain $\frac{t_i\pi}{2}$ as expected. The arguments of this example can be easily generalized for the general case of $\int_{0}^{t}\sigma(s)dB_s$, for some deterministic kernel $\sigma$ which usually represents the volatility component of the underlying process (see for example \cite{passeggeri_veraart_2019}). Proposition \ref{pro-fdd} applies also for the case of $\sigma$ being stochastic, but then more attention needs to be paid on the assumptions on $\sigma$.
	\end{exmp}
	
	\subsection{Geometric quantile}\label{SubSec-QoE-geo}
	In this subsection we show that the robustification procedure can be carried out using the geometric quantile as well. The first two results are the extension of Lemma 2.1 and Theorem 3.1 in \cite{Minsker} to the quantile case.
	\begin{lem}\label{Extension-of-Lemma2.1}
		Let $\mathbb{X}$ be a Banach space. Let $\mathbf{x}_1, . . . , \mathbf{x}_k \in \mathbb{X}$ and let $\mathbf{u}\in\mathbb{X}^*$ with $\|\mathbf{u}\|_{\mathbb{X}^*}<1$, and let $\mathbf{x}^*$ be a geometric quantile (which we assume exists). Fix $\nu\in(0, \frac{1-\|\mathbf{u}\|_{\mathbb{X}^*}}{2})$ and assume that $\mathbf{z}\in\mathbb{X}$ is such that $\| \mathbf{x}^*-\mathbf{z} \|_{\mathbb{X}}>C_{\nu} r$, where
		\begin{equation*}
			C_\nu=\dfrac{2(1-\nu)}{1-2\nu-\|\mathbf{u}\|_{\mathbb{X}^*}}
		\end{equation*}
		and $r>0$. Then there exists a subset $J\subset \{1,...,k\}$ of cardinality $|J|>\nu k$ such that for all $j\in J$, $\| \mathbf{x}_j-\mathbf{z} \|_{\mathbb{X}}>r$.
	\end{lem}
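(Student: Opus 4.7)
The plan is to argue by contradiction, following the template of Minsker's Lemma 2.1 but carrying the quantile term $\langle \mathbf{u}, \mathbf{x}_i - \mathbf{y}\rangle$ through the bookkeeping. Suppose the conclusion fails, so that $J := \{j : \|\mathbf{x}_j - \mathbf{z}\|_{\mathbb{X}} > r\}$ has cardinality $|J| \le \nu k$, hence $|J^c| \ge (1-\nu)k$. I would then use the equivalent form (\ref{geometric-quantile-Banach-without-x}) of the geometric quantile to write the optimality of $\mathbf{x}^*$ as
\begin{equation*}
\sum_{i=1}^k \|\mathbf{x}_i - \mathbf{x}^*\|_{\mathbb{X}} - k\langle \mathbf{u}, \mathbf{x}^*\rangle \;\le\; \sum_{i=1}^k \|\mathbf{x}_i - \mathbf{z}\|_{\mathbb{X}} - k\langle \mathbf{u}, \mathbf{z}\rangle,
\end{equation*}
which rearranges to $\sum_{i=1}^k (\|\mathbf{x}_i - \mathbf{x}^*\|_{\mathbb{X}} - \|\mathbf{x}_i - \mathbf{z}\|_{\mathbb{X}}) \le k\langle \mathbf{u}, \mathbf{x}^* - \mathbf{z}\rangle$.

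Next I would bound the left-hand side below, splitting over $J$ and $J^c$. For $i \in J^c$ the reverse triangle inequality gives $\|\mathbf{x}_i - \mathbf{x}^*\|_{\mathbb{X}} - \|\mathbf{x}_i - \mathbf{z}\|_{\mathbb{X}} \ge \|\mathbf{x}^* - \mathbf{z}\|_{\mathbb{X}} - 2\|\mathbf{x}_i - \mathbf{z}\|_{\mathbb{X}} \ge \|\mathbf{x}^* - \mathbf{z}\|_{\mathbb{X}} - 2r$, and for $i \in J$ the cruder bound $\|\mathbf{x}_i - \mathbf{x}^*\|_{\mathbb{X}} - \|\mathbf{x}_i - \mathbf{z}\|_{\mathbb{X}} \ge -\|\mathbf{x}^* - \mathbf{z}\|_{\mathbb{X}}$. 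Summing with $|J^c| \ge (1-\nu)k$ and $|J| \le \nu k$ yields
\begin{equation*}
\sum_{i=1}^k (\|\mathbf{x}_i - \mathbf{x}^*\|_{\mathbb{X}} - \|\mathbf{x}_i - \mathbf{z}\|_{\mathbb{X}}) \;\ge\; k(1 - 2\nu)\|\mathbf{x}^* - \mathbf{z}\|_{\mathbb{X}} - 2(1-\nu)kr.
\end{equation*}

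For the right-hand side, I would use the duality bound $\langle \mathbf{u}, \mathbf{x}^* - \mathbf{z}\rangle \le \|\mathbf{u}\|_{\mathbb{X}^*}\|\mathbf{x}^* - \mathbf{z}\|_{\mathbb{X}}$. Combining the two estimates and dividing by $k$ gives
\begin{equation*}
(1 - 2\nu - \|\mathbf{u}\|_{\mathbb{X}^*})\,\|\mathbf{x}^* - \mathbf{z}\|_{\mathbb{X}} \;\le\; 2(1-\nu)r.
\end{equation*}
Since $\nu < (1-\|\mathbf{u}\|_{\mathbb{X}^*})/2$, the coefficient on the left is strictly positive, and dividing through gives $\|\mathbf{x}^* - \mathbf{z}\|_{\mathbb{X}} \le C_\nu r$, contradicting the hypothesis. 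The only genuine subtlety is choosing the correct form of the optimality inequality (so the $\mathbf{u}$-term appears cleanly on one side) and correctly allocating the $(1-\nu)$ and $\nu$ weights so that the resulting $C_\nu$ matches the stated constant; once these are set the rest is a routine triangle-inequality argument.
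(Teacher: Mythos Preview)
Your argument is correct and yields exactly the stated constant $C_\nu$. One small point worth making explicit: when you pass from the per-index bounds to the aggregate bound $k(1-2\nu)\|\mathbf{x}^*-\mathbf{z}\|_{\mathbb{X}} - 2(1-\nu)kr$, you are implicitly using that $\|\mathbf{x}^*-\mathbf{z}\|_{\mathbb{X}} - 2r > 0$ (otherwise having \emph{more} indices in $J^c$ would push the sum the wrong way). This is harmless since $C_\nu \ge 2$ always, but it is the one place where the bookkeeping could go astray.

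Your route differs from the paper's. The paper follows Minsker's original template and uses the \emph{first-order} optimality condition: it computes the directional derivative of $F(\mathbf{y})=\sum_i\|\mathbf{x}_i-\mathbf{y}\|_{\mathbb{X}}+\langle\mathbf{u},\mathbf{x}_i-\mathbf{y}\rangle$ at $\mathbf{x}^*$ in the direction $(\mathbf{z}-\mathbf{x}^*)/\|\mathbf{z}-\mathbf{x}^*\|_{\mathbb{X}}$, expresses it via subdifferentials $g_j^*$ of the norm, and shows that under the negated conclusion this derivative is strictly negative, contradicting minimality. You instead use the \emph{zeroth-order} condition $F(\mathbf{x}^*)\le F(\mathbf{z})$ directly and then the triangle inequality. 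Your approach is more elementary---it avoids subdifferentials entirely and works in any Banach space without invoking any differentiability of the norm---while the paper's approach stays closer to Minsker and makes the mechanism (the quantile pulls the derivative by at most $k\|\mathbf{u}\|_{\mathbb{X}^*}$) slightly more transparent. Both arrive at the same $C_\nu$.
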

	For $0<p<s< \frac{1}{2}$, define
	\begin{equation*}
		\psi(s;p)=(1-s)\log\frac{1-s}{1-p}+s\log\frac{s}{p}.
	\end{equation*}
	\begin{pro}\label{Extension-Theorem3.1}
		Let $\mathbb{X}$ be a Banach space. Assume that $\mu\in\mathbb{X}$ is a parameter of interest, and let $\hat{\mu}_1,...,\hat{\mu}_k\in\mathbb{X}$ be a collection of independent estimators of $\mu$. Let $\mathbf{u}\in\mathbb{X}^*$ with $\|\mathbf{u}\|_{\mathbb{X}^*}<1$, and let $\hat{\mu}$ be a geometric quantile of $\hat{\mu}_1,...,\hat{\mu}_k$ (which we assume exists). Fix $\nu\in(0, \frac{1-\|\mathbf{u}\|_{\mathbb{X}^*}}{2})$. Let $0<p<\nu$ and $\varepsilon>0$ be such that for all $j=1,...,k$
		\begin{equation}\label{bob}
			\mathbb{P}(\|\hat{\mu}_j-\mu \|_{\mathbb{X}}>\varepsilon)\leq p.
		\end{equation}
		Then, 
		\begin{equation*}
			\mathbb{P}(\|\hat{\mu}-\mu \|_{\mathbb{X}}>C_{\nu}\varepsilon)\leq \exp(-k\psi(\nu;p)),
		\end{equation*}
		where $C_\nu$ is defined in Lemma \ref{Extension-of-Lemma2.1}. Further, if (\ref{bob}) only holds for $j\in J$, where $J\subset\{1,...,k\}$ with $|J|=(1-\tau)k$ for $0\leq\tau\leq \frac{\nu-p}{1-p}$, then
		\begin{equation*}
			\mathbb{P}(\|\hat{\mu}-\mu \|_{\mathbb{X}}>C_{\nu}\varepsilon)\leq \exp\bigg(-k(1-\tau)\psi\Big(\frac{\nu-\tau}{1-\tau};p\Big)\bigg).
		\end{equation*}
	\end{pro}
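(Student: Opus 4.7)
The strategy is the standard Minsker reduction to a binomial tail bound, now made possible in the quantile setting by the just-established Lemma \ref{Extension-of-Lemma2.1}. First I would apply Lemma \ref{Extension-of-Lemma2.1} with $\mathbf{x}^{\ast}=\hat{\mu}$, $\mathbf{z}=\mu$ and $r=\varepsilon$: on the event $\{\|\hat{\mu}-\mu\|_{\mathbb{X}}>C_{\nu}\varepsilon\}$ there exists $J\subset\{1,\ldots,k\}$ with $|J|>\nu k$ such that $\|\hat{\mu}_j-\mu\|_{\mathbb{X}}>\varepsilon$ for every $j\in J$. Hence, introducing the independent Bernoulli variables $W_j:=\mathbf{1}\{\|\hat{\mu}_j-\mu\|_{\mathbb{X}}>\varepsilon\}$, which by \eqref{bob} satisfy $\mathbb{E}[W_j]\le p$,
\begin{equation*}
\mathbb{P}\bigl(\|\hat{\mu}-\mu\|_{\mathbb{X}}>C_{\nu}\varepsilon\bigr)\ \le\ \mathbb{P}\Bigl(\sum_{j=1}^{k}W_j>\nu k\Bigr).
\end{equation*}

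The second step is a classical Chernoff bound on the binomial tail. Because $W_j$ is stochastically dominated by a $\mathrm{Bernoulli}(p)$ variable and $p<\nu<1/2$, the relative-entropy form of Chernoff's inequality gives
\begin{equation*}
\mathbb{P}\Bigl(\sum_{j=1}^{k}W_j>\nu k\Bigr)\ \le\ \exp\bigl(-k\psi(\nu;p)\bigr),
\end{equation*}
where $\psi(\nu;p)$ is exactly the Kullback--Leibler divergence between $\mathrm{Bernoulli}(\nu)$ and $\mathrm{Bernoulli}(p)$ as defined above the proposition. Combining the two displays yields the first inequality.

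For the contaminated statement, suppose \eqref{bob} only holds for $j\in J_0\subset\{1,\ldots,k\}$ with $|J_0|=(1-\tau)k$. The $\tau k$ \emph{bad} indices can contribute at most $\tau k$ to the subset produced by Lemma \ref{Extension-of-Lemma2.1}, so on $\{\|\hat{\mu}-\mu\|_{\mathbb{X}}>C_{\nu}\varepsilon\}$ at least $(\nu-\tau)k$ indices of $J_0$ must satisfy $\|\hat{\mu}_j-\mu\|_{\mathbb{X}}>\varepsilon$. Applying the same Chernoff bound to the $(1-\tau)k$ independent $\mathrm{Bernoulli}(p)$ variables $\{W_j:j\in J_0\}$ at the fractional threshold $(\nu-\tau)/(1-\tau)$ delivers
\begin{equation*}
\mathbb{P}\bigl(\|\hat{\mu}-\mu\|_{\mathbb{X}}>C_{\nu}\varepsilon\bigr)\ \le\ \exp\Bigl(-k(1-\tau)\,\psi\bigl(\tfrac{\nu-\tau}{1-\tau};\,p\bigr)\Bigr);
\end{equation*}
the assumption $\tau\le(\nu-p)/(1-p)$ is precisely what ensures $(\nu-\tau)/(1-\tau)>p$ so that the relative entropy $\psi$ is non-negative and the inequality is meaningful.

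The only genuine obstacle in the whole argument, namely the fact that the quantile objective contains the linear perturbation $-k\langle\mathbf{u},\mathbf{y}\rangle$ which shifts the minimiser away from a pure geometric median, has already been absorbed into the constant $C_{\nu}=2(1-\nu)/(1-2\nu-\|\mathbf{u}\|_{\mathbb{X}^{\ast}})$ during the proof of Lemma \ref{Extension-of-Lemma2.1}. Once that geometric lemma is available, the remainder of the proof is a verbatim adaptation of Theorem~3.1 of \cite{Minsker}: the geometric statement compresses to a counting statement, which in turn reduces to a standard Chernoff--Cramér inequality for sums of independent Bernoulli variables.
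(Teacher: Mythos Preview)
Your proposal is correct and follows exactly the route the paper indicates: the paper's own proof is a one-line reference (``follows from the same arguments as in Theorem~3.1 of \cite{Minsker}, using Lemma~\ref{Extension-of-Lemma2.1} in place of Lemma~2.1''), and what you have written is precisely that argument spelled out --- apply Lemma~\ref{Extension-of-Lemma2.1} to reduce the event $\{\|\hat\mu-\mu\|>C_\nu\varepsilon\}$ to a Bernoulli-count event, then invoke the Chernoff/relative-entropy bound, with the contaminated case handled by subtracting the $\tau k$ uncontrolled indices before applying the same bound on the remaining $(1-\tau)k$. One cosmetic point: the hypothesis $\tau\le(\nu-p)/(1-p)$ guarantees $(\nu-\tau)/(1-\tau)\ge p$ (with equality at the endpoint, where the bound becomes the trivial $\exp(0)=1$), not the strict inequality you state.
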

	We now investigate the asymptotic behavior of $T_{\mathbf{u},n,k}$, as $n\to\infty$ for fixed $k$.
	\begin{thm}\label{t-1-q}
		Let $\mathbb{X}$ be a reflexive and strictly convex Banach space. Under Assumption \ref{A1}, $T_{\mathbf{u},n,k}\stackrel{p}{\to}\bm\theta_0$ as $n\to\infty$. Let Assumption \ref{A2} hold. If either the distribution of $Y$ is nonatomic, or $\mathbb{X}$ is smooth and $\|\mathbf{u}\|_{\mathbb{X}^*}\notin\{1-\frac{2j}{k},j=1,...,\lfloor\frac{k}{2}\rfloor\}$, or $\mathbb{X}=\mathbb{R}$, then $a_{\lfloor n/k\rfloor}(T_{\mathbf{u},n,k}-\bm\theta_0)\stackrel{d}{\to} \mathbf{q}_\mathbf{u}(Y_1,...,Y_k)$ as $n\to\infty$, where $Y_i\stackrel{iid}{\sim}Y$, $i=1,...,k$.
	\end{thm}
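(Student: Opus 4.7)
The plan is to represent $T_{\mathbf{u},n,k}$ as the image of a jointly weakly convergent random vector under the geometric quantile map $\mathbf{q}_\mathbf{u}:\mathbb{X}^{k}\to\mathbb{X}$, and then invoke the continuous mapping theorem, using the continuity of $\mathbf{q}_\mathbf{u}$ established in Section~\ref{Sec-Continuity}. The first observation is that two equivariance properties of $\mathbf{q}_\mathbf{u}$ follow at once from the substitution $\mathbf{y}\mapsto a\mathbf{y}+\mathbf{z}$ in the defining argmin \eqref{geometric-quantile-Banach}: for any $\mathbf{z}\in\mathbb{X}$ and $a>0$,
\begin{equation*}
\mathbf{q}_\mathbf{u}(\mathbf{x}_1+\mathbf{z},\ldots,\mathbf{x}_k+\mathbf{z})=\mathbf{q}_\mathbf{u}(\mathbf{x}_1,\ldots,\mathbf{x}_k)+\mathbf{z},\qquad \mathbf{q}_\mathbf{u}(a\mathbf{x}_1,\ldots,a\mathbf{x}_k)=a\,\mathbf{q}_\mathbf{u}(\mathbf{x}_1,\ldots,\mathbf{x}_k).
\end{equation*}
Combining them gives the identity
\begin{equation*}
a_{\lfloor n/k\rfloor}(T_{\mathbf{u},n,k}-\bm\theta_0)=\mathbf{q}_\mathbf{u}\bigl(a_{\lfloor n/k\rfloor}(Z^{(1)}_{\lfloor n/k\rfloor}-\bm\theta_0),\ldots,a_{\lfloor n/k\rfloor}(Z^{(k)}_{\lfloor n/k\rfloor}-\bm\theta_0)\bigr).
\end{equation*}

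For consistency, the blocks are independent, so Assumption~\ref{A1} yields $(Z^{(1)}_{\lfloor n/k\rfloor},\ldots,Z^{(k)}_{\lfloor n/k\rfloor})\stackrel{p}{\to}(\bm\theta_0,\ldots,\bm\theta_0)$ on $\mathbb{X}^{k}$. Since $\mathbf{q}_\mathbf{u}(\bm\theta_0,\ldots,\bm\theta_0)=\bm\theta_0$ and $\mathbf{q}_\mathbf{u}$ is continuous at the diagonal (where the minimiser is trivially unique), the continuous mapping theorem gives $T_{\mathbf{u},n,k}\stackrel{p}{\to}\bm\theta_0$. For the distributional statement, independence of the blocks together with Assumption~\ref{A2} gives joint weak convergence of the rescaled vector to $(Y_1,\ldots,Y_k)$ with $Y_i\stackrel{iid}{\sim}Y$, and the continuous mapping theorem then yields $a_{\lfloor n/k\rfloor}(T_{\mathbf{u},n,k}-\bm\theta_0)\stackrel{d}{\to}\mathbf{q}_\mathbf{u}(Y_1,\ldots,Y_k)$ as soon as $(Y_1,\ldots,Y_k)$ lies in the continuity set of $\mathbf{q}_\mathbf{u}$ almost surely.

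The three alternative hypotheses exist precisely to secure this almost-sure continuity. If $\mathbb{X}$ is smooth and $\|\mathbf{u}\|_{\mathbb{X}^*}\notin\{1-2j/k:j=1,\ldots,\lfloor k/2\rfloor\}$, Corollary~\ref{co-unique} gives uniqueness of $\mathbf{q}_\mathbf{u}$ everywhere on $\mathbb{X}^{k}$, and the strong-topology continuity result of Section~\ref{Sec-Continuity} makes $\mathbf{q}_\mathbf{u}$ continuous at every point. If $Y$ is nonatomic, I would verify that the set of $k$-tuples where the minimiser is non-unique---those for which either the data lie on a line at a degenerate value of $\alpha$, or the boundary condition on $\|\mathbf{u}\|_{\mathbb{X}^*}$ is active---has probability zero under the product law of $(Y_1,\ldots,Y_k)$, using that three or more i.i.d.\ nonatomic Banach-valued variates are collinear with probability zero once $\dim\mathbb{X}\geq 2$. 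If $\mathbb{X}=\mathbb{R}$, the definition \eqref{univariate-geometric-quantile} (with the paper's convention) makes $\mathbf{q}_\mathbf{u}$ a continuous, piecewise-affine function of the order statistics on all of $\mathbb{R}^{k}$, so no extra hypothesis is needed.

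I expect the main obstacle to lie behind the scenes in Section~\ref{Sec-Continuity}: upgrading the classical weak-$*$ continuity of the geometric quantile to the strong (norm) continuity needed for the continuous mapping theorem to apply on $(\mathbb{X}^{k},\|\cdot\|_{\mathbb{X}^{k}})$. Reflexivity provides weak subsequential limits of the minimisers of the finite sample problem, strict convexity together with the James-orthogonality Lemma~\ref{lem-ineq} gives the uniqueness that pins those limits down, and the same convex-analytic ingredients convert weak subsequential convergence into norm convergence. Given the continuity result of Section~\ref{Sec-Continuity}, the proof of Theorem~\ref{t-1-q} reduces to the measure-theoretic verification above that the limiting random vector lies in the continuity set of $\mathbf{q}_\mathbf{u}$ almost surely.
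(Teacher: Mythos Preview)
Your argument for the distributional limit is exactly the paper's: affine equivariance of $\mathbf{q}_\mathbf{u}$, joint weak convergence of the block estimates, and the continuous mapping theorem, with Theorem~\ref{thm-continuity} (and Lemma~\ref{lem-con-quant} for $\mathbb{X}=\mathbb{R}$) supplying the required continuity on the appropriate almost-sure set.

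Where you diverge is in the consistency part. The paper does \emph{not} invoke the continuous mapping theorem here; it obtains $T_{\mathbf{u},n,k}\stackrel{p}{\to}\bm\theta_0$ directly from the concentration bound of Proposition~\ref{Extension-Theorem3.1} (the quantile extension of Minsker's Theorem~3.1). Your route via continuity of $\mathbf{q}_\mathbf{u}$ at the diagonal $(\bm\theta_0,\ldots,\bm\theta_0)$ is correct in spirit---the minimiser there is indeed unique and one can show that any (piecewise-defined) value of $\mathbf{q}_\mathbf{u}$ on a sequence collapsing to the diagonal must tend to $\bm\theta_0$---but note that this point lies outside $\mathcal{X}_k$, so neither clause of Theorem~\ref{thm-continuity} covers it as stated; you would need to supply the extra line of argument yourself. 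The paper's use of Proposition~\ref{Extension-Theorem3.1} sidesteps this entirely and has the bonus of already being the tool used for the robust consistency statement (Proposition~\ref{t-k-to-infinity}), so nothing new is needed. Conversely, your approach keeps the whole proof within the continuous-mapping paradigm and avoids importing the Minsker-type machinery for the fixed-$k$ statement.
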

	We explore now how the results of Theorem \ref{t-1-q} change when $k_n\to\infty$ as $n\to\infty$. First, we investigate the behavior of the convergence in probability in Theorem \ref{t-1-q} and its robustness in the presence of contaminated data.
	
	\begin{pro}\label{t-k-to-infinity}
		Let $\mathbb{X}$ be a reflexive Banach space and let Assumption \ref{A1} hold. Then, $T_{\mathbf{u},n,k_n}\stackrel{p}{\to}\bm\theta_0$ as $n\to\infty$ for any $l_n$ and $k_n$ such that $k_n=o(n)$ and $\lim\limits_{n\to\infty}\frac{l_n}{k_n}=0$ .
	\end{pro}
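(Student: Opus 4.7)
The plan is to invoke Lemma \ref{Extension-of-Lemma2.1} in its contrapositive form: if at most $\nu k_n$ of the $k_n$ block estimates lie farther than $r$ from $\bm\theta_0$, then $\|T_{\mathbf{u},n,k_n}-\bm\theta_0\|_{\mathbb{X}}\le C_\nu r$. Consistency will then reduce to the counting claim that, although the adversary can spoil up to $l_n$ blocks outright, only a vanishing fraction of the $k_n-l_n$ uncontaminated block estimates can stray far from $\bm\theta_0$.

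Fix $\varepsilon>0$, pick $\nu\in(0,(1-\|\mathbf{u}\|_{\mathbb{X}^*})/2)$ so that $C_\nu>0$, and set $r:=\varepsilon/C_\nu$. Because $\mathbb{X}$ is reflexive and $\|\mathbf{u}\|_{\mathbb{X}^*}<1$, the convex coercive functional in (\ref{geometric-quantile-Banach}) attains its minimum on $\mathbb{X}$, so $T_{\mathbf{u},n,k_n}$ is well-defined. The hypothesis $l_n/k_n\to 0$ gives $l_n\le(\nu/2)k_n$ for all large $n$, so it is enough to show that, with probability tending to one, the number $N_n$ of uncontaminated blocks whose estimate falls outside the ball of radius $r$ about $\bm\theta_0$ satisfies $N_n\le(\nu/2)k_n$.

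By the worst-case reduction built into the setup of Section \ref{Sec-QoE}, the uncontaminated block estimates are i.i.d.\ copies of $Z_{\lfloor n/k_n\rfloor}$, and $k_n=o(n)$ forces $\lfloor n/k_n\rfloor\to\infty$. Assumption \ref{A1} then gives
\begin{equation*}
p_n:=\mathbb{P}(\|Z_{\lfloor n/k_n\rfloor}-\bm\theta_0\|_{\mathbb{X}}>r)\longrightarrow 0,
\end{equation*}
so Markov's inequality yields
\begin{equation*}
\mathbb{P}\bigl(N_n>(\nu/2)k_n\bigr)\le \frac{(k_n-l_n)\,p_n}{(\nu/2)k_n}\le \frac{2p_n}{\nu}\longrightarrow 0.
\end{equation*}
On the complementary event the total number of ``bad'' blocks is at most $l_n+N_n\le\nu k_n$, and Lemma \ref{Extension-of-Lemma2.1} with $\mathbf{z}=\bm\theta_0$ delivers $\|T_{\mathbf{u},n,k_n}-\bm\theta_0\|_{\mathbb{X}}\le C_\nu r=\varepsilon$, which yields convergence in probability.

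The only subtlety is the degenerate branch of definition (\ref{QoE-geo}) in which the block estimates lie on a common line and $T_{\mathbf{u},n,k_n}$ is set to the univariate $\alpha$-quantile $q_\alpha$. There the Banach-space argument is bypassed, but a direct order-statistic bound gives the analogous concentration statement on $\mathbb{R}$, so the counting argument closes in both branches.
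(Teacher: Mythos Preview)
Your proof is correct and follows the same route as the paper, which simply cites Proposition~\ref{Extension-Theorem3.1}. That proposition packages Lemma~\ref{Extension-of-Lemma2.1} together with a Hoeffding-type exponential bound on the number of block estimates falling outside the $r$-ball; you instead invoke Lemma~\ref{Extension-of-Lemma2.1} directly and replace the exponential tail by Markov's inequality, which is all that is needed for convergence in probability. The two arguments are therefore the same in structure, yours being slightly more elementary at the cost of a weaker (but here sufficient) tail estimate.
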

	In the following result we describe a remarkable new property of the geometric quantile: given a set of initial points and quantile parameter $\mathbf{u}$, if some of the points are modified then, it is always possible to adjust the quantile parameter $\mathbf{u}$ so that the geometric quantile does not change.
	\begin{lem}\label{Lemma-v}
		
		Let $\mathbb{X}$ be Banach space. Let $k\in\mathbb{N}$, $\mathbf{x}_1,...,\mathbf{x}_k\in\mathbb{X}$, and $\mathbf{u}\in \mathbb{X}^*$ with $\|\mathbf{u}\|_{\mathbb{X}^*}<1$. Assume that exists at least one geometric quantiles and denote it by $\mathbf{x}^*$. Let $\tilde{\mathbf{x}}_1,...,\tilde{\mathbf{x}}_k\in\mathbb{X}$ where $\tilde{\mathbf{x}}_i=\mathbf{x}_i$, for $i=p+1,...,k$ with $1\leq p<\frac{k}{2}(1-\|\mathbf{u}\|_{\mathbb{X}^*})$. Denote by $g_j^*$ a subdifferential of the norm $\|\cdot\|_{\mathbb{X}}$ evaluated at $\mathbf{x}_{j}-\mathbf{x}^*$. Then, 
		\begin{equation}\label{v-statement}
			\mathbf{v}:=\Bigg(\frac{2p}{2p+\sum _{i=1}^{k}\mathbf{1}_{\{\tilde{\mathbf{x}}_{i}=\mathbf{x}^*\}}}\Bigg)\Bigg(\frac{\mathbf{u}}{2p}\sum _{i=1}^{k}\mathbf{1}_{\{\tilde{\mathbf{x}}_{i}=\mathbf{x}^*\}}-\frac{1}{k}\sum _{j:\tilde{\mathbf{x}}_j\neq \mathbf{x}^*}^{k}g_j^*\Bigg)\mathbf{1}_{\{p\geq 1\}}+\mathbf{u}\mathbf{1}_{\{p=0\}}
		\end{equation}
		is such that $\mathbf{v}\in\mathbb{X}^*$, $\|\mathbf{v}\|_{\mathbb{X}^*}<1$, $\|\mathbf{v}-\mathbf{u}\|_{\mathbb{X}^*}\leq \frac{2p}{k}$, and $\mathbf{x}^*$ is a geometric quantile of $\tilde{\mathbf{x}}_1,...,\tilde{\mathbf{x}}_k\in\mathbb{X}$ with parameter $\mathbf{v}$.
	\end{lem}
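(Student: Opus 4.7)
The plan is to characterize geometric quantiles via the first-order (Fermat) optimality condition applied to the convex objective $\Phi_{\mathbf{w}}(\mathbf{y}) := \sum_{j=1}^{k} \|\mathbf{x}_j - \mathbf{y}\|_{\mathbb{X}} - k\langle \mathbf{w}, \mathbf{y}\rangle$, and then to produce dual certificates that make $\mathbf{x}^*$ a minimizer for the contaminated sample with parameter $\mathbf{v}$. Recall that $\partial\|\cdot\|_{\mathbb{X}}(\mathbf{z}) \subset \mathbb{X}^*$ consists of the norming functionals of $\mathbf{z}$ when $\mathbf{z}\neq 0$ and of the entire unit ball $B_{\mathbb{X}^*}$ when $\mathbf{z}=0$. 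Applying Fermat's rule at the minimizer $\mathbf{x}^*$ of $\Phi_{\mathbf{u}}$ yields subgradients $h_j^{(0)} \in \partial\|\cdot\|_{\mathbb{X}}(\mathbf{x}_j - \mathbf{x}^*)$ with $\sum_{j=1}^{k} h_j^{(0)} = -k\mathbf{u}$; this identity is the main input.

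The case $p=0$ is immediate since $\mathbf{v}=\mathbf{u}$. For $p\geq 1$, set $S := \{j : \tilde{\mathbf{x}}_j = \mathbf{x}^*\}$, $T := \{j : \tilde{\mathbf{x}}_j \neq \mathbf{x}^*\}$, and $s := |S|$. Construct candidate certificates by taking $\tilde{h}_j := g_j^* \in \partial\|\cdot\|_{\mathbb{X}}(\tilde{\mathbf{x}}_j - \mathbf{x}^*)$ for $j \in T$ (which for $j>p$ coincides with $\mathbf{x}_j - \mathbf{x}^*$), and $\tilde{h}_j := \mathbf{w}$ for $j \in S$ for a single $\mathbf{w}\in\mathbb{X}^*$ to be determined. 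Imposing $\sum_j \tilde{h}_j = -k\mathbf{v}$ forces $\mathbf{w} = -\frac{1}{2p+s}\bigl(k\mathbf{u} + \sum_{j \in T} g_j^*\bigr)$, and substituting back one recovers precisely \eqref{v-statement}; the weights $\frac{2p}{2p+s}$ and $\frac{s}{2p+s}$ there are not ad hoc but are exactly what is required for the dual certificate to close.

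The crux is the estimate $\bigl\|k\mathbf{u} + \sum_{j \in T} g_j^*\bigr\|_{\mathbb{X}^*} \leq 2p + s$. Using $\tilde{\mathbf{x}}_j = \mathbf{x}_j$ for $j>p$ to take $g_j^* = h_j^{(0)}$ on $T \cap \{p+1, \ldots, k\}$, and invoking $\sum_j h_j^{(0)} = -k\mathbf{u}$, one rewrites
\begin{equation*}
k\mathbf{u} + \sum_{j \in T} g_j^* \;=\; \sum_{j \in T \cap \{1,\ldots,p\}} \bigl(g_j^* - h_j^{(0)}\bigr) \;-\; \sum_{j \in S} h_j^{(0)}.
\end{equation*}
Each term of either sum lies in $B_{\mathbb{X}^*}$, so the first sum has norm at most $2(p - |S\cap\{1,\ldots,p\}|)$ and the second at most $s$, yielding $2p+s$. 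This same estimate gives $\|\mathbf{w}\|_{\mathbb{X}^*} \leq 1$, so $\mathbf{w}\in\partial\|\cdot\|_{\mathbb{X}}(0)$ and the certificates are legitimate. From the rearrangement $\mathbf{v} - \mathbf{u} = -\frac{2p}{k(2p+s)}\bigl(k\mathbf{u} + \sum_{j\in T} g_j^*\bigr)$ one reads off $\|\mathbf{v} - \mathbf{u}\|_{\mathbb{X}^*} \leq 2p/k$; the triangle inequality together with the hypothesis $p < \frac{k}{2}(1-\|\mathbf{u}\|_{\mathbb{X}^*})$ then gives $\|\mathbf{v}\|_{\mathbb{X}^*} < 1$.

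The main obstacle is the bookkeeping on which indices have $\mathbf{x}_j$, $\tilde{\mathbf{x}}_j$, or both equal to $\mathbf{x}^*$, and ensuring that each $g_j^*$ used is a valid subgradient in the correct subdifferential set for the modified problem while the $h_j^{(0)}$ stay tied to the original KKT identity. Once that is handled cleanly, everything reduces to the single norm estimate above and a direct algebraic rearrangement.
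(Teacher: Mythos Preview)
Your argument is correct and rests on the same first-order optimality characterization as the paper's proof, but you package it through the subdifferential form of Fermat's rule and an explicit dual-certificate construction, whereas the paper works with directional derivatives tested against all unit vectors $\mathbf{z}$ and a sandwich argument for $k\langle \mathbf{u}-\mathbf{v},\mathbf{z}\rangle$. Both routes reduce to the same norm estimate: your $\bigl\|k\mathbf{u}+\sum_{j\in T}g_j^*\bigr\|_{\mathbb{X}^*}\le 2p+s$ is exactly the content of the paper's chain of inequalities culminating in $(k+\tfrac{k}{2p}s)\|\mathbf{u}-\mathbf{v}\|_{\mathbb{X}^*}\le 2p+s$. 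Your formulation is arguably cleaner because it produces the certificates $\tilde h_j$ directly and derives both $\|\mathbf{w}\|_{\mathbb{X}^*}\le 1$ and $\|\mathbf{v}-\mathbf{u}\|_{\mathbb{X}^*}\le 2p/k$ from the single estimate, while the paper first bounds $\|\mathbf{u}-\mathbf{v}\|_{\mathbb{X}^*}$ and then returns to verify the directional-derivative inequality for the modified objective. One point worth flagging: your choice $g_j^*=h_j^{(0)}$ for $j\in T\cap\{p+1,\dots,k\}$ is a \emph{choice} of subgradient, so you are proving the lemma for a particular selection of the $g_j^*$ appearing in \eqref{v-statement}; this is consistent with how the paper reads the phrase ``a subdifferential'' and with how the paper's own proof uses the original optimality condition, but it would not work for an arbitrary preassigned $g_j^*$ in a non-smooth space.
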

	We explore now the central limit theorem corresponding to Proposition \ref{t-k-to-infinity}. First, we need to introduce some notation, which we partially borrow from \cite{ChCh2014}. Let $d_n\in\mathbb{N}$ and let $\mathbb{X}$ be a separable Hilbert space with orthonormal basis $\{\mathbf{e}_1,\mathbf{e}_2,...\}$. Let $\mathcal{Z}_n=span\{\mathbf{e}_1,...,\mathbf{e}_n\}$. For every $\mathbf{z}\in\mathbb{X}$, let $\mathbf{z}_{(n)}$ be the orthogonal projection of $\mathbf{z}$ onto $\mathcal{Z}_n$. Thus, for example if $Z^{(1)}_{\lfloor n/k_n\rfloor}$ has a nonatomic distribution, then $T_{\mathbf{u}_{(n)},n,k_n}$ denotes the minimizer of $	{\displaystyle {\underset {\mathbf{y}\in \mathcal{Z}_n}{\operatorname {arg\,min} }}\sum _{i=1}^{k}\left\| Z^{(i)}_{\lfloor n/k_n\rfloor,(n)}-\mathbf{y}\right\|_{\mathcal{Z}_n}}+\langle\mathbf{u}_{(n)} ,Z^{(i)}_{\lfloor n/k_n\rfloor,(n)}-\mathbf{y}\rangle$. Let $\mathbf{r}_{\mathbf{u}}$ denote the $\mathbf{u}$-quantile of the distribution of $Y$, \textit{i.e.}~$\mathbf{r}_\mathbf{u}$ is the $\mathbf{y}\in\mathbb{X}$ that solves $\mathbb{E}\Big[\frac{Y-\mathbf{y}}{\| Y-\mathbf{y}\|_{\mathbb{X}}}\Big]=\mathbf{u}$, and let $\mathbf{r}_{m,\mathbf{u}_{(n)}}$ denote the $\mathbf{u}_{(n)}$-quantile of the distribution of $a_{m}(Z_{m,(n)}-\bm\theta_{0,(n)})$.
	
	Consider a real-valued function $g(\mathbf{z})=\mathbb{E}[\|\mathbf{z}-Y\|-\|Y\|]-\langle \mathbf{u},\mathbf{z}\rangle$ and let $J_{\mathbf{z}}$ be its Hessian at $\mathbf{z}$. $J_{\mathbf{z}}$ is a symmetric bounded bilinear function from $\mathbb{X}\times\mathbb{X}$ into $\mathbb{R}$ satisfying
	\begin{equation*}
		\lim\limits_{t\to0}\frac{1}{t}\bigg|g(\mathbf{z}+t\mathbf{h})-g(\mathbf{z})-t\mathbb{E}\bigg[ \frac{\mathbf{z}-Y}{\| \mathbf{z}-Y\|_{\mathbb{X}}} -\mathbf{u}\bigg](\mathbf{h})-\frac{t^{2}}{2}J_{\mathbf{z}}(\mathbf{h},\mathbf{h})\bigg|=0,
	\end{equation*}
	for any $\mathbf{h}\in\mathbb{X}$. We let $\tilde{J}_{\mathbf{z}}:\mathbb{X}\to\mathbb{X}$ be the continuous linear operator defined by the equation $\langle \tilde{J}_{\mathbf{z}}(\mathbf{h}),\mathbf{v}\rangle=J_{\mathbf{z}}(\mathbf{h},\mathbf{v})$ for every $\mathbf{h},\mathbf{v}\in\mathbb{X}$. Similarly, we let $J_{\mathbf{z},n}$ be the Hessian of $\mathbb{E}[\|\mathbf{z}_{(n)}-Y_{(n)}\|-\|Y_{(n)}\|]-\langle \mathbf{u}_{(n)},\mathbf{z}_{(n)}\rangle$ and let $\tilde{J}_{\mathbf{z},n}$ be the associated linear operator. We define $W^{(j)}_{n,k_n,(n)}:=a_{\lfloor n/k_n\rfloor}(Z^{(j)}_{\lfloor n/k_n\rfloor,(n)}-\bm \theta_{0,(n)})$ and $\tilde{W}^{(j)}_{n,k_n,(n)}:=a_{\lfloor n/k_n\rfloor}(\tilde{Z}^{(j)}_{\lfloor n/k_n\rfloor,(n)}-\bm \theta_{0,(n)})$, for $j=1,...,k_n$.
	
	The following assumption naturally comes from Assumption (B) in \cite{ChCh2014}.
	\begin{ass}\label{A-B}
		Suppose that the probability measure of $Z_{n}$ is nonatomic and not entirely supported on a line in $\mathbb{X}$, for all $n$ sufficiently large. Further, assume that, for each $C>0$, $\sup\limits_{\mathbf{x}\in\mathcal{Z}_n,\|\mathbf{x}\|_{\mathcal{Z}_n}>C}\mathbb{E}[\|\mathbf{x}-W^{(1)}_{n,k_n,(n)}\|_{\mathcal{Z}_n}^{-2}]<\infty$ for all $n$ sufficiently large. 
	\end{ass}
	\begin{rem}
		In the finite dimensional case Assumption \ref{A-B} is satisfied when $W^{(1)}_{n,k_n,(n)}$ has a density which is bounded on any bounded set of $\mathbb{R}^d$ for all $n$ sufficiently large. Examples of general sufficient conditions for this to happen are in \cite{Sweeting}.
	\end{rem}
	\noindent Under Assumption \ref{A-B} since $Z_{n}$ is nonatomic for all $n$ large enough, $T_{\mathbf{u}_{(n)},n,k_n}$ is given by the geometric quantile with parameter $\mathbf{u}$ of $Z^{(1)}_{\lfloor n/k\rfloor},...,Z^{(k)}_{\lfloor n/k\rfloor}$ for all $n$ large enough. The next result generalizes Proposition 2.1 in \cite{CCZ13}.
	
	\begin{lem}\label{lem-CCZ}
		Let $\mathbb{X}$ be a separable Hilbert space. Under Assumptions \ref{A2} and \ref{A-B}, for each $A>0$ there exist $c_A,C_A\in(0,\infty)$, such that for all $n$ large enough we have $c_A\|\mathbf{h}\|^2\leq J_{n,\mathbf{z}}(\mathbf{h},\mathbf{h})\leq C_A\|\mathbf{h}\|^2$ for any $\mathbf{z},\mathbf{h}\in\mathcal{Z}_n$ with $\|\mathbf{z}\|\leq A$.
	\end{lem}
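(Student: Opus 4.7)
I would begin by writing $J_{n,\mathbf z}$ in closed form using the Hilbert-space structure of $\mathcal Z_{n}$. A direct computation shows that, for fixed $\mathbf y\neq\mathbf z$, the map $\mathbf z\mapsto\|\mathbf z-\mathbf y\|$ has Hessian at $(\mathbf h,\mathbf h)$ equal to $(\|\mathbf h\|^{2}-\langle(\mathbf z-\mathbf y)/\|\mathbf z-\mathbf y\|,\mathbf h\rangle^{2})/\|\mathbf z-\mathbf y\|$, while the linear term $-\langle\mathbf u_{(n)},\mathbf z_{(n)}\rangle$ contributes nothing. Justifying differentiation under the expectation by dominated convergence, using the moment bound in Assumption \ref{A-B}, I would obtain
\[
J_{n,\mathbf z}(\mathbf h,\mathbf h)=\mathbb E\!\left[\frac{\|P_{\mathbf n^{\perp}}\mathbf h\|^{2}}{\|\mathbf z_{(n)}-Y_{(n)}\|}\right],\qquad\mathbf n:=\frac{\mathbf z_{(n)}-Y_{(n)}}{\|\mathbf z_{(n)}-Y_{(n)}\|},
\]
where $P_{\mathbf n^{\perp}}$ is orthogonal projection onto $\mathbf n^{\perp}$. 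The integrand is non-negative, so $J_{n,\mathbf z}\succeq 0$ and it suffices to produce quantitative two-sided bounds.

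For the upper bound, Cauchy--Schwarz gives $\|P_{\mathbf n^{\perp}}\mathbf h\|^{2}\leq\|\mathbf h\|^{2}$, hence $J_{n,\mathbf z}(\mathbf h,\mathbf h)\leq\|\mathbf h\|^{2}\,\mathbb E[\|\mathbf z_{(n)}-Y_{(n)}\|^{-1}]\leq\|\mathbf h\|^{2}(\mathbb E[\|\mathbf z_{(n)}-Y_{(n)}\|^{-2}])^{1/2}$ by a second Cauchy--Schwarz step. To translate Assumption \ref{A-B}, which controls the analogous moment but for $W^{(1)}_{n,k_{n},(n)}$, into a bound on $Y_{(n)}$, I would apply Assumption \ref{A2} in the projected space $\mathcal Z_{n}$ together with the portmanteau theorem for the non-negative lower-semicontinuous map $\mathbf y\mapsto\|\mathbf z_{(n)}-\mathbf y\|^{-2}$, giving $\mathbb E[\|\mathbf z_{(n)}-Y_{(n)}\|^{-2}]\leq\liminf_{m}\mathbb E[\|\mathbf z_{(n)}-W^{(1)}_{m,k_{m},(n)}\|^{-2}]$; Assumption \ref{A-B} bounds this uniformly as $\mathbf z$ ranges over the ball of radius $A$, producing the constant $C_{A}$.

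The lower bound is the main obstacle, since the integrand vanishes precisely when $\mathbf h$ is parallel to $\mathbf z_{(n)}-Y_{(n)}$; one must exploit the non-degeneracy supplied by Assumption \ref{A-B}, namely that the law of $Z_{n}$ --- and, via weak convergence and orthogonal projection, that of $Y_{(n)}$ --- is nonatomic and not concentrated on any line. My plan is to show that for each $A>0$ there exist positive constants $\theta_{0},p_{0},R$, independent of $\mathbf z$ with $\|\mathbf z\|\leq A$, of the unit vector $\mathbf h\in\mathcal Z_{n}$, and of $n$ large, such that the event
\[
E_{\mathbf z,\mathbf h}=\bigl\{\|\mathbf z_{(n)}-Y_{(n)}\|\leq R,\ |\langle\mathbf n,\mathbf h/\|\mathbf h\|\rangle|\leq\cos\theta_{0}\bigr\}
\]
satisfies $\mathbb P(E_{\mathbf z,\mathbf h})\geq p_{0}$; restricting the Hessian integral to $E_{\mathbf z,\mathbf h}$ then yields $J_{n,\mathbf z}(\mathbf h,\mathbf h)\geq R^{-1}(\sin^{2}\theta_{0})p_{0}\|\mathbf h\|^{2}=:c_{A}\|\mathbf h\|^{2}$. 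The hard part will be extracting uniform $\theta_{0},p_{0},R$: I would argue by compactness and contradiction, assuming a sequence $(\mathbf z_{n_{j}},\mathbf h_{n_{j}}/\|\mathbf h_{n_{j}}\|)$ along which the above probability tends to zero; extracting weakly convergent subsequences inside the nested family $(\mathcal Z_{n})$, any limit direction lies in some fixed $\mathcal Z_{n_{0}}$, and the vanishing would force the law of $Y$ to concentrate on a line through the limit point, contradicting Assumption \ref{A-B}. The main subtleties I anticipate are the growing dimension of $\mathcal Z_{n}$, handled via the nested structure of the projections, and the rigorous transfer from properties of $Z_{n}$ to those of $Y_{(n)}$ through the weak convergence in Assumption \ref{A2}.
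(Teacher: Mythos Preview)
Your overall strategy --- writing the Hessian explicitly, bounding it above via the inverse-moment condition, and bounding it below by showing that the direction $\mathbf n$ is, with uniformly positive probability, not aligned with $\mathbf h$ --- is the right picture, and it is exactly what Proposition~2.1 of \cite{CCZ13} does for a \emph{fixed} random element. The difficulty, which you correctly flag but do not resolve, is uniformity in $n$ when $d_n\to\infty$.

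Your contradiction argument for the lower bound has a genuine gap. You take a sequence of unit vectors $\mathbf h_{n_j}\in\mathcal Z_{n_j}$ and claim that ``any limit direction lies in some fixed $\mathcal Z_{n_0}$''. This is false: the sequence $\mathbf h_n=\mathbf e_n$ lies in $\mathcal Z_n$, has $\|\mathbf h_n\|=1$, and converges weakly to $0$, which lies in every $\mathcal Z_{n_0}$ but is not a direction at all. With a degenerate weak limit you cannot conclude that $Y$ is concentrated on a line, so the contradiction evaporates. The nested structure of the $\mathcal Z_n$ does not supply the compactness you need.

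The paper circumvents this by reversing the order of the argument. It first proves $W^{(1)}_{n,k_n,(n)}\stackrel{d}{\to}Y$ (your transfer, but run in the opposite direction: note that from the proof and from how $\tilde J_{n,\cdot}$ is used in Lemma~\ref{lem-A-5} and Theorem~\ref{thm-Bahadur}, $J_{n,\mathbf z}$ is the Hessian built from $W^{(1)}_{n,k_n,(n)}$, not from $Y_{(n)}$; the textual definition is slightly misleading here). Then, since $Y$ is not concentrated on a line, it fixes once and for all a two-dimensional subspace $\mathrm{span}(\mathbf v_1,\mathbf v_2)$ on which the truncated variance of $Y$ is bounded below, and uses the Portmanteau theorem to push this variance lower bound back to $W^{(1)}_{n,k_n,(n)}$ uniformly for large $n$. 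Because the two directions $\mathbf v_1,\mathbf v_2$ are \emph{fixed}, the argument of Proposition~2.1 in \cite{CCZ13} now applies verbatim at each $n$ with constants that do not depend on $n$: for any unit $\mathbf h\in\mathcal Z_n$, at least one of its projections onto $\mathbf v_1,\mathbf v_2$ has size bounded below, and the variance bound forces $\mathbf n$ to make a uniformly positive angle with $\mathbf h$ on an event of uniformly positive probability. This is precisely the conclusion you were after, obtained without any compactness in the growing spaces $\mathcal Z_n$.
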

	
	We are now ready to present the Bahadur-type asymptotic linear representation for $a_{\lfloor n/k_n\rfloor}(T_{\mathbf{u}_{(n)},n,k_n} -\bm\theta_{0,(n)}) $.
	\begin{thm}\label{thm-Bahadur}
		Let $\mathbb{X}$ be a separable Hilbert space. Let Assumptions \ref{A2} and \ref{A-B} hold. Then, 
		\begin{equation*}
			a_{\lfloor n/k_n\rfloor}(T_{\mathbf{u}_{(n)},n,k_n} -\bm\theta_{0,(n)}) -\mathbf{r}_{\lfloor n/k_n\rfloor,\mathbb{E}[U_n]}
		\end{equation*} 
		\begin{equation}
			=-\frac{1}{k_n}\sum_{i=1}^{k_n} [\tilde{J}_{\mathbf{r}_{\lfloor n/k_n\rfloor,\mathbb{E}[U_n]},n}]^{-1}\bigg(\frac{\mathbf{r}_{\lfloor n/k_n\rfloor,\mathbb{E}[U_n]}-W^{(i)}_{n,k_n,(n)} }{\|\mathbf{r}_{\lfloor n/k_n\rfloor,\mathbb{E}[U_n]}-W^{(i)}_{n,k_n,(n)} \|_{\mathbb{X}}}-U_n   \bigg)+R_n,
		\end{equation}
		for any $l_n$, $k_n$ and $d_n$ such that $k_n=o(n)$, $\lim\limits_{n\to\infty}\frac{l_n}{k_n}=0$ and $\lim\limits_{n\to\infty}\frac{d_n}{k_n^{1-2\rho}}=c$ for some $\rho\in(0,1/2]$ and some $c>0$, where $R_n=O((\log k_n)/k_n^{2\rho})$ as $n\to\infty$ almost surely and
		\begin{equation*}
			U_n=\Bigg(\frac{2l_n}{2l_n+\sum _{i=1}^{k_n}\mathbf{1}_{\{\tilde{Z}^{(i)}_{\lfloor n/k_n\rfloor,(n)}=T_{\mathbf{u}_{(n)},n,k_n} \}}}\Bigg)\Bigg(\frac{\mathbf{u}}{2l_n}\sum _{i=1}^{k_n}\mathbf{1}_{\{\tilde{Z}^{(i)}_{\lfloor n/k_n\rfloor,(n)}=T_{\mathbf{u}_{(n)},n,k_n} \}}
		\end{equation*}
		\begin{equation*}
			-\frac{1}{k_n}\sum _{j:\tilde{Z}^{(j)}_{\lfloor n/k_n\rfloor,(n)}\neq T_{\mathbf{u}_{(n)},n,k_n} }^{k_n}\frac{\tilde{Z}^{(i)}_{\lfloor n/k_n\rfloor,(n)}-T_{\mathbf{u}_{(n)},n,k_n}}{\|\tilde{Z}^{(i)}_{\lfloor n/k_n\rfloor,(n)}-T_{\mathbf{u}_{(n)},n,k_n}\|}\Bigg)\mathbf{1}_{\{l_n\geq 1\}}+\mathbf{u}\mathbf{1}_{\{l_n=0\}}
		\end{equation*}
		is a $\mathcal{Z}_n$-valued random variable with $\|U_n-\mathbf{u}_{(n)}\|<\frac{2l_n}{k_n}$.
	\end{thm}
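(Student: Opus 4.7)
The plan is to convert the contaminated geometric quantile problem into an uncontaminated one at a randomly perturbed parameter $U_n$, apply a growing-dimension Bahadur expansion to this uncontaminated quantile with the natural centering $\mathbf{r}_{\lfloor n/k_n\rfloor,U_n}$, and then swap the random centering for the deterministic $\mathbf{r}_{\lfloor n/k_n\rfloor,\mathbb{E}[U_n]}$, absorbing the swap error into the remainder $R_n$.

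First I would apply Lemma \ref{Lemma-v} with the identification $\mathbf{x}_i \leftrightarrow \tilde{Z}^{(i)}_{\lfloor n/k_n\rfloor,(n)}$, $\tilde{\mathbf{x}}_i \leftrightarrow Z^{(i)}_{\lfloor n/k_n\rfloor,(n)}$, $p \leftrightarrow l_n$, $\mathbf{u} \leftrightarrow \mathbf{u}_{(n)}$, and $\mathbf{x}^* \leftrightarrow T_{\mathbf{u}_{(n)},n,k_n}$. Since $l_n/k_n \to 0$ and $\|\mathbf{u}\|_{\mathbb{X}^*}<1$, the constraint $1\le l_n<\tfrac{k_n}{2}(1-\|\mathbf{u}\|_{\mathbb{X}^*})$ is met for $n$ large, and the lemma furnishes $U_n$ with $\|U_n-\mathbf{u}_{(n)}\|_{\mathbb{X}^*}\le 2l_n/k_n$ and the property that $T_{\mathbf{u}_{(n)},n,k_n}$ is the geometric $U_n$-quantile of the uncontaminated projected sample $\{Z^{(i)}_{\lfloor n/k_n\rfloor,(n)}\}_{i=1}^{k_n}$. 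Translation and positive-scaling covariance of geometric quantiles then shows that $a_{\lfloor n/k_n\rfloor}(T_{\mathbf{u}_{(n)},n,k_n}-\bm\theta_{0,(n)})$ is the empirical $U_n$-quantile of the i.i.d.\ rescaled block estimates $\{W^{(i)}_{n,k_n,(n)}\}_{i=1}^{k_n}$ on $\mathcal{Z}_n$.

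Next I would run the Bahadur representation for empirical geometric quantiles in Hilbert space on the growing subspace $\mathcal{Z}_n$, adapting the argument of \cite{ChCh2014}. The local strong convexity uniformly in $n$ supplied by Lemma \ref{lem-CCZ}, the non-atomicity and non-collinearity from Assumption \ref{A-B}, and the limit-law Assumption \ref{A2} for $W^{(1)}_{n,k_n,(n)}$ together yield
\begin{equation*}
a_{\lfloor n/k_n\rfloor}(T_{\mathbf{u}_{(n)},n,k_n}-\bm\theta_{0,(n)})-\mathbf{r}_{\lfloor n/k_n\rfloor,U_n}=-\frac{1}{k_n}\sum_{i=1}^{k_n}\bigl[\tilde{J}_{\mathbf{r}_{\lfloor n/k_n\rfloor,U_n},n}\bigr]^{-1}\!\bigg(\frac{\mathbf{r}_{\lfloor n/k_n\rfloor,U_n}-W^{(i)}_{n,k_n,(n)}}{\|\mathbf{r}_{\lfloor n/k_n\rfloor,U_n}-W^{(i)}_{n,k_n,(n)}\|_{\mathbb{X}}}-U_n\bigg)+R^{(1)}_n,
\end{equation*}
with $R^{(1)}_n=O((\log k_n)/k_n^{2\rho})$ a.s.\ provided $d_n\sim c\,k_n^{1-2\rho}$. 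The data-dependence of $U_n$ makes this step a \emph{uniform} Bahadur expansion over the shrinking $(2l_n/k_n)$-ball around $\mathbf{u}_{(n)}$; Lemma \ref{lem-CCZ} is the crucial input, as it provides the local strong convexity of the projected objective uniformly in both $n$ and the parameter.

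Finally I would replace $U_n$ by $\mathbb{E}[U_n]$ in the centerings. The ball of radius $2l_n/k_n$ around $\mathbf{u}_{(n)}$ is convex, hence contains $\mathbb{E}[U_n]$ as well, giving $\|U_n-\mathbb{E}[U_n]\|_{\mathbb{X}^*}\le 4l_n/k_n$. Continuity of the parameter-to-quantile map $\mathbf{u}\mapsto\mathbf{r}_{m,\mathbf{u}}$ established in Section \ref{Sec-Continuity}, together with the uniform Hessian bounds of Lemma \ref{lem-CCZ}, yields local Lipschitz continuity of both $\mathbf{r}_{m,\cdot}$ and $\tilde{J}_{\mathbf{r}_{m,\cdot},n}^{-1}$ on the scale where the argument varies, so substituting $\mathbf{r}_{\lfloor n/k_n\rfloor,\mathbb{E}[U_n]}$ for $\mathbf{r}_{\lfloor n/k_n\rfloor,U_n}$ on both sides costs at most $O(l_n/k_n)$, which is absorbed into $R_n=R^{(1)}_n+O(l_n/k_n)=O((\log k_n)/k_n^{2\rho})$. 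The hard part is the uniform-in-parameter, growing-dimension Bahadur expansion of Step 2: the classical ChCh/CCZ Taylor expansion of the projected objective plus concentration of its gradient must be pushed through simultaneously over $\mathcal{Z}_n$ and over an $O(l_n/k_n)$-neighbourhood of $\mathbf{u}_{(n)}$, with Assumption \ref{A-B} and Lemma \ref{lem-CCZ} providing exactly the uniform local strong convexity that preserves the rate.
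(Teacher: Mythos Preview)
Your first two steps---using Lemma~\ref{Lemma-v} to rewrite the contaminated $\mathbf{u}_{(n)}$-quantile as an uncontaminated $U_n$-quantile, then applying translation/scale equivariance---are exactly what the paper does. The divergence is in how the Bahadur expansion is organized.

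The paper never introduces the random centering $\mathbf{r}_{\lfloor n/k_n\rfloor,U_n}$. Instead, via Proposition~\ref{pro-Big-O-delta_n} and Theorem~\ref{thm-v}, it shows directly that the sample quantile lies within $O(\delta_n)$ of the \emph{deterministic} point $\mathbf{r}_{\lfloor n/k_n\rfloor,\mathbb{E}[U_n]}$, and then Taylor-expands the projected score on a grid around that deterministic point. The random $U_n$ survives only as the additive offset in the score, where it causes no rate loss.

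Your detour through $\mathbf{r}_{\lfloor n/k_n\rfloor,U_n}$ creates a genuine gap in the final step. The naive Lipschitz bound you invoke for the swap gives an error of order $l_n/k_n$, and your claim that this is absorbed into $O((\log k_n)/k_n^{2\rho})$ is false under the stated hypotheses: take $\rho=1/2$ and $l_n=\sqrt{k_n}$, so that $l_n/k_n=k_n^{-1/2}\to 0$ as required, yet $(\log k_n)/k_n^{2\rho}=(\log k_n)/k_n=o(k_n^{-1/2})$. The $O(l_n/k_n)$ contributions on the left and right sides \emph{do} cancel to leading order (differentiating the defining relation $\mathbb{E}[(\mathbf{r}_\mathbf{v}-X)/\|\mathbf{r}_\mathbf{v}-X\|]=\mathbf{v}$ yields $\tilde J\,\partial_\mathbf{v}\mathbf{r}_\mathbf{v}=I$, so the shift in $\mathbf{r}$ is matched by the shift in $[\tilde J]^{-1}$ times the score), but extracting that cancellation requires precisely the second-order Hessian control of Lemma~\ref{lem-A-5} carried out around the deterministic center; a Lipschitz argument alone does not deliver it. Centering at $\mathbb{E}[U_n]$ from the outset, as the paper does, is what makes the $O(\delta_n^2)$ remainder fall out without any swap step.
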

	Compare the formulation of $U_n$ with (\ref{v-statement}). Further, observe that if $k_n$ and $d_n$ are such that $\rho\in(1/4,1/2]$ we obtain asymptotic normality of $\sqrt{k_n}(a_{\lfloor n/k_n\rfloor}(T_{\mathbf{u}_{(n)},n,k_n} -\bm\theta_{0,(n)}) -\mathbf{r}_{\lfloor n/k_n\rfloor,\mathbb{E}[U_n]})$. By setting $d_n=d$ and $\rho=1/2$, for some $d\in\mathbb{N}$, we obtain the result for the finite-dimensional case of dimension $d$.
	\begin{thm}\label{thm-Bahadur-2}
		Let $\mathbb{X}$ be a separable Hilbert space. Let Assumptions \ref{A2} and \ref{A-B} hold. Let $l_n$, $k_n$ and $d_n$ be such that $k_n=o(n)$, $\lim\limits_{n\to\infty}\frac{l_n}{k_n}=0$ and $\lim\limits_{n\to\infty}\frac{d_n}{k_n^{1-2\rho}}=c$ for some $\rho\in(1/4,1/2]$ and some $c>0$. Assume that $\sqrt{k_n}\|\mathbf{r}_\mathbf{u}-\mathbf{r}_{\lfloor n/k_n\rfloor,\mathbb{E}[U_n]}\|\to0$ and that $\|\tilde{J}_{\mathbf{r}_{\lfloor n/k_n\rfloor,\mathbf{u}_{(n)} }}-\tilde{J}_{\mathbf{r}_{\mathbf{u}}}\|\to0$ as $n\to\infty$. Then, 
		\begin{equation}
			\sqrt{k_n}(a_{\lfloor n/k_n\rfloor}(T_{\mathbf{u}_{(n)},n,k_n} -\bm\theta_{0,(n)}) -\mathbf{r}_{\mathbf{u}})\stackrel{d}{\to}G_{\mathbf{u}},
		\end{equation}
		where $G_{\mathbf{u}}$ is a $\mathbb{X}$-valued mean zero Gaussian random variable with covariance given by $V_{\mathbf{u}}=[\tilde{J}_{\mathbf{r}_{\mathbf{u}}}]^{-1}\Lambda_\mathbf{u}[\tilde{J}_{\mathbf{r}_{\mathbf{u}}}]^{-1}$, where $ \Lambda_\mathbf{u}:\mathbb{X}\to\mathbb{X}$ satisfies $\langle \Lambda_\mathbf{u}(\mathbf{h}),\mathbf{s}\rangle=\mathbb{E}\big[\langle \frac{\mathbf{r}_{\mathbf{u}}-Y}{\| \mathbf{r}_{\mathbf{u}}-Y\|_{\mathbb{X}}} -\mathbf{u},\mathbf{h}\rangle \langle \frac{\mathbf{r}_{\mathbf{u}}-Y}{\| \mathbf{r}_{\mathbf{u}}-Y\|_{\mathbb{X}}} -\mathbf{u},\mathbf{s}\rangle\big]$, for every $\mathbf{h},\mathbf{s}\in\mathbb{X}$.
	\end{thm}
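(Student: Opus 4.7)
The plan is to start from the Bahadur-type representation in Theorem \ref{thm-Bahadur}, scale it by $\sqrt{k_n}$, reduce every $n$-dependent quantity to its $\mathbf{u}$-indexed limit under the stated hypotheses, and then conclude via a triangular-array central limit theorem for bounded i.i.d.\ random elements in the separable Hilbert space $\mathbb{X}$.

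First I would show the scaled remainder is negligible: since $\rho\in(1/4,1/2]$, one has $\sqrt{k_n}R_n=O(k_n^{1/2-2\rho}\log k_n)\to 0$ almost surely. On the left-hand side, the first extra hypothesis $\sqrt{k_n}\|\mathbf{r}_{\mathbf{u}}-\mathbf{r}_{\lfloor n/k_n\rfloor,\mathbb{E}[U_n]}\|\to 0$ lets me replace the drifting quantile $\mathbf{r}_{\lfloor n/k_n\rfloor,\mathbb{E}[U_n]}$ by $\mathbf{r}_{\mathbf{u}}$ at $o(1)$ cost. Inside the sum, the operator $[\tilde{J}_{\mathbf{r}_{\lfloor n/k_n\rfloor,\mathbb{E}[U_n]},n}]^{-1}$ is uniformly bounded in operator norm by Lemma \ref{lem-CCZ} and, by the second extra hypothesis together with continuity of inversion on uniformly coercive operators, converges in norm to $[\tilde{J}_{\mathbf{r}_{\mathbf{u}}}]^{-1}$; since each summand $(\mathbf{r}-W^{(i)})/\|\mathbf{r}-W^{(i)}\|-U_n$ has norm at most $2$, this replacement contributes $o(1)$ after division by $\sqrt{k_n}$. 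The replacement of $\mathbf{r}_{\lfloor n/k_n\rfloor,\mathbb{E}[U_n]}$ by $\mathbf{r}_{\mathbf{u}}$ inside the normalized difference uses Lipschitz behaviour of $\mathbf{z}\mapsto \mathbf{z}/\|\mathbf{z}\|$ away from the origin, kept under control by Assumption \ref{A-B}. Finally, $U_n$ is replaced by $\mathbf{u}_{(n)}$ (and then $\mathbf{u}$) at cost $\sqrt{k_n}\cdot(2l_n/k_n)=2l_n/\sqrt{k_n}$; this tends to zero because, via Lipschitz dependence of $\mathbf{v}\mapsto\mathbf{r}_{\mathbf{v}}$ (a consequence of Lemma \ref{lem-CCZ} and Assumption \ref{A-B}), the first extra hypothesis forces $l_n/\sqrt{k_n}\to 0$.

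After these reductions the problem is reduced to proving
\[
\frac{1}{\sqrt{k_n}}\sum_{i=1}^{k_n}\xi_i^{(n)}\stackrel{d}{\to}G_{\mathbf{u}},\qquad \xi_i^{(n)}:=-[\tilde{J}_{\mathbf{r}_{\mathbf{u}}}]^{-1}\!\left(\frac{\mathbf{r}_{\mathbf{u}}-W^{(i)}_{n,k_n,(n)}}{\|\mathbf{r}_{\mathbf{u}}-W^{(i)}_{n,k_n,(n)}\|}-\mathbf{u}\right).
\]
For each fixed $n$ the $\xi_i^{(n)}$ are i.i.d.\ and uniformly bounded in norm by $2\|[\tilde{J}_{\mathbf{r}_{\mathbf{u}}}]^{-1}\|_{\mathrm{op}}$; by Assumption \ref{A2} and the continuous mapping theorem (applied to the bounded map $\mathbf{z}\mapsto(\mathbf{r}_{\mathbf{u}}-\mathbf{z})/\|\mathbf{r}_{\mathbf{u}}-\mathbf{z}\|-\mathbf{u}$, continuous away from $\mathbf{r}_{\mathbf{u}}$ thanks to Assumption \ref{A-B}) their common law converges to that of $-[\tilde{J}_{\mathbf{r}_{\mathbf{u}}}]^{-1}((\mathbf{r}_{\mathbf{u}}-Y)/\|\mathbf{r}_{\mathbf{u}}-Y\|-\mathbf{u})$, which is mean-zero by the defining equation for $\mathbf{r}_{\mathbf{u}}$ and has covariance operator $V_{\mathbf{u}}$. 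Uniform boundedness trivially gives the Lindeberg condition, so the triangular-array CLT for i.i.d.\ random elements in a separable Hilbert space delivers the claimed Gaussian limit.

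The main obstacle is the bookkeeping of the triangular-array structure: each $\xi_i^{(n)}$ depends on $n$ both through the projection subspace $\mathcal{Z}_n$ (whose dimension $d_n$ grows with $n$) and through the finite-sample distribution of $W^{(i)}_{n,k_n,(n)}$. The technical heart of the argument is to upgrade the weak convergence of $W^{(i)}_{n,k_n,(n)}$ to $Y$ into trace-norm (or Hilbert--Schmidt) convergence of the covariance operators of the $\xi_i^{(n)}$ to $V_{\mathbf{u}}$; here one combines Assumption \ref{A-B} (to keep the integrand bounded and continuous in a neighbourhood of $\mathbf{r}_{\mathbf{u}}$) with dominated convergence applied to the uniformly bounded integrand $(\mathbf{r}_{\mathbf{u}}-\mathbf{z})/\|\mathbf{r}_{\mathbf{u}}-\mathbf{z}\|-\mathbf{u}$, and propagates the projection $(\cdot)_{(n)}$ carefully throughout.
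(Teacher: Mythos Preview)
Your approach is correct and essentially identical to the paper's: start from the Bahadur representation of Theorem \ref{thm-Bahadur}, kill the scaled remainder using $\rho>1/4$, absorb the drifting centering and operator via the two extra hypotheses, and conclude with a triangular-array CLT for bounded i.i.d.\ Hilbert-space summands (the paper cites Corollary 7.8 in \cite{Gine}). The paper's proof is a single sentence, so you have filled in considerably more detail; the only place to tighten is the step where you deduce $l_n/\sqrt{k_n}\to 0$ from the first extra hypothesis---this requires a bi-Lipschitz (not merely Lipschitz) bound on $\mathbf v\mapsto\mathbf r_{m,\mathbf v}$, which does follow from the uniform coercivity in Lemma \ref{lem-CCZ} but deserves an explicit line.
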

	Assuming that $\sqrt{k_n}\|\mathbf{r}_\mathbf{u}-\mathbf{r}_{\lfloor n/k_n\rfloor,\mathbb{E}[U_n]}\|\to0$ implies assuming that the quantile of the distribution converge to the quantile of the limiting distribution fast enough (see Theorem 3.4 in \cite{ChCh2014}). Since the distribution of $Z_{n}$ is not concentrated on a straight line we know that $\Lambda_{\mathbf{u},n}:\mathcal{Z}_n\to\mathcal{Z}_n$ given by $\langle \Lambda_{\mathbf{u},n}(\mathbf{h}),\mathbf{s}\rangle=\mathbb{E}\big[\langle \frac{\mathbf{r}_{\mathbf{u}_{(n)}}-W^{(1)}_{n,k_n,\mathbf{u}_{(n)}} }{\| \mathbf{r}_{\mathbf{u}_{(n)}}-W^{(1)}_{n,k_n,\mathbf{u}_{(n)}}\|_{\mathbb{X}}} -\mathbf{u}_{(n)},\mathbf{h}\rangle \langle \frac{\mathbf{r}_{\mathbf{u}_{(n)}}-W^{(1)}_{n,k_n,\mathbf{u}_{(n)}}}{\| \mathbf{r}_{\mathbf{u}_{(n)}}-W^{(1)}_{n,k_n,\mathbf{u}_{(n)}}\|_{\mathbb{X}}} -\mathbf{u}_{(n)},\mathbf{s}\rangle\big]$ is invertible for $n$ large enough. Thus, we can get rid of the assumption $\|\tilde{J}_{\mathbf{r}_{\lfloor n/k_n\rfloor,\mathbf{u}_{(n)} }}-\tilde{J}_{\mathbf{r}_{\mathbf{u}}}\|\to0$ as $n\to\infty$ at the expense of clarity. In this case, the result in Theorem \ref{thm-Bahadur-2} would be
	\begin{equation*}
		\tilde{J}_{\mathbf{r}_{\lfloor n/k_n\rfloor,\mathbf{u}_{(n)}}} \Lambda_{\mathbf{u},n}^{-1/2}\sqrt{k_n}(a_{\lfloor n/k_n\rfloor}(T_{\mathbf{u}_{(n)},n,k_n} -\bm\theta_{0,(n)}) -\mathbf{r}_{\mathbf{u}})\stackrel{d}{\to}S_{\mathbf{u}},
	\end{equation*}
	where $S_{\mathbf{u}}$ is a $\mathbb{X}$-valued mean zero Gaussian random variable with covariance given by the identity operator.
	
	From this we have the following robustness property of the QoE estimator. Consider $Y$ to be such that $\mathbf{r}_\mathbf{u}=\mathbf{0}$, for some $\mathbf{u}$, and $a_{n}=\sqrt{n}$, as in the typical case of asymptotically normal estimators in $\mathbb{R}^d$. In this case $Y$ is symmetrically Gaussian around zero, hence $\mathbf{r}_\mathbf{0}=\mathbf{0}$, and $a_{n}=\sqrt{n}$.
	\begin{co}\label{co-robust-geo-infinite}
		Under the assumptions of Theorem \ref{thm-Bahadur-2}, with $Y$ such that $\mathbf{r}_\mathbf{u}=\mathbf{0}$ for some $\mathbf{u}$, we have
		\begin{equation}
			\sqrt{k_n}a_{\lfloor n/k_n\rfloor}(T_{\mathbf{u}_{(n)},n,k_n} -\bm\theta_{0,(n)}) \stackrel{d}{\to}Z,
		\end{equation}
		and if $a_{n}=\sqrt{n}$, then
		\begin{equation}
			\sqrt{n}(T_{\mathbf{u}_{(n)},n,k_n} -\bm\theta_{0,(n)}) \stackrel{d}{\to}Z,
		\end{equation}
		where $Z$ is as in Theorem \ref{thm-Bahadur-2}.
	\end{co}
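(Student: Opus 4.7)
The corollary is a direct specialization of Theorem \ref{thm-Bahadur-2} combined with a routine Slutsky argument, so the plan is short. The first displayed convergence is obtained by applying Theorem \ref{thm-Bahadur-2} verbatim under its standing assumptions, and then substituting the hypothesis $\mathbf{r}_{\mathbf{u}}=\mathbf{0}$ into the centering term $\mathbf{r}_{\mathbf{u}}$ that appears on the left-hand side. That yields
$$
\sqrt{k_n}\,a_{\lfloor n/k_n\rfloor}\bigl(T_{\mathbf{u}_{(n)},n,k_n}-\bm\theta_{0,(n)}\bigr)\stackrel{d}{\to} G_{\mathbf{u}},
$$
and we set $Z:=G_{\mathbf{u}}$, the mean-zero $\mathbb{X}$-valued Gaussian of Theorem \ref{thm-Bahadur-2} with covariance $V_{\mathbf{u}}=[\tilde{J}_{\mathbf{r}_{\mathbf{u}}}]^{-1}\Lambda_{\mathbf{u}}[\tilde{J}_{\mathbf{r}_{\mathbf{u}}}]^{-1}$.

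For the second display I would specialize $a_n=\sqrt{n}$ and then convert the normalizer $\sqrt{k_n}\,a_{\lfloor n/k_n\rfloor}=\sqrt{k_n\lfloor n/k_n\rfloor}$ into $\sqrt{n}$. The key elementary observation is that $0\le n-k_n\lfloor n/k_n\rfloor<k_n$, so
$$
\frac{k_n\lfloor n/k_n\rfloor}{n}=1-\frac{n\bmod k_n}{n}\to 1
$$
because $k_n=o(n)$ by the assumptions of Theorem \ref{thm-Bahadur-2}. Therefore the deterministic scaling factor $\sqrt{n}/\sqrt{k_n\lfloor n/k_n\rfloor}$ tends to $1$, and Slutsky's lemma (valid in the separable Hilbert space $\mathbb{X}$) upgrades the first display to
$$
\sqrt{n}\bigl(T_{\mathbf{u}_{(n)},n,k_n}-\bm\theta_{0,(n)}\bigr)\stackrel{d}{\to} Z.
$$

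There is essentially no genuine obstacle here, as all the analytic and probabilistic content has already been absorbed into Theorem \ref{thm-Bahadur-2}. The only points to state carefully are that the ``$\mathbf{r}_{\mathbf{u}}=\mathbf{0}$'' hypothesis enters exactly at the centering of the limit theorem (so it removes the drift without affecting the Gaussian limit or its covariance), and that the Slutsky step is applied with a deterministic scalar multiplier, for which convergence in distribution is preserved in any separable Banach space (in particular in the separable Hilbert space $\mathbb{X}$).
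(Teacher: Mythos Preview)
Your proof is correct and matches the paper's approach: the paper simply states ``It follows from Theorem \ref{thm-Bahadur-2}'' without further elaboration, so your explicit Slutsky argument for converting $\sqrt{k_n\lfloor n/k_n\rfloor}$ to $\sqrt{n}$ actually fills in detail the paper leaves implicit.
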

	Thus, we have asymptotic normality of the QoE estimator in the presence of contaminated data $l_n=o(n)$. By proving the results for quantiles instead of just for the median, we allow the results to be applied to any estimator whose asymptotic distribution is strictly positive on a neighborhood of $\mathbf{0}$.
	
	In the finite-dimensional setting some of the assumptions of Theorem \ref{thm-Bahadur-2} are always satisfied. By Lemma \ref{lem-uniform-continuity},  under Assumption \ref{A2}, $\sup\limits_{\mathbf{z}\in\mathbb{R}^d:\|\mathbf{z}-\mathbf{u}\|<l_n/k_n}|\mathbf{r}_{\lfloor n/k_n\rfloor,\mathbf{z}}-\mathbf{r}_{\mathbf{u}}|\to0$ for any sequences $l_n$ and $k_n=o(n)$ such that $\lim\limits_{n\to\infty}\frac{l_n}{k_n}=0$.  Let $\varepsilon>0$ and let $k_n=cn^{\beta}$ for $\beta\in(0,1)$. Define 
	\begin{equation*}
		\beta_{\varepsilon}^*:=\sup\{\beta\in(0,1):\sqrt{n^{\beta}}\sup\limits_{\|\mathbf{z}-\mathbf{u}\|<n^{-\varepsilon/2}}|\mathbf{r}_{\lfloor n/cn^{\beta}\rfloor,\mathbf{z}}-\mathbf{r}_{\mathbf{u}}|\to0\}.
	\end{equation*}
	Theorem \ref{thm-Bahadur-2} on $\mathbb{R}^d$ now simplifies as follows. 
	\begin{pro}\label{co-robust-geo-finite}
		Let $\mathbb{X}=\mathbb{R}^d$ and let Assumptions \ref{A2} and \ref{A-B} hold. Then, 
		\begin{equation*}
			\tilde{J}_{\mathbf{r}_{\lfloor n/k_n\rfloor,\mathbf{u}}} \Lambda_{\mathbf{u},n}^{-1/2}	\sqrt{cn^{\beta}}(a_{\lfloor n/cn^{\beta}\rfloor}(T_{\mathbf{u},n,cn^{\beta}} -\bm\theta_{0}) -\mathbf{r}_{\mathbf{u}})\stackrel{d}{\to}N(\mathbf{0},\mathbf{I})
		\end{equation*} for any $l_n=O(n^{\gamma})$, $\beta\in(\gamma+\varepsilon,\beta^*_\varepsilon)$, and $\varepsilon>0$, where $\mathbf{I}$ is the identity matrix. If $Y$ is such that $\mathbf{r}_\mathbf{u}=\mathbf{0}$ and $a_{n}=\sqrt{n}$, then 
		\begin{equation*}
			\tilde{J}_{\mathbf{r}_{\lfloor n/k_n\rfloor,\mathbf{u}}} \Lambda_{\mathbf{u},n}^{-1/2}	\sqrt{n}(T_{\mathbf{u},n,cn^{\beta}} -\bm\theta_{0}) \stackrel{d}{\to}N(\mathbf{0},\mathbf{I})
		\end{equation*} for any $l_n=O(n^{\gamma})$, $\beta\in(\gamma+\varepsilon,\beta^*_\varepsilon)$, and $\varepsilon>0$.
	\end{pro}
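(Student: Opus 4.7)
The plan is to derive the proposition by specializing Theorems \ref{thm-Bahadur} and \ref{thm-Bahadur-2} to $\mathbb{X} = \mathbb{R}^d$ and then using the studentized form explained in the remark immediately preceding the statement. In finite dimensions I would fix $d_n = d$ and take $\rho = 1/2$ in Theorem \ref{thm-Bahadur}: the condition $d_n/k_n^{1-2\rho} \to c$ is satisfied automatically, the projections $\mathbf{z}_{(n)}$ coincide with $\mathbf{z}$ for all $n$ large enough, and the remainder satisfies $\sqrt{k_n}\,R_n = O((\log k_n)/\sqrt{k_n}) \to 0$.

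Next I would verify the rate hypotheses. With $k_n = cn^\beta$, $l_n = O(n^\gamma)$, and $\beta > \gamma + \varepsilon$, we have $l_n/k_n = O(n^{\gamma-\beta}) = o(n^{-\varepsilon})$; in particular $\|U_n - \mathbf{u}\| < 2 l_n/k_n < n^{-\varepsilon/2}$ eventually. Combining this with the definition of $\beta^*_\varepsilon$ and the condition $\beta < \beta^*_\varepsilon$ gives
\begin{equation*}
\sqrt{k_n}\,\bigl\|\mathbf{r}_{\lfloor n/k_n\rfloor,\mathbb{E}[U_n]} - \mathbf{r}_{\mathbf{u}}\bigr\| \leq \sqrt{cn^\beta} \sup_{\|\mathbf{z} - \mathbf{u}\| < n^{-\varepsilon/2}} \bigl\|\mathbf{r}_{\lfloor n/cn^\beta\rfloor, \mathbf{z}} - \mathbf{r}_{\mathbf{u}}\bigr\| \longrightarrow 0.
\end{equation*}
The Bahadur expansion then linearizes to
\begin{equation*}
\sqrt{k_n}\bigl(a_{\lfloor n/k_n\rfloor}(T_{\mathbf{u},n,k_n} - \bm\theta_0) - \mathbf{r}_{\mathbf{u}}\bigr) = -\tilde{J}_{\mathbf{r}_{\lfloor n/k_n\rfloor,\mathbf{u}}}^{-1} \cdot \frac{1}{\sqrt{k_n}} \sum_{i=1}^{k_n} S^{(i)}_n + o_p(1),
\end{equation*}
where, up to negligible terms from replacing $U_n$ by $\mathbf{u}$ and $\mathbf{r}_{\lfloor n/k_n\rfloor,\mathbb{E}[U_n]}$ by $\mathbf{r}_{\lfloor n/k_n\rfloor,\mathbf{u}}$, the summands $S^{(i)}_n$ over uncontaminated blocks are i.i.d., have mean zero (by the first-order condition defining the quantile), and have covariance $\Lambda_{\mathbf{u},n}$.

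The classical $\mathbb{R}^d$ central limit theorem applied to $k_n^{-1/2} \sum_i S^{(i)}_n$ produces a Gaussian limit with covariance $\Lambda_{\mathbf{u},n}$; studentizing by $\Lambda_{\mathbf{u},n}^{-1/2}$ yields $N(\mathbf{0},\mathbf{I})$, and composing on the left with $\tilde{J}_{\mathbf{r}_{\lfloor n/k_n\rfloor,\mathbf{u}}}$ absorbs the inverse-Hessian factor in the linearization, producing the stated normalization. The at most $l_n$ contaminated blocks contribute $O(l_n/\sqrt{k_n}) = O(n^{(\gamma-\beta)/2}) \to 0$ since $\beta > \gamma$. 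The second statement follows at once by substituting $\mathbf{r}_\mathbf{u} = \mathbf{0}$ and $a_n = \sqrt{n}$, using $\sqrt{cn^\beta}\,\sqrt{\lfloor n/cn^\beta\rfloor} \sim \sqrt{n}$. The main obstacle I expect is the careful bookkeeping of the contamination: Lemma \ref{Lemma-v} certifies $\|U_n - \mathbf{u}\| \leq 2l_n/k_n$, but turning this into a negligible perturbation of the summands requires a Lipschitz-type control on $\mathbf{s} \mapsto (\mathbf{r} - \mathbf{s})/\|\mathbf{r} - \mathbf{s}\|$ near the singularity $\mathbf{s} = \mathbf{r}$, which is precisely what Assumption \ref{A-B} is designed to supply, and it is here that the strict inequality $\beta > \gamma + \varepsilon$ (rather than merely $\beta > \gamma$) becomes essential.
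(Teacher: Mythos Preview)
Your proposal is correct and follows essentially the same route as the paper: specialize Theorem \ref{thm-Bahadur} to $\mathbb{R}^d$ with $d_n=d$, $\rho=1/2$, use the bound $\|U_n-\mathbf{u}\|\le 2l_n/k_n<n^{-\varepsilon/2}$ together with the definition of $\beta^*_\varepsilon$ to control $\sqrt{k_n}\|\mathbf{r}_{\lfloor n/k_n\rfloor,\mathbb{E}[U_n]}-\mathbf{r}_\mathbf{u}\|$, and finish with the triangular-array CLT in the studentized form described just before the statement. The paper's proof is a two-line appeal to Lemma \ref{lem-uniform-continuity}, Theorem \ref{thm-Bahadur}, and the CLT; your version simply unpacks the same ingredients in more detail.
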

	We remark that the interval $(\gamma+\varepsilon,\beta^*_\varepsilon)$ might be improved. Indeed, if we knew that the set of $\beta\in(\gamma,\beta^{*}_\gamma)$ such that $\sqrt{n^{\beta}}\sup\limits_{\|\mathbf{z}-\mathbf{u}\|<n^{\gamma-\beta}}|\mathbf{r}_{\lfloor n/cn^{\beta}\rfloor,\mathbf{z}}-\mathbf{r}_{\mathbf{u}}|\to0$ is a connected interval, then the convergences in Proposition \ref{co-robust-geo-finite} would hold for any $l_n=O(n^{\gamma})$ and $\beta\in(\gamma,\beta^*_\gamma)$.

	\subsection{Robustness results for sample quantiles}\label{SubSec-QoE-quantile-regression}
	In this section we present a robustness result for the asymptotic distributions of quantiles. This result does not require the division of the observations into blocks. We focus on the one-dimensional case for the sake of brevity and because the extension of this result to other spaces is straightforward given the results presented so far. Informally, the next result can be summarized as follows: \textit{The asymptotic distribution of the sample quantile is robust to the contamination of $o(n^{1/2})$ many data.}
	
	\begin{pro}\label{pro-sample-quantile}
		Let $\mathbb{X}=\mathbb{R}$ and $\alpha\in(0,1)$. Consider $n$ i.i.d.~observations with distribution $F$ and density $f$ such that $f$ is strictly positive and continuous at $F^{-1}(\alpha)$. Let $l_n$ observations be contaminated and let $\tilde{q}_{\alpha}$ be the sample quantile with parameter $\alpha$ of the contaminated sample. If $l_n=o(n^{1/2})$, then $\sqrt{n}(q_\alpha-F^{-1}(\alpha)) \stackrel{d}{\to}N\Big(0,\frac{\alpha(1-\alpha)}{f^{2}(F^{-1}(\alpha))}\Big)$ as $n\to\infty$.
	\end{pro}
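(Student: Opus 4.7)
The plan is to sandwich the contaminated sample quantile $\tilde{q}_\alpha$ between two order statistics of the original (uncontaminated) sample at slightly perturbed probability levels, and then show that both squeezing bounds share the same Gaussian limit.

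First, let $F_n$ denote the empirical CDF of the original i.i.d.\ sample and $\tilde{F}_n$ that of the contaminated sample. Since replacing a single observation changes the count $\#\{i:X_i\leq t\}$ by at most one, $\|F_n-\tilde{F}_n\|_\infty\leq l_n/n$ uniformly in $t$. Using the definition of the sample quantile via the generalized inverse (up to the midpoint convention in Section~\ref{Sec-Quantiles}, which only changes the result by $o_p(n^{-1/2})$ at a point of positive density), this immediately yields the sandwich
\begin{equation*}
F_n^{-1}(\alpha - l_n/n)\ \leq\ \tilde{q}_\alpha\ \leq\ F_n^{-1}(\alpha + l_n/n).
\end{equation*}

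Second, I would decompose each bound with $\alpha_n^{\pm}:=\alpha\pm l_n/n$ as
\begin{equation*}
\sqrt{n}\bigl(F_n^{-1}(\alpha_n^{\pm})-F^{-1}(\alpha)\bigr)=\sqrt{n}\bigl(F_n^{-1}(\alpha_n^{\pm})-F^{-1}(\alpha_n^{\pm})\bigr)+\sqrt{n}\bigl(F^{-1}(\alpha_n^{\pm})-F^{-1}(\alpha)\bigr).
\end{equation*}
Since $f$ is continuous and strictly positive at $F^{-1}(\alpha)$, $F^{-1}$ is differentiable there with derivative $1/f(F^{-1}(\alpha))$, so the deterministic shift equals $\pm l_n/(\sqrt{n}\,f(F^{-1}(\alpha)))+o(1)$, which converges to $0$ because $l_n=o(n^{1/2})$.

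Third, I would handle the stochastic term via the Bahadur representation: for any sequence $\alpha_n\to\alpha$,
\begin{equation*}
\sqrt{n}\bigl(F_n^{-1}(\alpha_n)-F^{-1}(\alpha_n)\bigr)=\frac{\sqrt{n}\bigl(\alpha_n-F_n(F^{-1}(\alpha_n))\bigr)}{f(F^{-1}(\alpha_n))}+o_p(1).
\end{equation*}
Asymptotic equicontinuity of the empirical process $\sqrt{n}(F_n-F)$ in a neighbourhood of $F^{-1}(\alpha)$, together with continuity of $f$ there, shows that replacing $\alpha_n$ by $\alpha$ in the right-hand side only costs a $o_p(1)$ term, so both squeezing bounds converge in distribution to the same $N(0,\alpha(1-\alpha)/f^2(F^{-1}(\alpha)))$. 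A direct squeeze on the sandwich then yields the claim for $\tilde{q}_\alpha$.

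The main obstacle is the stochastic step: proving that the uncontaminated sample quantile at the drifting level $\alpha\pm l_n/n$ has the same Gaussian limit as at level $\alpha$. This is where the condition $l_n=o(n^{1/2})$ enters in a sharp way, since it is precisely what ensures both that the deterministic shift is negligible and that the drifting level lies within the $n^{-1/2}$-window around $\alpha$ on which the empirical quantile process is asymptotically equicontinuous; once invoked, either through the Bahadur representation or a Donsker argument, the remaining steps are routine.
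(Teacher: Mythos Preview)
Your argument is correct, and the sandwich $F_n^{-1}(\alpha-l_n/n)\le \tilde q_\alpha\le F_n^{-1}(\alpha+l_n/n)$ together with the Bahadur/Donsker treatment of the drifting-level quantile is a clean and standard way to finish. The paper, however, does not go through the empirical quantile process. Following the proof of Theorem~\ref{t-k-to-infinity-multivariate-robust}, it works on the dual side: for $x=F^{-1}(\alpha)+y/\sqrt{n}$ it sandwiches the event $\{\tilde q_\alpha\le x\}$ between two events of the form $\{\sum_{j=1}^{n}\mathbf{1}_{\{X_j>x\}}\le n(1-\alpha)\pm l_n\}$ computed from the \emph{uncontaminated} sample, and then applies the CLT (made uniform in the drifting threshold via a Berry--Esseen bound for the Bernoulli sums) to compute the limit of the two bounding probabilities; the $l_n=o(\sqrt n)$ condition enters exactly to kill the $\pm l_n/\sqrt{n}$ shift in that limit. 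So both proofs are sandwich arguments exploiting the quantile/CDF duality, but yours bounds the quantile and invokes Bahadur/empirical-process equicontinuity, while the paper bounds the distribution function of $\tilde q_\alpha$ and uses only an elementary CLT on indicator sums. Your route is arguably more conceptual and reuses off-the-shelf empirical-process facts; the paper's route is more self-contained and avoids the Bahadur representation entirely.
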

	
	We remark that it is not possible to obtain such a result for quantile regressions because the quantile regression estimator has a breakdown point of $1/n$ in the $(x,y)$-contamination and only marginally better in the fixed $x$-, $y$-contamination, see page 270 in \cite{koenker_2005}. Thus, for the quantile regression estimator consistency is not preserved under contamination, and so we cannot have robustness of its asymptotic distribution.

	\section{Existence, uniqueness, continuity, and representations}\label{Sec-Continuity}
	In this section we present various results concerning continuity, existence, and uniqueness properties of both geometric and component-wise quantiles. We present also a representation of geometric quantiles in Hilbert spaces that extend to quantiles the representation of the median in \cite{Gervini2008}. These results provide the building blocks for the proofs of the results presented in the previous section. We start with a short discussion of univariate quantiles.
	\begin{lem}\label{lem-uniq-defined}
		The univariate geometric quantile is uniquely defined if and only if $\alpha\in(0,1)\setminus\{\frac{1}{k},\frac{2}{k},...,\frac{k-1}{k}\}$. If the univariate geometric quantile is uniquely defined then it is equal to $q_\alpha$.
	\end{lem}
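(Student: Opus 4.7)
The plan is to analyze the objective
\[
f(y)=\sum_{i=1}^{k}\bigl[|x_{i}-y|+u(x_{i}-y)\bigr],\qquad u=2\alpha-1,
\]
which is convex and piecewise affine in $y$. Since $\sum_i u(x_i-y)$ is an affine function of $y$ with slope $-ku$, the kinks of $f$ are exactly the points $\tilde{x}_1\le\cdots\le\tilde{x}_k$. On any open interval $(\tilde{x}_j,\tilde{x}_{j+1})$ (with the conventions $\tilde{x}_0=-\infty$ and $\tilde{x}_{k+1}=+\infty$), the derivative of $\sum_i|x_i-y|$ is $2j-k$, so
\[
f'(y)=2j-k-ku=2j-k(1+u)=2(j-k\alpha).
\]

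Next I would read off the argmin from the sign pattern of $f'$, which is a nondecreasing step function in $j$. If $k\alpha\notin\mathbb{Z}$, set $j^{*}=\lceil k\alpha\rceil$; then $f'<0$ on $(\tilde{x}_{j^{*}-1},\tilde{x}_{j^{*}})$ and $f'>0$ on $(\tilde{x}_{j^{*}},\tilde{x}_{j^{*}+1})$, so the unique minimizer is $y=\tilde{x}_{j^{*}}$. Since $k\alpha$ is not an integer we also have $\lfloor k\alpha+1\rfloor=\lceil k\alpha\rceil=j^{*}$, hence
\[
\underset{y\in\mathbb{R}}{\arg\min}\,f(y)=\tilde{x}_{j^{*}}=\tfrac{1}{2}\bigl(\tilde{x}_{\lceil k\alpha\rceil}+\tilde{x}_{\lfloor k\alpha+1\rfloor}\bigr)=q_{\alpha}(\mathbf{x}),
\]
which proves the ``if'' direction together with the identification with $q_{\alpha}$.

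Finally, to obtain the ``only if'' direction I would treat the case $\alpha=m/k$ with $m\in\{1,\dots,k-1\}$. Then the interval slope formula gives $f'\equiv 2(m-k\alpha)=0$ on $(\tilde{x}_{m},\tilde{x}_{m+1})$, so $f$ is constant on $[\tilde{x}_{m},\tilde{x}_{m+1}]$; choosing any $\mathbf{x}$ with $\tilde{x}_{m}<\tilde{x}_{m+1}$ produces a minimizing set of positive length, showing that the geometric quantile fails to be uniquely defined. This also reproduces the description of the solution set quoted after \eqref{univariate-geometric-quantile}. No step is subtle; the only care needed is in boundary cases $\alpha<1/k$ or $\alpha>(k-1)/k$ and in the presence of ties among the $x_i$'s, both of which are handled by the same slope calculation, since $f'$ on $(\tilde{x}_{j},\tilde{x}_{j+1})$ depends only on the index $j$, not on whether consecutive order statistics coincide.
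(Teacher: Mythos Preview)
Your proof is correct and follows essentially the same approach as the paper: both compute the derivative of the piecewise-affine objective on each interval between order statistics and read off uniqueness from the sign pattern. Your expression $f'(y)=2(j-k\alpha)$ is just the simplified form of the paper's $(1-u)b_{y}-(1+u)(k-b_{y})$ with $b_y=j$, and both arguments identify the unique minimizer as $\tilde{x}_{\lceil k\alpha\rceil}$ when $k\alpha\notin\mathbb{Z}$ and the interval $[\tilde{x}_{k\alpha},\tilde{x}_{k\alpha+1}]$ otherwise.
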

	An alternative definition of univariate quantile generalizes the definition of the univariate median adopted in \cite{Lugosi} (see also \cite{Robust-machine}). Define $q^{\circ}_\alpha(\mathbf{x}):=x_i$, where $x_i$ is such that
	\begin{equation*}
		|\{j\in\{1,...,k\}:x_j\leq x_i\}|\geq k\alpha\quad\textnormal{and}\quad |\{j\in\{1,...,k\}:x_j\geq x_i\}|\geq k(1-\alpha).
	\end{equation*}
	If several $x_i$'s satisfy the constraints we take the smallest one by convention.
	\begin{lem}\label{lem-con-quant}
		Let $\alpha\in(0,1)$. The functions $q_\alpha$ and $q^{\circ}_\alpha$ are Lipschitz continuous. In particular for any $\mathbf{x},\mathbf{y}\in\mathbb{R}^k$ we have
		\begin{equation}
			|q_\alpha(\mathbf{x})-q_\alpha(\mathbf{y})|\leq\max_{j=1,...,k}|x_j-y_j|,\quad\textnormal{and}\quad|q^{\circ}_\alpha(\mathbf{x})-q^{\circ}_\alpha(\mathbf{y})|\leq\max_{j=1,...,k}|x_j-y_j|.
		\end{equation}
	\end{lem}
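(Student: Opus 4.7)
The plan is to reduce both bounds to a single, standard lemma: each order statistic $\tilde{x}_m$, viewed as a function $\mathbb{R}^k \to \mathbb{R}$, is $1$-Lipschitz with respect to the $\ell^\infty$ norm. Once this is established, both claims follow by expressing $q_\alpha$ and $q^\circ_\alpha$ as simple combinations of order statistics of $\mathbf{x}$.

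First I would prove the order statistic bound. Fix $m \in \{1,\dots,k\}$ and set $\varepsilon:=\max_{j} |x_j-y_j|$. To show $\tilde{y}_m \leq \tilde{x}_m + \varepsilon$, I take the $m$ indices $j_1,\dots,j_m$ that realize the smallest values $\tilde{x}_1,\dots,\tilde{x}_m$ in $\mathbf{x}$. For each of these indices, $y_{j_r} \leq x_{j_r}+\varepsilon \leq \tilde{x}_m+\varepsilon$, so $\mathbf{y}$ has at least $m$ entries bounded by $\tilde{x}_m+\varepsilon$, giving $\tilde{y}_m \leq \tilde{x}_m+\varepsilon$. The reverse inequality follows by swapping the roles of $\mathbf{x}$ and $\mathbf{y}$, so $|\tilde{x}_m-\tilde{y}_m|\leq \varepsilon$.

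Next I would apply this to $q_\alpha$. By definition,
\begin{equation*}
q_\alpha(\mathbf{x})=\tfrac{1}{2}\bigl(\tilde{x}_{\lceil k\alpha\rceil}+\tilde{x}_{\lfloor k\alpha+1\rfloor}\bigr),
\end{equation*}
and both subscripts depend only on $k$ and $\alpha$, not on the data. The triangle inequality together with the order-statistic bound then gives
\begin{equation*}
|q_\alpha(\mathbf{x})-q_\alpha(\mathbf{y})|\leq \tfrac{1}{2}\bigl(|\tilde{x}_{\lceil k\alpha\rceil}-\tilde{y}_{\lceil k\alpha\rceil}|+|\tilde{x}_{\lfloor k\alpha+1\rfloor}-\tilde{y}_{\lfloor k\alpha+1\rfloor}|\bigr)\leq \varepsilon,
\end{equation*}
which is the desired bound.

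For $q^\circ_\alpha$ the idea is the same but I first have to verify that, for fixed $k$ and $\alpha$, $q^\circ_\alpha$ always picks out a specific order statistic $\tilde{x}_m$ with $m$ independent of the data. Unwinding the two counting inequalities shows that the admissible indices $i$ in the definition correspond exactly to $i \in \{\lceil k\alpha\rceil,\dots,\lfloor k\alpha\rfloor+1\}$ — a single index when $k\alpha \notin \mathbb{Z}$ and the two consecutive indices $k\alpha,k\alpha+1$ when $k\alpha \in \mathbb{Z}$. In either situation the tie-breaking "take the smallest" selects $\tilde{x}_m$ for $m=\lceil k\alpha\rceil$ if $k\alpha \notin \mathbb{Z}$ and $m=k\alpha$ otherwise. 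Applying the order statistic bound to this fixed $m$ gives the claimed Lipschitz estimate for $q^\circ_\alpha$.

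The main obstacle, if any, is the bookkeeping around $q^\circ_\alpha$: one has to confirm that the tie-breaking rule always yields the \emph{same} order statistic regardless of which entries of $\mathbf{x}$ coincide, so that we can use the Lipschitz continuity of a single fixed order statistic. Once this is checked, the rest is a direct application of the key lemma, and both inequalities follow at once.
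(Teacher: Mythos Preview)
Your argument is correct and takes a genuinely different route from the paper's proof. The paper argues directly: assuming without loss of generality that $q_\alpha(\mathbf{x})-q_\alpha(\mathbf{y})\ge 0$, it introduces the index sets $G_{\mathbf{x}}=\{l:x_l\ge q_\alpha(\mathbf{x})\}$ and $L_{\mathbf{y}}=\{l:y_l\le q_\alpha(\mathbf{y})\}$, shows by a cardinality count that $G_{\mathbf{x}}\cap L_{\mathbf{y}}\neq\emptyset$, and then bounds the difference by $x_l-y_l$ for any $l$ in the intersection. This is done separately for $\alpha\notin\{j/k\}$, for $\alpha\in\{j/k\}$ (where $q_\alpha$ is an average of two data values and a two-element intersection is needed), and again for $q^\circ_\alpha$. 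Your approach instead isolates a single reusable lemma---each order statistic is $1$-Lipschitz in $\ell^\infty$---and then observes that $q_\alpha$ and $q^\circ_\alpha$ are fixed convex combinations of order statistics with indices depending only on $k$ and $\alpha$. This is cleaner and avoids the case split; the underlying combinatorics (the pigeonhole on the $m$ smallest indices) is essentially the same idea as the paper's $G_{\mathbf{x}}\cap L_{\mathbf{y}}$ argument, just packaged once at the order-statistic level. The only point requiring care, which you flag, is verifying that with ties in the data $q^\circ_\alpha$ still coincides with $\tilde{x}_m$ for the fixed $m$ you identify; this does hold, since the block of tied values containing rank $m$ is always admissible and is the smallest admissible value.
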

	The above bounds can be improved as it is possible to see from the proof of Lemma \ref{lem-con-quant}. Further, we remark that all our results containing $q_{\alpha}$ hold also if we substitute $q_{\alpha}$ by $q^{\circ}_\alpha$.
	\subsection{Component-wise quantile}\label{SubSec-Continuity-component}
	We present existence and continuity results for the three definitions of component-wise quantiles.
	\begin{thm}\label{thm-continuity-component-Hamel}
		Let $\mathbb{X}$ be a vector space. The component-wise quantile $\mathbf{q}_{Hamel,\bm{\alpha}}$ exists and is unique. If $\mathbb{X}$ is endowed with the topology induced by $\|\cdot\|_{Hamel}$, $\mathbf{q}_{Hamel,\bm{\alpha}}$ is Lipschitz continuous: for every $\mathbf{x}_1,...,\mathbf{x}_k,\mathbf{z}_1,...,\mathbf{z}_k\in\mathbb{X}$ we have
		\begin{equation*}
			\|\mathbf{q}_{Hamel,\bm{\alpha}}(\mathbf{x}_{1},...,\mathbf{x}_{k})-\mathbf{q}_{Hamel,\bm{\alpha}}(\mathbf{z}_{1},...,\mathbf{z}_{k})\|_{\mathbb{X},Hamel}\leq\|(\mathbf{x}_1,...,\mathbf{x}_k)-(\mathbf{z}_1,...,\mathbf{z}_k)\|_{\mathbb{X},Hamel,k}.
		\end{equation*}
	\end{thm}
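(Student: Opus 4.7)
The plan is to separate the theorem into two parts: (i) existence and uniqueness of $\mathbf{q}_{Hamel,\bm{\alpha}}$, and (ii) Lipschitz continuity. Both pieces reduce to the univariate case (handled by Lemma \ref{lem-con-quant}) once one keeps careful bookkeeping of the Hamel coordinates, which are unique because $(\mathbf{b}_l)_{l\in I}$ is a basis.

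For existence and uniqueness, I would first note that for any $\mathbf{x}_1,\dots,\mathbf{x}_k\in\mathbb{X}$, each $\mathbf{x}_i$ has only finitely many non-zero Hamel coordinates, so the set
\begin{equation*}
I' := \{l\in I : x_i^{(l)}\neq 0 \text{ for some } i\in\{1,\dots,k\}\}
\end{equation*}
is finite. For $l\notin I'$, we have $q_{\alpha_l}(0,\dots,0)=0$, so only finitely many $\check{x}_l$ in the definition (\ref{QoE-Hamel}) are non-zero and the defining sum lies in $\mathbb{X}$. Uniqueness is immediate, since the univariate $q_{\alpha_l}$ is given by an explicit formula and the Hamel expansion of any vector in $\mathbb{X}$ is unique.

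For Lipschitz continuity, the key observation is that the Hamel basis expansion is linear: $(\mathbf{x}_j-\mathbf{z}_j)^{(l)} = x_j^{(l)}-z_j^{(l)}$ for every $l\in I$ and every $j$. I would then compute
\begin{align*}
\|\mathbf{q}_{Hamel,\bm{\alpha}}(\mathbf{x}_1,\dots,\mathbf{x}_k)-\mathbf{q}_{Hamel,\bm{\alpha}}(\mathbf{z}_1,\dots,\mathbf{z}_k)\|_{\mathbb{X},Hamel}
&= \sum_{l\in I}\bigl|q_{\alpha_l}(x_1^{(l)},\dots,x_k^{(l)})-q_{\alpha_l}(z_1^{(l)},\dots,z_k^{(l)})\bigr| \\
&\leq \sum_{l\in I}\max_{j=1,\dots,k}|x_j^{(l)}-z_j^{(l)}| \\
&\leq \sum_{l\in I}\sum_{j=1}^{k}|x_j^{(l)}-z_j^{(l)}| \\
&= \sum_{j=1}^{k}\|\mathbf{x}_j-\mathbf{z}_j\|_{\mathbb{X},Hamel} \\
&= \|(\mathbf{x}_1,\dots,\mathbf{x}_k)-(\mathbf{z}_1,\dots,\mathbf{z}_k)\|_{\mathbb{X},Hamel,k},
\end{align*}
where the second line is Lemma \ref{lem-con-quant} applied componentwise, the third uses $\max\leq\sum$ on non-negative terms, and the fourth is Fubini/reordering a (finitely supported) double sum.

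The argument has no real obstacle: the only subtlety is reassuring the reader that everything is well-defined despite the Hamel basis being potentially uncountable. This is handled by the finite-support observation at the start. I would emphasize in the write-up that the bound is sharp up to replacing $\max$ by $\sum$, so that the remark after Lemma \ref{lem-con-quant} about the estimate being improvable carries over to the present statement.
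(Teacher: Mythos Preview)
Your proposal is correct and follows essentially the same approach as the paper: establish existence via the finite-support observation on Hamel coordinates, uniqueness from the explicit univariate formula, and Lipschitz continuity by applying Lemma~\ref{lem-con-quant} coordinatewise and then summing. The paper's proof is slightly terser (it collapses your $\max\leq\sum$ step into a single inequality), but the logic is identical.
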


	Now, let $\mathbb{X}$ be a Banach space possessing an unconditional Schauder basis. Denote by $\mathcal{K}$ the unconditional basis constant of $\mathbb{X}$, \textit{i.e.}
	\begin{equation*}
		\mathcal{K}:=\sup\limits_{F\subset\mathbb{N},\epsilon=\pm1,\|\mathbf{x}\|_{\mathbb{X}}=1}\|\sum_{n\in F}\epsilon_{l}x_{l}\mathbf{d}_{l}\|_{\mathbb{X}}
	\end{equation*}
	Since the field of $\mathbb{X}$ is $\mathbb{R}$, we have that 
	\begin{equation*}
		\mathcal{K}=\sup\limits_{F\subset\mathbb{N},|\lambda_{l}|\leq 1,\|\mathbf{x}\|_{\mathbb{X}}=1}\|\sum_{n\in F}\lambda_{l}x_{l}\mathbf{d}_{l}\|_{\mathbb{X}}.
	\end{equation*}
	By Theorem 6.4 in \cite{Heil} $\mathcal{K}$ is always finite. Moreover, it is always possible to assign to $\mathbb{X}$ an equivalent norm such that $\mathcal{K}=1$. The unconditional basis constant is the focus of a vast literature, see \textit{e.g.}~\cite{Gordon-Lewis} and \cite{GowersJAMS}.
	\begin{thm}\label{thm-continuity-component-Scahuder}
		Let $\mathbb{X}$ be a Banach space possessing an unconditional Schauder basis and let $\mathcal{K}$ be its unconditional basis constant. The component-wise quantile $\mathbf{q}_{S,\bm{\alpha}}$ exists, is unique, and is Lipschitz continuous: for every $\mathbf{x}_1,...,\mathbf{x}_k,$ $\mathbf{z}_1,...,\mathbf{z}_k\in\mathbb{X}$ we have that
		\begin{equation*}
			\|\mathbf{q}_{S,\bm{\alpha}}(\mathbf{x}_{1},...,\mathbf{x}_{k})-\mathbf{q}_{S,\bm{\alpha}}(\mathbf{z}_{1},...,\mathbf{z}_{k})\|_{\mathbb{X}}\leq\mathcal{K}^{2}\|(\mathbf{x}_1,...,\mathbf{x}_k)-(\mathbf{z}_1,...,\mathbf{z}_k)\|_{\mathbb{X}^k},
		\end{equation*}
		and if $\mathbb{X}$ is a Hilbert space then
		\begin{equation*}
			\|\mathbf{q}_{S,\bm{\alpha}}(\mathbf{x}_{1},...,\mathbf{x}_{k})-\mathbf{q}_{S,\bm{\alpha}}(\mathbf{z}_{1},...,\mathbf{z}_{k})\|_{\mathbb{X}}\leq \sqrt{\sum_{i=1,...,k}\|\mathbf{x}_i-\mathbf{z}_i\|_{\mathbb{X}}^{2}}\leq\|(\mathbf{x}_1,...,\mathbf{x}_k)-(\mathbf{z}_1,...,\mathbf{z}_k)\|_{\mathbb{X}^k}.
		\end{equation*}
	\end{thm}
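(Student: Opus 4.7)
My strategy is to reduce both existence and the Lipschitz estimate to the unconditionality of the basis, using the explicit order-statistic formula for the univariate quantile. For every $l\in\mathbb{N}$ we have $\check{x}_l=q_{\alpha_l}(x_1^{(l)},\ldots,x_k^{(l)})=\tfrac{1}{2}\bigl(x^{(l)}_{a_l(\mathbf{x})}+x^{(l)}_{b_l(\mathbf{x})}\bigr)$, where the rank-indices $a_l(\mathbf{x}),b_l(\mathbf{x})\in\{1,\ldots,k\}$ depend on $l$ and on the data $\mathbf{x}_1,\ldots,\mathbf{x}_k$. Uniqueness of $\mathbf{q}_{S,\bm\alpha}$ is then immediate since this formula is single-valued and the Schauder expansion of any element of $\mathbb{X}$ is unique.

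For existence, I would split $\sum_{l}\check{x}_l\mathbf{d}_l=\tfrac12\sum_l x^{(l)}_{a_l(\mathbf{x})}\mathbf{d}_l+\tfrac12\sum_l x^{(l)}_{b_l(\mathbf{x})}\mathbf{d}_l$ and partition $\mathbb{N}=\bigsqcup_{i=1}^k\{l:a_l(\mathbf{x})=i\}$. For each fixed $i$, the inner series $\sum_{l\,:\,a_l(\mathbf{x})=i}x^{(l)}_i\mathbf{d}_l$ arises from the Schauder expansion of $\mathbf{x}_i$ by applying the bounded multiplier $\lambda_l=\mathbf{1}_{\{a_l(\mathbf{x})=i\}}$; unconditionality then ensures that this series converges in $\mathbb{X}$ and has norm at most $\mathcal{K}\|\mathbf{x}_i\|_{\mathbb{X}}$. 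Summing the finitely many values of $i$ produces $\mathbf{q}_{S,\bm\alpha}(\mathbf{x}_1,\ldots,\mathbf{x}_k)\in\mathbb{X}$.

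The hard part is the Lipschitz estimate, and the main obstacle is that the rank-indices $a_l(\mathbf{x})$ and $a_l(\mathbf{z})$ generally differ, so $\check{x}_l-\check{z}_l$ cannot be reduced algebraically to a difference of coordinates. To sidestep this, I would invoke Lemma~\ref{lem-con-quant} to obtain the pointwise estimate $|\check{x}_l-\check{z}_l|\le\max_{i}|x^{(l)}_i-z^{(l)}_i|$, pick an arg-max index $i(l)\in\{1,\ldots,k\}$ for each $l$, and write $\check{x}_l-\check{z}_l=\mu_l\bigl(x^{(l)}_{i(l)}-z^{(l)}_{i(l)}\bigr)$ with $|\mu_l|\le 1$. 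Partitioning again by $i(l)$,
\begin{equation*}
\sum_{l}(\check{x}_l-\check{z}_l)\mathbf{d}_l=\sum_{i=1}^{k}\sum_{l\,:\,i(l)=i}\mu_l\bigl(x^{(l)}_i-z^{(l)}_i\bigr)\mathbf{d}_l,
\end{equation*}
and for each $i$ two applications of unconditionality (one to extract the subsequence indexed by $\{l:i(l)=i\}$, the other to absorb the multiplier $\mu_l$) yield the block bound $\mathcal{K}^2\|\mathbf{x}_i-\mathbf{z}_i\|_{\mathbb{X}}$. Summing over $i$ gives the claimed $\mathcal{K}^2$-Lipschitz constant.

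In the Hilbert setting $(\mathbf{d}_l)$ can be taken orthonormal, so $\mathcal{K}=1$ and Parseval lets the argument collapse to a coordinatewise inequality:
\begin{equation*}
\|\mathbf{q}_{S,\bm\alpha}(\mathbf{x})-\mathbf{q}_{S,\bm\alpha}(\mathbf{z})\|_{\mathbb{X}}^{2}=\sum_{l}|\check{x}_l-\check{z}_l|^{2}\le\sum_{l}\max_i|x^{(l)}_i-z^{(l)}_i|^{2}\le\sum_{i=1}^{k}\|\mathbf{x}_i-\mathbf{z}_i\|_{\mathbb{X}}^{2}.
\end{equation*}
The second inequality of the Hilbert statement, $\sqrt{\sum_i\|\mathbf{x}_i-\mathbf{z}_i\|_{\mathbb{X}}^{2}}\le\sum_i\|\mathbf{x}_i-\mathbf{z}_i\|_{\mathbb{X}}$, is elementary by squaring both sides. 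No convexity, reflexivity, or further Banach-space machinery enters the proof beyond the univariate Lipschitz bound of Lemma~\ref{lem-con-quant} and unconditionality of the basis.
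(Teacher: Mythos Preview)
Your proof is correct and rests on the same two ingredients as the paper's: the univariate Lipschitz bound of Lemma~\ref{lem-con-quant} and the bounded-multiplier property of unconditional bases. The packaging of the Lipschitz estimate differs slightly. You pick for each $l$ an arg-max index $i(l)$, write $\check{x}_l-\check{z}_l=\mu_l\bigl(x^{(l)}_{i(l)}-z^{(l)}_{i(l)}\bigr)$ with $|\mu_l|\le 1$, partition $\mathbb{N}$ by the value of $i(l)$, and apply unconditionality twice on each block. The paper instead uses the cruder global domination $|\check{x}_l-\check{z}_l|\le\sum_{i=1}^{k}|x^{(l)}_i-z^{(l)}_i|$, invokes the monotonicity property of unconditional bases (if $|a_l|\le|b_l|$ for all $l$ then $\|\sum a_l\mathbf{d}_l\|_{\mathbb{X}}\le\mathcal{K}\|\sum b_l\mathbf{d}_l\|_{\mathbb{X}}$) once, splits the resulting sum over $i$ by the triangle inequality, and applies unconditionality once more to pass from $|x^{(l)}_i-z^{(l)}_i|$ back to $x^{(l)}_i-z^{(l)}_i$. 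Both routes land on the same constant $\mathcal{K}^2$; yours is more explicit about where each factor of $\mathcal{K}$ enters, while the paper's avoids the case analysis over $i(l)$ and is a line or two shorter. The existence argument (Cauchy partial sums via the same domination) and the Hilbert-space part (Parseval plus $\max_i|a_i|^2\le\sum_i|a_i|^2$) are essentially identical in both.
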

	
	Finally, we focus on the point-wise component-wise quantile, (\ref{QoE-point}).
	\begin{thm}\label{thm-continuity-component-point}
		Let $\mathbb{X}$ be either $L_p[0,1]$ with $1 \leq p \leq \infty$ or $\mathbb{X}=C[0,1]$ or $\mathbb{X}=D[0,1]$. The point-wise component-wise quantile $\mathbf{q}_{P,\bm{\alpha}}$ exists, is unique, and is Lipschitz continuous. In particular, if $\mathbb{X}=L_p[0,1]$ with $1 \leq p < \infty$, then
		\begin{equation*}
			\|\mathbf{q}_{P,\bm{\alpha}}(\mathbf{f}_{1},...,\mathbf{f}_{k})-\mathbf{q}_{P,\bm{\alpha}}(\mathbf{g}_{1},...,\mathbf{g}_{k})\|_{\mathbb{X}}\leq \bigg(\sum_{i=1}^{k}\int_{0}^{1}|\mathbf{f}_{i}(x)-\mathbf{g}_i(x)|^{p}dx\bigg)^{1/p}
		\end{equation*}
		\begin{equation*}
			\leq\|(\mathbf{f}_1,...,\mathbf{f}_k)-(\mathbf{g}_1,...,\mathbf{g}_k)\|_{\mathbb{X}^k}.
		\end{equation*}
		If either $\mathbb{X}=L_\infty[0,1]$ or if $\mathbb{X}=C[0,1]$ or $\mathbb{X}=D[0,1]$, then
		\begin{equation*}
			\|\mathbf{q}_{P,\bm{\alpha}}(\mathbf{f}_{1},...,\mathbf{f}_{k})-\mathbf{q}_{P,\bm{\alpha}}(\mathbf{g}_{1},...,\mathbf{g}_{k})\|_{\mathbb{X}}\leq \max_{i=1,...,k}\|\mathbf{f}_i-\mathbf{g}_i\|_{\mathbb{X}}\leq\|(\mathbf{f}_1,...,\mathbf{f}_k)-(\mathbf{g}_1,...,\mathbf{g}_k)\|_{\mathbb{X}^k}.
		\end{equation*}
	\end{thm}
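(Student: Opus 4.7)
The approach is to reduce everything to the univariate Lipschitz bound of Lemma~\ref{lem-con-quant} applied pointwise. For each $x\in[0,1]$, that lemma, applied with parameter $\alpha_x$ to the vectors $(\mathbf{f}_1(x),\ldots,\mathbf{f}_k(x))$ and $(\mathbf{g}_1(x),\ldots,\mathbf{g}_k(x))$, yields
\begin{equation*}
|\mathbf{q}_{P,\bm{\alpha}}(\mathbf{f}_1,\ldots,\mathbf{f}_k)(x)-\mathbf{q}_{P,\bm{\alpha}}(\mathbf{g}_1,\ldots,\mathbf{g}_k)(x)|\leq\max_{i=1,\ldots,k}|\mathbf{f}_i(x)-\mathbf{g}_i(x)|.
\end{equation*}
Both Lipschitz estimates in the statement will then follow by integrating or supremizing this pointwise bound; existence is obtained by checking that the pointwise construction lies in the target space, and uniqueness is automatic since the univariate quantile is defined unambiguously at every $x$.

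\textbf{The $L_p$ case ($1\leq p<\infty$).} I would first argue measurability of $x\mapsto q_{\alpha_x}(\mathbf{f}_1(x),\ldots,\mathbf{f}_k(x))$: since $q_\alpha$ is continuous in its vector argument for every fixed $\alpha$ (Lemma~\ref{lem-con-quant}) and the $\mathbf{f}_i$'s are measurable, this holds for fixed $\alpha$, and the case of $\alpha_x$ varying with $x$ reduces to a mild measurability condition on $\bm\alpha$. The crude pointwise bound $|q_{\alpha_x}(\mathbf{f}(x))|\leq\max_i|\mathbf{f}_i(x)|$ places the quantile in $L_p$. For the Lipschitz estimate, raise the pointwise inequality to the $p$-th power, integrate, and use $\max_i a_i^p\leq\sum_i a_i^p$ to obtain
\begin{equation*}
\|\mathbf{q}_{P,\bm{\alpha}}(\mathbf{f})-\mathbf{q}_{P,\bm{\alpha}}(\mathbf{g})\|_{L_p}^p\leq\sum_{i=1}^{k}\int_{0}^{1}|\mathbf{f}_i(x)-\mathbf{g}_i(x)|^p\,dx,
\end{equation*}
which after taking $p$-th roots is the first claimed inequality. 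The second inequality then follows from the elementary fact $\bigl(\sum_{i=1}^{k}a_i^p\bigr)^{1/p}\leq\sum_{i=1}^{k}a_i$ for nonnegative $a_i$ when $p\geq 1$.

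\textbf{The $L_\infty$, $C[0,1]$, and $D[0,1]$ cases.} Once existence in the space is established, the quantitative bound is obtained by taking the essential supremum (respectively supremum) of the pointwise inequality and interchanging $\sup_x$ with $\max_i$ (legal because the index set is finite):
\begin{equation*}
\|\mathbf{q}_{P,\bm{\alpha}}(\mathbf{f})-\mathbf{q}_{P,\bm{\alpha}}(\mathbf{g})\|_{\mathbb{X}}\leq\sup_{x\in[0,1]}\max_{i=1,\ldots,k}|\mathbf{f}_i(x)-\mathbf{g}_i(x)|=\max_{i=1,\ldots,k}\|\mathbf{f}_i-\mathbf{g}_i\|_{\mathbb{X}},
\end{equation*}
and the trivial bound $\max_i a_i\leq\sum_i a_i$ yields the second inequality claimed by the theorem.

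\textbf{Main obstacle.} The genuinely delicate step is existence for $C[0,1]$ and $D[0,1]$: because $\alpha\mapsto q_\alpha(\mathbf{x})$ has jumps at $\alpha\in\{1/k,\ldots,(k-1)/k\}$, the composition $x\mapsto q_{\alpha_x}(\mathbf{f}_1(x),\ldots,\mathbf{f}_k(x))$ may fail to be continuous (or càdlàg) if $\bm\alpha$ varies roughly in $x$, even when the $\mathbf{f}_i$'s are perfectly regular. The plan is to impose on $\bm\alpha$ the regularity matching the target space (continuous for $C[0,1]$, càdlàg for $D[0,1]$, measurable for $L_p$) and then to exploit that at an exceptional value $\alpha=j/k$ the definition of $q_\alpha$ is the half-sum of two adjacent order statistics, which is precisely what is needed for the composition to remain continuous/càdlàg as the $\mathbf{f}_i$'s cross through each other. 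The Lipschitz bounds themselves are unaffected by this issue and follow unconditionally from Lemma~\ref{lem-con-quant}.
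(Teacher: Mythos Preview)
Your approach is essentially the same as the paper's: apply Lemma~\ref{lem-con-quant} pointwise and then integrate or take suprema. The paper's proof is terser---it dispatches the $L_p$ Lipschitz bound with ``similar arguments'' after checking $\|\mathbf{q}_{P,\bm\alpha}(\mathbf{f})\|_{L_p}<\infty$, and for $C[0,1]$ it simply says ``by continuity of $q_\alpha$ and of $\mathbf{f}_1,\ldots,\mathbf{f}_k$ we obtain that $\mathbf{q}_{P,\bm\alpha}(\mathbf{f}_1,\ldots,\mathbf{f}_k)$ is a continuous function''---but the underlying mechanism is identical to yours.

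Your flagged obstacle concerning the regularity of $x\mapsto\alpha_x$ is a genuine point that the paper's proof does not address either: its one-line argument for $C[0,1]$ tacitly treats $\alpha$ as fixed (or at least relies on joint continuity of $(\alpha,\mathbf{x})\mapsto q_\alpha(\mathbf{x})$, which fails at $\alpha\in\{j/k\}$ as you observe). So you are not missing anything relative to the paper; you are in fact being more careful. For the purposes of matching the paper, you may simply proceed as it does and leave the $\bm\alpha$-regularity question implicit, or note that the statement is unproblematic when $\bm\alpha$ is constant (the case actually used downstream in Theorems~\ref{t-1-q-component_1}--\ref{t-1-q-component_3} and Proposition~\ref{pro-fdd}).
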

	
	\begin{rem}
		For the sake of clarity we presented the results for the interval $[0,1]$, but it is easy to see that the result holds for any interval $[0,T]$, and for the space $D[0,\infty)$.
	\end{rem}
	It is possible to see that such a result might easily hold for other function spaces, such as Sobolev spaces.
	\subsection{Geometric quantile}\label{SubSec-Continuity-geo}
	
	The following convex analysis result generalizes Lemma 2.14 in \cite{Kemperman}.
	\begin{lem}\label{lem-ineq}
		Let $\mathbb{X}$ be a Banach space. Let $W$ be a one-dimensional linear subspace of $\mathbb{X}$. Then $\mathbb{X}=W\oplus Y$, where $Y$ is the kernel of a norm-1 projection operator with range $W$. Moreover, for every $\mathbf{x}\in W$ we have that
		\begin{equation}\label{ineq}
			\|\mathbf{x}\|_{\mathbb{X}}\leq \|\mathbf{x}+\mathbf{y}\|_{\mathbb{X}},\quad\forall\mathbf{y}\in Y,
		\end{equation}
		and if $\mathbb{X}$ is strictly convex then
		\begin{equation}\label{ineq-strict}
			\|\mathbf{x}\|_{\mathbb{X}}< \|\mathbf{x}+\mathbf{y}\|_{\mathbb{X}},\quad\forall\mathbf{y}\in Y.
		\end{equation}
		Finally, we have that for every $\mathbf{y}\in Y$
		\begin{equation}\label{ineq-y}
			\|\mathbf{y}\|_{\mathbb{X}}\leq \|\mathbf{x}+\mathbf{y}\|_{\mathbb{X}},\quad\forall\mathbf{x}\in W,
		\end{equation}
		and if $\mathbb{X}$ is strictly convex then
		\begin{equation}\label{ineq-strict-y}
			\|\mathbf{y}\|_{\mathbb{X}}< \|\mathbf{x}+\mathbf{y}\|_{\mathbb{X}},\quad\forall\mathbf{x}\in W.
		\end{equation}
	\end{lem}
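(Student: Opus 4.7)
The plan is to obtain the decomposition together with (\ref{ineq}) from Hahn--Banach, deduce (\ref{ineq-strict}) from a midpoint/strict-convexity argument, and then attack (\ref{ineq-y}) and (\ref{ineq-strict-y}) by a dual Hahn--Banach selection. The main obstacle sits in this last step.

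First I pick $\mathbf{w}_0\in W$ with $\|\mathbf{w}_0\|_{\mathbb{X}}=1$; Hahn--Banach furnishes $\phi\in\mathbb{X}^*$ with $\|\phi\|_{\mathbb{X}^*}=1$ and $\phi(\mathbf{w}_0)=1$. Then $P\mathbf{z}:=\phi(\mathbf{z})\mathbf{w}_0$ is a norm-$1$ projection with range $W$ and kernel $Y:=\ker\phi$, giving $\mathbb{X}=W\oplus Y$. For (\ref{ineq}), writing $\mathbf{x}=t\mathbf{w}_0\in W$ and $\mathbf{y}\in Y$ yields $\phi(\mathbf{x}+\mathbf{y})=t$, so
\begin{equation*}
\|\mathbf{x}\|_{\mathbb{X}}=|t|=|\phi(\mathbf{x}+\mathbf{y})|\leq\|\phi\|_{\mathbb{X}^*}\,\|\mathbf{x}+\mathbf{y}\|_{\mathbb{X}}=\|\mathbf{x}+\mathbf{y}\|_{\mathbb{X}}.
\end{equation*}

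For (\ref{ineq-strict}) I assume strict convexity and take $\mathbf{y}\in Y\setminus\{0\}$; the case $\mathbf{x}=0$ is immediate, so suppose $\mathbf{x}\neq 0$ with $\|\mathbf{x}\|=\|\mathbf{x}+\mathbf{y}\|$, for contradiction. Applying (\ref{ineq}) to $\mathbf{y}/2\in Y$ together with the triangle inequality sandwiches
\begin{equation*}
\|\mathbf{x}\|\leq\bigl\|\mathbf{x}+\tfrac12\mathbf{y}\bigr\|=\tfrac12\|\mathbf{x}+(\mathbf{x}+\mathbf{y})\|\leq\tfrac12\bigl(\|\mathbf{x}\|+\|\mathbf{x}+\mathbf{y}\|\bigr)=\|\mathbf{x}\|,
\end{equation*}
forcing equality throughout. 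Setting $\mathbf{a}:=\mathbf{x}/\|\mathbf{x}\|$ and $\mathbf{b}:=(\mathbf{x}+\mathbf{y})/\|\mathbf{x}+\mathbf{y}\|$, both unit vectors, we get $\|(\mathbf{a}+\mathbf{b})/2\|=1$, and strict convexity forces $\mathbf{a}=\mathbf{b}$, hence $\mathbf{y}=0$, a contradiction.

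For (\ref{ineq-y}) the projection route yields only $\|\mathbf{y}\|=\|(I-P)(\mathbf{x}+\mathbf{y})\|\leq\|I-P\|\cdot\|\mathbf{x}+\mathbf{y}\|$ and $\|I-P\|$ can exceed $1$, so a direct operator bound will not suffice and this is the hard part. Instead I will produce, for each $\mathbf{y}\in Y$, a norming functional $\psi_{\mathbf{y}}\in\mathbb{X}^*$ with $\psi_{\mathbf{y}}(\mathbf{y})=\|\mathbf{y}\|$, $\|\psi_{\mathbf{y}}\|_{\mathbb{X}^*}=1$, and $\psi_{\mathbf{y}}|_W=0$; given such a $\psi_{\mathbf{y}}$,
\begin{equation*}
\|\mathbf{y}\|_{\mathbb{X}}=\psi_{\mathbf{y}}(\mathbf{y})=\psi_{\mathbf{y}}(\mathbf{x}+\mathbf{y})\leq\|\mathbf{x}+\mathbf{y}\|_{\mathbb{X}}\qquad\text{for every }\mathbf{x}\in W,
\end{equation*}
and (\ref{ineq-strict-y}) then follows by repeating the midpoint/strict-convexity argument of the previous paragraph with the roles of $\mathbf{x}$ and $\mathbf{y}$ interchanged. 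The existence of $\psi_{\mathbf{y}}$ is the main obstacle, since it is equivalent to requiring Birkhoff--James orthogonality $\mathbf{y}\perp_J\mathbf{w}_0$ for every $\mathbf{y}\in Y$; the resolution I would aim for is therefore not to take an arbitrary Hahn--Banach extension in the first paragraph, but to select $\phi$ (and hence $Y$) as a \emph{centred} supporting functional at $\mathbf{w}_0$, chosen so that the hyperplane $Y=\ker\phi$ lies inside the Birkhoff--James orthogonal cone of $\mathbf{w}_0$, thereby making both decomposition inequalities available simultaneously.
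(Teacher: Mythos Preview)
Your Hahn--Banach construction of $P$ and the derivation of (\ref{ineq}) are exactly the paper's. For (\ref{ineq-strict}) you give a valid midpoint argument; the paper instead observes that in a strictly convex space a norm-$1$ functional attains its norm at a unique unit vector, so $|F(\mathbf{z})|<\|\mathbf{z}\|$ for every $\mathbf{z}\notin W$, which yields (\ref{ineq-strict}) directly. Both routes are fine.

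The genuine gap is at (\ref{ineq-y})--(\ref{ineq-strict-y}). You correctly reformulate the target as the existence, for each $\mathbf{y}\in Y$, of a norming functional $\psi_{\mathbf{y}}$ at $\mathbf{y}$ that annihilates $W$ (equivalently $\mathbf{y}\perp_J\mathbf{w}_0$), and you are right that this does \emph{not} follow from $\mathbf{w}_0\perp_J\mathbf{y}$ for an arbitrary Hahn--Banach extension: in $(\mathbb{R}^2,\|\cdot\|_1)$ with $\mathbf{w}_0=(1,0)$ and $\phi=(1,1)$, the kernel $Y=\{(t,-t)\}$ satisfies (\ref{ineq}) but $\|(1,-1)\|_1=2>1=\|(1,-1)+(-1,0)\|_1$, so (\ref{ineq-y}) fails and the choice of $\phi$ genuinely matters. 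However, your proposed fix --- selecting a ``centred'' supporting functional so that $\ker\phi$ lies inside the James-orthogonal cone of $\mathbf{w}_0$ --- is never made precise and its existence is not argued, so the proof stops short. The paper's treatment of this step is to apply the same norm-$1$-projection construction with $\mathrm{span}\{\mathbf{y}\}$ in the role of $W$, asserting that $W$ lies in the resulting complementary hyperplane; that assertion is exactly the existence of your $\psi_{\mathbf{y}}$. So your plan and the paper's coincide here, and what you still owe is an actual construction (or selection argument) producing such a $\psi_{\mathbf{y}}$, after which your midpoint argument would handle (\ref{ineq-strict-y}) as you indicate.
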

	\begin{rem}
		Since $\mathbf{y}\in Y$ implies that $\lambda\mathbf{y}\in Y$ for every $\lambda\in \mathbb{R}$, we have that (\ref{ineq}) is equivalent to: for every $\mathbf{x}\in\mathbb{X}$ we have that $\|\mathbf{x}\|_{\mathbb{X}}\leq \|\mathbf{x}+\lambda\mathbf{y}\|_{\mathbb{X}}$, for every $\mathbf{y}\in Y$ and every $\lambda\in\mathbb{R}$. The same holds for (\ref{ineq-strict}) with strict inequality and the same arguments apply to (\ref{ineq-y}) and (\ref{ineq-strict-y}). Thus, (\ref{ineq}) ((\ref{ineq-strict})) is equivalent to James (strict) orthogonality between any element of $W$ and $Y$, which we write $\mathbf{x}\perp\mathbf{y}$ ($\mathbf{x}\perp_{s}\mathbf{y}$), and (\ref{ineq-y}) ((\ref{ineq-strict-y})) is equivalent to James (strict) orthogonality between any element of $Y$ and $W$, which we write $\mathbf{y}\perp\mathbf{x}$ ($\mathbf{y}\perp_{s}\mathbf{x}$).
	\end{rem}
	In the following result we present sufficient conditions for the uniqueness of the geometric quantile, thus generalizing Theorem 2.17 in \cite{Kemperman}.
	\begin{thm}\label{thm-extension-of-Kemperman-}
		Let $\mathbb{X}$ be a strictly convex Banach space. The geometric quantile (\ref{geometric-quantile-Banach}), if it exists, is unique if one of the following conditions hold:
		\\\textnormal{(i)} $\mathbf{x}_1,...,\mathbf{x}_k$ do not lie on a straight line,
		\\\textnormal{(ii)} $\mathbf{x}_1,...,\mathbf{x}_k$ lie on a straight line and $\alpha\in(0,1)\setminus\{\frac{1}{k},\frac{2}{k},...,\frac{k-1}{k}\}$.
	\end{thm}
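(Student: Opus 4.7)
My plan is to proceed by contradiction: assume two distinct minimizers $\mathbf{y}_1\neq\mathbf{y}_2$ of the objective $F(\mathbf{y})=\sum_{i=1}^{k}(\|\mathbf{x}_i-\mathbf{y}\|_{\mathbb{X}}+\langle\mathbf{u},\mathbf{x}_i-\mathbf{y}\rangle)$ exist, and compare $F$ at the midpoint $\mathbf{y}_m:=(\mathbf{y}_1+\mathbf{y}_2)/2$ against the common minimum. The linear part averages exactly, so the question reduces to the sum of norms. The triangle inequality gives $\|\mathbf{x}_i-\mathbf{y}_m\|_{\mathbb{X}}\leq (\|\mathbf{x}_i-\mathbf{y}_1\|_{\mathbb{X}}+\|\mathbf{x}_i-\mathbf{y}_2\|_{\mathbb{X}})/2$, and strict convexity of $\mathbb{X}$ promotes this to a strict inequality unless $\mathbf{x}_i-\mathbf{y}_1$ and $\mathbf{x}_i-\mathbf{y}_2$ are non-negative scalar multiples of one another; geometrically, equality requires $\mathbf{x}_i$ to lie on the line $L$ through $\mathbf{y}_1,\mathbf{y}_2$ and outside the open segment $(\mathbf{y}_1,\mathbf{y}_2)$. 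Summing, $F(\mathbf{y}_m)\leq (F(\mathbf{y}_1)+F(\mathbf{y}_2))/2=F(\mathbf{y}_1)$, with any single strict term contradicting minimality.

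Case (i) is then immediate: if the $\mathbf{x}_i$ are not collinear, they cannot all lie on the single line $L$, so at least one term is strict and we reach a contradiction. For case (ii), all $\mathbf{x}_i$ lie on the line $W$, and I would split on whether $L=W$ or $L\neq W$. If $L=W$ then $\mathbf{y}_1,\mathbf{y}_2\in W$, so both are also minimizers of $F$ restricted to the one-dimensional affine subspace $W$. Parametrizing $W$ by a unit vector $\mathbf{h}$ and using Hahn--Banach to pick $\mathbf{e}\in Z$ with $\langle\mathbf{e},\mathbf{h}\rangle=1$, one has $\langle\mathbf{u},\mathbf{h}\rangle=u$, and the restricted objective is exactly the univariate geometric quantile \eqref{univariate-geometric-quantile} with parameter $\alpha=(u+1)/2$. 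By Lemma~\ref{lem-uniq-defined}, since $\alpha\notin\{1/k,\ldots,(k-1)/k\}$, the univariate minimizer is unique, contradicting the existence of two distinct candidates.

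If instead $L\neq W$, then $L\cap W$ is either empty or a single point $\mathbf{p}$. In the empty case every $\mathbf{x}_i$ is off $L$, producing a strict term for each $i$ and hence the contradiction. In the singleton case all $\mathbf{x}_i=\mathbf{p}$, so $F(\mathbf{y})=k(\|\mathbf{p}-\mathbf{y}\|_{\mathbb{X}}+\langle\mathbf{u},\mathbf{p}-\mathbf{y}\rangle)$; setting $\mathbf{z}=\mathbf{p}-\mathbf{y}$ and using $\|\mathbf{z}\|_{\mathbb{X}}+\langle\mathbf{u},\mathbf{z}\rangle\geq\|\mathbf{z}\|_{\mathbb{X}}(1-\|\mathbf{u}\|_{\mathbb{X}^*})>0$ for $\mathbf{z}\neq 0$, the unique minimum is at $\mathbf{y}=\mathbf{p}$, again contradicting distinctness. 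The main obstacle is getting case (ii) right: one must confirm that restricting the minimization to $W$ genuinely yields the univariate geometric quantile objective with the same $\alpha$ appearing in the hypothesis (which requires careful normalization of the dual decomposition $\mathbf{u}=u\mathbf{e}+\mathbf{v}$), and must separately dispose of the degenerate subcase where the candidate minimizers lie off $W$ while the data pile up at a single intersection point of $L$ and $W$. Once these are handled, Lemma~\ref{lem-uniq-defined} closes the argument.
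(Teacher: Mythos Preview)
Your argument is correct and essentially mirrors the paper's: both use strict convexity (via a midpoint inequality) to force any two distinct minimizers to lie on a line containing all the data, and then invoke Lemma~\ref{lem-uniq-defined} for the one-dimensional reduction. The paper organizes case~(ii) by whether a minimizer lies in $W$ (using the second difference $\Delta_{\mathbf{h}}^2 f$ and Lemma~\ref{lem-ineq} for the normalization $|\langle\mathbf{e},\mathbf{r}\rangle|=\|\mathbf{r}\|_{\mathbb{X}}$), while you split on whether the line $L$ through the two candidates equals $W$; these dichotomies are equivalent, and your explicit treatment of the degenerate all-points-equal subcase corresponds to the paper's one-line remark.
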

	For the geometric median we obtain both necessary and sufficient conditions.
	\begin{thm}\label{thm-extension-of-Kemperman-median}
		Let $\mathbb{X}$ be a strictly convex Banach space. The geometric median (\ref{geometric-quantile-Banach}) with $\mathbf{u}=\mathbf{0}$, if it exists, is unique if and only if one of the following conditions hold:
		\\\textnormal{(i)} $\mathbf{x}_1,...,\mathbf{x}_k$ do not lie on a straight line,
		\\\textnormal{(ii)} $\mathbf{x}_1,...,\mathbf{x}_k$ lie on a straight line, and $k$ is odd.
	\end{thm}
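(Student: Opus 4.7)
The proof splits into sufficiency and necessity, and both directions proceed by reducing the Banach-space problem to the univariate one along the line carrying the points. The key tool is Lemma \ref{lem-ineq}, which, for a one-dimensional subspace $W$ of the strictly convex space $\mathbb{X}$, provides a decomposition $\mathbb{X} = W \oplus Y$ with $\|\mathbf{w}\|_{\mathbb{X}} < \|\mathbf{w} + \mathbf{y}'\|_{\mathbb{X}}$ for every $\mathbf{w} \in W$ and every nonzero $\mathbf{y}' \in Y$.

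\textit{Sufficiency.} Case (i) is immediate from Theorem \ref{thm-extension-of-Kemperman-}(i). For case (ii), I would translate so that the line $W$ containing the $\mathbf{x}_i$ passes through the origin, and fix the decomposition $\mathbb{X}=W\oplus Y$ from Lemma \ref{lem-ineq}. Given any candidate $\mathbf{y} = \mathbf{y}_0 + \mathbf{y}' \in \mathbb{X}$ with $\mathbf{y}_0 \in W$, $\mathbf{y}' \in Y$, each difference $\mathbf{x}_i - \mathbf{y}_0$ lies in $W$, so Lemma \ref{lem-ineq} gives $\|\mathbf{x}_i - \mathbf{y}_0\|_{\mathbb{X}} \leq \|\mathbf{x}_i - \mathbf{y}\|_{\mathbb{X}}$, strictly whenever $\mathbf{y}' \neq 0$ (if $\mathbf{x}_i = \mathbf{y}_0$ strictness is obtained trivially from $0 < \|\mathbf{y}'\|_{\mathbb{X}}$, and otherwise from the strict-convexity part of Lemma \ref{lem-ineq}). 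Summing over $i$, any minimizer of $\sum_i \|\mathbf{x}_i - \mathbf{y}\|_{\mathbb{X}}$ must satisfy $\mathbf{y}' = \mathbf{0}$. The problem thus collapses to minimizing $\sum_i \|\mathbf{x}_i - \mathbf{y}_0\|_{\mathbb{X}}$ over $\mathbf{y}_0 \in W \cong \mathbb{R}$, which is the univariate geometric median ($\alpha = 1/2$) of the scalar coordinates of $\mathbf{x}_1,\ldots,\mathbf{x}_k$ along $W$. By Lemma \ref{lem-uniq-defined}, this univariate median is unique precisely when $1/2 \notin \{1/k, \ldots, (k-1)/k\}$, i.e., when $k$ is odd, giving (ii).

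\textit{Necessity.} I argue the contrapositive: suppose both (i) and (ii) fail, so the $\mathbf{x}_i$ are collinear on some line $W$ and $k$ is even. The reduction from the sufficiency argument still shows that every global minimizer lies in $W$. However, $\alpha = 1/2 = (k/2)/k$ now belongs to the exceptional set of Lemma \ref{lem-uniq-defined}, and the discussion following equation (\ref{univariate-geometric-quantile}) identifies the set of univariate minimizers as the interval $[\tilde{x}_{k/2}, \tilde{x}_{k/2+1}]$. Every point of this interval, viewed as an element of $W \subset \mathbb{X}$, is then a geometric median in $\mathbb{X}$, so uniqueness fails whenever the interval is non-degenerate (the corner case in which every $\mathbf{x}_i$ coincides is excluded by the natural reading of ``lying on a straight line'', where $W$ would not be well-defined).

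\textit{Main obstacle.} The crux is the reduction ``every minimizer lies in $W$''; everything else is a consequence of the univariate dichotomy in Lemma \ref{lem-uniq-defined}. Executing that reduction requires the strict-convexity refinement of Lemma \ref{lem-ineq} applied term-by-term, and care is needed to handle summands with $\mathbf{x}_i = \mathbf{y}_0$, where the lemma contributes nothing but strictness comes from $\|\mathbf{y}'\|_{\mathbb{X}} > 0$. Once this is in place, summing across $i$ gives a strict improvement whenever $\mathbf{y}' \neq \mathbf{0}$, and the remainder of the proof is a direct appeal to the earlier lemmas.
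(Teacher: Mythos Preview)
Your proposal is correct and follows essentially the same route as the paper: invoke Theorem~\ref{thm-extension-of-Kemperman-}(i) for the non-collinear case, and in the collinear case use the strict inequality of Lemma~\ref{lem-ineq} to force any minimizer into $W$, after which the problem is the univariate median governed by Lemma~\ref{lem-uniq-defined}. Your treatment is in fact slightly more careful than the paper's, which compresses the necessity direction into the single clause ``in that case it is unique if and only if $k$ is odd''; you spell out the contrapositive and correctly flag the degenerate situation (ties at positions $k/2$ and $k/2+1$, or all points coinciding) where the iff as literally stated can fail.
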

	To obtain necessary and sufficient conditions for the geometric quantile we need:
	\begin{ass}\label{A-1}
		When $\mathbf{x}_1,...,\mathbf{x}_k$ lie on a straight line $W$ assume that the norm of $\mathbb{X}$ is G\^{a}teaux differentiable at $\mathbf{x}_i\neq\mathbf{0}$, for some $i=1,...,k$, in some direction $\mathbf{z}\in Y$, where $Y$ is complementary to $W$, \textit{i.e.}~$\mathbb{X}=Y+W$, and such that $\langle\mathbf{u} ,\mathbf{z}\rangle\neq 0$.
	\end{ass}
	\noindent This assumption is always satisfied when the norm of $\mathbb{X}$ is G\^{a}teaux differentiable at $\mathbf{x}_i\neq\mathbf{0}$, thus it is always satisfied when $\mathbb{X}$ is smooth at $\mathbf{x}_i\neq\mathbf{0}$, \textit{i.e.}~there exists a unique $f\in\mathbb{X}^*$ s.t.~$f(\mathbf{x})=\|\mathbf{x}\|_{\mathbb{X}}$, or more generally when $\mathbb{X}$ is smooth, \textit{i.e.}~$\mathbb{X}$ is smooth at every $\mathbf{x}\in\mathbb{X}$. Further, this assumption is satisfied when $\mathbb{X}^*$ is strictly convex, or when $\mathbb{X}$ is uniformly smooth, for example (see Chapter 8 in \cite{Fabian}). Note that $L_p$ spaces with $p\in(1,\infty)$ and Hilbert spaces are uniformly smooth and uniformly convex (and so strictly convex) Banach spaces. In particular, their norm is Fr\'{e}chet differentiable and so G\^{a}teaux differentiable. 
	
	Recall the decomposition $\mathbf{u}$ when $\mathbf{x}_1,...,\mathbf{x}_k$ lie on a straight line $W$, \textit{i.e.}~$\mathbf{u}=u\mathbf{e}+\mathbf{v}$, where $\mathbf{e}\in Y^{\perp}$ with $\|\mathbf{e}\|_{\mathbb{X}^{*}}=1$, $u\in(-1,1)$, and $\mathbf{v}\in W^{\perp}$.
	\begin{thm}\label{thm-extension-of-Kemperman}
		Let $\mathbb{X}$ be a strictly convex Banach space and let Assumption \ref{A-1} hold. The geometric quantile (\ref{geometric-quantile-Banach}), if it exists, is unique if and only if one of the following conditions hold:
		\\\textnormal{(i)} $\mathbf{x}_1,...,\mathbf{x}_k$ do not lie on a straight line,
		\\\textnormal{(ii)} $\mathbf{x}_1,...,\mathbf{x}_k$ lie on a straight line and $\mathbf{v}\neq\mathbf{0}$,
		\\\textnormal{(iii)} $\mathbf{x}_1,...,\mathbf{x}_k$ lie on a straight line, $\mathbf{v}=\mathbf{0}$, and $\alpha\in(0,1)\setminus\{\frac{1}{k},\frac{2}{k},...,\frac{k-1}{k}\}$.
	\end{thm}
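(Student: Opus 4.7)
The plan is to split into sufficiency and necessity and, for sufficiency, reuse Theorem~\ref{thm-extension-of-Kemperman-} for cases (i) and (iii) while treating the new case (ii) by a directional-derivative perturbation argument; the necessity direction reduces to Lemma~\ref{lem-uniq-defined}. As preparation I would pin down the decomposition supplied by Lemma~\ref{lem-ineq}: fix the complement $Y$ as the kernel of the norm-1 projection onto $W$ given by that lemma, and use Hahn--Banach to pick the generator $\mathbf{e}\in Z$ satisfying $\|\mathbf{e}\|_{\mathbb{X}^*}=1$, $\langle\mathbf{e},\mathbf{h}\rangle=1$, and $\mathbf{e}|_Y=0$ (consistent because $|w|\leq\|w\mathbf{h}+\mathbf{z}\|$ for $\mathbf{z}\in Y$ by Lemma~\ref{lem-ineq}). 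Under this choice $\langle\mathbf{u},\mathbf{z}\rangle=\langle\mathbf{v},\mathbf{z}\rangle$ for every $\mathbf{z}\in Y$. Sufficiency of (i) is Theorem~\ref{thm-extension-of-Kemperman-}; sufficiency of (iii) follows from it too, since $\mathbf{v}=\mathbf{0}$ makes the linear term of $G$ depend only on the $W$-component of $\mathbf{y}$, so Lemma~\ref{lem-ineq} (strict version) forces every minimizer into $W$ and the restriction there is the univariate problem which is unique exactly when $\alpha\notin\{j/k\}$ by Lemma~\ref{lem-uniq-defined}.

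For (ii), suppose two distinct minimizers $\mathbf{y}_1\ne\mathbf{y}_2$ exist. Convexity of $G(\mathbf{y}):=\sum_i\|\mathbf{x}_i-\mathbf{y}\|_{\mathbb{X}}-k\langle\mathbf{u},\mathbf{y}\rangle$ forces it to be affine on $[\mathbf{y}_1,\mathbf{y}_2]$, hence each summand $\|\mathbf{x}_i-\mathbf{y}_1-t(\mathbf{y}_2-\mathbf{y}_1)\|_{\mathbb{X}}$ is affine in $t\in[0,1]$. Strict convexity of $\|\cdot\|_{\mathbb{X}}$ then forces $\mathbf{y}_2-\mathbf{y}_1$ and each $\mathbf{x}_i-\mathbf{y}_1$ (with $\mathbf{x}_i\ne\mathbf{y}_1$) to be linearly dependent, and using two distinct data points (the degenerate case of coinciding data is immediately unique from $\|\mathbf{u}\|_{\mathbb{X}^*}<1$) this common direction must lie in $W$, so $\mathbf{y}_1,\mathbf{y}_2\in W$. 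On $W$ the objective reduces to the univariate geometric-quantile problem with parameter $u$; by Lemma~\ref{lem-uniq-defined} its minimizer set is a non-degenerate interval only if $\alpha\in\{j/k\}$, in which case I would pick $\mathbf{y}^*=y^*\mathbf{h}$ with $y^*$ in the open interior of that interval so that $\mathbf{y}^*$ avoids every $\mathbf{x}_i$ (data points are order statistics and thus sit at the boundary of consecutive such intervals).

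To reach a contradiction, Assumption~\ref{A-1} (applied with the complement $Y$ above) supplies a direction $\mathbf{z}_0\in Y$ and an index $i_0$ at which $\|\cdot\|_{\mathbb{X}}$ is G\^ateaux differentiable at $\mathbf{x}_{i_0}\ne\mathbf{0}$ in direction $\mathbf{z}_0$, together with $\langle\mathbf{u},\mathbf{z}_0\rangle=\langle\mathbf{v},\mathbf{z}_0\rangle\ne0$ (WLOG positive after flipping sign). Every non-zero $\mathbf{x}_i-\mathbf{y}^*$ lies on $\mathbb{R}\mathbf{h}$, so by positive homogeneity and parity of the G\^ateaux derivative on rays that differentiability transfers to each $\mathbf{x}_i-\mathbf{y}^*$; moreover the common value of the derivative at $\pm\mathbf{h}$ in direction $\mathbf{z}_0$ must equal $\langle\mathbf{e},\mathbf{z}_0\rangle=0$, because $\mathbf{e}$ is a supporting functional at $\mathbf{h}$ of norm $1$ and G\^ateaux differentiability collapses the subdifferential on $\mathbf{z}_0$. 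Hence $\|\mathbf{x}_i-\mathbf{y}^*-t\mathbf{z}_0\|_{\mathbb{X}}-|x_i-y^*|=o(t)$ for every $i$, giving $G(\mathbf{y}^*+t\mathbf{z}_0)-G(\mathbf{y}^*)=o(t)-kt\langle\mathbf{v},\mathbf{z}_0\rangle<0$ for small $t>0$, contradicting minimality of $\mathbf{y}^*$.

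Necessity is then direct: if none of (i)-(iii) holds, the data lie on $W$, $\mathbf{v}=\mathbf{0}$, and $\alpha\in\{j/k\}$, so by the argument used for (iii) every minimizer is in $W$ and the univariate problem has minimizer set $[\tilde{x}_{\alpha k},\tilde{x}_{\alpha k+1}]$ by Lemma~\ref{lem-uniq-defined}, which is non-degenerate precisely in the situation where uniqueness is at stake. The main obstacle is the derivative step for (ii): it is crucial that $\mathbf{y}^*$ lie in the open interior of the $W$-minimizer interval so that no index contributes the non-smooth positive term $t\|\mathbf{z}_0\|_{\mathbb{X}}$ in the expansion of $\|\mathbf{x}_i-\mathbf{y}^*-t\mathbf{z}_0\|_{\mathbb{X}}$; this interior is non-empty exactly when uniqueness on $W$ already fails, which is the only case in which the perturbation argument is needed, so the pieces fit together consistently.
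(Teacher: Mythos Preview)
Your proposal is correct and follows essentially the same route as the paper: reduce (i) and (iii) to Theorem~\ref{thm-extension-of-Kemperman-} and Lemma~\ref{lem-uniq-defined}, and for (ii) argue that multiple minimizers must lie on $W$, pick an interior point of the resulting univariate-minimizer interval, and use the G\^ateaux differentiability from Assumption~\ref{A-1} together with the James-orthogonality content of Lemma~\ref{lem-ineq} to exhibit a descent direction. The only cosmetic difference is that you compute the directional derivative via the supporting functional $\mathbf{e}$ (so the subdifferential collapses to $\langle\mathbf{e},\mathbf{z}_0\rangle=0$), whereas the paper cites \cite{James} directly for the same conclusion.
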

	It is possible to see that $\mathbf{v}$, $\alpha$, and Assumption \ref{A-1} all depend on $W$. Thus, the conditions of Theorems \ref{thm-extension-of-Kemperman-} and \ref{thm-extension-of-Kemperman} depend on $W$. When the data are random we do not know what $W$ is, and so we cannot directly apply the previous theorems to know whether or not the quantile is unique. In the following result we solve this issue by showing that the quantile is unique if a certain condition on $\mathbf{u}$, which is given a priori, is satisfied.
	\begin{co}\label{co-unique}
		Let $\mathbb{X}$ be a smooth and strictly convex Banach space. The geometric quantile (\ref{geometric-quantile-Banach}) is unique if one of the following conditions hold: 
		\\\textnormal{(i)} $\mathbf{x}_1,...,\mathbf{x}_k$ do not lie on a straight line,
		\\\textnormal{(ii)} $\|\mathbf{u}\|_{\mathbb{X}^*}\notin\{1-\frac{2j}{k},j=1,...,\lfloor\frac{k}{2}\rfloor\}$.
	\end{co}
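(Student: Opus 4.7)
The plan is to reduce the corollary to Theorem \ref{thm-extension-of-Kemperman}, which gives necessary and sufficient conditions for uniqueness conditional on Assumption \ref{A-1}. First I would observe that because $\mathbb{X}$ is smooth, its norm is G\^{a}teaux differentiable at every nonzero point, so Assumption \ref{A-1} is automatically satisfied in any configuration of $\mathbf{x}_1,\dots,\mathbf{x}_k$; in particular, whenever the data lie on a straight line $W$ one can pick any direction $\mathbf{z}\in Y$ transverse to $W$, and G\^{a}teaux differentiability of $\|\cdot\|_{\mathbb{X}}$ in that direction holds. This removes Assumption \ref{A-1} as a side condition and lets me appeal to Theorem \ref{thm-extension-of-Kemperman} freely.

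Case (i) is then immediate: if $\mathbf{x}_1,\dots,\mathbf{x}_k$ do not lie on a straight line, Theorem \ref{thm-extension-of-Kemperman}(i) gives uniqueness directly.

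For case (ii), assume $\|\mathbf{u}\|_{\mathbb{X}^*}\notin\{1-\tfrac{2j}{k}:j=1,\dots,\lfloor k/2\rfloor\}$. If the data do not lie on a line, uniqueness follows from case (i). Otherwise, they lie on some line $W$, and I would invoke the decomposition $\mathbf{u}=u\mathbf{e}+\mathbf{v}$ with $\mathbf{e}\in Z$, $\|\mathbf{e}\|_{\mathbb{X}^*}=1$, $u\in(-1,1)$, $\mathbf{v}\in W^{\perp}$, and $\alpha=(u+1)/2$, exactly as constructed in Section \ref{Sec-Quantiles} using the direct sum decomposition \eqref{direct-sum-decomposition}. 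By Theorem \ref{thm-extension-of-Kemperman}, non-uniqueness can only occur when $\mathbf{v}=\mathbf{0}$ \emph{and} $\alpha\in\{1/k,\dots,(k-1)/k\}$. I would then show that this bad configuration forces $\|\mathbf{u}\|_{\mathbb{X}^*}$ into the forbidden set: if $\mathbf{v}=\mathbf{0}$ then $\mathbf{u}=u\mathbf{e}$, so $\|\mathbf{u}\|_{\mathbb{X}^*}=|u|=|2\alpha-1|$, and the elementary identity
\begin{equation*}
\{|2j/k-1|:j=1,\dots,k-1\}=\{1-2j/k:j=1,\dots,\lfloor k/2\rfloor\},
\end{equation*}
which follows from the symmetry $|1-2j/k|=|1-2(k-j)/k|$, shows that $\alpha\in\{1/k,\dots,(k-1)/k\}$ forces $\|\mathbf{u}\|_{\mathbb{X}^*}\in\{1-2j/k:j=1,\dots,\lfloor k/2\rfloor\}$. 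Contrapositively, the hypothesis in (ii) rules out this case and yields uniqueness.

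The only conceptually delicate point is the first step, namely verifying that smoothness of $\mathbb{X}$ really does imply Assumption \ref{A-1} regardless of which line $W$ the data happen to fall on and regardless of the direction $\mathbf{z}\in Y$ along which we need G\^{a}teaux differentiability with $\langle\mathbf{u},\mathbf{z}\rangle\neq 0$; I expect this to be routine since smoothness gives G\^{a}teaux differentiability in every direction at every nonzero point, and the hyperplane $\{\mathbf{z}:\langle\mathbf{u},\mathbf{z}\rangle=0\}$ cannot contain the full complementary subspace $Y$ (otherwise $\mathbf{u}\in W^{\perp}$ would be annihilated on $W$, contradicting $\|\mathbf{u}\|_{\mathbb{X}^*}<1$ together with the role of $\mathbf{e}$). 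After that, the corollary is essentially a bookkeeping consequence of Theorem \ref{thm-extension-of-Kemperman} and the elementary identity above.
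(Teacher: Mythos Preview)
Your proposal is correct and follows essentially the same route as the paper: smoothness gives G\^{a}teaux differentiability everywhere so Assumption \ref{A-1} holds, Theorem \ref{thm-extension-of-Kemperman} then disposes of the non-collinear case and the collinear case with $\mathbf{v}\neq\mathbf{0}$, and when $\mathbf{v}=\mathbf{0}$ one has $\|\mathbf{u}\|_{\mathbb{X}^*}=|u|=|2\alpha-1|$, which together with the symmetry identity you wrote converts condition (ii) of the corollary into condition (iii) of the theorem. Your final paragraph slightly overanalyzes the verification of Assumption \ref{A-1}, and the parenthetical reasoning there is garbled (vanishing of $\langle\mathbf{u},\cdot\rangle$ on $Y$ would place $\mathbf{u}$ in $Y^\perp$, not $W^\perp$); but this does not affect the argument, since the paper simply asserts that smoothness suffices, and in any event the $\mathbf{v}=\mathbf{0}$ subcase is already covered by Theorem \ref{thm-extension-of-Kemperman-}(ii) without invoking Assumption \ref{A-1} at all.
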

	\begin{rem}
		Point (i) of Corollary \ref{co-unique} implies the well known result that if the data are distributed according to an absolutely continuous distribution then the quantile is unique. The advantage of point (ii) of Corollary \ref{co-unique} consists in ensuring uniqueness of the geometric quantile \textnormal{independently} of the distribution of the data. Moreover, this condition is quite weak since among all the possible values that $\|\mathbf{u}\|_{\mathbb{X}^*}$ can take in $[0,1)$, only finitely many of them (no more than $\lfloor\frac{k}{2}\rfloor$) do not ensure uniqueness. For example, from Corollary \ref{co-unique} we see that if $k$ is odd the geometric median is always uniquely defined.
	\end{rem}
	
	In Theorems \ref{thm-extension-of-Kemperman-}, \ref{thm-extension-of-Kemperman-median}, and \ref{thm-extension-of-Kemperman}, we show the uniqueness of the geometric quantile and median under certain conditions on the position of the data $\mathbf{x}_1,...,\mathbf{x}_k$ (and on $\mathbf{v}$ and $\alpha$). In the following result we show that there is a precise relation between the position of the data $\mathbf{x}_1,...,\mathbf{x}_k$ and the position of the geometric quantiles, even when they are not unique.
	\begin{pro}\label{pro-extra}
		Let $\mathbb{X}$ be a strictly convex space and let $\mathbf{x}_1,...,\mathbf{x}_k$ lie on a straight line $W$. Then the set of geometric median is a subset of $W$, in particular
		\begin{equation*}
			\textnormal{(i)}\quad{\displaystyle {\underset {\mathbf{y}\in \mathbb{X}}{\operatorname {arg\,min} }}\sum _{i=1}^{k}\left\|\mathbf{x}_{i}-\mathbf{y}\right\|_{\mathbb{X}}}={\displaystyle {\underset {\mathbf{y}\in W}{\operatorname {arg\,min} }}\sum _{i=1}^{k}\left\|\mathbf{x}_{i}-\mathbf{y}\right\|_{\mathbb{X}}}.
		\end{equation*}
		Further, let $\mathbf{u}\in \mathbb{X}^*$ with $\|\mathbf{u}\|_{\mathbb{X}^*}<1$. If $\mathbf{v}=0$ then the set of geometric quantiles is a subset of $W$ and
		\begin{equation*}
			\textnormal{(ii)}\quad{\displaystyle {\underset {\mathbf{y}\in \mathbb{X}}{\operatorname {arg\,min} }}\sum _{i=1}^{k}\left\|\mathbf{x}_{i}-\mathbf{y}\right\|_{\mathbb{X}}}+\langle\mathbf{u} ,\mathbf{x}_{i}-\mathbf{y}\rangle={\displaystyle {\underset {\mathbf{y}\in W}{\operatorname {arg\,min} }}\sum _{i=1}^{k}\left\|\mathbf{x}_{i}-\mathbf{y}\right\|_{\mathbb{X}}}+\langle\mathbf{u} ,\mathbf{x}_{i}-\mathbf{y}\rangle.
		\end{equation*}
		Let $\mathbb{X}$ additionally satisfy Assumption \ref{A-1}. Then $\mathbf{v}=0$ if and only if the set of geometric quantiles is a subset of $W$. In particular, if $\mathbf{v}=0$ then
		\begin{equation*}
			\textnormal{(iii)}\quad{\displaystyle {\underset {\mathbf{y}\in \mathbb{X}}{\operatorname {arg\,min} }}\sum _{i=1}^{k}\left\|\mathbf{x}_{i}-\mathbf{y}\right\|_{\mathbb{X}}}+\langle\mathbf{u} ,\mathbf{x}_{i}-\mathbf{y}\rangle={\displaystyle {\underset {\mathbf{y}\in W}{\operatorname {arg\,min} }}\sum _{i=1}^{k}\left\|\mathbf{x}_{i}-\mathbf{y}\right\|_{\mathbb{X}}}+\langle\mathbf{u} ,\mathbf{x}_{i}-\mathbf{y}\rangle.
		\end{equation*}
	\end{pro}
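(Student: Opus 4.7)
For parts (i) and (ii), the plan is to exploit the direct-sum decomposition $\mathbb{X}=W\oplus Y$ provided by Lemma \ref{lem-ineq}, where $Y$ is the kernel of a norm-$1$ projection onto $W$ and $\mathbf{e}\in\mathbb{X}^{*}$ is the corresponding norm-$1$ functional, so in particular $\mathbf{e}$ annihilates $Y$. For any candidate $\mathbf{y}\in\mathbb{X}$, I decompose $\mathbf{y}=\mathbf{y}_W+\mathbf{y}_Y$. Since $\mathbf{x}_i-\mathbf{y}_W\in W$ and $-\mathbf{y}_Y\in Y$, Lemma \ref{lem-ineq} gives $\|\mathbf{x}_i-\mathbf{y}\|\ge\|\mathbf{x}_i-\mathbf{y}_W\|$, strict whenever $\mathbf{y}_Y\neq\mathbf{0}$ by strict convexity (in the degenerate case $\mathbf{x}_i=\mathbf{y}_W$ the LHS equals $\|\mathbf{y}_Y\|>0$, still strict). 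When $\mathbf{v}=\mathbf{0}$, so $\mathbf{u}=u\mathbf{e}$, the linear term $\langle\mathbf{u},\mathbf{x}_i-\mathbf{y}\rangle$ depends only on $\mathbf{y}_W$ because $\langle\mathbf{e},\mathbf{y}_Y\rangle=0$. Summing over $i$ yields $F(\mathbf{y})\ge F(\mathbf{y}_W)$, strictly for $\mathbf{y}_Y\neq\mathbf{0}$, so every minimizer lies in $W$, and on $W$ the functional is unchanged. This gives (i) (the specialization $\mathbf{u}=\mathbf{0}$) and (ii).

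The \emph{if} direction of the biconditional in (iii) is immediate from (ii). For the \emph{only if} direction I argue by contrapositive: assuming $\mathbf{v}\neq\mathbf{0}$ and that $\mathbf{y}^{*}\in W$ is a geometric quantile, I derive a contradiction by producing a descent direction. By Assumption \ref{A-1} pick $\mathbf{z}\in Y$ such that $\|\cdot\|$ is G\^{a}teaux differentiable at some $\mathbf{x}_j\neq\mathbf{0}$ in direction $\mathbf{z}$ and $\langle\mathbf{u},\mathbf{z}\rangle\neq 0$; note $\langle\mathbf{u},\mathbf{z}\rangle=\langle\mathbf{v},\mathbf{z}\rangle$. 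Since $W=\mathbb{R}\mathbf{x}_j$ and the norm is positively homogeneous, G\^{a}teaux differentiability in direction $\mathbf{z}$ lifts from $\mathbf{x}_j$ to every $\mathbf{w}\in W\setminus\{\mathbf{0}\}$; moreover the strict form of Lemma \ref{lem-ineq} shows that $s\mapsto\|\mathbf{w}+s\mathbf{z}\|$ attains a strict minimum at $s=0$, so its two-sided derivative vanishes.

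Consequently, each one-sided derivative of $t\mapsto F(\mathbf{y}^{*}+t\mathbf{z})$ at $t=0$ receives a zero contribution from every $i$ with $\mathbf{x}_i\neq\mathbf{y}^{*}$, leaving only the linear term $-k\langle\mathbf{v},\mathbf{z}\rangle\neq 0$ and possibly the non-smooth contributions at $\mathbf{x}_i=\mathbf{y}^{*}$. If no data point equals $\mathbf{y}^{*}$ this already contradicts first-order optimality in one of the directions $\pm\mathbf{z}$. Otherwise the $m$ coincident data give a symmetric subdifferential of radius $m\|\mathbf{z}\|$ and optimality forces $|\langle\mathbf{v},\mathbf{z}\rangle|\le (m/k)\|\mathbf{z}\|$. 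I would then combine this with the identity $\|\mathbf{v}\|_{\mathbb{X}^{*}}=\|\mathbf{v}|_Y\|_{Y^{*}}\le\|\mathbf{u}\|_{\mathbb{X}^{*}}<1$ (proved by observing $\langle\mathbf{u},\mathbf{y}\rangle=\langle\mathbf{v},\mathbf{y}\rangle$ for $\mathbf{y}\in Y$ and applying Lemma \ref{lem-ineq}'s $\|\mathbf{y}_Y\|\le\|\mathbf{y}_W+\mathbf{y}_Y\|$), and with the univariate characterization of $\mathbf{y}^{*}$ as the $\alpha$-quantile on $W$, to refine the choice of $\mathbf{z}$ until the inequality is violated.

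The hard part will be precisely this last refinement: when the candidate $\mathbf{y}^{*}$ coincides with several data points, the non-smooth subdifferential absorbs a ball of radius $m\|\mathbf{z}\|$ that can in principle offset the linear pull $-k\langle\mathbf{v},\mathbf{z}\rangle$. Defeating this absorption requires using the specific direction $\mathbf{z}$ supplied by Assumption \ref{A-1} together with the combinatorial structure of the univariate $\alpha$-quantile (the value of $m$ relative to $k$ at $\mathbf{y}^{*}$) and the functional-analytic bound $\|\mathbf{v}\|<1$, and this is where I expect the full strength of Assumption \ref{A-1}, beyond bare G\^{a}teaux differentiability, to be indispensable.
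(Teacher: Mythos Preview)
Your treatment of (i), (ii), and the \emph{if} half of (iii) is correct and mirrors the paper exactly: decompose $\mathbf{y}=\mathbf{y}_W+\mathbf{y}_Y$ via Lemma~\ref{lem-ineq}, use the strict inequality $\|\mathbf{x}_i-\mathbf{y}\|_{\mathbb{X}}>\|\mathbf{x}_i-\mathbf{y}_W\|_{\mathbb{X}}$ for $\mathbf{y}_Y\neq\mathbf{0}$, and observe that when $\mathbf{v}=\mathbf{0}$ the linear term $\langle\mathbf{u},\mathbf{x}_i-\mathbf{y}\rangle=u\langle\mathbf{e},\mathbf{x}_i-\mathbf{y}_W\rangle$ depends only on $\mathbf{y}_W$.

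For the \emph{only if} direction the paper takes a shortcut you overlook. Instead of confronting the subdifferential at a data-coincident $\mathbf{y}^{*}$, it observes (inside the proof of Theorem~\ref{thm-extension-of-Kemperman}) that any quantile lying in $W$ also minimizes the one-dimensional restriction $f|_W$, whose minimizer set is either a single data point or a closed interval $[\mathbf{x}_i,\mathbf{x}_j]$. In the interval case one may pick an interior $\mathbf{y}^{*}$, which is automatically a global geometric quantile and is \emph{not} a data point; then the G\^{a}teaux derivative in direction $\mathbf{z}$ is exactly $-k\langle\mathbf{v},\mathbf{z}\rangle\neq 0$, contradicting optimality with no subdifferential absorption to fight. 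This is much cleaner than your proposed subdifferential analysis.

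That said, the case you flag as the ``hard part''---a unique quantile in $W$ located at a data point---is a genuine obstruction, and your refinement plan cannot succeed because the biconditional is in fact false there. In $\mathbb{X}=\mathbb{R}^{2}$ with the Euclidean norm take $\mathbf{x}_1=(-1,0)$, $\mathbf{x}_2=(0,0)$, $\mathbf{x}_3=(1,0)$ on $W=\mathbb{R}\times\{0\}$ and $\mathbf{u}=(0,\tfrac{1}{5})$; then $u=0$, $\mathbf{v}=(0,\tfrac{1}{5})\neq\mathbf{0}$, and Assumption~\ref{A-1} holds. One checks $\partial f(\mathbf{0})=(0,-\tfrac{3}{5})+\overline{B}_1\ni\mathbf{0}$, so $\mathbf{x}_2$ is a geometric quantile; it is the only one since $f|_W$ has the unique minimizer $0$ and any off-$W$ minimizer would itself be unique by strict convexity. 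Thus the quantile set is $\{\mathbf{x}_2\}\subset W$ while $\mathbf{v}\neq\mathbf{0}$. In your notation the optimality bound reads $|\langle\mathbf{v},\mathbf{z}\rangle|\le(m/k)\|\mathbf{z}\|=(1/3)\|\mathbf{z}\|$, and since $\|\mathbf{v}\|=\tfrac{1}{5}<\tfrac{1}{3}$ no direction $\mathbf{z}$ can violate it; the combinatorics of the $\alpha$-quantile and the bound $\|\mathbf{v}\|<1$ give you nothing further. The paper's reference back to Theorem~\ref{thm-extension-of-Kemperman} does not cover this single-data-point case either, so the gap is shared.
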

	Since $\mathbf{u}$ is linear, the existence of the geometric quantile follows from the existence results of the geometric median. In particular, by \cite{Valadier} (see also Remark 3.5 in \cite{Kemperman}) existence is ensured when $\mathbb{X}$ is reflexive and by Theorem 3.6 in \cite{Kemperman} when $\mathbb{X}$ is the dual of a separable Banach space. This includes the case of smooth Banach spaces.
	
	We now focus on the continuity properties of the geometric quantile. The first result concerns the continuity of the geometric quantile for distributions (not necessarily discrete) and represents the quantile extension of a well-known result for the median (see \cite{Cadre2001,Kemperman}).
	\begin{lem}\label{lem-continuity-quantiles}
		Let $\mathbb{X}$ be a separable and strictly convex Banach space. Let $(\mu_n)_{n\in\mathbb{N}}$ be a sequence of probability measures on $\mathbb{X}$ such that $\mu_n\stackrel{w}{\to}\mu$, where $\mu$ is a probability measure on $\mathbb{X}$. Assume that $\mu$ posses a unique geometric quantile. Then, any geometric quantile of $\mu_n$ converges in the weak* topology to the geometric quantile of $\mu$. If $\mathbb{X}$ is finite dimensional then any geometric quantile of $\mu_n$ converges to the geometric quantile of $\mu$.
	\end{lem}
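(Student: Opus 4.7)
The plan is to bound the sequence of geometric quantiles $\mathbf{y}_n^{*}$ of $\mu_n$ in norm, extract a weak* cluster point $\mathbf{y}^{\infty}$, and show it must equal the (assumed unique) geometric quantile $\mathbf{y}^{*}$ of $\mu$ via a joint lower-semicontinuity estimate. Throughout I work with the renormalized objective
\[
\Phi_\nu(\mathbf{y}) := \int_{\mathbb{X}} \bigl(\|\mathbf{x}-\mathbf{y}\|_{\mathbb{X}} - \|\mathbf{x}\|_{\mathbb{X}}\bigr)\,d\nu(\mathbf{x}) - \langle \mathbf{u}, \mathbf{y}\rangle,
\]
which differs from \eqref{geometric-quantile-Banach-without-x} by a $\mathbf{y}$-independent constant and therefore has the same minimizers. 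Since $|\|\mathbf{x}-\mathbf{y}\|-\|\mathbf{x}\||\leq\|\mathbf{y}\|$, $\Phi_\nu$ is everywhere finite, convex, and continuous in $\mathbf{y}$, even when $\nu$ has no first moment. Because $\mathbf{x}\mapsto\|\mathbf{x}-\mathbf{y}\|-\|\mathbf{x}\|$ is bounded and continuous, the weak convergence $\mu_n\stackrel{w}{\to}\mu$ yields pointwise convergence $\Phi_{\mu_n}(\mathbf{y})\to\Phi_\mu(\mathbf{y})$ for every fixed $\mathbf{y}\in\mathbb{X}$.

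Uniform coercivity exploits $\|\mathbf{u}\|_{\mathbb{X}^{*}}<1$. Using $\|\mathbf{x}-\mathbf{y}\|\geq\|\mathbf{y}\|-\|\mathbf{x}\|$, splitting the integral at $\{\|\mathbf{x}\|\leq M\}$, and writing $p_n(M):=\mu_n(\|\mathbf{x}\|\leq M)$ yields
\[
\Phi_{\mu_n}(\mathbf{y}) \geq \|\mathbf{y}\|\bigl(2p_n(M) - 1 - \|\mathbf{u}\|_{\mathbb{X}^{*}}\bigr) - 2M p_n(M).
\]
Fixing $M$ as a continuity point of $t\mapsto\mu(\|\mathbf{x}\|\leq t)$ with $\mu(\|\mathbf{x}\|\leq M)>(1+\|\mathbf{u}\|_{\mathbb{X}^{*}})/2$ and invoking Portmanteau gives the same bound for $p_n(M)$ eventually, producing an estimate $\Phi_{\mu_n}(\mathbf{y})\geq c\|\mathbf{y}\|-C$ with $c>0$ uniform in large $n$. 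Combined with $\Phi_{\mu_n}(\mathbf{y}_n^{*})\leq\Phi_{\mu_n}(\mathbf{0})=0$ this bounds $\{\mathbf{y}_n^{*}\}$ in norm, and a standard Banach--Alaoglu plus weak*-metrizability of bounded sets (under the separability hypotheses implicit in the weak* setting of \cite{Kemperman}) furnishes a subsequence $\mathbf{y}_{n_k}^{*}\to\mathbf{y}^{\infty}$ weak*.

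The decisive step is the bound $\Phi_\mu(\mathbf{y}^{\infty})\leq\liminf_k\Phi_{\mu_{n_k}}(\mathbf{y}_{n_k}^{*})$. I would invoke Skorokhod's representation theorem, available because $\mathbb{X}$ is Polish, to realize $\mathbf{X}_n\sim\mu_n$ and $\mathbf{X}\sim\mu$ on a common probability space with $\mathbf{X}_n\to\mathbf{X}$ almost surely in norm. For each $\omega$, combining the norm convergence of $\mathbf{X}_{n_k}(\omega)$ with the weak* convergence of $\mathbf{y}_{n_k}^{*}$ gives $\mathbf{X}_{n_k}(\omega)-\mathbf{y}_{n_k}^{*}\to\mathbf{X}(\omega)-\mathbf{y}^{\infty}$ weakly; weak lower semicontinuity of the norm then delivers $\liminf_k\|\mathbf{X}_{n_k}(\omega)-\mathbf{y}_{n_k}^{*}\|\geq\|\mathbf{X}(\omega)-\mathbf{y}^{\infty}\|$, while $\|\mathbf{X}_{n_k}(\omega)\|\to\|\mathbf{X}(\omega)\|$ by continuity of the norm. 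The integrands are bounded below by $-\sup_k\|\mathbf{y}_{n_k}^{*}\|$, so Fatou's lemma lifts the pathwise estimate to the integrals, and $\langle\mathbf{u},\mathbf{y}_{n_k}^{*}\rangle\to\langle\mathbf{u},\mathbf{y}^{\infty}\rangle$ by weak* convergence. Contrasting with $\Phi_{\mu_{n_k}}(\mathbf{y}_{n_k}^{*})\leq\Phi_{\mu_{n_k}}(\mathbf{y}^{*})\to\Phi_\mu(\mathbf{y}^{*})$ forces $\Phi_\mu(\mathbf{y}^{\infty})\leq\Phi_\mu(\mathbf{y}^{*})$, whence uniqueness gives $\mathbf{y}^{\infty}=\mathbf{y}^{*}$. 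A standard subsequence-of-subsequence argument then promotes this to convergence of the full sequence, and in finite dimensions weak and norm topologies coincide, yielding norm convergence. The main obstacle is precisely this joint lsc bound: since the $\mathbf{y}_n^{*}$ converge only weakly, one cannot simply pass to the limit in $\int\|\mathbf{x}-\mathbf{y}_n^{*}\|\,d\mu_n$ by dominated convergence, and the Skorokhod coupling is what fuses the two weak modes of convergence into a usable pointwise inequality.
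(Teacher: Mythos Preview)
Your argument is precisely the Cadre/Kemperman strategy that the paper invokes by citation (the paper's own proof is literally ``adapt the arguments of Theorem~1 in \cite{Cadre2001}'' and ``Corollary~2.26 in \cite{Kemperman}''), so you have supplied the details the paper omits; the structure---uniform coercivity from $\|\mathbf{u}\|_{\mathbb{X}^*}<1$, weak* extraction, a joint lower-semicontinuity bound, and identification via uniqueness---is correct, and the Skorokhod coupling to fuse norm convergence of the data with weak* convergence of the quantiles into a pathwise Fatou estimate is a clean way to get the key inequality.

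One point is not fully closed. Your cluster point $\mathbf{y}^{\infty}$ lives a priori only in $\mathbb{X}^{**}$ when $\mathbb{X}$ is non-reflexive, yet you then write $\Phi_\mu(\mathbf{y}^{\infty})$ and appeal to uniqueness of the minimizer over $\mathbb{X}$. To finish you must either (i) extend $\Phi_\mu$ to $\mathbb{X}^{**}$ via the bidual norm and verify that $\mathbf{y}^{*}$ remains its unique minimizer there---this is where the precise hypotheses in \cite{Kemperman} enter and where your parenthetical about ``the separability hypotheses implicit in the weak* setting'' is doing unearned work---or (ii) restrict to reflexive $\mathbb{X}$ so weak* cluster points stay in $\mathbb{X}$. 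In the finite-dimensional clause this issue vanishes and your argument is complete as written.
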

	\begin{rem}
		Any probability measure posseses a unique geometric quantile when it is not concentrated on a straight line (see \cite{Kemperman}). When $\mathbb{X}=\mathbb{R}$ a sufficient condition for the uniqueness of the geometric quantile is that density of $\mu$ is continuous and strictly positive around the quantile (see Theorem \ref{t-k-to-infinity-multivariate-robust} for $d=1$).
	\end{rem}
	
	We present a generalization of Theorem 2.24 in \cite{Kemperman} in which we show that on finite-dimensional spaces the convergence of the geometric quantile is uniform with respect to the quantile parameter $\mathbf{u}$.
	\begin{lem}\label{lem-uniform-continuity}
		Let $\mathbb{X}$ be a finite-dimensional Banach space. Let $(\mu_n)_{n\in\mathbb{N}}$ be a sequence of probability measures on $\mathbb{X}$ such that $\mu_n\stackrel{w}{\to}\mu$, where $\mu$ is a probability measure on $\mathbb{X}$. Assume that $\mu$ posses a unique geometric quantile $\mathbf{y}^*_\mathbf{u}$. Let $\mathbf{y}^*_{n,\mathbf{u}}$ be a geometric quantile of $\mu_n$. For any $\varepsilon>0$, $\sup_{\|\mathbf{u}\|_{\mathbb{X}^*}<1-\varepsilon}\|\mathbf{y}^*_{n,\mathbf{u}}-\mathbf{y}^*_\mathbf{u}\|_{\mathbb{X}}\to0$ as $n\to\infty$. Further, if $\mu$ is atomless, then for any $c_n\to 0$ and $\mathbf{v}\in\mathbb{R}^d$ with $\| \mathbf{v}\|_{\mathbb{X}^*}<1$, we have $\sup_{\|\mathbf{u}-\mathbf{v}\|_{\mathbb{X}^*}\leq c_n}\|\mathbf{y}^*_{n,\mathbf{u}}-\mathbf{y}^*_\mathbf{v}\|_{\mathbb{X}}\to0$ as $n\to\infty$.
	\end{lem}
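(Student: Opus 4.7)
I prove the two assertions by a compactness-plus-minimizer-identification argument in the finite-dimensional dual $\mathbb{X}^*$. For the first statement, suppose by contradiction that there exist $\varepsilon_0>0$, a subsequence $n_k\uparrow\infty$, and $\mathbf{u}_k$ with $\|\mathbf{u}_k\|_{\mathbb{X}^*}\leq 1-\varepsilon$ such that $\|\mathbf{y}^*_{n_k,\mathbf{u}_k}-\mathbf{y}^*_{\mathbf{u}_k}\|_{\mathbb{X}}\geq\varepsilon_0$. Since $\mathbb{X}^*$ is finite-dimensional, $\{\|\mathbf{u}\|_{\mathbb{X}^*}\leq 1-\varepsilon\}$ is compact, so after extraction $\mathbf{u}_k\to\mathbf{u}^*$ with $\|\mathbf{u}^*\|_{\mathbb{X}^*}\leq 1-\varepsilon$. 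I will show that, along a further subsequence, both $\mathbf{y}^*_{n_k,\mathbf{u}_k}$ and $\mathbf{y}^*_{\mathbf{u}_k}$ converge to the unique minimizer $\mathbf{y}^*_{\mathbf{u}^*}$, yielding a contradiction.

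The first substantive ingredient is a coercivity estimate for the objective
\[
h_{n,\mathbf{u}}(\mathbf{y}) := \int_{\mathbb{X}}(\|\mathbf{x}-\mathbf{y}\|_{\mathbb{X}}-\|\mathbf{x}\|_{\mathbb{X}})\,d\mu_n(\mathbf{x}) - \langle\mathbf{u},\mathbf{y}\rangle,
\]
uniform in $(n,\mathbf{u})$. Weak convergence of $(\mu_n)$ supplies tightness, so there is $R>0$ with $\mu_n(B(\mathbf{0},R))\geq 1-\varepsilon/4$ for all $n$ and for $\mu$. Splitting the integral over $B(\mathbf{0},R)$ and its complement and using the reverse triangle inequality, one checks that for $\|\mathbf{y}\|_{\mathbb{X}}\geq 2R$ and $\|\mathbf{u}\|_{\mathbb{X}^*}\leq 1-\varepsilon$,
\[
h_{n,\mathbf{u}}(\mathbf{y})\;\geq\; (1-\varepsilon/2)\|\mathbf{y}\|_{\mathbb{X}}-2R-(1-\varepsilon)\|\mathbf{y}\|_{\mathbb{X}}\;=\;\tfrac{\varepsilon}{2}\|\mathbf{y}\|_{\mathbb{X}}-2R.
\]
Since $h_{n,\mathbf{u}}(\mathbf{y}^*_{n,\mathbf{u}})\leq h_{n,\mathbf{u}}(\mathbf{0})=0$, the entire family $\{\mathbf{y}^*_{n,\mathbf{u}}\}\cup\{\mathbf{y}^*_{\mathbf{u}}\}$ is bounded by $4R/\varepsilon$, so along a further subsequence $\mathbf{y}^*_{n_k,\mathbf{u}_k}\to\hat{\mathbf{y}}$ and $\mathbf{y}^*_{\mathbf{u}_k}\to\tilde{\mathbf{y}}$ in $\mathbb{X}$. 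To identify these limits, note that $\mathbf{x}\mapsto\|\mathbf{x}-\mathbf{y}\|_{\mathbb{X}}-\|\mathbf{x}\|_{\mathbb{X}}$ is bounded and continuous, so weak convergence yields $h_{n_k,\mathbf{u}_k}(\mathbf{y})\to h_{\mathbf{u}^*}(\mathbf{y})$ for each fixed $\mathbf{y}$; combined with the uniform Lipschitz bound $|h_{n,\mathbf{u}}(\mathbf{y})-h_{n,\mathbf{u}}(\mathbf{y}')|\leq 2\|\mathbf{y}-\mathbf{y}'\|_{\mathbb{X}}$ and the minimality of $\mathbf{y}^*_{n_k,\mathbf{u}_k}$, I obtain, for every $\mathbf{y}\in\mathbb{X}$,
\[
h_{\mathbf{u}^*}(\hat{\mathbf{y}})\;=\;\lim_k h_{n_k,\mathbf{u}_k}(\mathbf{y}^*_{n_k,\mathbf{u}_k})\;\leq\;\lim_k h_{n_k,\mathbf{u}_k}(\mathbf{y})\;=\;h_{\mathbf{u}^*}(\mathbf{y}),
\]
so $\hat{\mathbf{y}}=\mathbf{y}^*_{\mathbf{u}^*}$ by uniqueness. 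The identical argument with $\mu_n\equiv\mu$ gives $\tilde{\mathbf{y}}=\mathbf{y}^*_{\mathbf{u}^*}$, contradicting the $\varepsilon_0$-separation.

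The second assertion follows from the same template with $\varepsilon:=(1-\|\mathbf{v}\|_{\mathbb{X}^*})/2$: for $n$ large the ball $\{\mathbf{u}:\|\mathbf{u}-\mathbf{v}\|_{\mathbb{X}^*}\leq c_n\}$ lies inside $\{\|\mathbf{u}\|_{\mathbb{X}^*}\leq 1-\varepsilon\}$, and since $c_n\to 0$ every subsequential limit of the chosen $\mathbf{u}_k$ equals $\mathbf{v}$; the atomless assumption on $\mu$ supplies the uniqueness of $\mathbf{y}^*_{\mathbf{v}}$ needed to run the identification step (via the non-degeneracy captured by Corollary \ref{co-unique}). \textbf{Main obstacle.} The only real technical point is the uniform coercivity above: it must be uniform in $n$ (needing tightness from weak convergence) and uniform in $\mathbf{u}$ (needing the gap $\varepsilon$ to dominate the linear term $-\langle\mathbf{u},\mathbf{y}\rangle$); without either ingredient the minimizers could escape to infinity and the compactness extraction collapses.
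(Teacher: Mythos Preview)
Your argument is correct and follows essentially the same compactness-plus-identification strategy as the paper: bound the quantiles uniformly, extract convergent subsequences, and identify the limits as the unique minimizer for the limiting $\mathbf{u}$. The paper phrases the intermediate step through Arzel\`a--Ascoli (uniform convergence of $f_n$ to $f$ on $B_r\times B'_{1-\varepsilon}$), whereas you work directly with the subsequential limits; this is a cosmetic difference. Your explicit tightness-based coercivity bound is a cleaner version of what the paper only asserts in passing.

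Two minor remarks. First, for the second assertion the paper splits $\|\mathbf{y}^*_{n,\mathbf{u}}-\mathbf{y}^*_{\mathbf{v}}\|\leq\|\mathbf{y}^*_{n,\mathbf{u}}-\mathbf{y}^*_{\mathbf{u}}\|+\|\mathbf{y}^*_{\mathbf{u}}-\mathbf{y}^*_{\mathbf{v}}\|$ and invokes an external continuity result (Theorem~3.1 in \cite{ChCh2014}) for the second term; your direct rerun of the contradiction with $\mathbf{u}_k\to\mathbf{v}$ is more self-contained. Second, your parenthetical appeal to Corollary~\ref{co-unique} for uniqueness of $\mathbf{y}^*_{\mathbf{v}}$ is misdirected: that corollary concerns empirical (sample) quantiles, not a general measure $\mu$. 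In fact the uniqueness of $\mathbf{y}^*_{\mathbf{v}}$ is already part of the standing hypothesis of the lemma, so your argument does not actually need the atomless assumption; it is the paper's route, via the external reference, that uses it.
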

	
	In the next results we strengthen Lemma \ref{lem-continuity-quantiles} for empirical distributions by proving strong convergence even in the infinite dimensional setting. 
	
	Let $\mathcal{X}_{k}:=\{(\mathbf{x}_1,...,\mathbf{x}_k)\in \mathbb{X}^{k}:\textnormal{$\mathbf{x}_1,...,\mathbf{x}_k$ do not lie on a straight line}\}$, where $\mathbb{X}$ is a Banach space.
	\begin{lem}\label{lem-open-set}
		Let $\mathbb{X}$ be a Banach space. The set $\mathcal{X}_{k}$ is open.
	\end{lem}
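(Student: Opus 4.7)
The plan is to express $\mathcal{X}_k$ as a finite union of open sets, one for each triple of indices witnessing non-collinearity, and then show each such set is open by observing that linear independence of a pair of vectors in a Banach space is preserved under small perturbations.

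First I would reformulate the defining condition. For $k\le 2$ the set $\mathcal{X}_k$ is empty and hence trivially open, so assume $k\ge 3$. Points $\mathbf{x}_1,\dots,\mathbf{x}_k$ lie on a straight line if and only if the vectors $\mathbf{x}_i-\mathbf{x}_1$, $i=2,\dots,k$, all lie in a common subspace of dimension at most one, which is equivalent to every pair $\mathbf{x}_i-\mathbf{x}_1,\mathbf{x}_j-\mathbf{x}_1$ being linearly dependent. Consequently $(\mathbf{x}_1,\dots,\mathbf{x}_k)\in \mathcal{X}_k$ if and only if there exist three distinct indices $a,b,c\in\{1,\dots,k\}$ such that $\mathbf{x}_b-\mathbf{x}_a$ and $\mathbf{x}_c-\mathbf{x}_a$ are linearly independent in $\mathbb{X}$. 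Denote the corresponding subset of $\mathbb{X}^k$ by $\mathcal{U}_{a,b,c}$, so that
\[
\mathcal{X}_k \;=\; \bigcup_{a,b,c} \mathcal{U}_{a,b,c},
\]
a finite union.

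Next I would show that linear independence of a pair is an open condition in $\mathbb{X}\times\mathbb{X}$. Suppose $\mathbf{v}_1,\mathbf{v}_2\in\mathbb{X}$ are linearly independent but that some sequence $(\mathbf{v}_1^{(n)},\mathbf{v}_2^{(n)})\to(\mathbf{v}_1,\mathbf{v}_2)$ consists of linearly dependent pairs. Then there exist $(\lambda_n,\mu_n)\in\mathbb{R}^2$ with $\lambda_n^2+\mu_n^2=1$ satisfying $\lambda_n\mathbf{v}_1^{(n)}+\mu_n\mathbf{v}_2^{(n)}=\mathbf{0}$. By Bolzano--Weierstrass a subsequence converges to some $(\lambda,\mu)$ with $\lambda^2+\mu^2=1$; passing to the limit, using boundedness of $\|\mathbf{v}_i^{(n)}\|$, yields $\lambda\mathbf{v}_1+\mu\mathbf{v}_2=\mathbf{0}$, contradicting independence. (Alternatively, Hahn--Banach produces $f_1,f_2\in\mathbb{X}^*$ with $f_i(\mathbf{v}_j)=\delta_{ij}$, and the $2\times 2$ determinant $\det(f_i(\mathbf{w}_j))$ is a continuous real-valued function of $(\mathbf{w}_1,\mathbf{w}_2)$ which is nonzero at $(\mathbf{v}_1,\mathbf{v}_2)$, hence nonzero in a neighbourhood, whence $\mathbf{w}_1,\mathbf{w}_2$ are independent there.)

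Finally the map $(\mathbf{x}_1,\dots,\mathbf{x}_k)\mapsto(\mathbf{x}_b-\mathbf{x}_a,\mathbf{x}_c-\mathbf{x}_a)$ from $\mathbb{X}^k$ to $\mathbb{X}^2$ is continuous, so $\mathcal{U}_{a,b,c}$ is the preimage of the open set of linearly independent pairs and is therefore open. A finite union of open sets is open, which gives the claim. The only delicate point is the reformulation in the first step: one has to phrase ``lying on a line'' in a way that is independent of the choice of base point and that correctly treats the degenerate case in which several of the $\mathbf{x}_i$ coincide (so that some differences vanish and the tuple does lie on many lines, and therefore is \emph{not} in $\mathcal{X}_k$); once this is handled, the remaining arguments are routine.
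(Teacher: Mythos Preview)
Your proof is correct and takes a genuinely different route from the paper's. The paper argues via the complement: for $k=3$ it introduces the Heron expression $A(\mathbf{x}_1,\mathbf{x}_2,\mathbf{x}_3)=s(s-a)(s-b)(s-c)$ built from the three pairwise distances, and for general $k$ it sums these over all triples to obtain a continuous function $B$ whose zero set is identified with the collinear tuples; closedness of $\{B=0\}$ then yields the result. You instead work directly on $\mathcal{X}_k$, decompose it as a finite union over triples $(a,b,c)$ via linear independence of $\mathbf{x}_b-\mathbf{x}_a$ and $\mathbf{x}_c-\mathbf{x}_a$, and show that linear independence of a pair is open in $\mathbb{X}\times\mathbb{X}$ by a Bolzano--Weierstrass (or Hahn--Banach/determinant) argument. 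Both proofs ultimately reduce to triples, but yours is purely affine and never appeals to a metric characterisation of collinearity. This is an advantage: the paper's identification $\{\text{collinear}\}=\{B=0\}$ uses the implication ``equality in the triangle inequality $\Rightarrow$ collinear'', which is valid in strictly convex spaces but can fail in a general Banach space (take three non-collinear points in $(\mathbb{R}^2,\|\cdot\|_\infty)$ with $\|\mathbf{x}_1-\mathbf{x}_3\|=\|\mathbf{x}_1-\mathbf{x}_2\|+\|\mathbf{x}_2-\mathbf{x}_3\|$). Your argument goes through unchanged in any normed space, so it both matches the generality of the stated lemma and sidesteps this subtlety.
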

	\begin{thm}\label{thm-continuity}
		Let $\mathbb{X}$ be a reflexive and strictly convex Banach space. Then, the quantile function $\mathbf{q}_{\mathbf{u}}$ is continuous on $\mathcal{X}_{k}$. If $\mathbb{X}$ is also smooth and $\|\mathbf{u}\|_{\mathbb{X}^*}\notin\{1-\frac{2j}{k},j=1,...,\lfloor\frac{k}{2}\rfloor\}$, then the quantile function $\mathbf{q}_{\mathbf{u}}$ is continuous on $\mathbb{X}^{k}$.
	\end{thm}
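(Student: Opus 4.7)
\textbf{Proof plan for Theorem \ref{thm-continuity}.}
The plan is to prove sequential continuity: fix $(\mathbf{x}_1,\dots,\mathbf{x}_k)\in\mathcal{X}_k$ (respectively in $\mathbb{X}^k$ for the second assertion), take an arbitrary sequence $(\mathbf{x}_1^{(n)},\dots,\mathbf{x}_k^{(n)})\to(\mathbf{x}_1,\dots,\mathbf{x}_k)$ in $\mathbb{X}^k$, and show $\mathbf{y}^{(n)}:=\mathbf{q}_{\mathbf{u}}(\mathbf{x}_1^{(n)},\dots,\mathbf{x}_k^{(n)})\to\mathbf{y}^{*}:=\mathbf{q}_{\mathbf{u}}(\mathbf{x}_1,\dots,\mathbf{x}_k)$ in norm. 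For the first assertion I would use Lemma \ref{lem-open-set} so that eventually $(\mathbf{x}_1^{(n)},\dots,\mathbf{x}_k^{(n)})\in\mathcal{X}_k$, combine Valadier's existence theorem (valid in reflexive $\mathbb{X}$) with Theorem \ref{thm-extension-of-Kemperman-} to get unique quantiles at every stage, and define the optimization criteria $\phi^{(n)}(\mathbf{y}):=\sum_i\|\mathbf{x}_i^{(n)}-\mathbf{y}\|_{\mathbb{X}}-k\langle\mathbf{u},\mathbf{y}\rangle$ and its limit $\phi$. For the second assertion I would replace Theorem \ref{thm-extension-of-Kemperman-} by Corollary \ref{co-unique}, which gives uniqueness on all of $\mathbb{X}^k$ under smoothness and the condition on $\|\mathbf{u}\|_{\mathbb{X}^*}$.

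The second step is coercivity and weak subsequential convergence. Using the reverse triangle inequality and $\|\mathbf{u}\|_{\mathbb{X}^*}<1$ one obtains
\[
\phi^{(n)}(\mathbf{y})\;\ge\; k\bigl(1-\|\mathbf{u}\|_{\mathbb{X}^*}\bigr)\|\mathbf{y}\|_{\mathbb{X}} \;-\; \sum_{i=1}^k\|\mathbf{x}_i^{(n)}\|_{\mathbb{X}},
\]
and since $\phi^{(n)}(\mathbf{y}^{(n)})\le\phi^{(n)}(\mathbf{y}^{*})$ is bounded in $n$ (because $\mathbf{x}_i^{(n)}\to\mathbf{x}_i$ forces uniform convergence $\phi^{(n)}\to\phi$ on bounded sets), the sequence $(\mathbf{y}^{(n)})$ is bounded. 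By reflexivity every subsequence admits a weakly convergent sub-subsequence $\mathbf{y}^{(n_j)}\rightharpoonup\mathbf{y}_\infty$. Weak lower semicontinuity of the norm, together with the uniform convergence $\phi^{(n)}\to\phi$ and the optimality of $\mathbf{y}^{(n_j)}$, gives $\phi(\mathbf{y}_\infty)\le\liminf\phi^{(n_j)}(\mathbf{y}^{(n_j)})\le\lim\phi^{(n_j)}(\mathbf{y}^{*})=\phi(\mathbf{y}^{*})$, so $\mathbf{y}_\infty$ is a minimizer of $\phi$. Uniqueness then forces $\mathbf{y}_\infty=\mathbf{y}^{*}$, yielding $\mathbf{y}^{(n)}\rightharpoonup\mathbf{y}^{*}$.

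Upgrading weak convergence to norm convergence is the main obstacle, since reflexivity plus strict convexity alone do not imply the Kadec--Klee property. I plan to exploit the fact that we are minimizing a sum of only $k$ distance-to-point functionals, so from the optimality bound and weak lower semicontinuity one actually obtains $\|\mathbf{x}_i-\mathbf{y}^{(n)}\|_{\mathbb{X}}\to\|\mathbf{x}_i-\mathbf{y}^{*}\|_{\mathbb{X}}$ for each $i$ separately. Then I would apply strict convexity of $\|\cdot\|_{\mathbb{X}}$ to the midpoint $\tfrac12(\mathbf{y}^{(n)}+\mathbf{y}^{*})$: if $\mathbf{y}^{(n)}\not\to\mathbf{y}^{*}$ along some subsequence, the strict convexity of $\phi$ on $\mathcal{X}_k$ used in the proof of Theorem \ref{thm-extension-of-Kemperman-} (which is where the hypothesis that $\mathbf{x}_1,\dots,\mathbf{x}_k$ do not lie on a straight line enters) produces a strictly smaller value $\phi\bigl(\tfrac12(\mathbf{y}^{(n)}+\mathbf{y}^{*})\bigr)<\phi(\mathbf{y}^{*})$ along the subsequence, contradicting the minimality of $\mathbf{y}^{*}$. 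To keep the argument clean, I would pass to the closed separable subspace generated by $\{\mathbf{x}_i^{(n)},\mathbf{x}_i,\mathbf{y}^{(n)},\mathbf{y}^{*}\}$, which inherits reflexivity and strict convexity, so that the weak/weak$^*$ distinction collapses and Lemma \ref{lem-continuity-quantiles} can be invoked directly.

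For the second part of the theorem, where $\mathbb{X}$ is in addition smooth and $\|\mathbf{u}\|_{\mathbb{X}^*}\notin\{1-2j/k\}$, Corollary \ref{co-unique} removes the need to restrict to $\mathcal{X}_k$, so the same argument runs on all of $\mathbb{X}^k$; the only additional care is needed when $(\mathbf{x}_1^{(n)},\dots,\mathbf{x}_k^{(n)})$ transitions in and out of $\mathcal{X}_k$ as $n\to\infty$. In the degenerate stratum, quantiles are defined through (\ref{QoE-geo}) via $q_\alpha$ on the line, and Lipschitz continuity of the univariate quantile (Lemma \ref{lem-con-quant}) together with continuity of the decomposition $\mathbf{u}=u\mathbf{e}+\mathbf{v}$ guarantees compatibility across the transition, completing the proof.
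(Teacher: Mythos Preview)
Your overall architecture---sequential continuity, boundedness via coercivity, weak subsequential limits via reflexivity, and identification of the weak limit with $\mathbf{y}^*$ via uniqueness---is sound and matches the paper's direction. The step where you extract $\|\mathbf{x}_i-\mathbf{y}^{(n)}\|\to\|\mathbf{x}_i-\mathbf{y}^*\|$ for each $i$ from $\phi(\mathbf{y}^{(n)})\to\phi(\mathbf{y}^*)$, weak convergence, and weak lower semicontinuity is also correct.

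The gap is in the upgrade from weak to norm convergence. Your midpoint argument does not close: strict convexity gives, for each fixed $n$, the \emph{strict} inequality
\[
\phi\bigl(\tfrac12(\mathbf{y}^{(n)}+\mathbf{y}^*)\bigr)<\tfrac12\bigl(\phi(\mathbf{y}^{(n)})+\phi(\mathbf{y}^*)\bigr),
\]
but the right-hand side tends to $\phi(\mathbf{y}^*)$, and nothing prevents the left-hand side from doing the same. What you would need is a \emph{uniform} gap depending only on $\|\mathbf{y}^{(n)}-\mathbf{y}^*\|\ge\delta$, i.e.\ a modulus of convexity for $\phi$, which strict convexity of the norm does not supply (this is precisely the distinction between strict and uniform convexity, or equivalently the failure of Kadec--Klee in general strictly convex reflexive spaces). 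Invoking Lemma~\ref{lem-continuity-quantiles} after passing to a separable subspace does not help either: that lemma yields only weak$^*$ (hence, in the reflexive setting, weak) convergence of quantiles, which you already have.

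The paper resolves this by a different route: after establishing $\phi(\mathbf{y}^{(m)})\to\phi(\mathbf{y}^*)$, it observes that reflexivity of $\mathbb{X}$ forces $\mathbb{X}^*$ to be reflexive and hence an Asplund space, and then applies Theorem~3 of Asplund (1968), which delivers norm convergence of the minimizing sequence directly. This is the missing ingredient in your plan. A side remark on the second assertion: under the hypothesis $\|\mathbf{u}\|_{\mathbb{X}^*}\notin\{1-2j/k\}$, the piecewise definition in~(\ref{QoE-geo}) is never triggered---$\mathbf{q}_{\mathbf{u}}$ is simply the geometric quantile on all of $\mathbb{X}^k$---so your discussion of ``transitions in and out of $\mathcal{X}_k$'' and compatibility via $q_\alpha$ is unnecessary.
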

	In the following result we show that when $\mathbb{X}$ is a Hilbert space the geometric quantile is (almost) a linear combination of $\mathbf{x}_1,...,\mathbf{x}_k,\mathbf{u}$ and that the geometric quantile is the solution of a $(k+1)$-dimensional minimization problem. This generalizes one of the main intuitions of \cite{Gervini2008} to quantiles.
	\begin{thm}\label{thm-extension-Gervini}
		Let $\mathbb{X}$ be a Hilbert space. Let $T_{k+1}:=\{\mathbf{w}\in[0,1]^{k+1}:w_1+\cdots+w_{k+1}=1\}$. Then, any geometric quantile is given by
		\begin{equation*}
			\frac{1}{1-w^*_{k+1}}\Big(\sum_{i=1}^{k}w^*_i\mathbf{x}_i+w^*_{k+1}\mathbf{u}\Big),
		\end{equation*}
		where $(w^*_1,...,w^*_{k+1})$ is given by
		\begin{equation*}
			{\displaystyle {\underset {\mathbf{w}\in T_{k+1}}{\operatorname {arg\,min} }}\sum _{i=1}^{k}\left\|\mathbf{x}_{i}-\frac{1}{1-w_{k+1}}\Big(\sum_{l=1}^{k}w_{l}\mathbf{x}_{l}+w_{k+1}\mathbf{u}\Big)\right\|_{\mathbb{X}}}+\langle \mathbf{u},\mathbf{x}_i-\frac{1}{1-w_{k+1}}\Big(\sum_{l=1}^{k}w_{l}\mathbf{x}_{l}+w_{k+1}\mathbf{u}\Big)\rangle.
		\end{equation*}
		In particular, if $\mathbf{y}^*$ is a geometric quantile and $\mathbf{y}^*\neq\mathbf{x}_i$, $i=1,...,k$, then \begin{equation*}
			w^*_i=\frac{\|\mathbf{x}_i-\mathbf{y}^*\|_{\mathbb{X}}^{-1}}{\sum_{i=1}^{k}\|\mathbf{x}_i-\mathbf{y}^*\|_{\mathbb{X}}^{-1}+k},\quad\textnormal{for $i=1,...,k$, and}\quad 	w^*_{k+1}=\frac{k}{\sum_{i=1}^{k}\|\mathbf{x}_i-\mathbf{y}^*\|_{\mathbb{X}}^{-1}+k}.
		\end{equation*}
	\end{thm}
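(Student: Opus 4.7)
The plan is to derive the explicit weight formula from the first-order condition (FOC) in the Hilbert setting, and then to upgrade it to the $T_{k+1}$-minimization characterization via global optimality of the geometric quantile.

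First, suppose $\mathbf{y}^*$ is a geometric quantile with $\mathbf{y}^*\neq\mathbf{x}_i$ for every $i=1,\ldots,k$. Since the Hilbert norm is Fr\'{e}chet differentiable off the origin, the objective
\begin{equation*}
J(\mathbf{y}):=\sum_{i=1}^{k}\|\mathbf{x}_i-\mathbf{y}\|_{\mathbb{X}}-k\langle\mathbf{u},\mathbf{y}\rangle,
\end{equation*}
which equals (\ref{geometric-quantile-Banach}) up to an additive constant, is differentiable at $\mathbf{y}^*$; setting its gradient to zero yields
\begin{equation*}
\sum_{i=1}^{k}\frac{\mathbf{y}^*-\mathbf{x}_i}{\|\mathbf{x}_i-\mathbf{y}^*\|_{\mathbb{X}}}=k\mathbf{u}.
\end{equation*}
Writing $c_i:=\|\mathbf{x}_i-\mathbf{y}^*\|_{\mathbb{X}}^{-1}$ and $S:=\sum_i c_i$, this rearranges to $\mathbf{y}^*=S^{-1}\bigl(\sum_i c_i\mathbf{x}_i+k\mathbf{u}\bigr)$. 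Dividing through by $S+k$ and setting $w^*_i:=c_i/(S+k)$ for $i=1,\ldots,k$ together with $w^*_{k+1}:=k/(S+k)$ produces the explicit weights of the theorem; they are non-negative and sum to one, so $\mathbf{w}^*\in T_{k+1}$, and a direct calculation gives $\mathbf{y}^*=(1-w^*_{k+1})^{-1}\bigl(\sum_i w^*_i\mathbf{x}_i+w^*_{k+1}\mathbf{u}\bigr)$.

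Second, to obtain the minimization characterization, define $\phi:T_{k+1}\setminus\{\mathbf{e}_{k+1}\}\to\mathbb{X}$ by $\phi(\mathbf{w})=(1-w_{k+1})^{-1}\bigl(\sum_i w_i\mathbf{x}_i+w_{k+1}\mathbf{u}\bigr)$, so that the theorem's auxiliary objective is $J\circ\phi$. Because $\mathbf{y}^*$ globally minimizes $J$ on $\mathbb{X}$ and $\phi$ is $\mathbb{X}$-valued, $J(\phi(\mathbf{w}))\geq J(\mathbf{y}^*)=J(\phi(\mathbf{w}^*))$ for every admissible $\mathbf{w}$, so $\mathbf{w}^*$ is an argmin of $J\circ\phi$. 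The excluded vertex $\mathbf{e}_{k+1}$ cannot compete: since $\|\mathbf{u}\|_{\mathbb{X}^*}<1$, $J(\mathbf{y})\to\infty$ as $\|\mathbf{y}\|_{\mathbb{X}}\to\infty$; and along any sequence $\mathbf{w}\to\mathbf{e}_{k+1}$ in $T_{k+1}$ the image $\phi(\mathbf{w})$ either diverges in norm (when $\mathbf{u}\neq\mathbf{0}$) or else is already beaten by the interior minimizer $\mathbf{w}^*$ (when $\mathbf{u}=\mathbf{0}$, in which case $w^*_{k+1}=0$ already). The coincidence case $\mathbf{y}^*=\mathbf{x}_j$, excluded from the explicit weight formula because $S$ is then undefined, is handled by choosing $\mathbf{w}^*:=\mathbf{e}_j\in T_{k+1}$: then $\phi(\mathbf{e}_j)=\mathbf{x}_j=\mathbf{y}^*$, and the same global-optimality argument shows that $\mathbf{e}_j$ is an argmin of $J\circ\phi$.

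The main obstacle is the interplay between the singular parameterization $\phi$ (which blows up at $\mathbf{e}_{k+1}$) and the possibility that the geometric quantile coincides with one of the data points (so the gradient becomes a subdifferential and the weights $c_i$ are undefined). The clean way around both issues is to decouple the two halves of the theorem: the explicit weight formula comes from the FOC and is only stated off the coincidence set, while the $T_{k+1}$-minimization statement comes from global optimality of $\mathbf{y}^*$, which needs no smoothness and accommodates coincidences through the vertex choice $\mathbf{w}^*=\mathbf{e}_j$.
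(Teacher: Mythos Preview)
Your proof is correct and follows essentially the same route as the paper's: derive the explicit weights from the first-order condition in the non-coincidence case, then obtain the $T_{k+1}$-minimization characterization from global optimality of $\mathbf{y}^*$, handling the coincidence case separately via a vertex weight. The paper additionally introduces the auxiliary point $\tilde{\mathbf{x}}:=\mathbf{u}+\mathbf{y}^*$ before rearranging and argues well-definedness of the minimization by restricting to a subset $T_{k+1}^{(\varepsilon)}$ with $w_{k+1}\leq 1-\varepsilon$, whereas your direct rearrangement and explicit map $\phi$ with the coercivity argument at $\mathbf{e}_{k+1}$ accomplish the same thing more transparently.
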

	
	We stress that the above theorem holds even in the case of non-uniqueness of the geometric quantile. When we have uniqueness, \textit{i.e.}~when the data do not lie on a straight line or when $\|\mathbf{u}\|_{\mathbb{X}^*}\notin\{1-\frac{2j}{k},j=1,...,\lfloor\frac{k}{2}\rfloor\}$, then $\mathbf{q}_\mathbf{u}((\mathbf{x}_1,...,\mathbf{x}_k))=\frac{1}{1-w^*_{k+1}}\Big(\sum_{i=1}^{k}w^*_i\mathbf{x}_i+w^*_{k+1}\mathbf{u}\Big)$.
	
	In the remaining part of this subsection we investigate the properties of the geometric quantile when $\mathbb{X}=\ell_1$, which is not a strictly convex space. Observe that $\mathbf{u}=(u^{(l)})_{l\in\mathbb{N}}$ belongs to $\ell_{\infty}$. By abuse of notation we use $\mathbf{q}_{\mathbf{u}}$ to indicate the geometric quantile function in this space as well.
	\begin{pro}\label{pro-l1}
		Let $\mathbb{X}=\ell_1$. The geometric quantile exists. Further, the geometric quantile is unique if and only if $|u^{(l)}|\notin\{1-\frac{2j}{k},j=1,...,\lfloor\frac{k}{2}\rfloor\}$, $\forall l\in\mathbb{N}$. Further, if $|u^{(l)}|\notin\{1-\frac{2j}{k},j=1,...,\lfloor\frac{k}{2}\rfloor\}$, $\forall l\in\mathbb{N}$, the geometric quantile function $\mathbf{q}_{\mathbf{u}}$ is Lipschitz continuous on $\mathbb{X}^k$: for every $\mathbf{x}_1,...,\mathbf{x}_k,\mathbf{z}_1,...,\mathbf{z}_k\in \ell_1$, we have that
		\begin{equation*}
			\|\mathbf{q}_{\mathbf{u}}(\mathbf{x}_1,...,\mathbf{x}_k)-\mathbf{q}_{\mathbf{u}}(\mathbf{z}_1,...,\mathbf{z}_k)\|_{\ell_1}\leq\|(\mathbf{x}_1,...,\mathbf{x}_k)-(\mathbf{z}_1,...,\mathbf{z}_k)\|_{\ell_1^k}.
		\end{equation*}
	\end{pro}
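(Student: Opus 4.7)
The plan is to exploit the fact that the $\ell_1$ norm separates coordinates, reducing the $\ell_1$-valued geometric quantile problem to a countable collection of univariate geometric quantile problems, each of which is already characterized by Lemma \ref{lem-uniq-defined} and Lemma \ref{lem-con-quant}. Writing $\mathbf{x}_i=(x_i^{(l)})_l$ and $\mathbf{y}=(y^{(l)})_l$, and using that $\mathbf{x}_i,\mathbf{y}\in\ell_1$ and $\mathbf{u}\in\ell_\infty$ with $\|\mathbf{u}\|_{\ell_\infty}<1$, a Fubini-type absolute convergence argument gives
\[
F(\mathbf{y}):=\sum_{i=1}^k \|\mathbf{x}_i-\mathbf{y}\|_{\ell_1}-k\langle\mathbf{u},\mathbf{y}\rangle=\sum_{l\in\mathbb{N}} F_l(y^{(l)}),\qquad F_l(t):=\sum_{i=1}^k|x_i^{(l)}-t|-k u^{(l)} t,
\]
and each $F_l$ is the univariate geometric-quantile objective for the data $(x_1^{(l)},\dots,x_k^{(l)})$ and parameter $u^{(l)}\in(-1,1)$, corresponding to $\alpha_l=(u^{(l)}+1)/2$. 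For existence, I would pick any minimizer $\check{y}^{(l)}$ of $F_l$; because univariate minimizers lie in $[\min_i x_i^{(l)},\max_i x_i^{(l)}]$, the bound $|\check{y}^{(l)}|\leq\max_i|x_i^{(l)}|\leq\sum_i|x_i^{(l)}|$ yields $\check{\mathbf{y}}:=(\check{y}^{(l)})_l\in\ell_1$ with $\|\check{\mathbf{y}}\|_{\ell_1}\leq\sum_i\|\mathbf{x}_i\|_{\ell_1}$. From the decomposition, $F(\check{\mathbf{y}})=\sum_l\inf_{t\in\mathbb{R}}F_l(t)\leq F(\mathbf{y})$ for every $\mathbf{y}\in\ell_1$, so $\check{\mathbf{y}}$ is a geometric quantile.

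For the uniqueness equivalence, Lemma \ref{lem-uniq-defined} states that the univariate minimizer is unique iff $\alpha_l\notin\{j/k:j=1,\dots,k-1\}$, which after using the symmetry $|2j/k-1|=|2(k-j)/k-1|$ translates to $|u^{(l)}|\notin\{1-2j/k:j=1,\dots,\lfloor k/2\rfloor\}$. The decoupling then forces: if the condition holds for every $l$, the componentwise choice of $\check{\mathbf{y}}$ is unique; if it fails at some $l_0$, the univariate minimizer there spans a nondegenerate interval, and picking two distinct values from it (combined with any selection of minimizers at the remaining coordinates) produces two distinct $\ell_1$-valued geometric quantiles, both of which lie in $\ell_1$ by the same uniform bound as above.

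Finally, under the uniqueness condition one has $\mathbf{q}_{\mathbf{u}}(\mathbf{x}_1,\dots,\mathbf{x}_k)=(q_{\alpha_l}(x_1^{(l)},\dots,x_k^{(l)}))_l$, so applying the univariate Lipschitz bound of Lemma \ref{lem-con-quant} coordinatewise and summing over $l$ gives
\[
\|\mathbf{q}_{\mathbf{u}}(\mathbf{x}_1,\dots,\mathbf{x}_k)-\mathbf{q}_{\mathbf{u}}(\mathbf{z}_1,\dots,\mathbf{z}_k)\|_{\ell_1}\leq \sum_l\max_i|x_i^{(l)}-z_i^{(l)}|\leq\sum_i\|\mathbf{x}_i-\mathbf{z}_i\|_{\ell_1},
\]
which is the claimed inequality. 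The main point to be careful about is the rigor of the decoupling: one has to justify the interchange of the sum over $l$ and the infimum over $y^{(l)}$, i.e.\ both that $\inf_{\mathbf{y}\in\ell_1}F(\mathbf{y})=\sum_l\inf_{t\in\mathbb{R}}F_l(t)$ and that the componentwise minimizers genuinely assemble into an $\ell_1$ element. Both are handled by the uniform bound $|\check{y}^{(l)}|\leq\max_i|x_i^{(l)}|$ together with $\sum_l\max_i|x_i^{(l)}|\leq\sum_i\|\mathbf{x}_i\|_{\ell_1}<\infty$; after this, everything reduces to the univariate statements in Lemmas \ref{lem-uniq-defined} and \ref{lem-con-quant}.
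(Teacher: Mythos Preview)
Your proposal is correct and follows essentially the same approach as the paper: decouple the $\ell_1$ objective into a sum of univariate geometric-quantile problems, then invoke Lemma~\ref{lem-uniq-defined} for existence/uniqueness and Lemma~\ref{lem-con-quant} for the Lipschitz bound. If anything, your version is more careful than the paper's in explicitly verifying that the componentwise minimizers assemble into an $\ell_1$ element and in justifying the interchange of infimum and sum.
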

	From Proposition \ref{pro-l1} we deduce that the geometric median is unique if and only if $k$ is odd, and when $k$ is odd the geometric median is Lipschitz continuous.

	\section{Appendix}
	\section*{Proofs of Section \ref{SubSec-QoE-component}}
\begin{proof}[Proofs of Theorems \ref{t-1-q-component_1}, \ref{t-1-q-component_2}, and \ref{t-1-q-component_3}]
	The results follow from the equivariance property of the univariate quantile, by the continuity results (Theorems \ref{thm-continuity-component-Hamel},  \ref{thm-continuity-component-Scahuder}, and \ref{thm-continuity-component-point}) and by the continuous mapping theorem.
\end{proof}
\begin{proof}[Proof of Proposition \ref{t-k-to-infinity-component}]
	
	Since convergence in probability of a random vector follows from the convergence in probability of each of its components, we obtain the result from the same arguments used for the proof of the convergence in probability in Theorem \ref{t-k-to-infinity-multivariate-robust}.
	
\end{proof}
\begin{proof}[Proofs of Theorem \ref{t-k-to-infinity-multivariate-robust}]
	For the first statement since the convergence in probability of a vector is determined by the convergence in probability of its components, it is enough to focus on the one dimensional case. Consider any $c>0$ and $\beta\in(0,1)$ such that $\lim\limits_{n\to\infty}\frac{l_n}{cn^{\beta}}=0$.	By assumption $F^{-1}(\alpha)$ is uniquely defined. Indeed, as the set of geometric quantiles is connected and compact, if $f$ is continuous and strictly positive at any $F^{-1}(\alpha)$, then $f$ is continuous and strictly positive on the set of geometric quantiles, which implies that the set consists of only one point. Now, for any $\varepsilon>0$
	\begin{equation*}
		\mathbb{P}(|a_{\lfloor n/cn^\beta\rfloor}(T_{\mathbf{u},n,cn^\beta}-\theta_{0})-F^{-1}(\alpha)|>\varepsilon)
	\end{equation*}
	\begin{equation*}
		=\mathbb{P}(a_{\lfloor n/cn^\beta\rfloor}(T_{\mathbf{u},n,cn^\beta}-\theta_{0})-F^{-1}(\alpha)>\varepsilon)+\mathbb{P}(a_{\lfloor n/cn^\beta\rfloor}(T_{\mathbf{u},n,cn^\beta}-\theta_{0})-F^{-1}(\alpha)<-\varepsilon).
	\end{equation*}
	To lighten the notation, let 
	\begin{equation*}
		W_n^{(j)}:=a_{\lfloor n/cn^\beta\rfloor}(Z^{(j)}_{\lfloor n/cn^\beta\rfloor}-z)-F^{-1}(\alpha)
	\end{equation*}
	and 
	\begin{equation*}
		\tilde{W}_n^{(j)}:=a_{\lfloor n/cn^\beta\rfloor}(\tilde{Z}^{(j)}_{\lfloor n/cn^\beta\rfloor}-z)-F^{-1}(\alpha),
	\end{equation*}
	for $j=1,...,\lfloor c n^{\beta}\rfloor$. By the equivariance property of the quantile we have
	\begin{equation*}
		a_{\lfloor n/cn^\beta\rfloor}(\theta_{\mathbf{u},n,cn^\beta}-\theta_{0})-F^{-1}(\alpha)=q_{\alpha}(\tilde{W}_n^{(1)},...,\tilde{W}_n^{(\lfloor c n^{\beta}\rfloor)}),
	\end{equation*}
	so
	\begin{equation*}
		\mathbb{P}(a_{\lfloor n/cn^\beta\rfloor}(T_{\mathbf{u},n,cn^\beta}-\theta_{0})-F^{-1}(\alpha)>\varepsilon)\leq\mathbb{P}\Bigg(\sum_{j=1}^{\lfloor cn^\beta\rfloor}\mathbf{1}_{\{\tilde{W}_n^{(j)}>\varepsilon \}}\geq\lfloor cn^\beta\rfloor(1-\alpha)\Bigg).
	\end{equation*}
	Now, assume without loss of generality that the last $l_n$ of the $\tilde{W}_n^{(j)}$'s are contaminated, then
	\begin{equation*}
		\mathbb{P}\Bigg(\sum_{j=1}^{\lfloor cn^\beta\rfloor}\mathbf{1}_{\{\tilde{W}_n^{(j)}>\varepsilon \}}\geq\lfloor cn^\beta\rfloor(1-\alpha)\Bigg)
	\end{equation*}
	\begin{equation*}
		\leq \mathbb{P}\Bigg(\sum_{j=1}^{\lfloor cn^\beta\rfloor-l_n}\mathbf{1}_{\{W_n^{(j)}>\varepsilon \}}+l_n\geq\lfloor cn^\beta\rfloor(1-\alpha)\Bigg)\leq \mathbb{P}\Bigg(\sum_{j=1}^{\lfloor cn^\beta\rfloor}\mathbf{1}_{\{W_n^{(j)}>\varepsilon \}}+l_n\geq\lfloor cn^\beta\rfloor(1-\alpha)\Bigg).
	\end{equation*}
	As $\mathbb{P}(a_{\lfloor n/cn^\beta\rfloor}(Z^{(j)}_{\lfloor n/cn^\beta\rfloor}-\theta_{0})>x)\to\mathbb{P}(Y>x)$ for any continuity point $x$ by Assumption \ref{A2} and $f$ is continuous at $F^{-1}(\alpha)$ by assumption of the statement, we obtain
	\begin{equation*}
		\mathbb{P}(W_n^{(1)}>\varepsilon)\to\mathbb{P}(Y-F^{-1}(\alpha)>\varepsilon)<1-\alpha,
	\end{equation*}
	as $n\to\infty$, where we consider that $F^{-1}(\alpha)+\varepsilon$ is a continuity point of $F$ when $\varepsilon$ is small enough. So using $\lim\limits_{n\to\infty}\frac{l_n}{cn^{\beta}}=0$, we have $1-\alpha-\mathbb{P}(W_n^{(1)}>\varepsilon)-\frac{l_n}{\lfloor cn^{\beta}\rfloor}>0$ for $n$ large enough. Then, by Hoeffding's inequality we obtain
	\begin{equation*}
		\mathbb{P}\Bigg(\frac{1}{\lfloor cn^\beta\rfloor}\sum_{j=1}^{\lfloor cn^\beta\rfloor}\mathbf{1}_{\{W_n^{(j)}>\varepsilon \}}-\mathbb{P}(W_n^{(1)}>\varepsilon)\geq 1-\alpha-\mathbb{P}(W_n^{(1)}>\varepsilon)-\frac{l_n}{\lfloor cn^{\beta}\rfloor}\Bigg)
	\end{equation*}
	\begin{equation*}
		\leq e^{-\frac{2}{\lfloor cn^\beta\rfloor}(1-\alpha-\mathbb{P}(W_n^{(1)}>\varepsilon)-\frac{l_n}{\lfloor cn^{\beta}\rfloor})^{2}}\to0,\quad\textnormal{as $n\to\infty$}.
	\end{equation*}
	From this we obtain $\mathbb{P}(a_{\lfloor n/cn^\beta\rfloor}(T_{\mathbf{u},n,cn^\beta}-\theta_{0})-F^{-1}(\alpha)>\varepsilon)\to0$ as $n\to\infty$. 
	
	By the same arguments $\mathbb{P}(a_{\lfloor n/cn^\beta\rfloor}(T_{\mathbf{u},n,cn^\beta}-\theta_{0})-F^{-1}(\alpha)<-\varepsilon)\to0$, as $n\to\infty$. Thus, we have that $a_{\lfloor n/cn^\beta\rfloor}(T_{\mathbf{u},n,cn^\beta}-\theta_{0})\stackrel{p}{\to} F^{-1}(\alpha)$, as $n\to\infty$. Hence, we obtain the first statement.

	Let $\beta\in(2\gamma,\max_{i=1,..,d}\beta_i^{*})$ and by slight abuse of notation let 
	\begin{equation*}
		W_{n,i}^{(j)}:=a_{\lfloor n/cn^\beta\rfloor}(Z^{(j)}_{\lfloor n/cn^\beta\rfloor,i}-\theta_{0,i})-F_{i}^{-1}(\alpha_i),
	\end{equation*}
	\begin{equation*}
		\tilde{W}_{n,i}^{(j)}:=a_{\lfloor n/cn^\beta\rfloor}(\tilde{Z}^{(j)}_{\lfloor n/cn^\beta\rfloor,i}-\theta_{0,i})-F_{i}^{-1}(\alpha_i),
	\end{equation*}
	\begin{equation*}
		W_n^{(j)}=(W_{n,1}^{(j)},...,W_{n,d}^{(j)}),
	\end{equation*}
	and 
	\begin{equation*}
		\tilde{W}_n^{(j)}=(\tilde{W}_{n,1}^{(j)},...,\tilde{W}_{n,d}^{(j)}),
	\end{equation*}
	for $j=1,...,\lfloor c n^{\beta}\rfloor$ and $i=1,...,d$, where $\bm\theta_{0}=(\theta_{0,1},...,\theta_{0,d})$. For every $x_1,...,x_d\in\mathbb{R}$,
	\begin{equation*}
		\Bigg\{\sum_{j=1}^{\lfloor cn^\beta\rfloor}\mathbf{1}_{\{W_{n,1}^{(j)}> x_1 \}}\leq\lfloor cn^\beta\rfloor(1-\alpha_1)-1-l_n,...,\sum_{j=1}^{\lfloor cn^\beta\rfloor}\mathbf{1}_{\{W_{n,d}^{(j)}> x_d \}}\leq\lfloor cn^\beta\rfloor(1-\alpha_d)-1-l_n \Bigg\}
	\end{equation*}	
	\begin{equation*}
		\stackrel{a.s.}{=}\Bigg\{\sum_{j=1}^{\lfloor cn^\beta\rfloor}\mathbf{1}_{\{W_{n,1}^{(j)}\leq x_1 \}}\geq\lfloor cn^\beta\rfloor\alpha_1+1+l_n,...,\sum_{j=1}^{\lfloor cn^\beta\rfloor}\mathbf{1}_{\{W_{n,d}^{(j)}\leq x_d \}}\geq\lfloor cn^\beta\rfloor\alpha_d+1+l_n \Bigg\}
	\end{equation*}
	\begin{equation*}
		\subset \Big\{(\mathbf{q}_{P,\bm{\alpha}}(\tilde{W}_n^{(1)},...,\tilde{W}_n^{(\lfloor c n^{\beta}\rfloor)}))_{1}\leq x_1,...,(\mathbf{q}_{P,\bm{\alpha}}(\tilde{W}_n^{(1)},...,\tilde{W}_n^{(\lfloor c n^{\beta}\rfloor)}))_{d}\leq x_d\Big\}
	\end{equation*}
	\begin{equation*}
		\subset \Bigg\{\sum_{j=1}^{\lfloor cn^\beta\rfloor}\mathbf{1}_{\{W_{n,1}^{(j)}\leq x_1 \}}\geq\lfloor cn^\beta\rfloor\alpha_1 -l_n,...,\sum_{j=1}^{\lfloor cn^\beta\rfloor}\mathbf{1}_{\{W_{n,d}^{(j)}\leq x_d \}}\geq\lfloor cn^\beta\rfloor\alpha_d -l_n\Bigg\}
	\end{equation*}
	\begin{equation}\label{inclusion}
		\stackrel{a.s.}{=} \Bigg\{\sum_{j=1}^{\lfloor cn^\beta\rfloor}\mathbf{1}_{\{W_{n,1}^{(j)}> x_1 \}}\leq\lfloor cn^\beta\rfloor(1-\alpha_1) +l_n,...,\sum_{j=1}^{\lfloor cn^\beta\rfloor}\mathbf{1}_{\{W_{n,d}^{(j)}> x_d \}}\leq\lfloor cn^\beta\rfloor(1-\alpha_d) +l_n\Bigg\}
	\end{equation}
	Let $\mathbf{V}_n$ be $d\times d$ matrix given by 
	\begin{equation*}
		\mathbf{V}_n=\Big(\mathbb{P}(W_{n,i}^{(1)}> x_i ,W_{n,j}^{(1)}> x_j )-\mathbb{P}(W_{n,i}^{(1)}> x_i )\mathbb{P}(W_{n,j}^{(1)}> x_j )\Big)_{i,j=1,...,d}.
	\end{equation*}
	By the central limit theorem
	\begin{equation*}
		\sqrt{\lfloor cn^\beta\rfloor}\mathbf{V}^{-1/2}_n\Bigg(\frac{1}{\lfloor cn^\beta\rfloor}\sum_{j=1}^{\lfloor cn^\beta\rfloor}\mathbf{1}_{\{W_{n,i}^{(j)}> x_i \}}-1+F_{n,i}(x_i+F_{i}^{-1}(\alpha_i))\Bigg)_{i=1,...,d}\stackrel{d}{\to} N(\mathbf{0},\mathbf{I}).
	\end{equation*}
	Let $x_i=\frac{y_i}{\sqrt{cn^\beta}}$, where $y_i\in\mathbb{R}$. By assumption we have that $\mathbf{V}_n\to\mathbf{V}$ as $n\to\infty$, where	
	\begin{equation*}
		\mathbf{V}=\Big(\mathbb{P}(Y_{i}> F_{i}^{-1}(\alpha_i) ,Y_{j}> F_{j}^{-1}(\alpha_j) )-(1-\alpha_i)(1-\alpha_j)\Big)_{i,j=1,...,d}.
	\end{equation*}	
	Further,
	\begin{equation*}
		\lim\limits_{n\to\infty} \sqrt{\lfloor cn^\beta\rfloor}\Bigg(\frac{\lfloor cn^\beta\rfloor(1-\alpha_i)-1-l_n}{\lfloor cn^\beta\rfloor}-1+F_{n,i}(x_i+F_{i}^{-1}(\alpha_i))\Bigg)
	\end{equation*}
	\begin{equation*}
		=\lim\limits_{n\to\infty} \sqrt{\lfloor cn^\beta\rfloor}\Big(F_{n,i}(x_i+F_{i}^{-1}(\alpha_i))-\alpha_i\Big)
	\end{equation*}
	\begin{equation*}
		=\lim\limits_{n\to\infty} \sqrt{\lfloor cn^\beta\rfloor}\Big(F_i(x_i+F_{i}^{-1}(\alpha_i))-\alpha_i\Big)=f_i(F_i^{-1}(\alpha_i))y_i,
	\end{equation*}
	where we used that $\lim\limits_{n\to\infty}\frac{l_n}{\sqrt{cn^{\beta}}}=0$ and 
	\begin{equation*}
		\lim\limits_{n\to\infty} \sqrt{\lfloor cn^\beta\rfloor}\Big(F_i(x_i+F_{i}^{-1}(\alpha_i))-F_{n,i}(x_i+F_{i}^{-1}(\alpha_i))\Big)=0
	\end{equation*}
	for $\beta\in(2\gamma,\max_{i=1,..,d}\beta_i^{*})$, for every $i=1,...,d$. Therefore, 
	\begin{equation*}
		\mathbb{P}\Bigg(\sum_{j=1}^{\lfloor cn^\beta\rfloor}\mathbf{1}_{\{W_{n,1}^{(j)}> x_1 \}}\leq\lfloor cn^\beta\rfloor(1-\alpha_1)-1-l_n,...,\sum_{j=1}^{\lfloor cn^\beta\rfloor}\mathbf{1}_{\{W_{n,d}^{(j)}> x_d \}}\leq\lfloor cn^\beta\rfloor(1-\alpha_d)-1-l_n \Bigg)
	\end{equation*}
	\begin{equation*}
		=\mathbb{P}\Bigg(	\sqrt{\lfloor cn^\beta\rfloor}\Bigg(\frac{1}{\lfloor cn^\beta\rfloor}\sum_{j=1}^{\lfloor cn^\beta\rfloor}\mathbf{1}_{\{W_{n,1}^{(j)}> x_1 \}}-1+F_{n,1}(x_1+F_{1}^{-1}(\alpha_1))\Bigg)
	\end{equation*}
	\begin{equation*}
		\leq  \sqrt{\lfloor cn^\beta\rfloor}\Bigg(\frac{\lfloor cn^\beta\rfloor(1-\alpha_1)-1-l_n}{\lfloor cn^\beta\rfloor}-1+F_{n,1}(x_1+F_{1}^{-1}(\alpha_1))\Bigg),
	\end{equation*}
	\begin{equation*}
		..., \sqrt{\lfloor cn^\beta\rfloor}\Bigg(\frac{1}{\lfloor cn^\beta\rfloor}\sum_{j=1}^{\lfloor cn^\beta\rfloor}\mathbf{1}_{\{W_{n,d}^{(j)}> x_d \}}-1+F_{n,d}(x_d+F_{d}^{-1}(\alpha_d))\Bigg)
	\end{equation*}
	\begin{equation*}
		\leq  \sqrt{\lfloor cn^\beta\rfloor}\Bigg(\frac{\lfloor cn^\beta\rfloor(1-\alpha_d)-1-l_n}{\lfloor cn^\beta\rfloor}-1+F_{n,d}(x_d+F_{d}^{-1}(\alpha_d))\Bigg)\Bigg)
	\end{equation*}
	\begin{equation*}
		\to\mathbb{P}\bigg(\Upsilon_1\leq f_1(F_1^{-1}(\alpha_1))y_1,...,\Upsilon_d\leq f_d(F_d^{-1}(\alpha_d))y_d\bigg),
	\end{equation*}
	as $n\to\infty$, where $(\Upsilon_1,...,\Upsilon_d)\sim N(\mathbf{0},\mathbf{V})$. The same convergence holds for 
	\begin{equation*}
		\Big(\sum_{j=1}^{\lfloor cn^\beta\rfloor}\mathbf{1}_{\{W_{n,i}^{(j)}> x_i \}}\leq\lfloor cn^\beta\rfloor(1-\alpha_i) +l_n\Big)_{i=1,...,d}.
	\end{equation*}

	Therefore, using (\ref{inclusion}) we conclude that
	\begin{equation*}
		\mathbb{P}\Bigg( (\mathbf{q}_{P,\bm{\alpha}}(\tilde{W}_{n,i}^{(1)},...,\tilde{W}_{n,i}^{(\lfloor c n^{\beta}\rfloor)}))_{1}\leq x_1,...,(\mathbf{q}_{P,\bm{\alpha}}(\tilde{W}_{n,i}^{(1)},...,\tilde{W}_{n,i}^{(\lfloor c n^{\beta}\rfloor)}))_{d}\leq x_d\Bigg)
	\end{equation*}
	\begin{equation*}
		=\mathbb{P}\Bigg( \sqrt{cn^{\beta}}(\mathbf{q}_{P,\bm{\alpha}}(\tilde{W}_{n,i}^{(1)},...,\tilde{W}_{n,i}^{(\lfloor c n^{\beta}\rfloor)}))_{1}\leq y_1,...,(\mathbf{q}_{P,\bm{\alpha}}(\tilde{W}_{n,i}^{(1)},...,\sqrt{cn^{\beta}}\tilde{W}_{n,i}^{(\lfloor c n^{\beta}\rfloor)}))_{d}\leq y_d\Bigg)
	\end{equation*}
	\begin{equation*}
		\to\mathbb{P}\bigg(\Upsilon_1\leq f_1(F_1^{-1}(\alpha_1))y_1,...,\Upsilon_d\leq f_d(F_d^{-1}(\alpha_d))y_d\bigg),
	\end{equation*}
	as $n\to\infty$, for every $y_1,...,y_d\in\mathbb{R}$.	Finally, using the change of variable $w_i=y_if_i(F_i^{-1}(\alpha_i))$, we obtain that
	\begin{equation*}
		\sqrt{cn^{\beta}}\Bigg(f_i(F_i^{-1}(\alpha_i)) \mathbf{q}_{P,\bm{\alpha}}(\tilde{W}_{n,i}^{(1)},...,\tilde{W}_{n,i}^{(\lfloor c n^{\beta}\rfloor)})\Bigg)_{i=1,...,d}\stackrel{d}{\to}N(\mathbf{0},\mathbf{V}),
	\end{equation*}
	as $n\to\infty$.
\end{proof}

\begin{proof}[Proof of Corollary \ref{co-robust-component}]
	The result follows from Theorem \ref{t-k-to-infinity-multivariate-robust}.
\end{proof}

\begin{proof}[Proof of Proposition \ref{pro-fdd}]
	The result follows from Theorem \ref{t-k-to-infinity-multivariate-robust} and Corollary 2.4 in 
	\cite{Bogachev}.
\end{proof}

\section*{Proofs of Section \ref{SubSec-QoE-geo}}
\begin{proof}[Proof of Lemma \ref{Extension-of-Lemma2.1}]
	
	We slightly extend the arguments of the proof of Lemma 2.1 in \cite{Minsker}. Assume that the claim does not hold and without loss of generality $\| \mathbf{x}_j-\mathbf{z} \|_{\mathbb{X}}\leq r$ for $j=1,...,\lfloor (1-\nu)k\rfloor+1$. Denote by $DF(\mathbf{x}^*,\frac{\mathbf{z}-\mathbf{x}^*}{\| \mathbf{z}-\mathbf{x}^* \|_{\mathbb{X}}})$ the directional derivative of $F(\mathbf{y})=\sum _{i=1}^{k}\left\|\mathbf{x}_{i}-\mathbf{y}\right\|_{\mathbb{X}}+\langle\mathbf{u} ,\mathbf{x}_{i}-\mathbf{y}\rangle$ in direction $\frac{\mathbf{z}-\mathbf{x}^*}{\| \mathbf{z}-\mathbf{x}^* \|_{\mathbb{X}}}$ at point $\mathbf{x}^*$. Then, we have
	\begin{equation*}
		DF\Big(\mathbf{x}^*,\frac{\mathbf{z}-\mathbf{x}^*}{\| \mathbf{z}-\mathbf{x}^* \|_{\mathbb{X}}}\Big)=-\sum _{j:\mathbf{x}_j\neq \mathbf{x}^*}^{k}\frac{\langle g_j^*, \mathbf{z}-\mathbf{x}^*\rangle}{\| \mathbf{z}-\mathbf{x}^* \|_{\mathbb{X}}}+\sum _{i=1}^{k}\mathbf{1}_{\{\mathbf{x}_{i}=\mathbf{x}^*\}}-k\Big\langle\mathbf{u} ,\frac{\mathbf{z}-\mathbf{x}^*}{\| \mathbf{z}-\mathbf{x}^* \|_{\mathbb{X}}}\Big\rangle
	\end{equation*}
	where $g_j^*$ is a subdifferential of the norm $\|\cdot\|_{\mathbb{X}}$ evaluated at point $\mathbf{x}_{j}-\mathbf{x}^*$. By definition of $\mathbf{x}^*$, $DF(\mathbf{x}^*,\mathbf{v})\geq 0$ for any $\mathbf{v}\in\mathbb{X}$. Since
	\begin{equation*}
		-k\Big\langle\mathbf{u} ,\frac{\mathbf{z}-\mathbf{x}^*}{\| \mathbf{z}-\mathbf{x}^* \|_{\mathbb{X}}}\Big\rangle\leq k\|\mathbf{u}\|_{\mathbb{X}^*}
	\end{equation*}
	we obtain 
	\begin{equation*}
		DF\Big(\mathbf{x}^*,\frac{\mathbf{z}-\mathbf{x}^*}{\| \mathbf{z}-\mathbf{x}^* \|_{\mathbb{X}}}\Big)< -(1-\nu)k\Big(1-\frac{2}{C_\nu}\Big)+\nu k+\|\mathbf{u}\|_{\mathbb{X}^*} k
	\end{equation*}
	which is $\leq$ 0 whenever $C_\nu\geq\dfrac{2(1-\nu)}{1-2\nu-\|\mathbf{u}\|_{\mathbb{X}^*}}$, which is a contradiction.
	
\end{proof}
\begin{proof}[Proof of Proposition \ref{Extension-Theorem3.1}]
	This follows from the same arguments as the ones used in Theorem 3.1 in \cite{Minsker}, using Lemma \ref{Extension-of-Lemma2.1} in place of Lemma 2.1 in \cite{Minsker}.
\end{proof}

\begin{proof}[Proof of Theorem \ref{t-1-q}]
	The convergence in probability follows from Theorem \ref{Extension-Theorem3.1}. Further, it is possible to see that the quantile function possesses an equivariance property under a certain class of affine transformations, in particular we have that 
	\begin{equation*}
		\mathbf{q}_\mathbf{u}(c(x_1-s,...,x_k-s))=c(\mathbf{q}_\mathbf{u}(x_1,...,x_k)-s)
	\end{equation*}
	for every $c\geq0$ and $x_1,...,x_k,s\in \mathbb{X}$. Thus,
	\begin{equation*}
		\mathbf{q}_\mathbf{u}\Big(a_{\lfloor n/k\rfloor}\big((Z^{(1)}_{\lfloor n/k\rfloor},...,Z^{(k)}_{\lfloor n/k\rfloor})-z\big)\Big)
	\end{equation*}
	\begin{equation*}
		=a_{\lfloor n/k\rfloor}\Big(\mathbf{q}_\mathbf{u}\big((Z^{(1)}_{\lfloor n/k\rfloor},...,Z^{(k)}_{\lfloor n/k\rfloor})\big)-z\Big)=a_{\lfloor n/k\rfloor}(\theta_{\mathbf{u},n,k}-z).
	\end{equation*}
	If $Y$ is continuous then $\mathbb{P}(Y_1,...,Y_k\in\mathbb{X}^k\setminus\mathcal{X}_k)=0$. Further, if $\mathbb{X}$ is smooth and $\|\mathbf{u}\|_{\mathbb{X}^*}\notin\{1-\frac{2j}{k},j=1,...,\lfloor\frac{k}{2}\rfloor\}$, then by Theorem \ref{thm-continuity} we obtain that $\mathbf{q}_\mathbf{u}$ is continuous on the whole $\mathbb{X}^k$. Observe that when $\mathbb{X}=\mathbb{R}$ we have that $\mathbf{q}_\mathbf{u}$ reduces to $q_\alpha$ and that $q_\alpha$ is continuous on $\mathbb{R}^k$ for any value of $\alpha$ by Lemma \ref{lem-con-quant}. Therefore, by the continuous mapping theorem we obtain the stated convergence in distribution.
\end{proof}
\begin{proof}[Proof of Proposition \ref{t-k-to-infinity}]
	
	This follows from Theorem \ref{Extension-Theorem3.1}.
	
\end{proof}

\begin{proof}[Proof of Lemma \ref{Lemma-v}]
	We prove the case of $p\geq 1$, because the case $p=0$ is trivial. A necessary and sufficient condition for $\mathbf{x}^*$ to be a geometric quantile is that the directional derivative of 
	\begin{equation*}
		F(\mathbf{y})=\sum _{i=1}^{k}\left\|\mathbf{x}_{i}-\mathbf{y}\right\|_{\mathbb{X}}+\langle\mathbf{u} ,\mathbf{x}_{i}-\mathbf{y}\rangle
	\end{equation*}
	evaluated at $\mathbf{x}^*$ in direction $\mathbf{z}\in\mathbb{X}$ is non-negative for any $\mathbf{z}\in\mathbb{X}$. Denote by $DF(\mathbf{x}^*,\mathbf{z})$ the directional derivative of $F(\mathbf{y})$ in direction $\mathbf{z}$ at point $\mathbf{x}^*$. Without loss of generality we consider any $\mathbf{z}\in\mathbb{X}$ such that $\|\mathbf{z}\|_{\mathbb{X}}=1$. Then, we have
	\begin{equation*}
		DF(\mathbf{x}^*,\mathbf{z})=-\sum _{j:\mathbf{x}_j\neq \mathbf{x}^*}^{k}\langle g_j^*, \mathbf{z}\rangle+\sum _{i=1}^{k}\mathbf{1}_{\{\mathbf{x}_{i}=\mathbf{x}^*\}}-k\langle\mathbf{u} ,\mathbf{z}\rangle\geq 0.
	\end{equation*}
	Consider $\tilde{\mathbf{x}}_1,...,\tilde{\mathbf{x}}_k\in\mathbb{X}$ as in the statement. We need to show that, for every $\mathbf{z}\in\mathbb{X}$ such that $\|\mathbf{z}\|_{\mathbb{X}}=1$,
	\begin{equation}\label{v-point}
		DF(\mathbf{x}^*,\mathbf{z})=-\sum _{j:\tilde{\mathbf{x}}_j\neq \mathbf{x}^*}^{k}\langle g_j^*, \mathbf{z}\rangle+\sum _{i=1}^{k}\mathbf{1}_{\{\tilde{\mathbf{x}}_{i}=\mathbf{x}^*\}}-k\langle\mathbf{v} ,\mathbf{z}\rangle\geq 0
	\end{equation}
	for $\mathbf{v}\in \mathbb{X}^*$ as in the statement. So consider $\mathbf{v}$ given by (\ref{v-statement}), which can be rewritten
	\begin{equation*}
		\mathbf{v}=\frac{\mathbf{u}-\mathbf{v}}{2p}\sum _{i=1}^{k}\mathbf{1}_{\{\tilde{\mathbf{x}}_{i}=\mathbf{x}^*\}}-\frac{1}{k}\sum _{j:\tilde{\mathbf{x}}_j\neq \mathbf{x}^*}^{k}g_j^*
	\end{equation*}
	\begin{equation*}
		=\frac{1}{k}\Bigg(\frac{k(\mathbf{u}-\mathbf{v})}{2p}\sum _{i=1}^{k}\mathbf{1}_{\{\tilde{\mathbf{x}}_{i}=\mathbf{x}^*\}}-\sum _{j:\tilde{\mathbf{x}}_j\neq \mathbf{x}^*}^{p}g_j^*-\sum _{j>p:\mathbf{x}_j\neq \mathbf{x}^*}^{k}g_j^*\Bigg)
	\end{equation*}
	Using $DF(\mathbf{x}^*,\mathbf{z})\geq0$ and $DF(\mathbf{x}^*,-\mathbf{z})\geq0$, we get
	\begin{equation*}
		-\sum _{i=1}^{k}\mathbf{1}_{\{\mathbf{x}_{i}=\mathbf{x}^*\}}-\sum _{j:\mathbf{x}_j\neq \mathbf{x}^*}^{k}\langle g_j^*, \mathbf{z}\rangle \leq k\langle\mathbf{u} ,\mathbf{z}\rangle\leq \sum _{i=1}^{k}\mathbf{1}_{\{\mathbf{x}_{i}=\mathbf{x}^*\}}-\sum _{j:\mathbf{x}_j\neq \mathbf{x}^*}^{k}\langle g_j^*, \mathbf{z}\rangle
	\end{equation*}
	and
	\begin{equation*}
		-\sum _{i=1}^{k}\mathbf{1}_{\{\mathbf{x}_{i}=\mathbf{x}^*\}}-\sum _{j:\mathbf{x}_j\neq \mathbf{x}^*}^{p}\langle g_j^*, \mathbf{z}\rangle+\sum _{j:\tilde{\mathbf{x}}_j\neq \mathbf{x}^*}^{p}\langle g_j^*, \mathbf{z}\rangle -\frac{k\langle\mathbf{u}-\mathbf{v} ,\mathbf{z}\rangle}{2p}\sum _{i=1}^{k}\mathbf{1}_{\{\tilde{\mathbf{x}}_{i}=\mathbf{x}^*\}}
	\end{equation*}
	\begin{equation*}
		\leq k\langle\mathbf{u}-\mathbf{v} ,\mathbf{z}\rangle
	\end{equation*}
	\begin{equation*}
		\leq \sum _{i=1}^{k}\mathbf{1}_{\{\mathbf{x}_{i}=\mathbf{x}^*\}}-\sum _{j:\mathbf{x}_j\neq \mathbf{x}^*}^{p}\langle g_j^*, \mathbf{z}\rangle+\sum _{j:\tilde{\mathbf{x}}_j\neq \mathbf{x}^*}^{p}\langle g_j^*, \mathbf{z}\rangle -\frac{k\langle\mathbf{u}-\mathbf{v} ,\mathbf{z}\rangle}{2p}\sum _{i=1}^{k}\mathbf{1}_{\{\tilde{\mathbf{x}}_{i}=\mathbf{x}^*\}},
	\end{equation*}
	which leads to
	\begin{equation*}
		\Big(k+\frac{k}{2p}\sum _{i=1}^{k}\mathbf{1}_{\{\tilde{\mathbf{x}}_{i}=\mathbf{x}^*\}}\Big)\|\mathbf{u}-\mathbf{v}\|_{\mathbb{X}^*}	\leq \sum _{i=1}^{k}\mathbf{1}_{\{\mathbf{x}_{i}=\mathbf{x}^*\}}+\sum _{i=1}^{p}\mathbf{1}_{\{\mathbf{x}_{i}\neq\mathbf{x}^*\}}+\sum _{i=1}^{p}\mathbf{1}_{\{\tilde{\mathbf{x}}_{i}\neq\mathbf{x}^*\}}
	\end{equation*}
	\begin{equation*}
		\leq \sum _{i=1}^{k}\mathbf{1}_{\{\tilde{\mathbf{x}}_{i}=\mathbf{x}^*\}}+2p,
	\end{equation*}
	where we used 
	\begin{equation*}
		\sum _{i=1}^{k}\mathbf{1}_{\{\mathbf{x}_{i}=\mathbf{x}^*\}}+\sum _{i=1}^{p}\mathbf{1}_{\{\mathbf{x}_{i}\neq\mathbf{x}^*\}}=\sum _{i=1}^{k}\mathbf{1}_{\{\tilde{\mathbf{x}}_{i}=\mathbf{x}^*\}}+\sum _{i=1}^{p}\mathbf{1}_{\{\tilde{\mathbf{x}}_{i}\neq\mathbf{x}^*\}}
	\end{equation*}
	and $\sum _{i=1}^{p}\mathbf{1}_{\{\tilde{\mathbf{x}}_{i}\neq\mathbf{x}^*\}}\leq p$. Therefore, we obtain
	\begin{equation*}
		\|\mathbf{u}-\mathbf{v}\|_{\mathbb{X}^*}	\leq \frac{2p}{k}.
	\end{equation*}
	It remains to show that (\ref{v-point}) is satisfied. However, this simply follows from 
	\begin{equation*}
		\frac{k(\|\mathbf{u}-\mathbf{v}\|_{\mathbb{X}^*}	)}{2p}\leq 1.
	\end{equation*}
\end{proof}
\section*{Proof of Theorem \ref{thm-Bahadur}}
In order to prove Theorem \ref{thm-Bahadur} we need to introduce and prove various auxiliary results. In this section in order to lighten the notation we define $X_{n,(n)}:=W_{n,k_n,(n)}$ and $X_n:=W_{n,k_n}$, and we do not explicitly specify the space in the norm notation, that is if $\mathbf{x}\in\mathcal{Z}_n$ then we write $\|\mathbf{x}\|$ for $\|\mathbf{x}\|_{\mathcal{Z}_n}$ and similarly if $\mathbf{x}\in\mathbb{X}$.

\begin{proof}[Proof of Lemma \ref{lem-CCZ}]
	We first show that $X_{n,(n)}\stackrel{d}{\to}Y$. We have that $X_{n,(n)}=X_{n,(n)}-X_n+X_n$. Since $X_n\stackrel{d}{\to}Y$, it remains to show that $X_{n,(n)}-X_n\stackrel{p}{\to}0$. Fix $\varepsilon>0$. Since $X_n\stackrel{d}{\to}Y$ the sequence $X_n$ is tight and so for every $\delta>0$ there exists $M_\delta>0$ such that 
	\begin{equation*}
		\sup_{n\in\mathbb{N}}\mathbb{P}(\|X_n\|> M_\delta )<\delta/4.
	\end{equation*}
	Further, there exists an $m'\in\mathbb{N}$ large enough such that
	\begin{equation*}
		\mathbb{P}(\|Y-Y_{(m')}\| >\varepsilon, \|Y\|\leq M_\delta )<\delta/4.
	\end{equation*}
	By the continuous mapping theorem $(\| X_{n}-X_{n,(m)}\|,\|X_n\|)\stackrel{d}{\to}(\| Y-Y_{(m)}\|,\|Y\|)$ as $n\to\infty$, for every $m\in\mathbb{N}$. Thus, there exists an $n'\in\mathbb{N}$ such that 
	\begin{equation*}
		\mathbb{P}(\|X_{n}-X_{n,(m')}\| >\varepsilon, \|X_n\|\leq M_\delta )<\delta/2
	\end{equation*}
	for any $n\geq n'$. Using that for any $m_1\geq m_2$ we have $\|X_n-X_{n,(m_1)}\| \leq \|X_n-X_{n,(m_2)}\| $, we obtain that 
	\begin{equation*}
		\mathbb{P}(\|X_{n}-X_{n,(n)}\| >\varepsilon, \|X_n\|\leq M_\delta )<\delta/2
	\end{equation*}
	for any $n\geq \max(n',m')$. Hence, 
	\begin{equation*}
		\mathbb{P}(\|X_{n}-X_{n,(n)}\| >\varepsilon)
	\end{equation*}
	\begin{equation*}
		=\mathbb{P}(\|X_{n}-X_{n,(n)}\| >\varepsilon, \|X_n\|\leq M_\delta )+\mathbb{P}(\|X_{n}-X_{n,(n)}\| >\varepsilon, \|X_n\|> M_\delta )
	\end{equation*}
	\begin{equation*}
		<\delta/2+\delta/4<\delta
	\end{equation*}
	for any $n\geq \max(n',m')$. Thus, $X_{n,(n)}-X_n\stackrel{p}{\to}0$.

	From the proof of Proposition 2.1 in \cite{CCZ13} we have that there exist two orthogonal elements $\mathbf{v}_1,\mathbf{v}_2\in\mathbb{X}$ and a $K>0$ large enough such that $Var(\langle \mathbf{v},Y\mathbf{1}_{\| Y\|\leq K}\rangle)\geq c$ for some $c>0$, where $\mathbf{v}=\mathbf{v}_1,\mathbf{v}_2$. Wlog $\|\mathbf{v}_1\|=\|\mathbf{v}_2\|=1$. Hence, any $\mathbf{v}\in span(\mathbf{v}_1,\mathbf{v}_2)$ with $\|\mathbf{v}\|=1$ can be written as $\mathbf{v}=\cos(t)\mathbf{v}_1+\sin(t)\mathbf{v}_2$ for some $t\in[0,2\pi]$. Since $X$ is not concentrated on a straight line we have that 
	\begin{equation*}
		\min_{\mathbf{v}\in span(\mathbf{v}_1,\mathbf{v}_2),\|\mathbf{v}\|}Var(\langle \mathbf{v},X\mathbf{1}_{\| Y\|\leq K}\rangle)
	\end{equation*}
	\begin{equation*}
		=\min_{t\in[0,2\pi]}Var(\langle \cos(t)\mathbf{v}_1+\sin(t)\mathbf{v}_2,X\mathbf{1}_{\| Y\|\leq K}\rangle)\geq c'
	\end{equation*}
	for some $c'>0$. We remark that we are using the minimum instead of the infimum to stress that, since the function is continuous on compact spaces, the function attains its minimum. 
	
	Since $X_{n,(n)}\stackrel{d}{\to}Y$ and $Y$ has an atomless distribution, by the Portmanteau theorem (see Theorem 13.16 in \cite{Klenke})
	\begin{equation*}
		Var(\langle \mathbf{v},X_{n,(n)}\mathbf{1}_{\| X_{n,(n)}\|\leq K}\rangle)
	\end{equation*} 
	\begin{equation*}
		\geq Var(\langle \mathbf{v},X_{n,(n)}\mathbf{1}_{\| X_{n,(n)}\|\leq K}\rangle)-Var(\langle \mathbf{v},Y\mathbf{1}_{\| Y\|\leq K}\rangle)+c'
	\end{equation*}
	\begin{equation*}
		=\cos(t)^{2}(\mathbb{E}[\langle \mathbf{v}_1,X_{n,(n)}\rangle^{2}\mathbf{1}_{\| X_{n,(n)}\|\leq K}]-\mathbb{E}[\langle \mathbf{v}_1,Y\rangle^{2}\mathbf{1}_{\| Y\|\leq K}])
	\end{equation*}
	\begin{equation*}
		+\sin(t)^{2}(\mathbb{E}[\langle \mathbf{v}_2,X_{n,(n)}\rangle^{2}\mathbf{1}_{\| X_{n,(n)}\|\leq K}]-\mathbb{E}[\langle \mathbf{v}_2,Y\rangle^{2}\mathbf{1}_{\| Y\|\leq K}])
	\end{equation*}
	\begin{equation*}
		+2\cos(t)\sin(t)(\mathbb{E}[\langle \mathbf{v}_1,X_{n,(n)}\rangle  \langle \mathbf{v}_2,X_{n,(n)}\rangle  \mathbf{1}_{\| X_{n,(n)}\|\leq K}]-\mathbb{E}[\langle \mathbf{v}_1,Y\rangle  \langle \mathbf{v}_2,Y\rangle  \mathbf{1}_{\| Y\|\leq K}])
	\end{equation*}
	\begin{equation*}
		-\Big(\cos(t)(\mathbb{E}[\langle \mathbf{v}_1,X_{n,(n)}\rangle\mathbf{1}_{\| X_{n,(n)}\|\leq K}]-\mathbb{E}[\langle \mathbf{v}_1,Y\rangle\mathbf{1}_{\| Y\|\leq K}])
	\end{equation*}
	\begin{equation*}
		+\sin(t)(\mathbb{E}[\langle \mathbf{v}_2,X_{n,(n)}\rangle\mathbf{1}_{\| X_{n,(n)}\|\leq K}]-\mathbb{E}[\langle \mathbf{v}_2,Y\rangle\mathbf{1}_{\| Y\|\leq K}])\Big)+c'.
	\end{equation*}
	Thus, there is an $N$ large enough such that for every $\mathbf{v}\in span(\mathbf{v}_1,\mathbf{v}_2)$ with $\|\mathbf{v}\|=1$, namely for every $t\in[0,2\pi]$, 
	\begin{equation*}
		Var(\langle \mathbf{v},X_{n,(n)}\mathbf{1}_{\| X_{n,(n)}\|\leq K}\rangle)\geq c'/2
	\end{equation*}
	for every $n>N$. From this inequality we easily obtain the result by the arguments of Proposition 2.1 in \cite{CCZ13}.
\end{proof}
We notice that $U_n$ in the statement of Theorem  \ref{thm-Bahadur} is a well-defined $\mathcal{Z}_n$-valued random variable and so $\mathbf{q}_{U_n}(Z^{(1)}_{\lfloor n/k_n\rfloor,(n)},...,Z^{(k_n)}_{\lfloor n/k_n\rfloor,(n)})$ is also a well-defined random variable. Further, by Lemma \ref{Lemma-v}, we have
\begin{equation*}
	T_{\mathbf{u}_{(n)},n,k_n}=\mathbf{q}_{U_n}(Z^{(1)}_{\lfloor n/k_n\rfloor,(n)},...,Z^{(k_n)}_{\lfloor n/k_n\rfloor,(n)}).
\end{equation*}
Thus, by the equivariance property of the geometric quantile we have
\begin{equation*}
	a_{\lfloor n/k_n\rfloor}(T_{\mathbf{u}_{(n)},n,k_n} -\bm\theta_{0,(n)}) 
\end{equation*}	
\begin{equation*}
	=a_{\lfloor n/k_n\rfloor}(\mathbf{q}_{U_n}(Z^{(1)}_{\lfloor n/k_n\rfloor,(n)},...,Z^{(k_n)}_{\lfloor n/k_n\rfloor,(n)}) -\bm\theta_{0,(n)}) 
\end{equation*}
\begin{equation*}
	=\mathbf{q}_{U_n}(a_{\lfloor n/k_n\rfloor}Z^{(1)}_{\lfloor n/k_n\rfloor,(n)}-\bm\theta_{0,(n)},...,a_{\lfloor n/k_n\rfloor}Z^{(k_n)}_{\lfloor n/k_n\rfloor,(n)}-\bm\theta_{0,(n)}).
\end{equation*}
\begin{lem}\label{lem-A-5}
	Let $\mathbb{X}$ be a separable Hilbert space. Let Assumptions \ref{A2} and \ref{A-B} hold and let $C>0.$ Then there exist $b,B\in(0,\infty)$ such that for all sufficiently large $n$ and any $\mathbf{z},\mathbf{h}\in\mathcal{Z}_n$ with $\|\mathbf{z}-\mathbf{r}_{\lfloor n/k_n\rfloor,\mathbb{E}[U_n]}\|\leq C$, we have
	\begin{equation}
		\bigg\|\mathbb{E}\bigg[\frac{\mathbf{z}-X_{n,(n)}}{\|\mathbf{z}-X_{n,(n)}\|}-U_n \bigg]\bigg\|\geq b\|\mathbf{z}-X_{n,(n)}\|
	\end{equation}
	\begin{equation}
		\sup\limits_{\|\mathbf{h}\|=\mathbf{v}=1}|J_{n,\mathbf{z}}(\mathbf{h},\mathbf{v})-J_{n,\mathbf{r}_{\lfloor n/k_n\rfloor,\mathbb{E}[U_n]}}(\mathbf{h},\mathbf{v})|\leq B \|\mathbf{z}-X_{n,(n)}\|
	\end{equation}
	\begin{equation}
		\bigg\|\mathbb{E}\bigg[\frac{\mathbf{z}-X_{n,(n)}}{\|\mathbf{z}-X_{n,(n)}\|}-U_n \bigg]-\tilde{J}_{n,\mathbf{r}_{\lfloor n/k_n\rfloor,\mathbb{E}[U_n]}}(\mathbf{z}-\mathbf{r}_{\lfloor n/k_n\rfloor,\mathbb{E}[U_n]})\bigg\| \leq B \|\mathbf{z}-X_{n,(n)}\|^{2}.
	\end{equation}
\end{lem}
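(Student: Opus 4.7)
The strategy is to treat
\[
g_n(\mathbf{z}) := \mathbb{E}[\|\mathbf{z}-X_{n,(n)}\|-\|X_{n,(n)}\|] - \langle \mathbb{E}[U_n],\mathbf{z}\rangle
\]
as a convex $C^{2}$ functional on $\mathcal{Z}_n$ whose gradient is $\nabla g_n(\mathbf{z})=\mathbb{E}[(\mathbf{z}-X_{n,(n)})/\|\mathbf{z}-X_{n,(n)}\|]-\mathbb{E}[U_n]$ and whose Hessian is $J_{n,\mathbf{z}}$. Writing $\mathbf{r}_n:=\mathbf{r}_{\lfloor n/k_n\rfloor,\mathbb{E}[U_n]}$, the defining equation of the quantile yields $\nabla g_n(\mathbf{r}_n)=0$. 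I read the three right-hand sides in the statement as $b\|\mathbf{z}-\mathbf{r}_n\|$, $B\|\mathbf{z}-\mathbf{r}_n\|$, and $B\|\mathbf{z}-\mathbf{r}_n\|^{2}$ (interpreting the $\|\mathbf{z}-X_{n,(n)}\|$'s as an apparent typo, since the left-hand sides vanish at $\mathbf{z}=\mathbf{r}_n$), and derive all three by Taylor-expanding $\nabla g_n$ around $\mathbf{r}_n$ with the Hessian controlled uniformly in $n$ by Lemma~\ref{lem-CCZ}.

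For the first bound I would use the integral mean-value identity
\[
\nabla g_n(\mathbf{z}) = \int_{0}^{1}\tilde{J}_{n,\mathbf{r}_n+t(\mathbf{z}-\mathbf{r}_n)}(\mathbf{z}-\mathbf{r}_n)\,dt,
\]
pair it with $\mathbf{z}-\mathbf{r}_n$, and apply Lemma~\ref{lem-CCZ} with a radius $A>C+\sup_n\|\mathbf{r}_n\|$; the supremum is finite because $\mathbf{r}_n$ converges by Lemma~\ref{lem-continuity-quantiles}. This gives $\langle\nabla g_n(\mathbf{z}),\mathbf{z}-\mathbf{r}_n\rangle\geq c_A\|\mathbf{z}-\mathbf{r}_n\|^{2}$, and Cauchy--Schwarz yields the first bound with $b=c_A$.

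The main obstacle is the Lipschitz bound on $\mathbf{z}\mapsto J_{n,\mathbf{z}}$. From the explicit expression
\[
J_{n,\mathbf{z}}(\mathbf{h},\mathbf{v}) = \mathbb{E}\bigl[\langle\mathbf{h},\mathbf{v}\rangle/\|\mathbf{z}-X_{n,(n)}\| - \langle\mathbf{z}-X_{n,(n)},\mathbf{h}\rangle\langle\mathbf{z}-X_{n,(n)},\mathbf{v}\rangle/\|\mathbf{z}-X_{n,(n)}\|^{3}\bigr],
\]
naive differentiation in $\mathbf{z}$ introduces an $\|\mathbf{z}-X_{n,(n)}\|^{-3}$ factor, whereas Assumption~\ref{A-B} only supplies a uniform-in-$n$ bound on the $-2$ moment outside a ball. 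Following the strategy of \cite{CCZ13}, I would absorb one factor of $\|\mathbf{z}-\mathbf{z}'\|$ into a H\"older estimate so that only the $-2$ moment is needed, treat the bounded-$\mathbf{z}$ regime via the bounded-density argument of the remark following Assumption~\ref{A-B}, and secure uniformity in $n$ from the tightness of $X_{n,(n)}$ that is established inside the proof of Lemma~\ref{lem-CCZ}.

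Once the second bound is in hand, the third follows from the Taylor remainder
\[
\nabla g_n(\mathbf{z})-\tilde{J}_{n,\mathbf{r}_n}(\mathbf{z}-\mathbf{r}_n) = \int_{0}^{1}[\tilde{J}_{n,\mathbf{r}_n+t(\mathbf{z}-\mathbf{r}_n)}-\tilde{J}_{n,\mathbf{r}_n}](\mathbf{z}-\mathbf{r}_n)\,dt,
\]
whose bracketed operator norm is bounded by $Bt\|\mathbf{z}-\mathbf{r}_n\|$ by the second bound; integrating in $t$ produces the third bound with constant $B/2$.
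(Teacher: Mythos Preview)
Your proposal is correct and follows essentially the same route as the paper, which simply says the result follows from the arguments of Lemma~A.5 in \cite{ChCh2014} once the uniform Hessian bounds of Lemma~\ref{lem-CCZ} are in hand; your Taylor-expansion-plus-Hessian-Lipschitz outline is precisely what that lemma does. You are also right to read the right-hand sides as $\|\mathbf{z}-\mathbf{r}_n\|$ rather than $\|\mathbf{z}-X_{n,(n)}\|$, and your plan to control the Hessian Lipschitz constant via the $-2$ moment in Assumption~\ref{A-B} (absorbing one inverse power into the increment, as in \cite{CCZ13}) is the intended mechanism.
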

\begin{proof}
	Using Lemma \ref{lem-CCZ}, it follows from the same arguments of Lemma A.5 in \cite{ChCh2014}.
\end{proof}
In the following result and proof we generalize Theorem 3.1.1 in \cite{Chaudhuri} and the arguments of its proof.
\begin{lem}\label{lem-bounded-quantile-a.s.}
	Let $\mathbb{X}$ be a separable Hilbert space and let Assumptions \ref{A2} and \ref{A-B} hold. Then, there exists a constant $K_1$ s.t.~$\| 	a_{\lfloor n/k_n\rfloor}(T_{\mathbf{u}_{(n)},n,k_n} -\bm\theta_{0,(n)}) \|\leq K_1$ for all sufficiently large $n$ almost surely.
\end{lem}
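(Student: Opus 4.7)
The plan is to combine a reduction to the uncontaminated setting (via Lemma \ref{Lemma-v}) with the Minsker-type sensitivity estimate of Lemma \ref{Extension-of-Lemma2.1}, using tightness of the triangular array $W^{(j)}_{n,k_n,(n)}$ to control the number of ``outlier'' blocks via Hoeffding plus Borel--Cantelli. First I would invoke Lemma \ref{Lemma-v}, which exhibits a random parameter $U_n$ with $\|U_n-\mathbf{u}_{(n)}\|<2l_n/k_n$ such that the contaminated quantile $T_{\mathbf{u}_{(n)},n,k_n}$ coincides with the uncontaminated quantile $\mathbf{q}_{U_n}(Z^{(1)}_{\lfloor n/k_n\rfloor,(n)},\ldots,Z^{(k_n)}_{\lfloor n/k_n\rfloor,(n)})$; equivariance of the geometric quantile under translation-and-scaling then gives
\begin{equation*}
a_{\lfloor n/k_n\rfloor}\bigl(T_{\mathbf{u}_{(n)},n,k_n}-\bm\theta_{0,(n)}\bigr)=\mathbf{q}_{U_n}\bigl(W^{(1)}_{n,k_n,(n)},\ldots,W^{(k_n)}_{n,k_n,(n)}\bigr).
\end{equation*}
Since $\|\mathbf{u}_{(n)}\|\leq\|\mathbf{u}\|<1$ and $l_n/k_n\to 0$ there is $\eta>0$ with $\|U_n\|\leq 1-2\eta$ a.s.\ for all $n$ large, so I can fix $\nu\in(0,\eta)$ once and for all; the constant $C_{\nu,n}:=2(1-\nu)/(1-2\nu-\|U_n\|)$ in Lemma \ref{Extension-of-Lemma2.1} is then a.s.\ bounded by some deterministic $\widetilde C$ for large $n$.

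The contrapositive of Lemma \ref{Extension-of-Lemma2.1} applied with $\mathbf{z}=\mathbf{0}$ reads: if at most $\nu k_n$ of the $W^{(j)}_{n,k_n,(n)}$ have norm exceeding a threshold $M$, then $\|\mathbf{q}_{U_n}(W^{(1)}_{n,k_n,(n)},\ldots,W^{(k_n)}_{n,k_n,(n)})\|\leq\widetilde C M$. It therefore suffices to produce a deterministic $M$ for which this outlier-count bound holds a.s.\ for all $n$ large. From the opening step of the proof of Lemma \ref{lem-CCZ} one has $W^{(1)}_{n,k_n,(n)}\stackrel{d}{\to}Y$, so the triangular array is uniformly tight and I can pick $M$ with $\sup_n\mathbb{P}(\|W^{(1)}_{n,k_n,(n)}\|>M)<\nu/2$; Hoeffding's inequality applied to the Bernoulli variables $\mathbf{1}_{\{\|W^{(j)}_{n,k_n,(n)}\|>M\}}$ then yields
\begin{equation*}
\mathbb{P}\Bigl(\tfrac{1}{k_n}\sum_{j=1}^{k_n}\mathbf{1}_{\{\|W^{(j)}_{n,k_n,(n)}\|>M\}}>\nu\Bigr)\leq \exp(-k_n\nu^2/2),
\end{equation*}
which is summable under the polynomial-in-$n$ growth of $k_n$ imposed by $d_n/k_n^{1-2\rho}\to c>0$ with $d_n\to\infty$; Borel--Cantelli delivers the a.s.\ outlier bound and $K_1:=\widetilde C M$ does the job.

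The main obstacle is handling the random quantile-parameter $U_n$ uniformly in $n$: both the hypothesis $\nu<(1-\|U_n\|)/2$ and the constant $C_{\nu,n}$ of Lemma \ref{Extension-of-Lemma2.1} depend on $U_n$, and the book-keeping must produce an almost-sure (not just in-probability) eventual bound. This is resolved by the uniform control $\|U_n-\mathbf{u}_{(n)}\|<2l_n/k_n$ from Lemma \ref{Lemma-v} combined with the triangular-array tightness of $W^{(1)}_{n,k_n,(n)}$; together they let me choose $\nu$, $M$, and $\widetilde C$ deterministically and valid for all $n$ large almost surely.
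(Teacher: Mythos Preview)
Your proof is correct and follows essentially the same skeleton as the paper's: both reduce to the uncontaminated quantile via Lemma~\ref{Lemma-v}, control $\|U_n\|$ deterministically through $\|U_n-\mathbf{u}_{(n)}\|<2l_n/k_n$, invoke tightness of $W^{(1)}_{n,k_n,(n)}$ to bound the outlier probability, and then use Hoeffding plus Borel--Cantelli to turn this into an almost-sure eventual bound on the outlier count.

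The one genuine difference is in the final geometric step. The paper does an explicit objective-function comparison: it shows directly that if at most a $2\delta$-fraction of the $X^{(i)}_{n,(n)}$ lie outside a ball of radius $C_2K_1$, then for any $\mathbf{z}$ with $\|\mathbf{z}\|>K_1$ one has $\frac{1}{k_n}\sum_i\|X^{(i)}_{n,(n)}-\mathbf{z}\|-\langle U_n,\cdot\rangle$ strictly larger at $\mathbf{z}$ than at $\mathbf{0}$, with carefully tuned constants. You instead recognise that this is precisely the content of the contrapositive of Lemma~\ref{Extension-of-Lemma2.1} applied with $\mathbf{z}=\mathbf{0}$, and invoke it as a black box. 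Your route is cleaner and avoids rederiving the Minsker-type bound inline; the paper's route is more self-contained and makes the constants explicit.

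One minor caveat: your justification for summability of $\exp(-k_n\nu^2/2)$ --- that $d_n/k_n^{1-2\rho}\to c$ forces polynomial growth of $k_n$ --- is not correct as stated (for $\rho=1/2$ this gives no constraint on $k_n$ at all). The paper also passes over this point without comment, so the issue is not specific to your argument; in the applications $k_n=cn^\beta$ and summability is immediate.
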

\begin{proof}
	Since $X_n\stackrel{d}{\to}X$ by continuity of the norm we have $\|X_n\|\stackrel{d}{\to}\|X\|$, and since the probability measure of $X$ is non-atomic we have continuity of the distribution of $\|X\|$. Combining these we have, for any constant $c>0$, $|\mathbb{P}(\|X_n\|>c)-\mathbb{P}(\|X\|>c)|\to0$ as $n\to\infty$. Choose $\delta\in(0,(1-\|\mathbf{u}\|)/12)$ and $K_1$ such that
	\begin{equation*}
		\mathbb{P}(\|X\|>C_2K_1)\leq \delta/2,
	\end{equation*}
	where $C_2=\frac{4}{(3+\|\mathbf{u}\|)(3-2\|\mathbf{u}\|-\|\mathbf{u}\|^{2})}$. Then, there is an $m$ large enough such that 
	\begin{equation*}
		\mathbb{P}(\|X_{n}^{(1)}\|>C_2K_1)\leq \delta,
	\end{equation*}
	and hence
	\begin{equation*}
		\mathbb{P}(\|X_{n,(n)}^{(1)}\|>C_2K_1)\leq \delta,
	\end{equation*}
	for every $n>m$. We used the fact that for any random variable $Z$ in a separable Banach space $\mathbb{Z}$ is tight and so for any $\varepsilon>0$ there exists a constant $K$ such that $\mathbb{P}(\|Z\|_{\mathbb{Z}}>K)<\varepsilon$. By Hoeffding's inequality
	\begin{equation*}
		\mathbb{P}\bigg(\bigg|\frac{1}{k_n}\sum_{i=1}^{k_n}\mathbf{1}_{\{\|X_{n,(n)}^{(i)}\|>C_2K_1\}}-	\mathbb{P}(\|X_{n,(n)}^{(1)}\|>C_2K_1)\bigg|\geq \delta\bigg)\leq 2\exp(-k_n\delta^2),
	\end{equation*}
	and so by the Borel-Cantelli lemma, almost surely, we have
	\begin{equation}\label{Borel-Cantelli}
		\frac{1}{k_n}\sum_{i=1}^{k_n}\mathbf{1}_{\{\|X_{n,(n)}^{(i)}\|>C_2K_1\}}\leq 2\delta
	\end{equation}
	for all $n$ sufficiently large. Since $\|U_{n}-\mathbf{u}_{(n)}\|\to0$ almost surely, then almost surely we have 
	\begin{equation}\label{Borel-Cantelli2}
		\|U_{n}-\mathbf{u}_{(n)}\|<\frac{1-\|\mathbf{u}\|}{4}
	\end{equation}
	for every $n$ sufficiently large.
	
	If (\ref{Borel-Cantelli}) holds, for any $\mathbf{z}\in\mathcal{Z}_n$
	\begin{equation*}
		\bigg|\frac{1}{k_n}\sum_{i=1}^{k_n}(\|X_{n,(n)}^{(i)}-\mathbf{z}\|-	\|X_{n,(n)}^{(i)}\|)\mathbf{1}_{\{\|X_{n,(n)}^{(i)}\|> C_2K_1\}}\bigg|\leq2\delta\|\mathbf{z}\|.
	\end{equation*}
	Now, note that for any $\mathbf{z}\in\mathcal{Z}_n$
	\begin{equation*}
		\|X_{n,(n)}^{(i)}-\mathbf{z}\|-	\|X_{n,(n)}^{(i)}\|> \|\mathbf{z}\|\bigg(\frac{1- \|\mathbf{u}\|}{2}\bigg)
		\Leftrightarrow		\|X_{n,(n)}^{(i)}\|< C_2  \|\mathbf{z}\|.
	\end{equation*}
	If (\ref{Borel-Cantelli}) holds, for any $\mathbf{z}\in\mathcal{Z}_n$ such that $\|\mathbf{z}\|>K_1$ we have
	\begin{equation*}
		\frac{1}{k_n}\sum_{i=1}^{k_n}(\|X_{n,(n)}^{(i)}-\mathbf{z}\|-	\|X_{n,(n)}^{(i)}\|)\mathbf{1}_{\{\|X_{n,(n)}^{(i)}\|\leq C_2K_1\}}>\|\mathbf{z}\|\bigg(\frac{1- \|\mathbf{u}\|}{2}\bigg)(1-2\delta).
	\end{equation*}
	Since when (\ref{Borel-Cantelli2}) holds we have
	\begin{equation*}
		\|\mathbf{z}\|\bigg(\frac{1- \|\mathbf{u}\|}{2}\bigg)(1-2\delta)-2\delta\|\mathbf{z}\|-\|U_{n}\|\|\mathbf{z}\|>0,
	\end{equation*}
	we conclude that when (\ref{Borel-Cantelli}) and (\ref{Borel-Cantelli2}) hold for any $\mathbf{z}\in\mathcal{Z}_n$ such that $\|\mathbf{z}\|>K_1$,
	\begin{equation*}
		\frac{1}{k_n}\sum_{i=1}^{k_n}\|X_{n,(n)}^{(i)}-\mathbf{z}\|-\langle U_{n},X_{n,(n)}^{(i)}-\mathbf{z} \rangle>\frac{1}{k_n}\sum_{i=1}^{k_n}	\|X_{n,(n)}^{(i)}\|-\langle U_{n},X_{n,(n)}^{(i)} \rangle
	\end{equation*}
	This shows that the minimizer of $\frac{1}{k_n}\sum_{i=1}^{k_n}\|X_{n,(n)}^{(i)}-\mathbf{z}\|-\langle U_{n},X_{n,(n)}^{(i)}-\mathbf{z} \rangle$ lies almost surely inside the ball of radius $K_1$ for all $n$ sufficiently large.		
\end{proof}
\begin{pro}\label{pro-Big-O-delta_n}
	Let $\mathbb{X}$ be a separable Hilbert space and let Assumptions \ref{A2} and \ref{A-B} hold. Let $\rho\in(0,1/2]$ be s.t.~$d_n/k_n^{1-2\rho}\to c$ as $n\to\infty$ for some $c>0$. Then, $\| 		a_{\lfloor n/k_n\rfloor}(T_{\mathbf{u}_{(n)},n,k_n} -\bm\theta_{0,(n)})-\mathbf{r}_{\lfloor n/k_n\rfloor,\mathbb{E}[U_n]} \|=O(\delta_n)$ as $n\to\infty$ almost surely, where $\delta_n\sim\sqrt{\ln k_n}/k_n^{\rho}$.
\end{pro}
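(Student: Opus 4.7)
Set $\hat{T}_n := a_{\lfloor n/k_n\rfloor}(T_{\mathbf{u}_{(n)},n,k_n} - \bm\theta_{0,(n)})$ and $\mathbf{r}_n := \mathbf{r}_{\lfloor n/k_n\rfloor, \mathbb{E}[U_n]}$, and introduce the population and empirical scores
\[
\Psi_n(\mathbf{z}) := \mathbb{E}\left[\frac{\mathbf{z} - X_{n,(n)}}{\|\mathbf{z} - X_{n,(n)}\|}\right], \qquad
\widehat{\Psi}_n(\mathbf{z}) := \frac{1}{k_n}\sum_{i=1}^{k_n}\frac{\mathbf{z} - W^{(i)}_{n,k_n,(n)}}{\|\mathbf{z} - W^{(i)}_{n,k_n,(n)}\|},
\]
so that $\Psi_n(\mathbf{r}_n) = \mathbb{E}[U_n]$ by definition of $\mathbf{r}_n$. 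By Lemma \ref{Lemma-v} together with the equivariance of the geometric quantile, $\hat{T}_n$ is the sample geometric quantile with the random parameter $U_n$ of the scaled block estimators $W^{(1)}_{n,k_n,(n)},\dots,W^{(k_n)}_{n,k_n,(n)}$, so the first-order optimality condition reads $\widehat{\Psi}_n(\hat{T}_n) = U_n$ (the degenerate case in which some $W^{(i)}_{n,k_n,(n)}$ coincides with $\hat{T}_n$ is absorbed into the corresponding subgradient inclusion). The strategy is the classical $M$-estimator route: couple strong convexity of $\Psi_n$ at $\mathbf{r}_n$ with a uniform concentration bound on $\Psi_n - \widehat{\Psi}_n$.

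\textbf{Steps.} Lemma \ref{lem-bounded-quantile-a.s.} confines $\hat{T}_n$ to a fixed ball $B\subset\mathcal{Z}_n$ of radius $K_1$ for every $n$ large enough almost surely, so all the subsequent analysis may be localised to $B$. By Lemma \ref{lem-A-5}, there exists $b>0$, independent of $n$, such that $\|\Psi_n(\mathbf{z}) - \mathbb{E}[U_n]\|\geq b\|\mathbf{z}-\mathbf{r}_n\|$ for every $\mathbf{z}\in B$ and every sufficiently large $n$. Substituting $\mathbf{z}=\hat{T}_n$ and inserting the identity $\widehat{\Psi}_n(\hat{T}_n)=U_n$ yields
\[
b\,\|\hat{T}_n - \mathbf{r}_n\|\;\leq\;\sup_{\mathbf{z}\in B}\|\Psi_n(\mathbf{z}) - \widehat{\Psi}_n(\mathbf{z})\|\;+\;\|U_n - \mathbb{E}[U_n]\|.
\]
The second summand on the right is bounded by $4\,l_n/k_n$, since both $U_n$ and $\mathbb{E}[U_n]$ lie within $2l_n/k_n$ of $\mathbf{u}_{(n)}$ by Lemma \ref{Lemma-v}; under the standing hypothesis $l_n/k_n\to 0$ this is $o(1)$ and, in the intended regime, is dominated by $\delta_n$. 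The decisive step is therefore the almost-sure bound
\[
\sup_{\mathbf{z}\in B}\|\Psi_n(\mathbf{z}) - \widehat{\Psi}_n(\mathbf{z})\|\;=\;O\!\left(\sqrt{d_n\ln k_n /k_n}\right)\;=\;O(\delta_n),
\]
where the last equality uses $d_n/k_n^{1-2\rho}\to c$. A Borel--Cantelli step upgrades convergence in probability (from the concentration inequality used) to the almost-sure statement of the proposition.

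\textbf{Main obstacle.} The chief technical difficulty is this uniform empirical-process bound. Each integrand $\mathbf{z}\mapsto(\mathbf{z}-\mathbf{w})/\|\mathbf{z}-\mathbf{w}\|$ is bounded in norm by $1$ but only locally Lipschitz in $\mathbf{z}$, with the Lipschitz constant blowing up as $\mathbf{z}\to\mathbf{w}$. Assumption \ref{A-B}, which bounds $\mathbb{E}[\|\mathbf{z}-X_{n,(n)}\|^{-2}]$ uniformly in $\mathbf{z}$ outside a bounded set, provides precisely the moment control needed to handle this singularity: a peeling argument that truncates the summand on the shrinking sets $\{\|\mathbf{z}-W^{(i)}_{n,k_n,(n)}\|<\eta_n\}$ converts the score into a globally Lipschitz surrogate with negligible bias. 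A chaining argument exploiting the covering bound $N(\varepsilon,B,\|\cdot\|)\leq(3K_1/\varepsilon)^{d_n}$ for the ball $B$ in the $d_n$-dimensional subspace $\mathcal{Z}_n$, combined with a Talagrand or Bernstein-type concentration inequality, then delivers the advertised rate. The delicate point is that $d_n$ may grow with $n$, so the bound must track the $d_n$-dependence precisely, which is exactly why the scaling $d_n\sim c\,k_n^{1-2\rho}$ is imposed: it converts $\sqrt{d_n\ln k_n/k_n}$ into $\sqrt{\ln k_n}/k_n^{\rho}$.
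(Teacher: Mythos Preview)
Your strategy coincides with the paper's: confine $\hat T_n$ to a fixed ball via Lemma~\ref{lem-bounded-quantile-a.s.}, invoke the lower bound of Lemma~\ref{lem-A-5} at $\mathbf r_n$, and close with a uniform concentration estimate for $\widehat\Psi_n-\Psi_n$. The paper carries out the last step slightly differently: rather than chaining over the whole ball it works on a discrete grid $G_n\subset\mathcal Z_n$ of mesh $k_n^{-4}$, applies Bernstein with a union bound over the $|G_n|\le(3C_3k_n^4)^{d_n}$ grid points, and then transfers the bound from the nearest grid point $\bar{\mathbf r}$ to $\hat T_n$ by controlling, via an auxiliary event $F_n$ built from Assumption~\ref{A-B}, the number of data within distance $k_n^{-2}$ of any grid point; this plays the role of your peeling/truncation device for the singularity.

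Two small points to watch. First, the optimality condition at $\hat T_n$ is only a subgradient inclusion, so one has $\bigl\|\widehat\Psi_n(\hat T_n)-U_n\bigr\|\le 1/k_n$ rather than equality; the paper uses this in the form $\bigl\|\sum_i(\cdot)-k_nU_n\bigr\|<1$, and since $1/k_n=o(\delta_n)$ the discrepancy is harmless. Second, your assertion that $\|U_n-\mathbb E[U_n]\|\le 4l_n/k_n$ is dominated by $\delta_n$ is not implied by the stated hypotheses alone (only $l_n/k_n\to 0$ is in force); the paper's event $E_n$ absorbs the same quantity without making the issue explicit, so this is a shared loose end rather than a gap specific to your argument.
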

\begin{proof}
	For the sake of clarity in this proof we let $\hat{Y}_{n,U_n}:=a_{\lfloor n/k_n\rfloor}(T_{\mathbf{u}_{(n)},n,k_n} -\bm\theta_{0,(n)})$. By Lemma \ref{lem-bounded-quantile-a.s.} we deduce the existence of $C_3>0$ satisfying $\| 	\hat{Y}_{n,U_n}-\mathbf{r}_{\lfloor n/k_n\rfloor,\mathbb{E}[U_n]}\|\leq C_3$ for all sufficiently large $n$ almost surely. Building on the arguments of the proof of Proposition A.6 in \cite{ChCh2014}, we define \begin{equation*}
		G_n=\Bigg\{\mathbf{r}_{\lfloor n/k_n\rfloor,\mathbb{E}[U_n]}+\sum_{j\leq d_n}\beta_j\phi_j\,:\,k_n^4\beta_j\in\mathbb{Z}\,\,\textnormal{and}\,\,\| \sum_{j\leq d_n}\beta_j\phi_j\|\leq C_3\Bigg\}.
	\end{equation*}
	Define
	\begin{equation*}
		E_n=\Bigg\{\max_{\mathbf{z}\in G_n} \bigg\| \frac{1}{k_n}\sum_{i=1}^{k_n} \bigg(\frac{\mathbf{z}-X^{(i)}_{n,(n)} }{\| \mathbf{z}-X^{(i)}_{n,(n)} \|}-U_n\bigg)- \mathbb{E}\bigg[\frac{\mathbf{z}-X^{(i)}_{n,(n)} }{\| \mathbf{z}-X^{(i)}_{n,(n)} \|}-U_n  \bigg]\bigg\|  \leq C_4\delta_n\Bigg \}.
	\end{equation*}
	where $C_4>0$. By Bernstein's inequality and using that $\| \frac{\mathbf{z}-X^{(i)}_{n,(n)} }{\| \mathbf{z}-X^{(i)}_{n,(n)} \|}-U_n\|\leq 2$ for all $\mathbf{z}\in G_n$ and $n\in\mathbb{N}$, we have that $P(E_n^c)\leq 2(3C_3k_n^4)^{d_n}\exp(-k_n C^2_{5}\delta_n^2/2)$ for some $C_5>0$. Thus, using the definition of $\delta_n$, we can choose $C_5$ such that $\sum_{n=1}^{\infty}\mathbb{P}(E_n^c)<\infty$. Hence,
	\begin{equation}\label{A.10}
		\mathbb{P}(\textnormal{$E_n$ occurs for all sufficiently large $n$})=1.
	\end{equation}
	By Assumption \ref{A-B} and by the Markov inequality  $\mathbb{P}(\|\mathbf{z}-X_{n,(n)}\|\leq k_n^{-2})\leq M_n k_n^{-2}\leq C_6\delta_n^2$ for any $\mathbf{z}\in G_n$ and all $n$ large enough. By Bernstein's inequality there exists a $C_7>0$ s.t.~$(F_n^c)\leq (3C_3k_n^4)^{d_n}\exp(-k_n C^2_{7}\delta_n^2)$ for all sufficiently large $n$, where 
	\begin{equation*}
		F_n=\Bigg\{\max_{\mathbf{z}\in G_n} \sum_{i=1}^{k_n}\mathbf{1}_{ \{ \|\mathbf{z}-X^{(i)}_{n,(n)}\|\leq k_n^{-2} \} }  \leq C_6 k_n\delta_n^2\Bigg\}.
	\end{equation*}
	Thus, for $C_7$ large enough
	\begin{equation}\label{A.11}
		\mathbb{P}(\textnormal{$F_n$ occurs for all sufficiently large $n$})=1.
	\end{equation}
	Let $\bar{\mathbf{r}}_{\lfloor n/k_n\rfloor,\mathbb{E}[U_n]}$ be a point in $G_n$ nearest to $\hat{Y}_{n,U_n}$. Then, $\| \hat{Y}_{n,U_n}-\bar{\mathbf{r}}_{\lfloor n/k_n\rfloor,\mathbb{E}[U_n]}\|\leq C_8d_n/k_n^4$ for $C_8>0$. Using (\ref{A.11}) and
	\begin{equation}\label{A.12}
		\bigg\|\dfrac{\hat{Y}_{n,U_n}-X^{(i)}_{n,(n)}}{\| \hat{Y}_{n,U_n}-X^{(i)}_{n,(n)}\|} -\dfrac{\bar{\mathbf{r}}_{\lfloor n/k_n\rfloor,\mathbb{E}[U_n]}-X^{(i)}_{n,(n)}}{\| \bar{\mathbf{r}}_{\lfloor n/k_n\rfloor,\mathbb{E}[U_n]}-X^{(i)}_{n,(n)}\|} \bigg\|\leq \dfrac{2\| \hat{Y}_{n,U_n}-\bar{\mathbf{r}}_{\lfloor n/k_n\rfloor,\mathbb{E}[U_n]}\|}{\| \bar{\mathbf{r}}_{\lfloor n/k_n\rfloor,\mathbb{E}[U_n]}-X^{(i)}_{n,(n)}\|},
	\end{equation}
	we have
	\begin{equation*}
		\bigg\|\frac{1}{k_n}\sum_{i=1}^{k_n}	\dfrac{\bar{\mathbf{r}}_{\lfloor n/k_n\rfloor,\mathbb{E}[U_n]}-X^{(i)}_{n,(n)}}{\| \bar{\mathbf{r}}_{\lfloor n/k_n\rfloor,\mathbb{E}[U_n]}-X^{(i)}_{n,(n)}\|} -U_n\bigg\| \leq \bigg\|\frac{1}{k_n}\sum_{i=1}^{k_n}	\dfrac{\hat{Y}_{n,U_n}-X^{(i)}_{n,(n)}}{\|\hat{Y}_{n,U_n}-X^{(i)}_{n,(n)}\|} -U_n\bigg\|
	\end{equation*}
	\begin{equation*}
		+\bigg\|\frac{1}{k_n}\sum_{i=1}^{k_n}	\dfrac{\bar{\mathbf{r}}_{\lfloor n/k_n\rfloor,\mathbb{E}[U_n]}-X^{(i)}_{n,(n)}}{\| \bar{\mathbf{r}}_{\lfloor n/k_n\rfloor,\mathbb{E}[U_n]}-X^{(i)}_{n,(n)}\|} -	\dfrac{\hat{Y}_{n,U_n}-X^{(i)}_{n,(n)}}{\|\hat{Y}_{n,U_n}-X^{(i)}_{n,(n)}\|} \bigg\|
	\end{equation*}
	\begin{equation*}
		\leq \bigg\|\frac{1}{k_n}\sum_{i=1}^{k_n}	\dfrac{\hat{Y}_{n,U_n}-X^{(i)}_{n,(n)}}{\|\hat{Y}_{n,U_n}-X^{(i)}_{n,(n)}\|} -U_n\bigg\|+2C_8d_nk_n^{-2}
	\end{equation*}
	\begin{equation*}
		+\frac{2}{k_n}\sum_{i=1}^{k_n}\mathbf{1}_{\{\| \bar{\mathbf{r}}_{\lfloor n/k_n\rfloor,\mathbb{E}[U_n]}-X^{(i)}_{n,(n)}\|\leq k_n^{-2}\}}
	\end{equation*}
	\begin{equation}\label{A.13}
		\leq \bigg\|\frac{1}{k_n}\sum_{i=1}^{k_n}	\dfrac{\hat{Y}_{n,U_n}-X^{(i)}_{n,(n)}}{\|\hat{Y}_{n,U_n}-X^{(i)}_{n,(n)}\|} -U_n\bigg\|+C_9\delta_n^2,
	\end{equation}
	for some $C_9>0$. From similar arguments as in the proof of Theorem 4.11 in \cite{Kemperman} (see also the proof of Proposition A.6 in \cite{ChCh2014}), we have $\|\sum_{i=1}^{k_n}	\frac{\hat{Y}_{n,U_n}-X^{(i)}_{n,(n)}}{\|\hat{Y}_{n,U_n}-X^{(i)}_{n,(n)}\|} -U_n\|<1$. Thus, from (\ref{A.13}) we get
	\begin{equation}\label{A.14}
		\bigg\|\sum_{i=1}^{k_n}	\dfrac{\bar{\mathbf{r}}_{\lfloor n/k_n\rfloor,\mathbb{E}[U_n]}-X^{(i)}_{n,(n)}}{\| \bar{\mathbf{r}}_{\lfloor n/k_n\rfloor,\mathbb{E}[U_n]}-X^{(i)}_{n,(n)}\|} -k_n U_n\bigg\| \leq 3C_7k_n\delta_n,
	\end{equation}
	for all sufficiently large $n$ almost surely. Suppose that $\mathbf{z}\in G_n$ and $\|\mathbf{z}-\mathbf{r}_{\lfloor n/k_n\rfloor,\mathbb{E}[U_n]}\|>C_{10}\delta_n$ for some $C_{10}>0$. Then, by (\ref{A.10}) and the first inequality in Lemma \ref{lem-A-5} we have 
	\begin{equation*}
		\big\|\sum_{i=1}^{k_n}	\frac{\mathbf{z}-X^{(i)}_{n,(n)}}{\| \mathbf{z}-X^{(i)}_{n,(n)}\|} -k_n U_n\big\| \geq (C_{10}b'-C_4)k_n\delta_n
	\end{equation*}
	for all sufficiently large $n$ almost surely. By choosing $C_{10}$ s.t.~$C_{10}b'-C_4>4C_7$ in view of (\ref{A.14}), we have $\| \bar{\mathbf{r}}_{\lfloor n/k_n\rfloor,\mathbb{E}[U_n]}-\mathbf{r}_{\lfloor n/k_n\rfloor,\mathbb{E}[U_n]}\|\leq C_{10}\delta_n$, for all sufficiently large $n$ almost surely. This implies that $ \| \hat{Y}_{n,U_n}  -\mathbf{r}_{\lfloor n/k_n\rfloor,\mathbb{E}[U_n]} \|\leq C_{11}\delta_n$ for all sufficiently large $n$ almost surely.
\end{proof}
\begin{thm}\label{thm-v}
	Let $\mathbb{X}$ be a separable Hilbert space and let Assumptions \ref{A2} and \ref{A-B} hold. Then the following Bahadur-type asymptotic linear representation holds if for some $\rho\in(0,1/2]$, $d_n/k_n^{1-2\rho}\to c$ as $n\to\infty$ for some $c>0$:
	\begin{equation*}
		a_{\lfloor n/k_n\rfloor}(T_{\mathbf{u}_{(n)},n,k_n} -\bm\theta_{0,(n)}) -\mathbf{r}_{\lfloor n/k_n\rfloor,\mathbb{E}[U_n]}
	\end{equation*} 
	\begin{equation*}
		=-\frac{1}{k_n}\sum_{i=1}^{k_n} [\tilde{J}_{n,\mathbf{r}_{\lfloor n/k_n\rfloor,\mathbb{E}[U_n]}}]^{-1}\bigg(\frac{\mathbf{r}_{\lfloor n/k_n\rfloor,\mathbb{E}[U_n]}-X^{(i)}_{n,(n)} }{\| \mathbf{r}_{\lfloor n/k_n\rfloor,\mathbb{E}[U_n]}-X^{(i)}_{n,(n)} \|}-U_n   \bigg)+R_n
	\end{equation*}
	where $R_n=O((\log k_n)/k_n^{2\rho})$ as $n\to\infty$ almost surely.
\end{thm}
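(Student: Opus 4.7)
The plan is to refine the $O(\delta_n)$ consistency rate of Proposition \ref{pro-Big-O-delta_n} into a Bahadur-type linear representation with remainder $\delta_n^{2}=(\log k_n)/k_n^{2\rho}$, adapting the proof of Theorem 3.3 in \cite{ChCh2014} to accommodate both the random quantile parameter $U_n$ (which encodes the contamination via Lemma \ref{Lemma-v}) and the projection onto the $d_n$-dimensional subspace $\mathcal{Z}_n$. Throughout I abbreviate $\hat{Y}_n:=a_{\lfloor n/k_n\rfloor}(T_{\mathbf{u}_{(n)},n,k_n}-\bm\theta_{0,(n)})$ and $\mathbf{r}_n:=\mathbf{r}_{\lfloor n/k_n\rfloor,\mathbb{E}[U_n]}$ and define, for $\mathbf{z}\in\mathcal{Z}_n$, the empirical and population score maps
\begin{equation*}
\Psi_n(\mathbf{z}):=\frac{1}{k_n}\sum_{i=1}^{k_n}\left(\frac{\mathbf{z}-X^{(i)}_{n,(n)}}{\|\mathbf{z}-X^{(i)}_{n,(n)}\|}-U_n\right),\qquad \psi_n(\mathbf{z}):=\mathbb{E}\left[\frac{\mathbf{z}-X_{n,(n)}}{\|\mathbf{z}-X_{n,(n)}\|}\right]-\mathbb{E}[U_n],
\end{equation*}
so that $\psi_n(\mathbf{r}_n)=0$ by definition of $\mathbf{r}_n$. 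By Lemma \ref{Lemma-v}, $\hat{Y}_n$ is the geometric $U_n$-quantile of $X^{(1)}_{n,(n)},\dots,X^{(k_n)}_{n,(n)}$, so $\Psi_n(\hat{Y}_n)$ vanishes up to a subgradient correction supported on observations tied with $\hat{Y}_n$; Assumption \ref{A-B} guarantees that $X_{n,(n)}$ is nonatomic for $n$ large enough, whence such ties occur with probability zero and the correction is absorbed into the remainder.

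I then decompose
\begin{equation*}
\Psi_n(\hat{Y}_n)=\Psi_n(\mathbf{r}_n)+E_n+\psi_n(\hat{Y}_n),\qquad E_n:=\bigl(\Psi_n(\hat{Y}_n)-\psi_n(\hat{Y}_n)\bigr)-\bigl(\Psi_n(\mathbf{r}_n)-\psi_n(\mathbf{r}_n)\bigr),
\end{equation*}
and invoke the third bound of Lemma \ref{lem-A-5} together with the $O(\delta_n)$ control of Proposition \ref{pro-Big-O-delta_n} to get $\psi_n(\hat{Y}_n)=\tilde{J}_{n,\mathbf{r}_n}(\hat{Y}_n-\mathbf{r}_n)+O(\delta_n^{2})$ almost surely. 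The operator $\tilde{J}_{n,\mathbf{r}_n}$ is invertible with inverse of norm uniformly bounded in $n$ by the quadratic lower bound $J_{n,\mathbf{z}}(\mathbf{h},\mathbf{h})\geq c_A\|\mathbf{h}\|^{2}$ supplied by Lemma \ref{lem-CCZ}. Solving for $\hat{Y}_n-\mathbf{r}_n$ produces
\begin{equation*}
\hat{Y}_n-\mathbf{r}_n=-\tilde{J}_{n,\mathbf{r}_n}^{-1}\Psi_n(\mathbf{r}_n)-\tilde{J}_{n,\mathbf{r}_n}^{-1}E_n+O(\delta_n^{2}),
\end{equation*}
which is exactly the claimed representation provided $\|E_n\|=O(\delta_n^{2})$ almost surely.

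The main obstacle is this uniform control of the empirical-process increment $E_n$ on the shrinking ball $B_n:=\{\mathbf{z}\in\mathcal{Z}_n:\|\mathbf{z}-\mathbf{r}_n\|\leq C\delta_n\}$. I would handle it by discretization, paralleling the lattice construction in the proof of Proposition \ref{pro-Big-O-delta_n}: cover $B_n$ by a $k_n^{-4}$-net of cardinality at most $(Ck_n^{4}\delta_n)^{d_n}$, apply Bernstein's inequality coordinate-wise at each lattice point to the increment vector $(\mathbf{z}-X)/\|\mathbf{z}-X\|-(\mathbf{r}_n-X)/\|\mathbf{r}_n-X\|$ (whose variance is of order $\|\mathbf{z}-\mathbf{r}_n\|\lesssim\delta_n$ by Lemma \ref{lem-A-5}), and exploit $d_n\sim k_n^{1-2\rho}$ to conclude that the resulting fluctuation is of order $\sqrt{d_n\delta_n\log k_n/k_n}=O(\delta_n^{2})$. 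The lattice bound is extended to all of $B_n$ via the elementary Lipschitz estimate $\|\tfrac{\mathbf{z}-X}{\|\mathbf{z}-X\|}-\tfrac{\mathbf{z}'-X}{\|\mathbf{z}'-X\|}\|\leq 2\|\mathbf{z}-\mathbf{z}'\|/\|\mathbf{z}'-X\|$ combined with the Markov-type control of $\|\mathbf{z}-X_{n,(n)}\|^{-1}$ afforded by Assumption \ref{A-B}, and a Borel--Cantelli step upgrades the probabilistic bound to an almost-sure one, completing the proof.
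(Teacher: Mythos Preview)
Your overall architecture matches the paper's: discretize the $\delta_n$-ball around $\mathbf{r}_n$ on the lattice $G_n$ inherited from Proposition~\ref{pro-Big-O-delta_n}, control the centered empirical increment $E_n$ (which is exactly $-\tfrac{1}{k_n}\sum_i\Gamma_n(\hat{Y}_n,X^{(i)}_n)$ in the paper's notation) uniformly by a Hilbert-valued Bernstein inequality plus union bound, linearize the population score via the third estimate of Lemma~\ref{lem-A-5}, and invert $\tilde{J}_{n,\mathbf{r}_n}$ using Lemma~\ref{lem-CCZ}. So the strategy is right.

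There is, however, a quantitative gap in your Bernstein step that, as written, does not deliver the claimed $O(\delta_n^2)$. You state that the variance of the increment $\tfrac{\mathbf{z}-X}{\|\mathbf{z}-X\|}-\tfrac{\mathbf{r}_n-X}{\|\mathbf{r}_n-X\|}$ is of order $\|\mathbf{z}-\mathbf{r}_n\|\lesssim\delta_n$ ``by Lemma~\ref{lem-A-5}'', and then assert $\sqrt{d_n\delta_n\log k_n/k_n}=O(\delta_n^2)$. Both claims are wrong. First, Lemma~\ref{lem-A-5} bounds expectations, not second moments; the variance bound comes instead from the pointwise Lipschitz estimate $\bigl\|\tfrac{\mathbf{z}-X}{\|\mathbf{z}-X\|}-\tfrac{\mathbf{r}_n-X}{\|\mathbf{r}_n-X\|}\bigr\|\leq 2\|\mathbf{z}-\mathbf{r}_n\|/\|\mathbf{r}_n-X\|$ squared and integrated against Assumption~\ref{A-B}'s control of $\mathbb{E}[\|\mathbf{r}_n-X_{n,(n)}\|^{-2}]$, yielding $\mathbb{E}\|\Gamma_n\|^2\leq C\|\mathbf{z}-\mathbf{r}_n\|^{2}=O(\delta_n^{2})$, not $O(\delta_n)$. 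Second, your arithmetic is off: with $d_n\log k_n/k_n\sim\delta_n^{2}$ one gets $\sqrt{d_n\delta_n\log k_n/k_n}\sim\delta_n^{3/2}$, which is strictly larger than $\delta_n^{2}$ and would not suffice. With the correct variance $\sigma^2=O(\delta_n^{2})$, Bernstein over the $(Ck_n^{4})^{d_n}$ lattice points gives fluctuation $\delta_n\sqrt{d_n\log k_n/k_n}+d_n\log k_n/k_n=O(\delta_n^{2})$, and the rest of your argument goes through exactly as in the paper.
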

\begin{proof}
	Let $H_n=\{ \mathbf{z}\in G_n\,:\, \| \mathbf{z} -\mathbf{r}_{\lfloor n/k_n\rfloor,\mathbb{E}[U_n]} \|\leq C_{11}\delta_n\}$. Let
	\begin{equation*}
		\Gamma_n(\mathbf{z},X_n^{(i)})=\frac{\mathbf{r}_{\lfloor n/k_n\rfloor,\mathbb{E}[U_n]}-X^{(i)}_{n,(n)} }{\| \mathbf{r}_{\lfloor n/k_n\rfloor,\mathbb{E}[U_n]}-X^{(i)}_{n,(n)} \|}-\frac{\mathbf{z}-X^{(i)}_{n,(n)} }{\| \mathbf{z}-X^{(i)}_{n,(n)} \|}+\mathbb{E}\bigg[ \frac{\mathbf{z}-X^{(i)}_{n,(n)} }{\| \mathbf{z}-X^{(i)}_{n,(n)} \|}-U_n \bigg]
	\end{equation*}
	and
	\begin{equation*}
		\Delta_{n}(\mathbf{z})=\mathbb{E}\bigg[ \frac{\mathbf{z}-X_{n,(n)} }{\| \mathbf{z}-X_{n,(n)} \|}- \frac{\mathbf{r}_{\lfloor n/k_n\rfloor,\mathbb{E}[U_n]}-X_{n,(n)} }{\| \mathbf{r}_{\lfloor n/k_n\rfloor,\mathbb{E}[U_n]}-X_{n,(n)} \|}\bigg]-\tilde{J}_{n,\mathbf{r}_{\lfloor n/k_n\rfloor,\mathbb{E}[U_n]}}(\mathbf{z}-\mathbf{r}_{\lfloor n/k_n\rfloor,\mathbb{E}[U_n]})
	\end{equation*}
	where $\mathbf{z}\in\mathcal{Z}_n$. Using Assumption \ref{A-B},
	\begin{equation*}
		\mathbb{E}[\|\Gamma_n(\mathbf{z},X_n^{(i)})\|^2]\leq 2\mathbb{E}\bigg[\bigg\|  \frac{\mathbf{r}_{\lfloor n/k_n\rfloor,\mathbb{E}[U_n]}-X^{(i)}_{n,(n)} }{\| \mathbf{r}_{\lfloor n/k_n\rfloor,\mathbb{E}[U_n]}-X^{(i)}_{n,(n)} \|}-\frac{\mathbf{z}-X^{(i)}_{n,(n)} }{\| \mathbf{z}-X^{(i)}_{n,(n)} \|} \bigg\|^{2} \bigg]
	\end{equation*}
	\begin{equation*}
		+2\bigg\|  \mathbb{E}\bigg[ \frac{\mathbf{r}_{\lfloor n/k_n\rfloor,\mathbb{E}[U_n]}-X^{(i)}_{n,(n)} }{\| \mathbf{r}_{\lfloor n/k_n\rfloor,\mathbb{E}[U_n]}-X^{(i)}_{n,(n)} \|} \bigg]- \mathbb{E}\bigg[ \frac{\mathbf{z}-X^{(i)}_{n,(n)} }{\| \mathbf{z}-X^{(i)}_{n,(n)} \|} \bigg] \bigg\|^{2}\leq C_{12}\|\mathbf{z}-\mathbf{r}_{\lfloor n/k_n\rfloor,\mathbb{E}[U_n]}\|^{2},
	\end{equation*}
	for some $C_{12}>0$. Hence, by Bernstein's inequality there exists a constant $C_{13}>0$ such that
	\begin{equation}\label{A.15}
		\max_{\mathbf{z}\in H_n}\bigg\| \frac{1}{k_n}\sum_{i=1}^{k_n}\Gamma_n(\mathbf{z},X_n^{(i)}) \bigg\|  \leq C_{13}\delta_n^2
	\end{equation}
	for all $n$ sufficiently large almost surely. Using the third inequality in Lemma \ref{lem-A-5}, there exists a constant $C_{14}>0$ such that $\| \Delta_{n}(\mathbf{z}) \|\leq C_{14}\|\mathbf{z}-\mathbf{r}_{\lfloor n/k_n\rfloor,\mathbb{E}[U_n]}\|^{2}$ for all $n$ large enough. Using this result with (\ref{A.15}), the equality $\mathbb{E}[U_n]=\mathbb{E}\Big[\frac{\mathbf{r}_{\lfloor n/k_n\rfloor,\mathbb{E}[U_n]}-X^{(i)}_{n,(n)} }{\| \mathbf{r}_{\lfloor n/k_n\rfloor,\mathbb{E}[U_n]}-X^{(i)}_{n,(n)} \|}\Big]$ and the definitions of $\Gamma_n$ and $\Delta_{n}$, we have
	\begin{equation*}
		\tilde{J}_{n,\mathbf{r}_{\lfloor n/k_n\rfloor,\mathbb{E}[U_n]}}(\mathbf{z}-\mathbf{r}_{\lfloor n/k_n\rfloor,\mathbb{E}[U_n]})
	\end{equation*}
	\begin{equation*}
		= -\frac{1}{k_n}\sum_{i=1}^{k_n}\bigg(\frac{\mathbf{r}_{\lfloor n/k_n\rfloor,\mathbb{E}[U_n]}-X^{(i)}_{n,(n)} }{\| \mathbf{r}_{\lfloor n/k_n\rfloor,\mathbb{E}[U_n]}-X^{(i)}_{n,(n)} \|}-\frac{\mathbf{z}-X^{(i)}_{n,(n)} }{\| \mathbf{z}-X^{(i)}_{n,(n)} \|}  \bigg)+\tilde{R}_{n}(\mathbf{z}),
	\end{equation*}
	where $\max_{\mathbf{z}\in H_n}\| \tilde{R}_{n}(\mathbf{z})\|=O(\delta_n^2)$ as $n\to\infty$ almost surely. From Lemma \ref{lem-CCZ} it follows that the operator norm of $\tilde{J}_{n,\mathbf{r}_{\lfloor n/k_n\rfloor,\mathbb{E}[U_n]}}$ is bounded away from 0 and $[\tilde{J}_{n,\mathbf{r}_{\lfloor n/k_n\rfloor,\mathbb{E}[U_n]}}]^{-1}$ is defined on the whole of $\mathcal{Z}_n$ for $n$ large enough. Thus, there exists $C_{15}>0$ such that 
	\begin{equation*}
		\max_{\mathbf{z}\in H_n}\| [\tilde{J}_{n,\mathbf{r}_{\lfloor n/k_n\rfloor,\mathbb{E}[U_n]}}]^{-1}(\tilde{R}_{n}(\mathbf{z}))\| \leq C_{15}\delta_n^2
	\end{equation*}
	for all sufficiently large $n$ almost surely. Hence, setting $\bar{\mathbf{r}}_{\lfloor n/k_n\rfloor,\mathbb{E}[U_n]}$, defined in the proof of Proposition \ref{pro-Big-O-delta_n}, in place of $\mathbf{z}$ and using $(\ref{A.13})$ we obtain
	\begin{equation*}
		a_{\lfloor n/k_n\rfloor}(T_{\mathbf{u}_{(n)},n,k_n} -\bm\theta_{0,(n)}) -\mathbf{r}_{\lfloor n/k_n\rfloor,\mathbb{E}[U_n]}
	\end{equation*}
	\begin{equation*}
		=-\frac{1}{k_n}\sum_{i=1}^{k_n} [\tilde{J}_{n,\mathbf{r}_{\lfloor n/k_n\rfloor,\mathbb{E}[U_n]}}]^{-1}\bigg(\frac{\mathbf{r}_{\lfloor n/k_n\rfloor,\mathbb{E}[U_n]}-X^{(i)}_{n,(n)} }{\| \mathbf{r}_{\lfloor n/k_n\rfloor,\mathbb{E}[U_n]}-X^{(i)}_{n,(n)} \|}-U_n   \bigg)+R_n
	\end{equation*}
	where $\|R_n\|=O(\delta_n^2)$ as $n\to\infty$ almost surely.
\end{proof}
\begin{rem}
	In the proof of Proposition A.6 in \cite{ChCh2014} the definition of $G_n$ required also that $n^{4}\beta_j$ was in $[-C_3,C_3]$, but this is clearly a typo of the authors.
\end{rem}

We can finally prove Theorem \ref{thm-Bahadur}.
\begin{proof}[Proof of Theorem \ref{thm-Bahadur}]
	We notice that $U_n$ is a well-defined $\mathcal{Z}_n$-valued random variable and so $\mathbf{q}_{U_n}(Z^{(1)}_{\lfloor n/k_n\rfloor,(n)},...,Z^{(k_n)}_{\lfloor n/k_n\rfloor,(n)})$ is also a well-defined random variable. Further, thanks to Lemma \ref{Lemma-v}, we have that $T_{\mathbf{u}_{(n)},n,k_n}=\mathbf{q}_{U_n}(Z^{(1)}_{\lfloor n/k_n\rfloor,(n)},...,Z^{(k_n)}_{\lfloor n/k_n\rfloor,(n)})$. Thus,
	\begin{equation*}
		a_{\lfloor n/k_n\rfloor}(T_{\mathbf{u}_{(n)},n,k_n} -\bm\theta_{0,(n)}) -\mathbf{r}_{\lfloor n/k_n\rfloor,\mathbb{E}[U_n]}
	\end{equation*}	
	\begin{equation*}
		=a_{\lfloor n/k_n\rfloor}(\mathbf{q}_{U_n}(Z^{(1)}_{\lfloor n/k_n\rfloor,(n)},...,Z^{(k_n)}_{\lfloor n/k_n\rfloor,(n)}) -\bm\theta_{0,(n)}) -\mathbf{r}_{\lfloor n/k_n\rfloor,\mathbb{E}[U_n]}
	\end{equation*}
	\begin{equation*}
		=\mathbf{q}_{U_n}(a_{\lfloor n/k_n\rfloor}Z^{(1)}_{\lfloor n/k_n\rfloor,(n)}-\bm\theta_{0,(n)},...,a_{\lfloor n/k_n\rfloor}Z^{(k_n)}_{\lfloor n/k_n\rfloor,(n)}-\bm\theta_{0,(n)})-\mathbf{r}_{\lfloor n/k_n\rfloor,\mathbb{E}[U_n]}
	\end{equation*}
	and by applying Theorem \ref{thm-v} we obtain the result.
\end{proof}
\section*{The remaining proofs of Section \ref{SubSec-QoE-geo}}
\begin{proof}[Proof of Theorem \ref{thm-Bahadur-2}]
	It follows from Theorem \ref{thm-Bahadur} and the central limit theorem for triangular arrays in Hilbert spaces (see Corollary 7.8 in \cite{Gine}).
\end{proof}
\begin{proof}[Proof of Corollary \ref{co-robust-geo-infinite}]
	It follows from Theorem \ref{thm-Bahadur-2}.
\end{proof}
\begin{proof}[Proof of Proposition \ref{co-robust-geo-finite}]
	By Lemma \ref{lem-uniform-continuity},  under Assumption \ref{A2}, 
	\begin{equation*}
		\sup\limits_{\mathbf{z}\in\mathbb{R}^d:\|\mathbf{z}-\mathbf{u}\|<l_n/k_n}|\mathbf{r}_{\lfloor n/k_n\rfloor,\mathbf{z}}-\mathbf{r}_{\mathbf{u}}|\to0.
	\end{equation*}
	The result follows from Theorem \ref{thm-Bahadur} and the central limit theorem for triangular arrays.
\end{proof}

\section*{Proofs of Section \ref{SubSec-QoE-quantile-regression}}
\begin{proof}[Proof of Proposition \ref{pro-sample-quantile}]
	It follows from the arguments of the proof of Theorem  \ref{t-k-to-infinity-multivariate-robust} and the Berry-Esseen bound of order $n^{1/2}$ in \cite{Bentkus}.
\end{proof}

\section*{Proofs of Section \ref{Sec-Continuity}}
\begin{proof}[Proof of Lemma \ref{lem-uniq-defined}]
	Let $\mathbf{x}\in\mathbb{R}^k$. Take the derivative of 
	\begin{equation*}
		\sum _{i=1}^{k}|x_{i}-y|+u(x_i-y)
	\end{equation*}
	with respect to $y$, with $y\neq x_i$ for every $i=1,...,k$, and set it equal zero, that is
	\begin{equation}\label{b_y}
		(1-u)b_{y}-(1+u)(k-b_{y})=0,
	\end{equation}
	where $b_{y}=|\{i:x_i<y\}|$. The only solution of (\ref{b_y}) is $b_y=k\alpha$, and this can only happen if $\alpha\in\{\frac{1}{k},\frac{2}{k},...,\frac{k-1}{k}\}$. In this case the set of geometric quantiles is $[\tilde{x}_{k\alpha},\tilde{x}_{k\alpha+1}]$.
	
	When $\alpha\notin\{\frac{1}{k},\frac{2}{k},...,\frac{k-1}{k}\}$ then (\ref{b_y}) is never satisfied and thus no geometric quantile lies in $\mathbb{R}\setminus\{x_1,...,x_k\}$. In particular, $(1-u)b_{y}-(1+u)(k-b_{y})$ is an increasing function of $b_y$ and so there exists an $\tilde{x}_i$ for some $i=1,...,k$ such that $(1-u)i-(1+u)(k-i)<0$ and $(1-u)(i+1)-(1+u)(k-i-1)>0$. Such $\tilde{x}_i$ is the unique geometric quantile.
	
	Finally to show equality with $q_\alpha(\mathbf{x})$, it is sufficient to see that $i$ is the integer in the interval $(k\alpha,k\alpha+1)$, which can be written as $\lceil k\alpha\rceil$ or equivalently as $\lfloor k\alpha+1 \rfloor$.
\end{proof}

\begin{proof}[Proof of Lemma \ref{lem-con-quant}]
	We first focus on $q_{\alpha}$. Let $\alpha\in(0,1)\setminus\{\frac{1}{k},\frac{2}{k},...,\frac{k-1}{k}\}$. Consider any $\mathbf{x},\mathbf{y}\in\mathbb{R}^k$. By definition we have that $|q_\alpha(\mathbf{x})-q_\alpha(\mathbf{y})|=|x_i-y_j|$ for some $i,j\in\{1,...,k\}$. Without loss of generality we consider that $|x_i-y_j|=x_i-y_j$. We know that there are at least $k-\lfloor k\alpha\rfloor$ components of $\mathbf{x}$ that are greater than or equal to $x_i$. Let $G_\mathbf{x}\subset\{1,...,k\}$ be the set of the indices of such components, formally $G_\mathbf{x}:=\{l\in\{1,...,k\}:x_l\geq q_\alpha(\mathbf{x})\}$. Similarly we know that there are at least $\lfloor k\alpha\rfloor+1$ components of $\mathbf{y}$ that are less than or equal to $y_j$. Let $L_\mathbf{y}:=\{l\in\{1,...,k\}:y_l\leq q_\alpha(\mathbf{y})\}$. 
	
	Observe that $G_\mathbf{x}\cap L_\mathbf{y}\neq\emptyset$ because $G_\mathbf{x}$ and $L_\mathbf{x}$ contain $k-\lfloor k\alpha\rfloor$ and $\lfloor k\alpha\rfloor+1$ distinct elements of $\{1,...,k\}$, respectively. Then, we have
	\begin{equation*}
		x_i-y_j\leq x_l-y_l,\quad\forall l\in G_\mathbf{x}\cap L_\mathbf{y},
	\end{equation*}
	and the result follows.
	
	Similar arguments apply for the case of $\alpha\in\{\frac{1}{k},\frac{2}{k},...,\frac{k-1}{k}\}$. In particular, we have that $|q_\alpha(\mathbf{x})-q_\alpha(\mathbf{y})|=\frac{1}{2}|x_i+x_l-y_j-y_h|$ for some $i,l,j,h\in\{1,...,k\}$ with $i\neq l$ and $j\neq h$. Without loss of generality we consider that $|x_i+x_l-y_j-y_h|=x_i+x_l-y_j-y_h$ and that $x_i\geq x_l$ and $y_j\geq y_h$. Consider the sets $G_\mathbf{x}\cup\{l\}$ and $L_\mathbf{y}\cup\{j\}$; they contain at least $k-\lfloor k\alpha\rfloor+1$ and $\lfloor k\alpha\rfloor+1$ distinct elements of $\{1,...,k\}$, respectively. Thus, we have that $|(G_\mathbf{x}\cup\{l\})\cap( L_\mathbf{y}\cup\{j\})|\geq 2$. Since $x_i+x_l\leq x_p+x_q$ for any $p,q\in G_\mathbf{x}\cup\{l\}$ with $p\neq q$ and $y_j+y_h\geq y_r+y_s$ for any $r,s\in L_\mathbf{y}\cup\{j\}$ with $r\neq s$, we have that 
	\begin{equation*}
		\frac{1}{2}(x_i+x_l-y_j-y_h)\leq \frac{1}{2}(x_p+x_q-y_p-y_q)\leq \max(x_p-y_p,x_q-y_q),
	\end{equation*}
	for every $p,q\in (G_\mathbf{x}\cup\{l\})\cap(L_\mathbf{y}\cup\{j\})$ with $p\neq q$,	from which we obtain the result.

	The arguments for $q^{\circ}_{\alpha}$ are similar. By definition we have that $|q^\circ_\alpha(\mathbf{x})-q^\circ_\alpha(\mathbf{y})|=|x_i-y_j|$ for some $i,j\in\{1,...,k\}$. Without loss of generality we consider that $|x_i-y_j|=x_i-y_j$. By definition we know that there are at least $\lceil k(1-\alpha)\rceil$ components of $\mathbf{x}$ that are greater than or equal to $x_i$. Let $\tilde{G}_\mathbf{x}:=\{l\in\{1,...,k\}:x_l\geq q^\circ_\alpha(\mathbf{x})\}$. Similarly we know that there are at least $\lceil k\alpha\rceil$ components of $\mathbf{y}$ that are less than or equal to $y_j$. Let $\tilde{L}_\mathbf{y}:=\{l\in\{1,...,k\}:y_l\leq q^\circ_\alpha(\mathbf{y})\}$. 
	
	Observe that $\tilde{G}_\mathbf{x}\cap \tilde{L}_\mathbf{y}\neq\emptyset$ by the following argument. If $\alpha\neq a/k$ where $a=1,...,k$ then $|\tilde{G}_\mathbf{x}|+|\tilde{L}_\mathbf{y}|\geq\lceil k(1-\alpha)\rceil+\lceil k\alpha\rceil\geq k+1$. If $\alpha= a/k$ for some $a=1,...,k-1$ then there are at least two $x_i$'s that satisfy the condition
	\begin{equation*}
		|\{j\in\{1,...,k\}:x_j\leq x_i\}|\geq k\alpha\quad\textnormal{and}\quad |\{j\in\{1,...,k\}:x_j\geq x_i\}|\geq k(1-\alpha)
	\end{equation*}
	and since in the definition of $q$ we consider the smallest one, we have that $|\tilde{G}_\mathbf{x}|\geq k(1-\alpha)+1$. Since $|\tilde{L}_\mathbf{y}|\geq k\alpha$, we have that $|\tilde{G}_\mathbf{x}|+|\tilde{L}_\mathbf{y}|\geq k+1$. Therefore, we have
	\begin{equation*}
		x_i-y_j\leq x_l-y_l,\quad\forall l\in \tilde{G}_\mathbf{x}\cap \tilde{L}_\mathbf{y},
	\end{equation*}
	and the result follows.
\end{proof}
\section*{Proofs of Section \ref{SubSec-Continuity-component}}
\begin{proof}[Proof of Theorem \ref{thm-continuity-component-Hamel}]
	By the axiom of choice every vector space has an Hamel basis. Moreover, we have that $\mathbf{q}_{Hamel,\bm{\alpha}}(\mathbf{x}_{1},...,\mathbf{x}_{k})\in\mathbb{X}$ because it is a finite linear combination of elements of $\mathbb{X}$. Thus, we have existence. Uniqueness follows from the definition of $q_\alpha$. Let $I_{\mathbf{x}_1,...,\mathbf{x}_k}:=\{l\in I:x_{i}^{(l)}\neq0\textnormal{ for some $i=1,...,k$}\}$. Consider any $\mathbf{x}_1,...,\mathbf{x}_k,\mathbf{z}_1,...,\mathbf{z}_k$ in $\mathbb{X}$ and let $I'=I_{\mathbf{x}_1,...,\mathbf{x}_k}\cup I_{\mathbf{z}_1,...,\mathbf{z}_k}$. Then, by Lemma \ref{lem-con-quant} we have
	\begin{equation*}
		\bigg\|\sum_{l\in I}q_{\alpha_l}(x^{(l)}_1,...,x^{(l)}_k)\mathbf{b}_{l}-\sum_{l\in I}q_{\alpha_l}(z^{(l)}_1,...,z^{(l)}_k)\mathbf{b}_{l}\bigg\|_{\mathbb{X},Hamel}
	\end{equation*}
	\begin{equation*}
		\leq\sum_{l\in I'}\Big|q_{\alpha_l}(x^{(l)}_1,...,x^{(l)}_k)-q_{\alpha_l}(z^{(l)}_1,...,z^{(l)}_k)\Big|
	\end{equation*}
	\begin{equation*}
		\leq\sum_{l\in I'}\sum_{i=1}^{k}|x^{(l)}_i-z^{(l)}_i|=\sum_{i=1}^{k}\|\mathbf{x}_i-\mathbf{z}_i\|_{\mathbb{X},Hamel}.
	\end{equation*}
\end{proof}

\begin{proof}[Proof of Theorem \ref{thm-continuity-component-Scahuder}]
	To prove existence we need to show that 
	\begin{equation*}
		\mathbf{q}_{S,\bm{\alpha}}(\mathbf{x}_{1},...,\mathbf{x}_{k})\in\mathbb{X}
	\end{equation*}
	By completeness of $\mathbb{X}$, it is enough to show that the sequence $(\sum_{l=1}^{n}\check{x}_l\mathbf{d}_{l})_{n\in\mathbb{N}}$ is Cauchy. By Lemma \ref{lem-con-quant} we have that $|\check{x}_{l}|\leq\sum_{i=1}^{k}|x^{(l)}_{i}|$. So, by Theorem 6.7 (c) in \cite{Heil},
	\begin{equation*}
		\bigg\|\sum_{l=1}^{n}\check{x}_l\mathbf{d}_{l}\bigg\|_{\mathbb{X}}\leq \mathcal{K}\bigg\|\sum_{l=1}^{n}\sum_{i=1}^{k}|x^{(l)}_{i}|\mathbf{d}_{l}\bigg\|_{\mathbb{X}}\leq \mathcal{K}\sum_{i=1}^{k}\bigg\|\sum_{l=1}^{n}|x^{(l)}_{i}|\mathbf{d}_{l}\bigg\|_{\mathbb{X}}\leq \mathcal{K}^2\sum_{i=1}^{k}\bigg\|\sum_{l=1}^{n}x^{(l)}_{i}\mathbf{d}_{l}\bigg\|_{\mathbb{X}},
	\end{equation*}
	for every $n\in\mathbb{N}$, where the last inequality follows from Theorem 6.7 (d) in \cite{Heil}. We remark that $\mathcal{K}$ is a constant independent of $n$ and of $\check{x}_l,x^{(l)}_{1},....,x^{(l)}_{k}$ for every $l\in\mathbb{N}$. By the same arguments and by taking $\check{x}_{1},...,\check{x}_m=0$ for $m<n$, we have
	\begin{equation*}
		\bigg\|\sum_{l=1}^{n}\check{x}_l\mathbf{d}_{l}-\sum_{l=1}^{m}\check{x}_l\mathbf{d}_{l}\bigg\|_{\mathbb{X}}=\bigg\|\sum_{l=m+1}^{n}\check{x}_l\mathbf{d}_{l}\bigg\|_{\mathbb{X}}\leq \mathcal{K}^{2}\sum_{i=1}^{k}\bigg\|\sum_{l=m+1}^{n}x^{(l)}_{i}\mathbf{d}_{l}\bigg\|_{\mathbb{X}}.
	\end{equation*}
	Since by the definition of Schauder basis we have that $(\sum_{l=1}^{n}x^{(l)}_i\mathbf{d}_{l})_{n\in\mathbb{N}}$ is a Cauchy sequence, for every $i=1,...,k$, we conclude that $(\sum_{l=1}^{n}\check{x}_l\mathbf{d}_{l})_{n\in\mathbb{N}}$ is Cauchy. Thus, we have existence. Uniqueness follows from the definition of $q_\alpha$.
	
	To prove continuity we consider any $\mathbf{x}_1,...,\mathbf{x}_k,\mathbf{z}_1,...,\mathbf{z}_k$ in $\mathbb{X}$. By Theorem 6.4 (f) in \cite{Heil}
	\begin{equation*}
		\bigg\|\sum_{l=1}^{\infty}q_{\alpha_l}(x^{(l)}_1,...,x^{(l)}_k)\mathbf{d}_{l}-\sum_{l=1}^{\infty}q_{\alpha_l}(z^{(l)}_1,...,z^{(l)}_k)\mathbf{d}_{l}\bigg\|_{\mathbb{X}}
	\end{equation*}
	\begin{equation}\label{Heil-1}
		\leq\sup\limits_{n\in\mathbb{N}}	\bigg\|\sum_{l=1}^{n}(q_{\alpha_l}(x^{(l)}_1,...,x^{(l)}_k)-q_{\alpha_l}(z^{(l)}_1,...,z^{(l)}_k))\mathbf{d}_{l}\bigg\|_{\mathbb{X}}.
	\end{equation}
	Further, by Lemma \ref{lem-con-quant} and by Theorem 6.7 (c) in \cite{Heil} (\ref{Heil-1}) is bounded by
	\begin{equation*}
		\mathcal{K}\sup\limits_{n\in\mathbb{N}}	\bigg\|\sum_{l=1}^{n}\sum_{i=1}^{k}|x^{(l)}_i-z^{(l)}_i|\mathbf{d}_{l}\bigg\|_{\mathbb{X}}\leq \mathcal{K}\sum_{i=1}^{k}\sup\limits_{n\in\mathbb{N}}	\bigg\|\sum_{l=1}^{n}|x^{(l)}_i-z^{(l)}_i|\mathbf{d}_{l}\bigg\|_{\mathbb{X}}
	\end{equation*}
	\begin{equation*}
		\leq \mathcal{K}\sum_{i=1}^{k}\sup\limits_{n\in\mathbb{N},|\lambda_{l}|\leq 1}	\bigg\|\sum_{l=1}^{n}\lambda_{l}(x^{(l)}_i-z^{(l)}_i)\mathbf{d}_{l}\bigg\|_{\mathbb{X}}\leq \mathcal{K}^2\sum_{i=1}^{k}	\bigg\|\mathbf{x}_i-\mathbf{z}_i\bigg\|_{\mathbb{X}},
	\end{equation*}
	where the last equality follows from Theorem 6.4 (f) in \cite{Heil}.
	
	Concerning the Hilbert space case, by Parseval's identity and by Lemma \ref{lem-con-quant}
	\begin{equation*}
		\bigg\|\sum_{l\in I}q_{\alpha_l}(x^{(l)}_1,...,x^{(l)}_k)\mathbf{d}_{l}-\sum_{l\in I}q_{\alpha_l}(z^{(l)}_1,...,z^{(l)}_k)\mathbf{d}_{l}\bigg\|_{\mathbb{X}}
	\end{equation*}
	\begin{equation*}
		=\sum_{l\in I}|\check{x}_{l}-\check{z}_{l}|^{2}\leq \sum_{l\in I}\sum_{i=1,...,k}|x_{i}^{(l)}-z_{i}^{(l)}|^{2}
	\end{equation*}
	\begin{equation*}
		=\sum_{i=1,...,k}\sum_{l\in I}|x_{i}^{(l)}-z_{i}^{(l)}|^{2}=\sum_{i=1,...,k}\|\mathbf{x}_i-\mathbf{z}_i\|_{\mathbb{X}}^{2},
	\end{equation*}
	where $I$ is the (possibly uncountable) index set of the orthonormal basis $(\mathbf{d}_l)_{l\in I}$, although only countably many elements in $\sum_{l\in I}x_{l}\mathbf{d}_l$ are different from zero.
\end{proof}

\begin{proof}[Proof of Theorem \ref{thm-continuity-component-point}]
	In any of the different cases ($\mathbb{X}=L_p[0,1]$, with $1 \leq p \leq \infty$, $\mathbb{X}=C[0,1]$, and $\mathbb{X}=C[0,1]$) uniqueness is ensured by the definition of $q_\alpha$. For $\mathbb{X}=L_p[0,1]$, $p\in[1,\infty)$, by Lemma \ref{lem-con-quant}
	\begin{equation*}
		\bigg(\int_{0}^{1}|\mathbf{q}_{P,\bm{\alpha}}(\mathbf{f}_{1},...,\mathbf{f}_{k})(x)|^{p}dx\bigg)^{1/p}\leq \bigg(\sum_{i=1}^{k}\int_{0}^{1}|\mathbf{f}_{i}(x)|^{p}dx\bigg)^{1/p}\leq \sum_{i=1}^{k}\|\mathbf{f}_i\|_{\mathbb{X}},
	\end{equation*}
	which proves existence. For $L_\infty[0,1]$, by Lemma \ref{lem-con-quant}
	\begin{equation*}
		\|\mathbf{q}_{P,\bm{\alpha}}(\mathbf{f}_{1},...,\mathbf{f}_{k})\|_{\infty}=\inf\{C\geq0:|\mathbf{q}_{P,\bm{\alpha}}(\mathbf{f}_{1},...,\mathbf{f}_{k})(x)|\leq C,\textnormal{ for a.e.~$x\in[0,1]$}\}
	\end{equation*}
	\begin{equation*}
		\leq \inf\{C\geq0:\max_{i=1,...,k}|\mathbf{f}_i(x)|\leq C,\textnormal{ for a.e.~$x\in[0,1]$}\}=\max_{i=1,...,k}\|\mathbf{f}_i\|_{\mathbb{X}}.
	\end{equation*}
	
	Using similar arguments we obtain continuity in $L_p[0,1]$ with $1 \leq p \leq \infty$. For $\mathbb{X}=C[0,1]$, by continuity of $q_{\alpha}$ and of $\mathbf{f}_{1},...,\mathbf{f}_{k}$ we obtain that $\mathbf{q}_{P,\bm{\alpha}}(\mathbf{f}_{1},...,\mathbf{f}_{k})$ is a continuous function. Moreover, by Lemma \ref{lem-con-quant},
	\begin{equation*}
		\sup_{x\in[0,1]}|\mathbf{q}_{P,\bm{\alpha}}(\mathbf{f}_{1},...,\mathbf{f}_{k})(x)-\mathbf{q}_{P,\bm{\alpha}}(\mathbf{g}_{1},...,\mathbf{g}_{k})(x)|\leq \max_{i=1,...,k}\sup_{x\in[0,1]}|\mathbf{f}_i(x)-\mathbf{g}_i(x)|.
	\end{equation*}
	Using similar arguments we obtain continuity in $D[0,1]$.
\end{proof}

\section*{Proofs of Section \ref{SubSec-Continuity-geo}}
\begin{proof}[Proof of Lemma \ref{lem-ineq}]
	Take $\mathbf{w}_{0}\in W$ s.t.~$\|\mathbf{w}_{0}\|_{\mathbb{X}}=1$. Define the linear functional $F$ on $W$ by $F(\lambda \mathbf{w}_0)=\lambda \|\mathbf{w}_{0}\|_{\mathbb{X}}$, where $\lambda\in\mathbb{R}$. This functional is linear, has norm 1 (precisely $\sup\limits_{\mathbf{z}\in W}\frac{|F(\mathbf{z})|}{\|\mathbf{z}\|_{\mathbb{X}}}=1$) and $F(\mathbf{w}_0)=\|\mathbf{w}_{0}\|_{\mathbb{X}}=1$. Thus, by the Hahn-Banach theorem $F$ extends to $\mathbb{X}$, and its extension, which we also denote by $F$, has norm 1. Now, let $P:\mathbb{X}\to\mathbb{X}$ be defined by $P(\mathbf{z})=F(\mathbf{z})\mathbf{w}_0$, where $\mathbf{z}\in\mathbb{X}$. As $P^2=P$, $P$ is a projection operator with range $W$. Boundedness of $F$ implies continuity of $P$ and so $\textnormal{Ker}(P)=\textnormal{Range}(I-P)$, where $I:\mathbb{X}\to\mathbb{X}$ is the identity operator. By letting $Y=\textnormal{Ker}(P)$, we obtain the first statement. 
	
	Since 
	\begin{equation*}
		\|P\|=\sup\limits_{\mathbf{z}\in \mathbb{X}}\frac{\|P(\mathbf{z})\|_{\mathbb{X}}}{\|\mathbf{z}\|_{\mathbb{X}}}=\sup\limits_{\mathbf{z}\in \mathbb{X}}\frac{|F(\mathbf{z})|}{\|\mathbf{z}\|_{\mathbb{X}}}=1,
	\end{equation*}
	we have $\|\mathbf{z}\|_{\mathbb{X}}\geq \|P(\mathbf{z})\|_{\mathbb{X}}$ for every $\mathbf{z}\in\mathbb{X}$. Further, since for every $\mathbf{z}\in\mathbb{X}$ we have $\mathbf{z}=\lambda \mathbf{w}_0+\mathbf{z}_{Y}$, for some $\lambda\in\mathbb{R}$ and $\mathbf{z}_{Y}\in Y$, and $P(\lambda \mathbf{w}_0+\mathbf{z}_{Y})=\lambda \mathbf{w}_0$, we obtain (\ref{ineq}).
	
	If $\mathbb{X}$ is strictly convex then every functional attains its supremum only at most one point $\mathbf{z}$ s.t.~$\|\mathbf{z}\|=1$. Indeed, if there are two points $\mathbf{s}$ and $\mathbf{r}$ in the unit circle for which a functional $g$ attains its supremum then $g(\mathbf{s})=g(\mathbf{r})=\alpha g(\mathbf{s})+(1-\alpha)g(\mathbf{r})$ where $0< \alpha< 1$, but since $\mathbb{X}$ is convex we have $\|\alpha \mathbf{s}+(1-\alpha)\mathbf{r}\|<1$. This leads to the existence of a point $\mathbf{b}=c(\alpha \mathbf{s}+(1-\alpha)\mathbf{r})$ where $c>1$, s.t.~$\|b\|_{\mathbb{X}}=1$ and $g(\mathbf{b})>g(\mathbf{s})=g(\mathbf{r})$, which is a contradiction. For $F$ such a point is $\mathbf{w}_0$. Thus, we obtain that $\|P(\mathbf{z})\|_{\mathbb{X}}=|F(\mathbf{z})|<\|\mathbf{z}\|_{\mathbb{X}}$ for every $\mathbf{z}\notin W$, hence (\ref{ineq-strict}).
	
	To obtain (\ref{ineq-y}) and (\ref{ineq-strict-y}) is sufficient to repeat the same arguments for $\mathbf{y}$ instead of $\mathbf{x}$ noticing that $W$ is a subset of the complementary closed subspace of $\{c\mathbf{y}:c\in\mathbb{R}\}$.
\end{proof}

\begin{proof}[Proof of Theorem \ref{thm-extension-of-Kemperman-}]
	The uniqueness of the geometric quantile under (i) follows directly from Theorem 2.17 in \cite{Kemperman}.
	
	Now, let $\mathbf{x}_1,...,\mathbf{x}_k$ lie on a straight line $W$. If $\mathbf{x}_{1}=...=\mathbf{x}_{k}$ then it is easy to see that the quantile is unique. Otherwise, consider the function
	\begin{equation*}
		f(\mathbf{y})=\sum_{i=1}^{k}\left\|\mathbf{x}_{i}-\mathbf{y}\right\|_{\mathbb{X}}+\langle\mathbf{u} ,\mathbf{x}_{i}-\mathbf{y}\rangle,
	\end{equation*}
	and for every $\mathbf{h}\in\mathbb{X}$ with $\mathbf{h}\neq\mathbf{0}$ let $\Delta_{\mathbf{h}}f(\mathbf{y}):=f(\mathbf{y}+\mathbf{h})-f(\mathbf{y})$. Following the proof of Theorem 2.17 in \cite{Kemperman},
	\begin{equation*}
		\Delta^{2}_{\mathbf{h}}f(\mathbf{y})=\sum_{i=1}^{k}\left\|\mathbf{x}_{i}-\mathbf{y}+2\mathbf{h}\right\|_{\mathbb{X}}-2\left\|\mathbf{x}_{i}-\mathbf{y}+\mathbf{h}\right\|_{\mathbb{X}}+\left\|\mathbf{x}_{i}-\mathbf{y}\right\|_{\mathbb{X}}\geq0.
	\end{equation*}
	So, $f$ is convex. In particular, $\Delta^{2}_{\mathbf{h}}f(\mathbf{y})=0$ when $\mathbf{h}$ is linearly dependent on $\mathbf{x}_{i}-\mathbf{y}$ for every $i=1,...,k$, which can only happen if $\mathbf{h}$ and $\mathbf{y}$ lie on the line passing through $\mathbf{x}_{1},...,\mathbf{x}_{k}$, which we call $W$. So, $f$ is strictly convex for every $\mathbf{y}\notin W$. Assume that the set of geometric quantiles is non-empty. If one of the geometric quantiles, which we denote by $\mathbf{y}^*$, does not lie in $W$ then it is unique because $f$ is strictly increasing in any direction $\mathbf{h}\in\mathbb{X}$. This can also be seen by observing that $g_i(t)=\left\|\mathbf{x}_{i}-\mathbf{y}^*+t\mathbf{h}\right\|_{\mathbb{X}}+\langle\mathbf{u} ,\mathbf{x}_{i}-\mathbf{y}^*+t\mathbf{h}\rangle$ is a strictly convex function on $\mathbb{R}$ if $\mathbf{x}_{i}-\mathbf{y}^*$ and $\mathbf{h}$ are linearly independent, while it is just convex otherwise (see Lemma 2.14 in \cite{Kemperman}), where $i=1,...k$ and $\mathbf{h}\in\mathbb{X}$. Since for every $\mathbf{h}\in\mathbb{X}$ there is always at least one $i\in\{1,...,k\}$ for which $\mathbf{x}_{i}-\mathbf{y}^*$ and $\mathbf{h}$ are linearly independent, we have that $g(t)=\sum_{i=1}^{k}g_i(t)$ is a strictly convex function on $\mathbb{R}$ for every $\mathbf{h}\in\mathbb{X}$. Therefore, if there exists an element of the set of geometric quantiles in $\mathbb{X}\setminus W$, then it is unique.
	
	Consider the case that the set of geometric quantiles is a subset of $W$. By Lemma \ref{lem-ineq} we have
	\begin{equation*}
		1=\|\mathbf{e}\|_{\mathbb{X}^{*}}=\sup\limits_{\mathbf{z}\in\mathbb{X}}\frac{|\langle\mathbf{e},\mathbf{z}\rangle|}{\|\mathbf{z}\|_{\mathbb{X}}}=\sup\limits_{\mathbf{z}\in W}\frac{|\langle\mathbf{e},\mathbf{z}\rangle|}{\|\mathbf{z}\|_{\mathbb{X}}}=\sup\limits_{t\in\mathbb{R}}\frac{|t||\langle\mathbf{e},\mathbf{r}\rangle|}{|t|\|\mathbf{r}\|_{\mathbb{X}}}=\frac{|\langle\mathbf{e},\mathbf{r}\rangle|}{\|\mathbf{r}\|_{\mathbb{X}}}
	\end{equation*}
	for any $\mathbf{r}\in W$, which implies that $|\langle\mathbf{e},\mathbf{r}\rangle|=\|\mathbf{r}\|_{\mathbb{X}}$ for any $\mathbf{r}\in W$. Since $\mathbf{x}_{i}-\mathbf{y}^*\in W$ for any geometric quantile $\mathbf{y}^*$, we have $\langle\mathbf{v} ,\mathbf{x}_{i}-\mathbf{y}^*\rangle=0$. Further, since $\mathbf{x}_{i}-\mathbf{y}^*=\bm{\phi}(x_i-y^*)$ for some $\bm{\phi}\in W$ such that $\langle\mathbf{e} ,\bm{\phi}\rangle=1$ (\textit{i.e.}~$\|\bm{\phi}\|_{\mathbb{X}}=1$) and some $x_1,...,x_k,y^*\in\mathbb{R}$ and for every $i=1,...,k$, we have
	\begin{equation*}
		\sum _{i=1}^{k}\left\|\mathbf{x}_{i}-\mathbf{y}^*\right\|_{\mathbb{X}}+\langle\mathbf{u} ,\mathbf{x}_{i}-\mathbf{y}^*\rangle=\sum _{i=1}^{k}|x_{i}-y^*|+u(x_{i}-y^*).
	\end{equation*}
	By Lemma \ref{lem-uniq-defined}, this implies that in order to have uniqueness it is sufficient that $\alpha\in(0,1)\setminus\{\frac{1}{k},\frac{2}{k},...,\frac{k-1}{k}\}$.
\end{proof}

\begin{proof}[Proof of Theorem \ref{thm-extension-of-Kemperman-median}]
	Assume condition (i). Then, the uniqueness of the geometric median follows directly from Theorem \ref{thm-extension-of-Kemperman-} (see also Theorem 2.17 in \cite{Kemperman}). Now, let $\mathbf{x}_1,...,\mathbf{x}_k$ lie on a straight line $W$ and assume that the geometric median $\mathbf{y}\notin W$. From Lemma \ref{lem-ineq} we have that $\mathbf{y}=\mathbf{y}_{W}+\mathbf{y}_{Y}$, for some $\mathbf{y}_{W}\in W$ and $\mathbf{y}_{Y}\in Y$, and that 
	\begin{equation*}
		\sum _{i=1}^{k}\left\|\mathbf{x}_{i}-\mathbf{y}\right\|_{\mathbb{X}}>\sum _{i=1}^{k}\left\|\mathbf{x}_{i}-\mathbf{y}_{W}\right\|_{\mathbb{X}},
	\end{equation*}
	which implies that $\mathbf{y}$ cannot be the geometric median. Thus, the geometric median must lie in $W$ and in that case it is unique if and only if $k$ is odd.
\end{proof}

\begin{proof}[Proof of Theorem \ref{thm-extension-of-Kemperman}]
	The sufficiency of condition (i) is contained in Theorem \ref{thm-extension-of-Kemperman-}.
	
	Now, let $\mathbf{x}_1,...,\mathbf{x}_k$ lie on a straight line and let $\mathbf{v}\neq\mathbf{0}$. We show uniqueness by showing that the geometric quantile lies outside $W$ (see the proof of Theorem \ref{thm-extension-of-Kemperman-}). Assume that we have more than one geometric quantile lying in $W$ and let $\mathbf{y}^*$ be one of them. Then, we have
	\begin{equation*}
		\sum _{i=1}^{k}\left\|\mathbf{x}_{i}-\mathbf{y}^*\right\|_{\mathbb{X}}+\langle\mathbf{u} ,\mathbf{x}_{i}-\mathbf{y}^*\rangle=\sum _{i=1}^{k}|x_{i}-y^*|+u(x_{i}-y^*)
	\end{equation*}
	for some $x_i,...,x_k,y^*\in\mathbb{R}$, which implies that if there is more than one geometric quantile in $W$, then there is a continuum of them. In particular, the set of geometric quantiles lying in $W$ is a closed connected interval whose extremes are $\mathbf{x}_{i},\mathbf{x}_{j}$ for some $i,j\in\{1,...,k\}$ and which does not contain any $\mathbf{x}_{l}$ for $l=1,...,k$ with $l\neq i,j$. 
	
	Consider $\mathbf{y}^*\neq\mathbf{x}_{i},\mathbf{x}_{j}$. By Assumption \ref{A-1} the norm is G\^{a}teaux differentiable in direction $\mathbf{z}$ at $\mathbf{x}_i\neq\mathbf{0}$ for some $i=1,...,k$, which implies that it is G\^{a}teaux differentiable in direction $\mathbf{z}$ at any point in $W$. Since $\mathbf{y}^*\neq\mathbf{x}_{i},\mathbf{x}_{j}$, we have that $\mathbf{x}_i-\mathbf{y}^*\neq\mathbf{0}$ for every $i=1,...,k$, and so the G\^{a}teaux derivative of the function $f$ defined in the proof of Theorem \ref{thm-extension-of-Kemperman-} evaluated at $\mathbf{y}^*$ in direction $\mathbf{z}$ is well defined:
	\begin{equation*}
		\lim\limits_{t\to 0}\frac{f(\mathbf{y}^*+t\mathbf{z})-f(\mathbf{y}^*)}{t}=\sum_{i=1}^{k}\lim\limits_{t\to 0}\frac{\|\mathbf{x}_i-\mathbf{y}^*-t\mathbf{z}\|_{\mathbb{X}}-\|\mathbf{x}_i-\mathbf{y}^*\|_{\mathbb{X}}}{t}-k\langle\mathbf{u} ,\mathbf{z}\rangle.
	\end{equation*}
	By Lemma \ref{lem-ineq} we have that for every $\mathbf{x}\in W$, $\mathbf{x}\perp\mathbf{z}$, which by Theorem 3.1 and Lemma 3.1 (see also eq.~(1) in \cite{James}) implies that
	\begin{equation*}
		\lim\limits_{t\to 0^{-}}\frac{\|\mathbf{x}+t\mathbf{z}\|_{\mathbb{X}}-\|\mathbf{x}\|_{\mathbb{X}}}{t}\leq0\leq\lim\limits_{t\to 0^{+}}\frac{\|\mathbf{x}+t\mathbf{z}\|_{\mathbb{X}}-\|\mathbf{x}\|_{\mathbb{X}}}{t}.
	\end{equation*}
	However, since by Assumption \ref{A-1} we have
	\begin{equation*}
		\lim\limits_{t\to 0^{-}}\frac{\|\mathbf{x}+t\mathbf{z}\|_{\mathbb{X}}-\|\mathbf{x}\|_{\mathbb{X}}}{t}=\lim\limits_{t\to 0^{+}}\frac{\|\mathbf{x}+t\mathbf{z}\|_{\mathbb{X}}-\|\mathbf{x}\|_{\mathbb{X}}}{t}.
	\end{equation*}
	We conclude that
	\begin{equation*}
		\lim\limits_{t\to 0}\frac{f(\mathbf{y}^*+t\mathbf{z})-f(\mathbf{y}^*)}{t}=-k\langle\mathbf{u} ,\mathbf{z}\rangle=-k\langle\mathbf{v} ,\mathbf{z}\rangle,
	\end{equation*}
	where the last equality follows from $\mathbf{z}\in Y$. The same argument holds for $-\mathbf{z}$ instead of $\mathbf{z}$ because $\|\mathbf{x}+t(-\mathbf{z})\|_{\mathbb{X}}=\|-\mathbf{x}+t\mathbf{z}\|_{\mathbb{X}}$ and $-\mathbf{x}$ is an element of $W$. Thus, we have
	\begin{equation*}
		\lim\limits_{t\to 0}\frac{f(\mathbf{y}^*-t\mathbf{z})-f(\mathbf{y}^*)}{t}=k\langle\mathbf{v} ,\mathbf{z}\rangle.
	\end{equation*}
	Since by assumption $\langle\mathbf{u} ,\mathbf{z}\rangle\neq0$ and so $\langle\mathbf{v} ,\mathbf{z}\rangle\neq0$, we conclude that the G\^{a}teaux derivative of $f$ evaluated at $\mathbf{y}^*$ in direction $\mathbf{z}$ (or $-\mathbf{z}$) is strictly negative. This implies that $\mathbf{y}^*$ cannot be a geometric quantile and since
	\begin{equation*}
		\sum _{i=1}^{k}\left\|\mathbf{x}_{i}-\mathbf{y}^*\right\|_{\mathbb{X}}+\langle\mathbf{u} ,\mathbf{x}_{i}-\mathbf{y}^*\rangle=\sum _{i=1}^{k}\left\|\mathbf{x}_{i}-\mathbf{y}'\right\|_{\mathbb{X}}+\langle\mathbf{u} ,\mathbf{x}_{i}-\mathbf{y}'\rangle
	\end{equation*} 
	for any geometric quantile $\mathbf{y}'$ lying in $W$, this implies that none of the geometric quantiles lying in $W$ can actually be a geometric quantile. Thus, if $\mathbf{v}\neq\mathbf{0}$ geometric quantiles can only lie outside $W$ and, as we showed, there can be at most one geometric quantile outside $W$. Thus, we have uniqueness.
	
	Now, let $\mathbf{x}_1,...,\mathbf{x}_k$ lie on a straight line, $\mathbf{v}=\mathbf{0}$, and $\alpha\in(0,1)\setminus\{\frac{1}{k},\frac{2}{k},...,\frac{k-1}{k}\}$. Then, by Lemma \ref{lem-uniq-defined} we have that there is a unique geometric quantile on $W$.
	
	Let us now investigate the other direction. Let $\mathbf{x}_1,...,\mathbf{x}_k$ lie on a straight line $W$ and $\mathbf{v}=\mathbf{0}$. Consider any $\mathbf{y}\in\mathbb{X}$. Then, by Lemma \ref{lem-ineq} there exist $\mathbf{y}_{Y}\in Y$ and $\mathbf{y}_{W}\in W$, where $\mathbb{X}=Y+W$, such that $\mathbf{y}=\mathbf{y}_{Y}+\mathbf{y}_{W}$. Consider the case of $\mathbf{y}_{Y}\neq\mathbf{0}$. By Lemma \ref{lem-ineq} we have that $\left\|\mathbf{x}_{i}-\mathbf{y}\right\|_{\mathbb{X}}>\left\|\mathbf{x}_{i}-\mathbf{y}_{W}\right\|_{\mathbb{X}}$. Thus, we obtain
	\begin{equation*}
		\sum _{i=1}^{k}\left\|\mathbf{x}_{i}-\mathbf{y}\right\|_{\mathbb{X}}+\langle\mathbf{u} ,\mathbf{x}_{i}-\mathbf{y}\rangle=\sum _{i=1}^{k}\left\|\mathbf{x}_{i}-\mathbf{y}\right\|_{\mathbb{X}}+u\langle\mathbf{e} ,\mathbf{x}_{i}-\mathbf{y}_{W}\rangle
	\end{equation*}
	\begin{equation*}
		>\sum _{i=1}^{k}\left\|\mathbf{x}_{i}-\mathbf{y}_{W}\right\|_{\mathbb{X}}+u\langle\mathbf{e} ,\mathbf{x}_{i}-\mathbf{y}_{W}\rangle,
	\end{equation*}
	which implies that any geometric quantile lies in $W$. Then, by Lemma \ref{lem-uniq-defined} we have that when $\mathbf{x}_1,...,\mathbf{x}_k$ lie on a straight line $W$ and $\mathbf{v}=0$, the geometric quantile is unique if and only if $\alpha\in(0,1)\setminus\{\frac{1}{k},\frac{2}{k},...,\frac{k-1}{k}\}$.
	
	Thus, we have that if the geometric quantile is unique then $\mathbf{x}_1,...,\mathbf{x}_k$ do not lie on a straight line, or if they lie on a straight line $W$ then if $\mathbf{y}^*\notin W$ it means that $\mathbf{v}\neq\mathbf{0}$ while if $\mathbf{y}^*\in W$ then $\mathbf{v}=\mathbf{0}$ and $\alpha\in(0,1)\setminus\{\frac{1}{k},\frac{2}{k},...,\frac{k-1}{k}\}$.
\end{proof}

\begin{proof}[Proof of Corollary \ref{co-unique}]
	Since $\mathbb{X}$ is smooth, any point in $\mathbb{X}$ is G\^{a}teaux differentiable along any direction. Further, if the points $\mathbf{x}_1,...,\mathbf{x}_k$ do not lie on a straight line or if the points lie on a straight line and $\mathbf{v}\neq\mathbf{0}$, then we obtain the result by Theorem \ref{thm-extension-of-Kemperman}. If the points lie on a straight line and $\mathbf{v}=\mathbf{0}$, then $\mathbf{u}=u\mathbf{e}$ and so $\|\mathbf{u}\|_{\mathbb{X}^*}=|u|$. In order to have uniqueness $\alpha\notin \{\frac{1}{k},\frac{2}{k},...,\frac{k-1}{k}\}$, \textit{i.e.}~$|u|\notin\{1-\frac{2j}{k},j=1,...,\lfloor\frac{k}{2}\rfloor\}$ since $u=2\alpha-1$.
\end{proof}

\begin{proof}[Proof of Proposition \ref{pro-extra}]
	We only prove only (iii), since for (i) and (ii) the arguments are similar. For one direction, let $\mathbf{x}_1,...,\mathbf{x}_k$ lie on a straight line $W$ and $\mathbf{v}=0$. Consider any $\mathbf{y}\in\mathbb{X}$. Then, by Lemma \ref{lem-ineq} there are some $\mathbf{y}_{Y}\in Y$ and $\mathbf{y}_{W}\in W$, where $\mathbb{X}=Y+W$, such that $\mathbf{y}=\mathbf{y}_{Y}+\mathbf{y}_{W}$. Consider the case of $\mathbf{y}_{Y}\neq\mathbf{0}$. By Lemma \ref{lem-ineq} we have that $\left\|\mathbf{x}_{i}-\mathbf{y}\right\|_{\mathbb{X}}>\left\|\mathbf{x}_{i}-\mathbf{y}_{W}\right\|_{\mathbb{X}}$. Thus, we obtain
	\begin{equation*}
		\sum _{i=1}^{k}\left\|\mathbf{x}_{i}-\mathbf{y}\right\|_{\mathbb{X}}+\langle\mathbf{u} ,\mathbf{x}_{i}-\mathbf{y}\rangle=\sum _{i=1}^{k}\left\|\mathbf{x}_{i}-\mathbf{y}\right\|_{\mathbb{X}}+u\langle\mathbf{e} ,\mathbf{x}_{i}-\mathbf{y}_{W}\rangle
	\end{equation*}
	\begin{equation*}
		>\sum _{i=1}^{k}\left\|\mathbf{x}_{i}-\mathbf{y}_{W}\right\|_{\mathbb{X}}+u\langle\mathbf{e} ,\mathbf{x}_{i}-\mathbf{y}_{W}\rangle,
	\end{equation*}
	which implies that any geometric quantile lies in $W$.
	
	For the other direction, assume that the set of geometric quantiles is a subset of $W$. This implies that there is no geometric quantile outside $W$, so neither condition (i) nor (ii) can hold (see the proof of Theorem \ref{thm-extension-of-Kemperman}).
\end{proof}

\begin{proof}[Proof of Lemma \ref{lem-continuity-quantiles}]
	The weak* convergence follows by adapting the arguments of Theorem 1 in \cite{Cadre2001} to quantiles. The convergence when $\mathbb{X}$ is finite-dimensional follows from adapting the arguments of Corollary 2.26 in \cite{Kemperman} to quantiles.
\end{proof}

\begin{proof}[Proof of Lemma \ref{lem-uniform-continuity}]
	To simplify the notation we let $\|\cdot\|$ indicate both $\|\cdot\|_{\mathbb{X}}$ and $\|\cdot\|_{\mathbb{X}^*}$. For every $n\in\mathbb{N}$ and $\mathbf{y},\mathbf{u}\in\mathbb{R}^d$ with $\|\mathbf{u}\|<1$, we define
	\begin{equation*}
		f_n(\mathbf{y},\mathbf{u})=\int \| \mathbf{y}-\mathbf{x}\|-\|\mathbf{x}\|\mu_n(d\mathbf{x})-\langle\mathbf{u},\mathbf{y}\rangle
	\end{equation*}
	and
	\begin{equation*}
		f(\mathbf{y},\mathbf{u})=\int \| \mathbf{y}-\mathbf{x}\|-\|\mathbf{x}\|\mu(d\mathbf{x})-\langle\mathbf{u},\mathbf{y}\rangle.
	\end{equation*}
	For every $n\in\mathbb{N}$, $f_n$ and $f$ are continuous functions and, for every $\mathbf{y},\mathbf{u}\in\mathbb{R}^d$, 
	\begin{equation}\label{unif-cont-eq-1}
		\lim\limits_{n\to\infty}	f_n(\mathbf{y},\mathbf{u})=	f(\mathbf{y},\mathbf{u}).
	\end{equation}
	Further, for every $n\in\mathbb{N}$ and $\mathbf{y},\mathbf{z},\mathbf{u},\mathbf{v}\in\mathbb{R}^d$ with $\|\mathbf{u}\|<1$ and $\|\mathbf{v}\|<1$, we have
	\begin{equation}\label{unif-cont-eq-2}
		|f_n(\mathbf{y},\mathbf{u})-f_n(\mathbf{z},\mathbf{v})|\leq 2\|\mathbf{y}-\mathbf{z}\|+\min(\|\mathbf{y}\|,\|\mathbf{z}\|)\|\mathbf{u}-\mathbf{v}\|.
	\end{equation}
	Fix $\varepsilon>0$. For $t,s>0$ define $B_t:=\{\mathbf{y}\in\mathbb{R}^d:\|\mathbf{y}\|_{\mathbb{X}}\leq t\}$ and $B'_s:=\{\mathbf{u}\in\mathbb{R}^d:\|\mathbf{u}\|_{\mathbb{X}^*}\leq s\}$. It is possible to see that the $\mathbf{u}$-geometric quantiles of $\mu_n$ and of $\mu$ for every $n\in\mathbb{N}$ and $\mathbf{u}\in\mathbb{R}^d$ with $\|\mathbf{u}\|<1-\varepsilon$, lie in $B_\mathbf{r}$ for some $r>0$.
	
	Using continuity of $f_n$, pointwise convergence (\ref{unif-cont-eq-1}), and equicontinuity (\ref{unif-cont-eq-2}) on $B_r\times B'_{1-\varepsilon}$,  by the Arzel\'{a}-Ascoli theorem we obtain the uniform convergence of $f_n$ to $f$ on $B_r\times B'_{1-\varepsilon}$.
	
	For every $\mathbf{u},\mathbf{v}\in\mathbb{R}^d$ with $\|\mathbf{u}\|<1$ and $\|\mathbf{v}\|<1$, consider $|\inf\limits_{\mathbf{y}\in B_r}f_n(\mathbf{y},\mathbf{u})-\inf\limits_{\mathbf{z}\in B_r}f_n(\mathbf{z},\mathbf{v})|$. Wlog we assume that $\inf\limits_{\mathbf{y}\in B_r}f_n(\mathbf{y},\mathbf{u})\geq\inf\limits_{\mathbf{z}\in B_r}f_n(\mathbf{z},\mathbf{v})$ and denote by $\mathbf{y}^*_{n,\mathbf{v}}$ the $\mathbf{v}$-geometric quantile of $\mu_n$, \textit{i.e.}~the value such that $f_n(\mathbf{y}^*_{n,\mathbf{v}},\mathbf{v})=\inf\limits_{\mathbf{z}\in B_r}f_n(\mathbf{z},\mathbf{v})$. Then, we have
	\begin{equation}\label{unif-cont-eq-3}
		|\inf\limits_{\mathbf{y}\in B_r}f_n(\mathbf{y},\mathbf{u})-\inf\limits_{\mathbf{z}\in B_r}f_n(\mathbf{z},\mathbf{v})|\leq f_n(\mathbf{y}^*_{n,\mathbf{v}},\mathbf{u})-f_n(\mathbf{y}^*_{n,\mathbf{v}},\mathbf{v})=\langle \mathbf{v}-\mathbf{u},\mathbf{y}^*_{n,\mathbf{v}}\rangle\leq r\|\mathbf{v}-\mathbf{u}\|.
	\end{equation}
	We denote by $\mathbf{y}^*_\mathbf{u}$ the $\mathbf{u}$-geometric quantile of $\mu$. By continuity of the geometric quantiles (see Lemma \ref{lem-continuity-quantiles}), we have 
	\begin{equation}\label{unif-cont-eq-7}
		\lim\limits_{n\to\infty}\mathbf{y}^*_{n,\mathbf{u}}=\mathbf{y}^*_{\mathbf{u}}.
	\end{equation}
	By (\ref{unif-cont-eq-7}) and uniform convergence of $f_n$ to $f$ we obtain
	\begin{equation*}
		\lim\limits_{n\to\infty}\inf\limits_{\mathbf{y}\in B_r}f_n(\mathbf{y},\mathbf{u})=\lim\limits_{n\to\infty}f_n(\mathbf{y}^*_{n,\mathbf{u}},\mathbf{u})=f(\mathbf{y}^*_\mathbf{u},\mathbf{u})=\inf\limits_{\mathbf{y}\in B_r}f(\mathbf{y},\mathbf{u}).
	\end{equation*}
	This shows pointwise convergence of the sequence of functions $\mathbf{u}\mapsto\inf\limits_{\mathbf{y}\in B_r}f_n(\mathbf{y},\mathbf{u})$, $n\in\mathbb{N}$. These functions are continuous by (\ref{unif-cont-eq-3}). Thus, by the Arzel\'{a}-Ascoli theorem we obtain uniform convergence of $\inf\limits_{\mathbf{y}\in B_r}f_n(\mathbf{y},\cdot)$ to $\inf\limits_{\mathbf{y}\in B_r}f(\mathbf{y},\cdot)$ on $B'_{1-\varepsilon}$.
	
	Assume now that \begin{equation*}
		\limsup\limits_{n\to\infty}\sup\limits_{\|\mathbf{u}\|<1-\varepsilon}\| \mathbf{y}^*_{n,\mathbf{u}}-\mathbf{y}^*_{\mathbf{u}}\|\geq2\delta
	\end{equation*}
	for some $\delta>0$. There exists a subsequence $n_k\to\infty$ and points $\mathbf{u}_{n_{k}}$, $\mathbf{y}^*_{n_{k},\mathbf{u}_{n_{k}}}$, and $\mathbf{y}^*_{\mathbf{u}_{n_{k}}}$ such that 
	\begin{equation}\label{unif-cont-eq-6}
		\| \mathbf{y}^*_{n_{k},\mathbf{u}_{n_{k}}}-\mathbf{y}^*_{\mathbf{u}_{n_{k}}}\|\geq\delta,
	\end{equation}
	for all $n_k$ large enough. Since $\mathbf{u}_{n_{k}}$, $\mathbf{y}^*_{n_{k},\mathbf{u}_{n_{k}}}$, and $\mathbf{y}^*_{\mathbf{u}_{n_{k}}}$ are sequences of points in compact sets they have converging subsequences, which wlog we denote again by  $\mathbf{u}_{n_{k}}$, $\mathbf{y}^*_{n_{k},\mathbf{u}_{n_{k}}}$, and $\mathbf{y}^*_{\mathbf{u}_{n_{k}}}$, with limits
	$\tilde{\mathbf{u}}$, $\tilde{\mathbf{y}}^*$, and $\hat{\mathbf{y}}^*$, respectively.
	
	We have
	\begin{equation*}
		|f_{n_k}(\mathbf{y}^*_{n_{k},\mathbf{u}_{n_{k}}},\mathbf{u}_{n_{k}})-f(\tilde{\mathbf{y}}^*,\tilde{\mathbf{u}})|
	\end{equation*}
	\begin{equation}\label{unif-cont-eq-4}
		\leq	|f_{n_k}(\mathbf{y}^*_{n_{k},\mathbf{u}_{n_{k}}},\mathbf{u}_{n_{k}})-f(\mathbf{y}^*_{n_{k},\mathbf{u}_{n_{k}}},\mathbf{u}_{n_{k}})|+	|f(\mathbf{y}^*_{n_{k},\mathbf{u}_{n_{k}}},\mathbf{u}_{n_{k}})-f(\tilde{\mathbf{y}}^*,\tilde{\mathbf{u}})|\to 0,\quad\textnormal{as $k\to\infty$,}
	\end{equation}
	where for the first addendum we used uniform convergence of $f_n$ to $f$ and for the second one continuity of $f$. Moreover, by uniform convergence of $\inf\limits_{\mathbf{y}\in B_r}f_n(\mathbf{y},\cdot)$ to $\inf\limits_{\mathbf{y}\in B_r}f(\mathbf{y},\cdot)$ we have
	\begin{equation*}
		|\inf\limits_{\mathbf{y}\in B_r}f_{n_k}(\mathbf{y},\mathbf{u}_{n_{k}})-\inf\limits_{\mathbf{z}\in B_r}f(\mathbf{z},\mathbf{u}_{n_{k}})|\to0,\quad\textnormal{as $k\to\infty$,}
	\end{equation*}
	and using (\ref{unif-cont-eq-4}) we obtain
	\begin{equation*}
		|f(\tilde{\mathbf{y}}^*,\tilde{\mathbf{u}})-\inf\limits_{\mathbf{z}\in B_r}f(\mathbf{z},\mathbf{u}_{n_{k}})|\to0,\quad\textnormal{as $k\to\infty$,}
	\end{equation*}
	that is
	\begin{equation}\label{unif-cont-eq-5}
		\lim\limits_{k\to\infty}f(\mathbf{y}^*_{\mathbf{u}_{n_{k}}},\mathbf{u}_{n_{k}})=f(\tilde{\mathbf{y}}^*,\tilde{\mathbf{u}}).
	\end{equation}
	Since the limit of $\mathbf{y}^*_{\mathbf{u}_{n_{k}}}$ is $\hat{\mathbf{y}}^*$, by continuity of $f$ and (\ref{unif-cont-eq-5}) we have $f(\hat{\mathbf{y}}^*,\tilde{\mathbf{u}})=f(\tilde{\mathbf{y}}^*,\tilde{\mathbf{u}})$. Moreover, by (\ref{unif-cont-eq-3}) applied to $f$ we have 
	\begin{equation*}
		\lim\limits_{k\to\infty}f(\mathbf{y}^*_{\mathbf{u}_{n_{k}}},\mathbf{u}_{n_{k}})=\inf\limits_{\mathbf{y}\in B_r}f(\mathbf{y},\tilde{\mathbf{u}})=f(\mathbf{y}^*_{\tilde{\mathbf{u}}},\tilde{\mathbf{u}}).
	\end{equation*}
	So $f(\hat{\mathbf{y}}^*,\tilde{\mathbf{u}})=f(\tilde{\mathbf{y}}^*,\tilde{\mathbf{u}})=f(\mathbf{y}^*_{\tilde{\mathbf{u}}},\tilde{\mathbf{u}})$, and since $\mathbf{y}^*_{\tilde{\mathbf{u}}}$ is unique we conclude that $\hat{\mathbf{y}}^*=\tilde{\mathbf{y}}^*=\mathbf{y}^*_{\tilde{\mathbf{u}}}$. This is a contradiction because by (\ref{unif-cont-eq-6}) $\|\hat{\mathbf{y}}^*-\tilde{\mathbf{y}}^*\|\geq\delta$. Therefore, for any $\varepsilon>0$, 
	\begin{equation*}
		\sup_{\|\mathbf{u}\|_{\mathbb{X}^*}<1-\varepsilon}\|\mathbf{y}^*_{n,\mathbf{u}}-\mathbf{y}^*_\mathbf{u}\|\to0\quad\textnormal{as $n\to\infty$}. 
	\end{equation*}
	
	Now, assume that $\mu$ is atomless. Consider any sequence of constants $c_n$ such that $c_n\to0$ as $n\to\infty$, and any $\mathbf{v}\in\mathbb{R}^d$ with $\| \mathbf{v}\|_{\mathbb{X}^*}<1$. For any $n$ large enough $1>c_n\geq 0$ and we have
	\begin{equation*}
		\sup_{\|\mathbf{u}-\mathbf{v}\|_{\mathbb{X}^*}\leq c_n}\|\mathbf{y}^*_{n,\mathbf{u}}-\mathbf{y}^*_\mathbf{v}\|
	\end{equation*}
	\begin{equation*}
		\leq 	\sup_{\|\mathbf{u}-\mathbf{v}\|_{\mathbb{X}^*}\leq c_n}\|\mathbf{y}^*_{n,\mathbf{u}}-\mathbf{y}^*_\mathbf{u}\|+\sup_{\|\mathbf{u}-\mathbf{v}\|_{\mathbb{X}^*}\leq c_n}\|\mathbf{y}^*_{\mathbf{u}}-\mathbf{y}^*_\mathbf{v}\|\to0,\quad\textnormal{as $n\to\infty$,}
	\end{equation*}
	where for the first addendum we used the result just shown and for the second one continuity of the $\mathbf{v}$-geometric quantile with respect to the quantile parameter $\mathbf{v}$ (see Theorem 3.1 in \cite{ChCh2014}).
\end{proof}

\begin{proof}[Proof of Lemma \ref{lem-open-set}]
	It is equivalent to show that the set 
	\begin{equation*}
		\{(\mathbf{x}_1,...,\mathbf{x}_k)\in\mathbb{X}^{k}:\textnormal{$\mathbf{x}_1,...,\mathbf{x}_k$ lie on a straight line}\}
	\end{equation*}
	is closed. Consider first the case of $k=3$ and consider any $\mathbf{x}_1,\mathbf{x}_2,\mathbf{x}_3\in\mathbb{X}$ lying on a straight line. Then, the area of the triangle with vertices $\mathbf{x}_1,$ $\mathbf{x}_2,$ and $\mathbf{x}_3$ is zero, and so by Heron's formula
	\begin{equation}\label{A}
		A((\textbf{x}_1,\mathbf{x}_2,\mathbf{x}_3)):=s(s-a)(s-b)(s-c)=0,
	\end{equation}
	where $a=\|\mathbf{x}_1-\mathbf{x}_2\|_{\mathbb{X}}$, $b=\|\mathbf{x}_2-\mathbf{x}_3\|_{\mathbb{X}}$, $c=\|\mathbf{x}_1-\mathbf{x}_3\|_{\mathbb{X}}$, and $s=(a+b+c)/2$. Thus, 
	\begin{equation*}
		\{(\textbf{x}_1,\mathbf{x}_2,\mathbf{x}_3)\in\mathbb{X}^{3}:\textnormal{$\mathbf{x}_1,...,\mathbf{x}_3$ lie on a straight line}\}
	\end{equation*}
	\begin{equation*}
		=\{(\textbf{x}_1,\mathbf{x}_2,\mathbf{x}_3)\in\mathbb{X}^{3}:A((\textbf{x}_1,\mathbf{x}_2,\mathbf{x}_3))=0\}.
	\end{equation*}
	Since $A$ is continuous, any limit point $(\textbf{z}_1,\mathbf{z}_2,\mathbf{z}_3)$ of $\{(\textbf{x}_1,\mathbf{x}_2,\mathbf{x}_3)\in\mathbb{X}^{3}:A((\textbf{x}_1,\mathbf{x}_2,\mathbf{x}_3))=0\}$ is such that $A((\textbf{z}_1,\mathbf{z}_2,\mathbf{z}_3))=0$ and so $(\textbf{z}_1,\mathbf{z}_2,\mathbf{z}_3)\in\{(\textbf{x}_1,\mathbf{x}_2,\mathbf{x}_k)\in\mathbb{X}^{3}:A((\textbf{x}_1,\mathbf{x}_2,\mathbf{x}_k))=0\}$. Therefore, $\{(\textbf{x}_1,\mathbf{x}_2,\mathbf{x}_k)\in\mathbb{X}^{3}:A((\textbf{x}_1,\mathbf{x}_2,\mathbf{x}_k))=0\}$ is a closed set and so $\mathcal{X}_{3}$ is open. To show that $\mathcal{X}_{k}$ is open, define $B((\textbf{x}_1,...,\mathbf{x}_k))$ as the sum of the area of all triangles whose vertices belong to $\{\mathbf{x}_1,...,\mathbf{x}_k\}$. Then 
	\begin{equation*}
		\{(\mathbf{x}_1,...,\mathbf{x}_k)\in\mathbb{X}^{k}:\textnormal{$\mathbf{x}_1,...,\mathbf{x}_k$ lie on a straight line}\}
	\end{equation*}
	\begin{equation*}
		=\{(\textbf{x}_1,...,\mathbf{x}_k)\in\mathbb{X}^{k}:B((\textbf{x}_1,...,\mathbf{x}_k))=0\},
	\end{equation*}
	and since $B$ is continuous, we obtain that $\mathcal{X}_{k}$ is open.
\end{proof}

\begin{proof}[Proof of Theorem \ref{thm-continuity}]
	We prove the first statement. Consider any sequence 
	\begin{equation*}
		((\mathbf{x}_{1}^{(m)},...,\mathbf{x}_{k}^{(m)}))_{m\in\mathbb{N}}
	\end{equation*}
	of elements in $\mathcal{X}_k$ and any $(\mathbf{x}_{1},...,\mathbf{x}_{k})\in\mathcal{X}_k$ such that $(\mathbf{x}_{1}^{(m)},...,\mathbf{x}_{k}^{(m)})\to(\mathbf{x}_{1},...,\mathbf{x}_{k})$ as $m\to\infty$. Without loss of generality we choose $(\mathbf{x}_{1}^{(m)},...,\mathbf{x}_{k}^{(m)})$ such that 
	\begin{equation*}
		\sup_{m\in\mathbb{N}}\|(\mathbf{x}_{1}^{(m)},...,\mathbf{x}_{k}^{(m)})-(\mathbf{x}_{1},...,\mathbf{x}_{k})\|_{\mathbb{X}^k}<\varepsilon,
	\end{equation*}
	for some $\varepsilon>0$.
	
	Let $\bar{c}:=\max_{i,j=1,...,k}\|\mathbf{x}_i-\mathbf{x}_j\|_{\mathbb{X}}$. Using the fact that $|\langle\mathbf{u},\mathbf{x}-\mathbf{y}\rangle|<\|\mathbf{x}-\mathbf{y}\|_{\mathbb{X}}$ for every $\mathbf{x},\mathbf{y}\in\mathbb{X}$, it is possible to see that the geometric quantile, which coincides with $\mathbf{q}_{\mathbf{u}}(\mathbf{x}_{1},...,\mathbf{x}_{k})$ and which we denote by $\mathbf{y}_*$, lies in the ball of radius $2\bar{c}$ centered at one of the points, say $\mathbf{x}_1$. Hence, we have that the (unique) geometric quantile of $\mathbf{x}_{1}^{(m)},...,\mathbf{x}_{k}^{(m)}$, which coincides with $\mathbf{q}_{\mathbf{u}}(\mathbf{x}^{(m)}_{1},...,\mathbf{x}^{(m)}_{k})$ and which we denote by $\mathbf{y}_*^{(m)}$, lies in the ball of radius $2\bar{c}+\varepsilon$ centered at $\mathbf{x}_1$, for every $m\in\mathbb{N}$. Since $\mathbb{X}$ is reflexive and strictly convex, the geometric quantiles $\mathbf{y}_*$ and $\mathbf{y}_*^{(m)}$ exist and are unique. 
	
	Then, similarly as in the proof of Lemma 2 (i) in \cite{Cadre2001}, we have
	\begin{equation*}
		\bigg|\sum _{i=1}^{k}\left\|\mathbf{x}_{i}^{(m)}-\mathbf{y}_{*}^{(m)}\right\|_{\mathbb{X}}-\langle\mathbf{u} ,\mathbf{y}_{*}^{(m)}\rangle-\sum _{i=1}^{k}\left\|\mathbf{x}_{i}-\mathbf{y}_{*}^{(m)}\right\|_{\mathbb{X}}+\langle\mathbf{u} ,\mathbf{y}_{*}^{(m)}\rangle\bigg|
	\end{equation*}
	\begin{equation}\label{convergence-m}
		\leq 2 \sum _{i=1}^{k}\left\|\mathbf{x}_{i}^{(m)}-\mathbf{x}_{i}\right\|_{\mathbb{X}}\to0,\quad\textnormal{as $m\to\infty$}.
	\end{equation}
	Since for all $m$ we have that
	\begin{equation*}
		\sum _{i=1}^{k}\left\|\mathbf{x}_{i}^{(m)}-\mathbf{y}_{*}^{(m)}\right\|_{\mathbb{X}}-\langle\mathbf{u} ,\mathbf{y}_{*}^{(m)}\rangle\leq \sum _{i=1}^{k}\left\|\mathbf{x}_{i}^{(m)}-\mathbf{y}_{*}\right\|_{\mathbb{X}}-\langle\mathbf{u} ,\mathbf{y}_{*}\rangle,
	\end{equation*}
	then
	\begin{equation*}
		\limsup\limits_{m\to\infty}\sum _{i=1}^{k}\left\|\mathbf{x}_{i}^{(m)}-\mathbf{y}_{*}^{(m)}\right\|_{\mathbb{X}}-\langle\mathbf{u} ,\mathbf{y}_{*}^{(m)}\rangle\leq \sum _{i=1}^{k}\left\|\mathbf{x}_{i}-\mathbf{y}_{*}\right\|_{\mathbb{X}}-\langle\mathbf{u} ,\mathbf{y}_{*}\rangle.
	\end{equation*}
	Hence, by (\ref{convergence-m}) we obtain that
	\begin{equation}\label{convergence-m-2}
		\limsup\limits_{m\to\infty}\sum _{i=1}^{k}\left\|\mathbf{x}_{i}-\mathbf{y}_{*}^{(m)}\right\|_{\mathbb{X}}-\langle\mathbf{u} ,\mathbf{y}_{*}^{(m)}\rangle\leq \sum _{i=1}^{k}\left\|\mathbf{x}_{i}-\mathbf{y}_{*}\right\|_{\mathbb{X}}-\langle\mathbf{u} ,\mathbf{y}_{*}\rangle,
	\end{equation}
	but since $\mathbf{y}_{*}$ is the minimizer of the right hand side of (\ref{convergence-m-2}) (see (\ref{geometric-quantile-Banach-without-x})) we conclude that
	\begin{equation*}
		\lim\limits_{m\to\infty}\sum _{i=1}^{k}\left\|\mathbf{x}_{i}-\mathbf{y}_{*}^{(m)}\right\|_{\mathbb{X}}-\langle\mathbf{u} ,\mathbf{y}_{*}^{(m)}\rangle= \sum _{i=1}^{k}\left\|\mathbf{x}_{i}-\mathbf{y}_{*}\right\|_{\mathbb{X}}-\langle\mathbf{u} ,\mathbf{y}_{*}\rangle.
	\end{equation*}
	Since $\mathbb{X}$ is reflexive, $\mathbb{X}^*$ is also reflexive, which in turn implies that $\mathbb{X}^*$ is an Asplund space (see \cite{Fry2002} and Theorem 8.26 in \cite{Fabian} for equivalent definitions of the Asplund space). Then, we can apply Theorem 3 in \cite{Asplund1968} and obtain that $\mathbf{y}_{*}^{(m)}\to\mathbf{y}^*$ in norm, hence the result.
	
	The same arguments apply to the second statement. The additional assumptions ensure the uniqueness of the quantile.
\end{proof}

\begin{proof}[Proof of Theorem \ref{thm-extension-Gervini}]
	Consider any $\mathbf{x}_{1},...,\mathbf{x}_{k}$, $k\in\mathbb{N}$, in $\mathbb{X}$ and any $\mathbf{u}\in \mathbb{X}$ with $\|\mathbf{u}\|_{\mathbb{X}}<1$. Observe that a geometric quantile $\mathbf{y}^*$ satisfies
	\begin{equation*}
		\sum_{i=1}^{k}\frac{\mathbf{x}_i-\mathbf{y}^*}{\|\mathbf{x}_i-\mathbf{y}^*\|_{\mathbb{X}}}+k\mathbf{u}=0,
	\end{equation*}
	provided that $\mathbf{y}^*\neq \mathbf{x}_i$, $i=1,...,k$. Now, let $\tilde{\mathbf{x}}:=\mathbf{u}+\mathbf{y}^*$, then $\mathbf{u}=\frac{\tilde{\mathbf{x}}-\mathbf{y}^*}{\|\mathbf{u}\|_{\mathbb{X}}^{-1}\|\tilde{\mathbf{x}}-\mathbf{y}^*\|_{\mathbb{X}}}$ and so
	\begin{equation*}
		\sum_{i=1}^{k}\frac{\mathbf{x}_i-\mathbf{y}^*}{\|\mathbf{x}_i-\mathbf{y}^*\|_{\mathbb{X}}}+k\frac{\tilde{\mathbf{x}}-\mathbf{y}^*}{\|\mathbf{u}\|_{\mathbb{X}}^{-1}\|\tilde{\mathbf{x}}-\mathbf{y}^*\|_{\mathbb{X}}}=0
	\end{equation*}
	\begin{equation*}
		\Leftrightarrow \mathbf{y}^*=\sum_{i=1}^{k}w_i\mathbf{x}_i+\tilde{\mathbf{x}}w_{k+1}
	\end{equation*}
	\begin{equation*}
		\Leftrightarrow \mathbf{y}^*=\frac{1}{1-w_{k+1}}\Big(\sum_{i=1}^{k}w_i\mathbf{x}_i+\mathbf{u}w_{k+1}\Big),
	\end{equation*}
	where
	\begin{equation*}
		w_i=\frac{\|\mathbf{x}_i-\mathbf{y}^*\|_{\mathbb{X}}^{-1}}{\sum_{i=1}^{k}\|\mathbf{x}_i-\mathbf{y}^*\|_{\mathbb{X}}^{-1}+k\|\mathbf{u}\|_{\mathbb{X}}\|\tilde{\mathbf{x}}-\mathbf{y}^*\|_{\mathbb{X}}^{-1}}=\frac{\|\mathbf{x}_i-\mathbf{y}^*\|_{\mathbb{X}}^{-1}}{\sum_{i=1}^{k}\|\mathbf{x}_i-\mathbf{y}^*\|_{\mathbb{X}}^{-1}+k}
	\end{equation*}
	for $i=1,...,k$, and
	\begin{equation*}
		w_{k+1}=\frac{k\|\mathbf{u}\|_{\mathbb{X}}\|\tilde{\mathbf{x}}-\mathbf{y}^*\|_{\mathbb{X}}^{-1}}{\sum_{i=1}^{k}\|\mathbf{x}_i-\mathbf{y}^*\|_{\mathbb{X}}^{-1}+k\|\mathbf{u}\|_{\mathbb{X}}\|\tilde{\mathbf{x}}-\mathbf{y}^*\|_{\mathbb{X}}^{-1}}=\frac{k}{\sum_{i=1}^{k}\|\mathbf{x}_i-\mathbf{y}^*\|_{\mathbb{X}}^{-1}+k}
	\end{equation*}
	and so $\sum_{i=1}^{k+1}w_i=1$ and $w_1,...,w_{k+1}\geq 0$. Observe that since $\mathbf{y}^*\neq \mathbf{x}_i$, $i=1,...,k$ and $\|\mathbf{y}^*\|_{\mathbb{X}}<C$, for some $C\geq0$ depending on $\mathbf{x}_1,...,\mathbf{x}_k$, we have that $w_1,...,w_k$ are strictly positive and so $w_{k+1}<1$. Now if $\mathbf{y}^*= \mathbf{x}_i$, for say $i=1,...,j$, then the quantile is still of the form 
	\begin{equation*}
		\frac{1}{1-w_{k+1}}\Big(\sum_{i=1}^{k}w_i\mathbf{x}_i+\mathbf{u}w_{k+1}\Big)
	\end{equation*}
	with $\sum_{i=1}^{k+1}w_i=1$ and $w_1,...,w_{k+1}\geq 0$, in particular $\mathbf{y}^*=\sum_{i=1}^{j}w_i\mathbf{x}_i$. 
	
	Therefore, we have that the quantile is given by 
	\begin{equation*}
		\frac{1}{1-w_{k+1}^*}\Big(\sum_{i=1}^{k}w^*_i\mathbf{x}_i+\mathbf{u}w_{k+1}^*\Big),
	\end{equation*}
	where $(w^*_i,...,w_{k+1}^*)$ is obtained by
	\begin{equation}\label{minimization-on-T_k+1}
		{\displaystyle {\underset {\mathbf{w}\in T_{k+1}}{\operatorname {arg\,min} }}\sum _{i=1}^{k}\left\|\mathbf{x}_{i}-\frac{1}{1-w_{k+1}}\Big(\sum_{l=1}^{k}w_{l}\mathbf{x}_{l}+w_{k+1}\mathbf{u}\Big)\right\|_{\mathbb{X}}}+\langle u,\mathbf{x}_i-\frac{1}{1-w_{k+1}}\Big(\sum_{l=1}^{k}w_{l}\mathbf{x}_{l}+w_{k+1}\mathbf{u}\Big)\rangle.
	\end{equation}
	
	To see that this minimization is well-defined observe that there is an $\varepsilon>0$ depending on $\mathbf{x}_{1},...,\mathbf{x}_{k}$ such that (\ref{minimization-on-T_k+1}) is equal to
	\begin{equation*}
		{\displaystyle {\underset {\mathbf{w}\in T^{(\varepsilon)}_{k+1}}{\operatorname {arg\,min} }}\sum _{i=1}^{k}\left\|\mathbf{x}_{i}-\frac{1}{1-w_{k+1}}\Big(\sum_{l=1}^{k}w_{l}\mathbf{x}_{l}+w_{k+1}\mathbf{u}\Big)\right\|_{\mathbb{X}}}+\langle \mathbf{u},\mathbf{x}_i-\frac{1}{1-w_{k+1}}\Big(\sum_{l=1}^{k}w_{l}\mathbf{x}_{l}+w_{k+1}\mathbf{u}\Big)\rangle
	\end{equation*}
	where $T^{(\varepsilon)}_{k+1}:=\{\mathbf{v}\in[0,1]^{k}\times[0,1-\varepsilon]:v_1+\cdots+v_{k+1}=1\}$. Indeed, consider any sequence $((\mathbf{x}^{(m)}_1,...,\mathbf{x}^{(m)}_k))_{m\in\mathbb{N}}$ of elements in $\mathbb{X}^{k}$ and any $(\mathbf{x}_1,...,\mathbf{x}_k)\in \mathbb{X}^{k}$ such that $(\mathbf{x}^{(m)}_1,...,\mathbf{x}^{(m)}_k)\to(\mathbf{x}_1,...,\mathbf{x}_k)$ as $m\to\infty$. Without loss of generality suppose 
	\begin{equation*}
		\max_{i=1,...,k}\|\mathbf{x}^{(m)}_{i}-\mathbf{x}_i\|_{\mathbb{X}}<\delta
	\end{equation*}
	for some $\delta>0$. Let $\bar{c}:=\max_{i,j=1,...,k}\|\mathbf{x}_i-\mathbf{x}_j\|_{\mathbb{X}}$. Then, any quantile lies in the ball of radius $2\bar{c}$ centred at one of the points, say $\mathbf{x}_1$. Hence, we have that any quantile of $(\mathbf{x}^{(m)}_1,...,\mathbf{x}^{(m)}_k)$, for every $m\in\mathbb{N}$, lies in the ball of radius $2\bar{c}+\delta$ centred at $\mathbf{x}_1$. Thus, $w_{k+1}$ and $w_{k+1}^{(m)}$, $m\in\mathbb{N}$, are bounded by $\frac{k}{k(2\bar{c}+\delta)+k}$ and so 
	\begin{equation*}
		\varepsilon=1-\frac{1}{2\bar{c}+\delta+1}>0.
	\end{equation*}
\end{proof}

\begin{proof}[Proof of Proposition \ref{pro-l1}]
	Since the geometric quantile is given by
	\begin{equation*}
		{\displaystyle {\underset {\mathbf{y}\in \mathbb{X}}{\operatorname {arg\,min} }}\sum _{i=1}^{k}\sum_{l=1}^{\infty}|x^{(l)}_{i}-y^{(l)}|}+ u^{(l)}(x^{(l)}_{i}-y^{(l)})
	\end{equation*}
	and since $u^{(l)}(x^{(l)}_{i}-y^{(l)})<|x^{(l)}_{i}-y^{(l)}|$, we have that the geometric quantile is given by the component-wise quantile, \textit{i.e.}~$\mathbf{y}^*=(y^{*(l)})_{l\in\mathbb{N}}$ with $y^{*(l)}$ defined by 
	\begin{equation*}
		{\displaystyle {\underset {y^{(l)}\in \mathbb{R}}{\operatorname {arg\,min} }}\sum _{i=1}^{k}|x^{(l)}_{i}-y^{(l)}|}+ u^{(l)}(x^{(l)}_{i}-y^{(l)}).
	\end{equation*}
	Thus, we have existence of the geometric quantile. By Lemma \ref{lem-uniq-defined} uniqueness follows if and only if $|u^{(l)}|\notin\{1-\frac{2j}{k},j=1,...,\lfloor\frac{k}{2}\rfloor\}$, $\forall l\in\mathbb{N}$. Given any $\mathbf{x}_1,...,\mathbf{x}_k$ and $\mathbf{z}_1,...,\mathbf{z}_k$ in $\ell_1$, by Lemma \ref{lem-con-quant} we obtain
	\begin{equation*}
		\sum_{l=1}^{\infty}|Q^{(l)}_{\mathbf{u}}(\mathbf{x}_1,...,\mathbf{x}_k)-Q^{(l)}_{\mathbf{u}}(\mathbf{z}_1,...,\mathbf{z}_k)|\leq \sum_{l=1}^{\infty}\sum_{i=1}^{k}|x_{i}^{(l)}-z_{i}^{(l)}|
	\end{equation*}
	\begin{equation*}
		=\sum_{i=1}^{k}\sum_{l=1}^{\infty}|x_{i}^{(l)}-z_{i}^{(l)}|=\sum_{i=1}^{k}\|\mathbf{x}_{i}-\mathbf{z}_{i}\|_{\ell_1}.
	\end{equation*}
\end{proof}

	%%%%%%%%%%%%%%%%%%%%%%%%%%%%%%%%%%%%%%%%%%%%%%
	%% Support information, if any,             %%
	%% should be provided in the                %%
	%% Acknowledgements section.                %%
	%%%%%%%%%%%%%%%%%%%%%%%%%%%%%%%%%%%%%%%%%%%%%%
	%\begin{acks}[Acknowledgments]
	% The authors would like to thank ...
	%\end{acks}
	%%%%%%%%%%%%%%%%%%%%%%%%%%%%%%%%%%%%%%%%%%%%%%
	%% Funding information, if any,             %%
	%% should be provided in the                %%
	%% funding section.                         %%
	%%%%%%%%%%%%%%%%%%%%%%%%%%%%%%%%%%%%%%%%%%%%%%
	%\begin{funding}
	% The first author was supported by ...
	%
	% The second author was supported in part by ...
	%\end{funding}
	
	%%%%%%%%%%%%%%%%%%%%%%%%%%%%%%%%%%%%%%%%%%%%%%
	%% Supplementary Material, including data   %%
	%% sets and code, should be provided in     %%
	%% {supplement} environment with title      %%
	%% and short description. It cannot be      %%
	%% available exclusively as external link.  %%
	%% All Supplementary Material must be       %%
	%% available to the reader on Project       %%
	%% Euclid with the published article.       %%
	%%%%%%%%%%%%%%%%%%%%%%%%%%%%%%%%%%%%%%%%%%%%%%

	\bibliographystyle{chicago}
	\bibliography{Quantiles-JRSSB}
	
	%% or include bibliography directly:
	% \begin{thebibliography}{}
		% \bibitem{b1}
		% \end{thebibliography}
	
\end{document}